\numberwithin{equation}{section}
\theoremstyle{plain}
\newtheorem{theorem}[equation]{Theorem}
\newtheorem{corollary}[equation]{Corollary}
\newtheorem{lemma}[equation]{Lemma}
\newtheorem{proposition}[equation]{Proposition}
\newtheorem{construction}[equation]{Construction}
\newtheorem{question}[equation]{conjecture}
\newtheorem{conjecture}[equation]{conjecture}
\theoremstyle{definition}
\newtheorem{definition}[equation]{Definition}
\newtheorem{notation}[equation]{Notation}
\theoremstyle{remark}
\newtheorem{remark}[equation]{Remark}
\newcommand{\cal}{\mathcal}
\newcommand{\GW}{{\mathrm{GW}}}
\newcommand{\rank}{\operatorname{rank}}
\newcommand{\coker}{\operatorname{coker}}
\newcommand{\Sm}{\mathrm{Sm}}
\newcommand{\chark}{\operatorname{char}}
\newcommand{\pour}{\ar@{}[ur]|(0.2){\text{\pigpenfont G}}}
\newcommand{\podr}{\ar@{}[dr]|(0.2){\text{\pigpenfont A}}}
\DeclareMathOperator{\Ker}{Ker}
\DeclareMathOperator{\Image}{Im}
\DeclareMathOperator{\Coim}{Coim}
\DeclareMathOperator{\Coker}{Coker}
\DeclareMathOperator{\image}{im}
\DeclareMathOperator{\Supp}{Supp}
\DeclareMathOperator{\Suppred }{Supp_{red}}
\newcommand{\eff}{{\mathrm{eff}}}
\newcommand{\DM}{\mathbf{DM}^{\eff}_{nis}}
\newcommand{\DMGWeff}{\mathbf{DM}^\mathrm{GW}_{\eff}}
\newcommand{\DMGWeffunb}{\mathbf{DM}^\mathrm{GW}_{\eff,\mathrm{unb}}}
\newcommand{\DMGW}{\mathbf{DM}^\mathrm{GW}}
\newcommand{\DMWeff}{\mathbf{DM}^\mathrm{W}_{\eff}}
\newcommand{\DMW}{\mathbf{DM}^\mathrm{W}}
\newcommand{\DaffGW}{\mathbf{D}^\mathrm{GW}_{\affl}}
\newcommand{\Daff}{\mathbf{D}^-_{\affl}}
\newcommand{\mSH}{\mathcal{SH}}
\newcommand{\Dba}{\mathbf{D}^-}
\newcommand{\Dun}{\mathbf{D}}
\newcommand{\ShN}{Sh_{Nis}}
\newcommand{\ShNGW}{\ShN(GWCor)}
\newcommand{\PreGW}{Pre(GWCor)}
\newcommand{\ShNGWs}{Sh^{GW}}
\newcommand{\PreGWs}{Pre^{GW}}
\newcommand{\DPreGWs}{\Dba(\PreGWs)}
\newcommand{\cmod}{\text{-}\mathbf{mod}}
\newcommand{\calP}{{\mathcal P}}
\newcommand{\calO}{{\mathcal O}}
\newcommand{\calN}{{\mathcal N}}
\newcommand{\bbG}{{\mathbb G}}
\newcommand{\bG}{{\mathbb G}}
\newcommand{\bGm}{{\mathbb G_m}}
\newcommand{\pri}[1]{{#1^\prime}}
\newcommand{\ppri}[1]{{#1^{\prime\prime}}}
\newcommand{\pppri}[1]{{#1^{\prime\prime\prime}}}
\newcommand{\prl}{\mathbb{P}^1}
\newcommand{\affl}{\mathbb{A}^1}
\newcommand{\aff}{{\mathbb{A}}}
\newcommand{\GXqGY}{ X\times\bbG_m , Y\times\bbG_m}
\newcommand{\GXGY}{ X\times\bbG_m \times Y\times\bbG_m }
\newcommand{\bbGX}{{ X\times\bbG_m }}
\newcommand{\bbGY}{{ Y\times\bbG_m }}
\newcommand{\bGX}{{ X\times\bbG_m }}
\newcommand{\bGY}{{ Y\times\bbG_m }}
\newcommand{\bGmw}{{\mathbb G_m^{\wedge 1}}}
\newcommand{\bGwX}{X\times\bGmw}
\newcommand{\bGwY}{Y\times\bGmw}
\newcommand{\GwXqGwY}{X\times\bGmw,Y\times\bGmw}
\newcommand{\XGwqYGw}{X\times\bGmw,Y\times\bGmw}
\newcommand{\ipGX}{1_\bGX}
\newcommand{\ipGY}{1_\bGY}
\newcommand{\bZ}{\mathbb Z}
\newcommand{\bZGW}{\bZ_{GW}}
\newcommand{\nisGWCor}{{GWCor}_{nis}}
\newcommand{\Simpo}{\Delta^1}
\newcommand{\alphind}{{\alpha\in \mathfrak A}}
\begin{document}

\title[Cancellation for GW-correspondences and Witt-correspondences]{Cancellation theorem for Grothendieck-Witt-correspondences and Witt-correspondences.}
\author{Andrei Druzhinin}
\address{Chebyshev Laboratory, St. Petersburg State University, 14th Line V.O., 29B, Saint Petersburg 199178 Russia}
\email{andrei.druzh@gmail.com}
\thanks{Research is supported by the Russian Science Foundation grant 14-21-00035}
\keywords{categories of motives, presheaves with transfers, GW-motives, Witt-motives, cancellation theorem.}
\subjclass[2000]{14F42, 19G38, 19E20, 19G12}

\begin{abstract}
The cancellation theorem for Grothendieck-Witt--cor\-res\-pon\-den\-ces and Witt-correspon\-dences between smooth varieties over an infinite perfect field $k$, $\chark k \neq 2$, is proved. 

The result implies the
the canonical functor $\Sigma_{\bGmw}^\infty\colon \mathbf{DM}^\mathrm{GW}_\mathrm{eff}(k)\to \mathbf{DM}^\mathrm{GW}(k)$ is fully faithful, 
and for any $X\in Sm_k$ and a homotopy invariant sheaf with GW-transfers $\mathcal F$,

$$
Hom_{\mathbf{DM}^\mathrm{GW}}(M^{GW}(X), \Sigma_{\bGmw}^\infty\mathcal F[i]) \simeq H^i_{Nis}(X,\mathcal F)
,$$
where $\mathbf{DM}^\mathrm{GW}(k)$ is the category of GW-motives obtained by $\bGm$-stabilisation from the category of  
effective Grothendieck-Witt-motives $\mathbf{DM}^\mathrm{GW}_\mathrm{eff}(k)$ 
constructed in \cite{AD_DMGWeff}.
Similarly in the case of Witt-motives $\mathbf{DM}^\mathrm{W}(k)$.

\end{abstract}
\maketitle

\newcommand{\GWCor}{GW^\oplus}
\section{Introduction.}
\subsection{The categories $\mathbf{DM}^\mathrm{GW}(k)$ and $\mathbf{DM}^\mathrm{W}(k)$.} 
This article is devoted to the cancellation theorem in the categories of effective Grothendieck-Witt-motives and Witt-motives.
This is the final article of the series of works (\cite{AD_DMGWeff},\cite{AD_StrHomInv}) devoted to the construction of the categories of GW-motives $\mathbf{DM}^\mathrm{GW}(k)$ and Witt-motives $\mathbf{DM}^\mathrm{W}(k)$ over an infinite perfect field $k$, $\chark k\neq 2$.
The construction follows the Voevodsky-Suslin method
originally used for the construction of the category of the Voevodsky motives $\DM(k)$, see 
\cite{VSF_CTMht_Ctpretr},\cite{VSF_CTMht_DM},\cite{SV_Bloch-Kato}, \cite{MVW_LectMotCohom}.

The construction of $\mathbf{DM}^\mathrm{GW}(k)$ 
starts with 
some additive category of correspondences between smooth varieties $\GWCor$, 
called the GW-cor\-res\-pon\-den\-ces defined in \cite{AD_StrHomInv}. 
Namely,
and for a pair of smooth affine varieties $X$ and $Y$ 
the group $\GWCor(X,Y)$ is defied 
as Grothendieck-Witt-group of quadratic space $(P,q)$,
with $P\in k[Y\times X]\mod$ such that $P$ is finitely generated projective over $k[X]$
and $q\colon P\simeq Hom_{k[X]}(P,k[X])$ being $k[Y\times X]$-linear isomorphism.

The cancellation theorem proved in the article yields the main property of motives of smooth varieties in $\DMGW(k)$ and $\DMW(k)$,
i.e. isomorphism
\begin{equation}\label{eq:intr:MTC}
Hom_{\mathbf{DM}^{*}(k)}(M^*(X), \Sigma^\infty_{\bGm}\mathcal F[i]) = H^i_{Nis}(X,\mathcal F), \, *\in \{GW,W\},\end{equation}
for a homotopy invariant presheaf with GW-transfers (Witt-transfers) $\mathcal F$ and smooth variety $X$,
where $M^{GW}(X)$ is the complex of Nisnevich sheaves $\GWCor_{nis}(\Delta^\bullet\times -,X)$, 
and similarly for the case of Witt motives.

The category $\DMGW(k)$ gives a version of generalised motivic cohomology theory given by $X\mapsto Hom_{\DMGW}(M^{GW}(X), \mathbb Z_{GW}(i)[j]))$, where $\mathbb Z_{GW}(i) = M^{GW}(\mathbb G_m^{\wedge i})[i]$. As written in \cite[example .. ]{FB_EffSpMotCT} this theory is equivalent to the generalised motivic cohomology defined by C`almes and Fasel in \cite{CF_FinChWittCor},
so the result \cite{} by Garkusha implies that $\mathbf{DM}^\mathrm{GW}(k)[1/p]\simeq \widetilde{\mathbf{DM}}(k)[1/p]$, $p=\chark k$.
Consequently the Garkusha's result \cite{GG_RecRatStMot} implies equivalence $\DMGW(k)_{\mathbb Q}\simeq \mSH(k)_{\mathbb Q}$.
Thus the computation of the GW-motives of smooth varieties 
presented here gives a fibrant replacement functor in $\mSH(k)_{\mathbb Q}$. 
Let's note also that the canonical adjunction $\mSH(k)\leftrightharpoons \DMGW(k)$ is not equivalence.  

It is expected that 
the category $\DMW(k)$ is equivalent to the category of Witt-motives $\mathbf{DM}_W(k)$
constructed by Ananievsky, Levine, Panin in \cite{ALP_WittSh-etsinvert} using the category of modules over the Witt-ring sheaf. 
As proven in \cite{ALP_WittSh-etsinvert} the category $\mathbf{DM}_W(k)$ satisfies the Morel conjecture about Witt-motives, i.e. $\mathbf{DM}_W(k)_{\mathbb Q}\simeq \mathcal{SH}^-(k)_{\mathbb Q}$. In \cite{ALP_WittSh-etsinvert}
such category was constructed .
Usefulness of the reconstruction by the Voevodsky-Suslin method as above is that it gives an explicit fibrant replacements for the motives of smooth varieties that can be useful for computations. 

The functor $\mSH(k)\to \DMW(k)$ in some sense is the algebraic version of the real realisation.
To justify the last statement let's say that it is known that $H^0(X_{\mathbb R}) = Spec_{\mathbb Z}\,W(X)$, $H^i_{sing}(X_{\mathbb R}) = H^i_{Zar}(X,\mathcal I^n)$,
where $X_\mathbb{R}$ is the real realisation of an algebraic variety $X$, and
$\mathcal I$ is sheaf of fundamental ideals in the Witt ring and $n>dim_{Krull}(X)$. 
The the case zero cohomologies is the result by L.~Mahe and J.~Houdebine, \cite{Mahe_SignRCon},\cite{HoundMahe_ComConRAlgVar},
the general case is the result by J.~Jacobson \cite{Jacobson_RealChIWitt}


\subsection{The cancellation theorem in $\mathbf{DM}^\mathrm{GW}_{\mathrm{eff}}(k)$ and $\mathbf{DM}^\mathrm{W}_{\mathrm{eff}}(k)$.}
We give the formulation for the case of GW motives, the case of the W motive is similar.
The main result of the article is 
\begin{theorem}[Theorem \ref{th:fullyfaithDMW(k)}]\label{th:intro:fullyfaithDMW(k)}
For an infinite perfect field $k$, $\chark k \neq 2$, the canonical functor
$$\Sigma^\infty_{\bGm}\colon   \DMGWeff(k)  \rightarrow   \DMGW(k),\quad
 A^\bullet\mapsto    (A^\bullet,A^\bullet\otimes \bGmw,\dots, A^\bullet\otimes {\bbG_m^{\wedge i}},\dots)
$$
is a fully faithful embedding.
\end{theorem}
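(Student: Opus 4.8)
The plan is to follow the proof of the cancellation theorem for Voevodsky's motives (\cite[Lect.~16]{MVW_LectMotCohom}), lifting every construction there to the category of $\GWCor$-correspondences so that it becomes compatible with the quadratic-form data carried by a $\mathrm{GW}$-correspondence; the case of $\DMW(k)$ is obtained by the same argument with $\mathrm{GW}$ replaced by $\mathrm{W}$ everywhere, so we only discuss $\DMGW(k)$. By the construction of $\DMGW(k)$ as the $\bGm$-stabilisation of $\DMGWeff(k)$, for $A^\bullet,B^\bullet\in\DMGWeff(k)$ one has
\[
\Hom_{\DMGW(k)}\bigl(\Sigma^\infty_{\bGm}A^\bullet,\Sigma^\infty_{\bGm}B^\bullet\bigr)=\colim_{n}\Hom_{\DMGWeff(k)}\bigl(A^\bullet\otimes\bGm^{\wedge n},\,B^\bullet\otimes\bGm^{\wedge n}\bigr),
\]
the transition maps being $(-)\otimes\bGmw$. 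Thus the theorem is equivalent to the assertion that $(-)\otimes\bGmw$ is bijective on all $\Hom$-groups of $\DMGWeff(k)$. Reading this map through the internal-hom adjunction of $\DMGWeff(k)$ (which exists by \cite{AD_DMGWeff}, built from the strictly homotopy invariant $\mathrm{GW}$-sheaves of \cite{AD_StrHomInv}), it is post-composition with the unit
\[
u_{B}\colon B^\bullet\longrightarrow\iHom_{\DMGWeff}\bigl(\bGmw,\,B^\bullet\otimes\bGmw\bigr),
\]
so it is bijective for all arguments if and only if $u_{B}$ is an isomorphism for every $B^\bullet$. As $u$ is a natural transformation of exact functors and the $M^{GW}(X)[n]$, $X\in\Sm_k$ affine, $n\in\Z$, generate $\DMGWeff(k)$, it suffices to prove that $u_{B}$ is an isomorphism for $B^\bullet=M^{GW}(Y)$, $Y\in\Sm_k$ affine; here it is convenient to identify $\iHom_{\DMGWeff}(\bGmw,-)$ with the $\mathrm{GW}$-contraction functor $(-)_{-1}$ on strictly homotopy invariant $\mathrm{GW}$-sheaves.

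Following \cite[Lect.~16]{MVW_LectMotCohom}, one produces a candidate inverse $v_{B}\colon\iHom_{\DMGWeff}(\bGmw,B^\bullet\otimes\bGmw)\to B^\bullet$ from explicit correspondences on finite powers of $\Gm$: one re-reads Voevodsky's rational functions (products of factors $t\mapsto(t-a)(1-at)^{-1}$ and of power maps $t\mapsto t^{m}$) as graphs of morphisms, and assembles the resulting complexes of $\mathrm{GW}$-sheaves. The advantage of presenting the construction through graphs is that the graph of a morphism carries a canonical $\mathrm{GW}$-structure, namely the free rank-one $\struct$-module with the unit form $\la 1\ra$, so every building block is a genuine $\mathrm{GW}$-correspondence and the compositions are computed by the composition formulas of \cite{AD_StrHomInv} and again come equipped with a definite form. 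An integer multiplicity $m$ occurring in one of Voevodsky's finite correspondences has to be translated as an orthogonal sum of $m$ rank-one forms rather than as a degree-$m$ map, which is the first place the bookkeeping departs from the classical case.

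It remains to show $v_{B}\circ u_{B}\simeq\id$ and $u_{B}\circ v_{B}\simeq\id$ in $\DMGWeff(k)$. As classically, these are witnessed by explicit $\affl$-homotopies, i.e. by families of $\mathrm{GW}$-correspondences over $\affl$ interpolating between the two sides, arising from rational functions on $\Gm^{\wedge n}\times\affl\times\affl$. Two points need checking. First, admissibility: the supports must stay finite and equidimensional over the base so that the families really are $\mathrm{GW}$-correspondences over $\affl$; this is the ``moving'' part, handled as in \cite[Lect.~16]{MVW_LectMotCohom} by choosing the auxiliary parameters generically, using that $k$ is infinite. Second, the forms must be transported correctly, so that restriction to the two $\affl$-endpoints gives precisely the forms appearing in $v_{B}u_{B}$, in $u_{B}v_{B}$ and in the identity maps. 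For the homotopies needed here the underlying modules are componentwise graphs, hence free of rank one over the total space, so their forms are units of $\mathrm{GW}$ of that space; $\affl$-invariance of $\mathrm{GW}$-sheaves from \cite{AD_StrHomInv} — where the hypotheses $\chark k\neq 2$ and $k$ perfect are used — forces these units to be independent of the $\affl$-coordinate, which determines the endpoints. Together the three steps show $u_{B}$ is an isomorphism, hence the cancellation map is bijective, hence $\Sigma^\infty_{\bGm}$ is fully faithful; the same argument applies to $\DMW(k)$.

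The substantive difficulty is exactly the $\mathrm{GW}$-enrichment above. Over $\DM(k)$ a homotopy of correspondences is just a finite flat family over $\affl$ and its coefficients are locally constant integers; over $\DMGW(k)$ one needs, in addition, a coherent $\affl$-family of self-dual isomorphisms matching prescribed forms at the two ends, one must make sense of ``multiplicities'' via orthogonal sums, and one must control which unit form occurs — in particular one must exclude a spurious $\la-1\ra$-twist, recalling that $-\la 1\ra\neq\la-1\ra$ in $\mathrm{GW}$ in general. Doing this coherently — through the functoriality of the $\mathrm{GW}$-transfer, a dévissage to the rank-one case via determinants, and the $\affl$-invariance of $\mathrm{GW}$-sheaves of \cite{AD_StrHomInv} (reducing over function fields to the known structure of $\mathrm{GW}(K)$) — is where the real work lies. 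The moving-lemma estimates, by contrast, are essentially Voevodsky's, run with line bundles in place of Weil divisors.
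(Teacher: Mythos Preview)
There is a genuine gap. Your plan to build the inverse $v_B$ ``from explicit correspondences on finite powers of $\Gm$'' presented as ``graphs of morphisms'' misidentifies what has to be constructed. The adjoint to $-\otimes\bGmw$ is $\iHom_{\DMGWeff}(\bGmw,-)$, and to show the unit is an isomorphism one needs natural transformations of presheaves
\[
\rho\colon \GWCor(-\times\bGmw,\,Y\times\bGmw)\;\dashrightarrow\;\GWCor(-,Y)
\]
which are left and right inverse to $-\boxtimes id_{\bGmw}$ up to $\affl$-homotopy. Such a $\rho$ must be defined on an \emph{arbitrary} class $[(P,q)]$ with $P\in\mathcal P(X\times\bGm,Y\times\bGm)$: here $P$ is a locally free $\mathcal O_{X\times\bGm}$-module of arbitrary rank supported on an arbitrary finite subscheme, and $q$ is an arbitrary nondegenerate form. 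Your assertion that ``the underlying modules are componentwise graphs, hence free of rank one over the total space'' is false for these objects, and the reduction ``via determinants to the rank-one case'' you invoke does not exist for quadratic spaces. Likewise, $v_B$ cannot be given by tensoring with a fixed correspondence $c$: that would force $\bGmw\otimes c\simeq\mathbf 1$ in $\DMGWeff(k)$, i.e.\ invertibility of $\bGmw$ in the effective category, which is exactly what one is trying to prove after stabilisation (and is false effectively).

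What the paper actually does is the quadratic analogue of Suslin's $K_0$-construction: for $f=f^{\pm}_n\in k[\bGm\times\bGm]$ one forms $P/fP$, and the entire difficulty is to equip this quotient with a nondegenerate symmetric form. This requires choosing auxiliary data $(\mathcal N,g)$ with $\mathcal N=\det_P(f)$ and $m^P_{\mathcal N}=m^P_f\circ m^P_g$, building a trace $l^{\mathcal N}\colon k[Z(\mathcal N)]\to k[X]$ via Grothendieck duality for $Z(\mathcal N)\to X$, and then reducing the degenerate form $g\cdot\bigl((P,q)\circ\langle\mathcal N\rangle\bigr)$ modulo its kernel; one must check that the result lies in $Q(\mathcal P(X,Y))$, is independent (up to isomorphism) of the choices, respects direct sums and metabolic spaces, and that the $\affl$-homotopies witnessing $\rho\circ(-\boxtimes id_{\bGmw})\simeq id$ and $(-\boxtimes id_{\bGmw})\circ\rho\simeq id$ exist with the correct forms. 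None of this is visible from the graph case. Your final paragraph correctly names this as ``where the real work lies'', but the body of the proposal does not carry it out; in particular the claimed $\affl$-homotopies are not between rank-one objects, so the appeal to $\affl$-invariance of units does not apply. A secondary issue: the ``rational functions $(t-a)(1-at)^{-1}$'' you cite are not the ones used in Voevodsky's cancellation argument; the relevant functions are $f^+_n=t^n-1$ and $f^-_n=t^n-u$, whose zero loci are degree-$n$ (not degree-one) over the base.
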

Combining the result of the article with the results of \cite{AD_DMGWeff}
we get the sequence of adjunctions
\begin{equation}\label{eq:intr:ajsq}
\begin{array}{ccccccc}
L_{GW}&\dashv& F_{GW},\quad L_{\affl}&\dashv& R_{\affl},\quad \Sigma^\infty_{\bGm}&\dashv& \Omega^\infty_{\bGm},\\
\Dun(\ShN(k))&{\leftrightharpoons}& \Dun(\ShNGW)& \leftrightharpoons& \DMGWeffunb(k)&\leftrightharpoons& \DMGW(k)
,\\
\bZ(X)&\mapsto & \bZ_{GW,Nis}(X)&\mapsto & M^{GW}_{eff}(X)&\mapsto & M^{GW}(X)
\end{array}
\footnote{
We consider unbounded derived categories that allows to define the infinite loop functor $\Omega_{\bGm}^{\infty}$, but further in the text we'll deal with bounded above derived categories, since the result after stabilisation by $\bGmw$ in both cases is the same,
the GW-motives of smooth varieties and homotopy invariant presheaves with GW-transfers in formula \eqref{eq:intr:MTC} are bounded above. 
}
\end{equation}
where $\bZ_{GW,Nis}(X)=\nisGWCor(-,X)$ is the Nisnevich sheafification,
and $F_{GW}$ is forgetful. 
$L_{\affl}$ is the localisation with respect to $\affl$-equivalences.
The adjoint functor $R_{\affl}$ is equivalent to the full embedding of the subcategory of motivic complexes, 
which are complexes with homotopy invariant cohomology sheaves.
Under the last identification $L_{\affl}$ takes a complex $A^\bullet$ to $\mathcal Hom_{\Dun(\ShNGW)}(\Delta^\bullet,A^\bullet)$.
Finally, the third adjunction is infinite-suspension and infinite-loop functors.
%
The second adjunction in the sequence is a reflection, $R_{\affl}(L_{\affl}(A^\bullet)\simeq A^\bullet$.
Theorem \ref{th:intro:fullyfaithDMW(k)} implies that the third adjunction is a coreflection, $A\simeq \Omega^\infty_{\bGmw}(\Sigma^\infty_{\bGmw}(A))$.

Similar to in the original case of $Cor$-cor\-res\-pon\-den\-ces and other known cases
Theorem \ref{th:intro:fullyfaithDMW(k)} is a consequence of the following cancellation theorem.
\begin{theorem}[Corollary \ref{cor:CancSmMotCompl}]\label{th:intr:Cancellth}
For any $A^\bullet,B^\bullet\in \DMGWeff(k)$, 
there is isomorphism
on hom-groups
$$ Hom_{\DMGWeff(k)}(A^\bullet, B^\bullet)\simeq Hom_{\DMGWeff(k)}(A^\bullet(1), B^\bullet(1)) 
,$$
where $A^\bullet(1) = A^\bullet\otimes \bGmw$, and similarly for $\DMWeff(k)$.
\end{theorem}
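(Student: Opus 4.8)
The proof will follow Voevodsky's cancellation argument, adapted to the additive category of GW-correspondences $\GWCor$ and the derived category of complexes of Nisnevich sheaves with GW-transfers. \emph{Formal reductions.} By the tensor--hom adjunction $(-)\otimes\bGmw\dashv\iHom(\bGmw,-)$ on $\DMGWeff(k)$ one has $\Hom_{\DMGWeff(k)}(A^\bullet(1),B^\bullet(1))\simeq\Hom_{\DMGWeff(k)}(A^\bullet,\iHom(\bGmw,B^\bullet(1)))$, so the statement is equivalent to the assertion that the unit morphism $\eta_{B^\bullet}\colon B^\bullet\to\iHom(\bGmw,B^\bullet(1))$ is an isomorphism for every $B^\bullet$. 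The motives $M^{GW}(Y)$ of smooth affine $Y$ generate $\DMGWeff(k)$, and both functors $(-)\otimes\bGmw$ and $\iHom(\bGmw,-)$ commute with the relevant homotopy colimits and preserve this class of objects, so it suffices to treat $B^\bullet=M^{GW}(Y)$ with $Y\in Sm_k$ affine. Finally, homotopy invariance together with the strict homotopy invariance theorem of \cite{AD_StrHomInv} lets one compute $\iHom(\bGmw,M^{GW}(Y)(1))$ in the derived category by the naive internal-hom presheaf of GW-correspondences applied to a flasque, $\A^1$-local model, so that $\eta_{M^{GW}(Y)}$ becomes, up to quasi-isomorphism, the map of Suslin complexes
\[
C_*\,\GWCor(-,Y)\;\longrightarrow\;C_*\,\iHom\!\bigl(\bGmw,\GWCor(-\times\bGmw,Y\times\bGmw)\bigr),
\]
and one must show it is an $\A^1$-quasi-isomorphism of presheaves with GW-transfers.

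\emph{The explicit construction.} Mimicking Voevodsky, I would filter the internal-hom presheaf on the right by subpresheaves $L^{N}$, consisting of the GW-correspondences $(P,q)$ from $U\times\bGm$ to $Y\times\bGm$ whose support is finite over $U\times\bGm$, lies in general position with respect to the distinguished section $1$ of the source $\bGm$ and the divisors $0,\infty$ of the compactified target $\bGm$, and is bounded in a fashion controlled by $N$. On each $L^{N}$ one builds an explicit section of $\eta$ by composing with finite GW-correspondences $\bGm\to\bGm$ coming from graphs of monomials $t\mapsto t^{m}$ and from the translation action of $\bGm$, exactly as in the classical $\mathbb Z_{tr}$-case. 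The new point over $\GWCor$ is that these geometric operations --- pushforward along finite morphisms, the duality/transpose entering the composition law, and specialisation of cycles --- must be checked to be compatible with the quadratic-form data, so that the resulting endomorphism of $M^{GW}(Y)$ is verified to equal the identity; this reduces to a local computation of the class in $\GW$ of the quadratic space attached to a monomial self-correspondence of $\bGm$ (it is $\langle 1\rangle$ modulo hyperbolic summands, which the reduced $\bGmw$-construction discards), together with an $\A^1$-homotopy of GW-correspondences over $U\times\affl$ realising the contracting homotopy. The composition formulas and additivity established in \cite{AD_StrHomInv} do the bookkeeping; $\chark k\neq2$ enters so that $\GW$ behaves correctly here.

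\emph{Moving lemma and assembly.} It remains to show that the inclusions $L^{N}\hookrightarrow L^{N+1}$ and $\colim_N L^{N}\hookrightarrow\iHom(\bGmw,\GWCor(-\times\bGmw,Y\times\bGmw))$ induce quasi-isomorphisms after applying $C_*$. This is the GW-analogue of Voevodsky's moving lemma: using that $U$ and $Y$ are affine and $k$ is infinite, one moves the support of a GW-correspondence on $U\times\bGm\times Y\times\bGm$ into general and bounded position along an explicit $\A^1$-family, transporting the quadratic form along that family, so that each subpresheaf inclusion becomes $\A^1$-homotopic to a surjection; a standard colimit argument then yields the exhaustion statement. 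Combining the filtration quasi-isomorphisms with the sections constructed above shows $\eta_{M^{GW}(Y)}$ is an isomorphism, proving the cancellation theorem in $\DMGWeff(k)$; the Witt-correspondence case follows by running the identical argument with Witt groups in place of Grothendieck--Witt groups. The step I expect to be the main obstacle is precisely this moving lemma: unlike $\mathbb Z_{tr}$-correspondences, the presheaves $\GWCor(-,?)$ are not free on cycles, so one cannot deform a single supporting component at a time --- the deformation into good position must be realised as an honest homotopy of GW-correspondences, i.e. a GW-correspondence over $U\times\affl$, which forces one to control how the quadratic form degenerates under the moving process and to argue with the whole Grothendieck--Witt group rather than with individual supports.
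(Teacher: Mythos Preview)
Your high-level plan --- reduce to Suslin complexes of motives of smooth varieties, build partially-defined inverses $\rho$ on an exhausting filtration, following Voevodsky --- matches the paper. But the construction you sketch for $\rho$ is not the one that works, and the obstacle you flag as ``main'' is not actually present.

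The technical heart of the paper is how, given $(P,q)\in Q(\mathcal P(X\times\bGm,Y\times\bGm))$ and $f\in\{t^n-1,\,t^n-u\}$ with $\Supp(P/fP)$ finite over $X$, to equip the quotient $P/fP$ with a non-degenerate quadratic form over $k[X]$. Composing with monomial self-correspondences of $\bGm$ does not produce a map to $\GWCor(X,Y)$ at all; the paper instead (closer to Suslin's $K_0$ argument than to Voevodsky's cycle-theoretic one) introduces an auxiliary `f'-triple $(f,\mathcal N,g)$ with $\mathcal N=\det_P(f)\in k[X\times\bGm]$ and $m^P_{\mathcal N}=m^P_f\, m^P_g$. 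One composes $(P,q)$ with the rank-one quadratic correspondence $\langle\mathcal N\rangle\in QCor(X,X\times\bGm)$ (built via Grothendieck duality for the finite map $Z(\mathcal N)\to X$), multiplies the resulting form by $g$ so that its radical becomes exactly $fP/\mathcal NP$, and passes to the quotient: $\rho_{(f,\mathcal N,g)}(P,q)=red\bigl(g\cdot((P,q)\circ\langle\mathcal N\rangle)\bigr)$. Verifying that this is well-defined, essentially independent of $(\mathcal N,g)$, and sends metabolic spaces to metabolic is the content of Sections~\ref{sect:Norm}--\ref{sect:invHom}; your proposal does not contain any version of this step, and your remark that it ``reduces to a local computation of a class in $\GW$'' understates the difficulty.

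Relatedly, there is no moving lemma in the paper and none is needed. The filtration is indexed not by general-position conditions but by triples $(r,n,m)\in\mathfrak A$ recording $\rank_{\mathcal O(X\times\bGm)}P$ and degree bounds on the norm polynomial $\mathcal N_P(f^-_n)$; exhaustion (Lemma~\ref{lm:surlimFNL}) follows because for $n$ large $\Supp(P/f^{\pm}_nP)$ is \emph{automatically} finite over $X$ by a direct computation with characteristic polynomials (Lemma~\ref{lm:N,M_P}). So the deformation-of-supports-with-quadratic-form problem you worry about is bypassed entirely. Finally, you omit the right-inverse half $\theta\circ\rho\simeq\id$: in the paper this rests on computing the permutation on $\bGmw\wedge\bGmw$ to be $\langle-1\rangle\cdot\id$ in $\overline{\GWCor}$ (Lemma~\ref{prop:permutGmGm}) and threading that homotopy through the filtration (Lemma~\ref{lm:rightinverse}); this has no counterpart in your sketch.
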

Equivalently this states that the adjunction $-\otimes \bGmw\colon \DMGWeff(k) \leftrightharpoons \DMGWeff(k)\colon \mathcal Hom({\bGmw},-)$ is a coreflection.
The main case is the case of $A^\bullet = M^{GW}(X)$, $B^\bullet = M^{GW}(Y)$,
which is equivalent to the isomorphism 
\begin{equation}\label{eq:CancMotCompl}
\nisGWCor(X\times\Delta^\bullet, Y) \simeq\nisGWCor(X\times\bGmw\times\Delta^\bullet, Y\times\bGmw) 
\end{equation}
because of the equality \eqref{eq:intr:MTC} for effective GW-motives 


\subsection{Comparing with other cancellation theorems.}
Let's give short list of other known cancellation theorems proved 
for another categories of correspondences:
\begin{itemize}
\item[1)] Cancellation for 
  $Cor$-cor\-res\-pon\-den\-ces proved by V.~Voevodsky in \cite{Voevodsky_CancellationTh}.
\item[2)] Cancellation for $K_0$-cor\-res\-pon\-den\-ces proved by Suslin in \cite{Suslin-GraysonSpectralSeq}
\item[3)] Cancellation for framed-motives proved by A.~Ananievsky, G.~Garkusha, I.~Panin in \cite{AnGarPanin_CancellationThFrMot}.
\item[4)] Cancellation for Milnor-Witt cor\-res\-pon\-den\-ces in recent work \cite{FaselOstvaer_CancThMilnorWittCor} by J.~Fasel, P.~\O stvaer.
\end{itemize}

\newcommand{\WCor}{WCor}

\label{intro_sketchofproof}
In this subsection we give a brief overview of ingredients and steps of the proof of our cancellation theorem, 
and explain what is the similar and what differs proofs of other cancellation theorems. 
Following to the original scheme in \cite{Voevodsky_CancellationTh}, 
the main two ingredients for theorem \ref{th:intr:Cancellth} are 
\begin{itemize}[leftmargin = 15pt]
\item[1)] the construction of a 'partially defined' maps 
$GWCor(X\times\bGmw,Y\times\bGmw) \dasharrow GWCor(X,Y)$, 
which are inverse up to a canonical $\affl$-homotopy to the homomorphism
$id_{\bGmw}\boxtimes - \colon \GWCor(X,Y)\to \GWCor(X\times\bGmw,Y\times\bGmw)$,
(and similarly for $\WCor$);
\item[2)] the computation of the $\affl$-homotopy class $[T]$ of the permutation morphism $T$ on $\mathbb G_m^{\wedge 2}$ in the category of correspondences. 
\end{itemize}

In the case of $Cor$ and $K_0$, $[T]=[-id_{\mathbb G_m^{\wedge 2}}]$
like as in $\mathcal{SH}^+(k)$.
In the case of $\GWCor$,
$[T]=[-\langle -1\rangle]$ (Prop. \ref{prop:permutGmGm}),
like as for framed correspondences and $\mathcal{SH}(k)$.
For $\WCor$, $[T]=[id_{\mathbb G_m^{\wedge 2}}]$
like as in $\mathcal{SH}^-(k)$.

The construction of the homomorphisms $\rho$ strongly depends on the definition of considered correspondences. 
Let's note that the construction of the homotopy for $T$ of course depends in this too, but $\rho$ can not be obtained directly even if we have a functor between the categories of correspondences. 
Informally, $\rho$ can be thought about as $\cup$-product with the class ofthe  diagonal $\Delta$ in $\bGm\times\bGm$,
or it can be considered by analogy with the trace for linear operators.
\begin{itemize}[leftmargin=10pt]
\item[(1)]
For $Z\in Cor(\GXqGY)$,
$\rho($ is defined by the intersection with a generic representer in the Chow-class of the diagonal $\Delta_\bGm$,
which is transversal with respect to $Z$. 
More precisely, $\rho_n(Z)=(Z\cap Z_{red}(f^+_n))- (Z\cap Z_{red}(f^-_n))$, 
where $f^+_n=t^n-1,f^-_n=t^n-u\in k[\bGm\times\bGm]=k(t,u)$. 
Then $\rho_n$ are defined on an exhausting filtration on the group $Cor(\GXqGY)$. 
\item[(2)] 
For $\Phi=[P]\in {K_0}(X\times\bGm,Y\times\bGm)$, $P\in Coh(Y\times X)$, 
$\rho_n([P]) = [P/f^+_n P] - [P/f^-_n P]\in K_0(X,Y)$.
\item[(3)]
If $\Phi=[(Z,\mathcal V,\phi,g)]\in Fr_n(X\times\bGm,Y\times\bGm)$, 
where $\mathcal V\to \aff^n_X$ is etale neighbourhood of closed subscheme $Z$, 
$\phi=(\phi_1\dots \phi_{dim Y})$, 
$\phi_i\in k[\mathcal V]$, $Z=Z(\phi)$ and $g\colon Z\to Y$, 
then $\rho([\Phi])=[(Z,\mathcal V,\phi,\gamma^*(f^+_n),g)] - [(Z,\mathcal V,\phi,\gamma^*(f^0_n),g)]$ in $Fr_{n+1}(X,Y)$, where $\gamma$ is composition map $\mathcal V\xrightarrow{(pr_{X\times\bGm},g)}X\times\bGm\times Y\times\bGm\to \bGm\times\bGm$.
\item[(4)]
In the case when cor\-res\-pon\-den\-ces are defined by a ring cohomology theory, $\rho$ is defined by cohomological multiplication with $pr^*(\Delta_*(1))$, 
where  $X\times\bGm\times Y\times\bGm\xrightarrow{pr} \bGm\times\bGm\xleftarrow{\Delta} \bGm$.
Such construction is used for Milnor-Witt-correspondences.
As well this can be used for the categories given by $(X,Y)\mapsto {GW}^{dim\,Y}_{fin}(Y\times X,\omega_{\bGm\times Y})$ or$W^{dim\,Y}_{fin}(Y\times X,\omega_{\bGm\times Y})$ 
(where $GW^i$ denotes hermitian K-theory and subscript $fin$ means that we consider cohomology groups on $Y\times X$ with finite supports over $X$). 
\end{itemize}

Since $\GWCor_k$ and $\WCor_k$ are
defined by quadratic spaces,
the closest case in a technical sense is $K_0$-corr.
If $\Phi=[(P,q)]$,
then $\rho(P,q) = (P^+,q^+)]-[(P^-,q^-)]$, where $P^+ = P/f^+_n P$, $P^-=P/f^-_n P$.
So the question is to equip sheaves $P/fP$ with quadratic forms
In the case of affine schemes
$P$ is $k[Y\times X]$-module finitely generated projective over $k[X]$,
and $q\colon P\simeq Hom_{k[\bGX]}(P,k[\bGX])$ is $k[\GXGY]$-linear isomorphism.
So we should construct $k[Y\times X]$-linear isomorphism $q^f\colon P/fP\simeq Hom_{k[X]}(P/fP,k[X])$ 
for such $P$ that $P/fP$ is finitely generated projective over $k[X]$, where $f=f^-_n$ or $f^-_n$.

Firstly we choose some additional data, namely, 
$\mathcal N \in k[\aff^1_X] = k[X][t]$ and $g\in k[\GXGY]$
such that the vanish locus $Z(\mathcal N)\in \bGX$ is finite over $X$,
and contains the image of $\Supp P/fP$ under the projection on $\bGX$ ,
and such that for any $a\in P$, we have $\mathcal N \cdot a = f g \cdot a$. 
Actually, we can put $\mathcal N=det_P(f)$,
Then the construction is in four steps: 
\par 1)
Put $(\pri P,\pri q) = (P,q)\otimes_{k[\bGX]} k[\bGX]/(\mathcal N)$.
\par 2)
Define 
a linear homomorphism $l^{\mathcal N}\colon k[Z(\mathcal N)]\to k[X]$ as the junior term of the Euler trace 
$k[X][t]\to k[X][\mathcal N]$, 
where $k[X][\mathcal N]$ is identified with the subalgebra i $k[X][t]$ generated by $\mathcal N$.
\par 3)
Consider  a quadratic space $(\pri P,\ppri q)$ over $k[X]$, where 
 $\ppri q(a,b) = l^{\mathcal N}(\pri q(a,b))$. 
\par 4)
Now we consider the quadratic form $\pppri q(a,b) = \ppri q(g\cdot a,b)$.
Then $\pppri q$ is not non-degenerate, and its kernel is exactly $fP/\mathcal NP \subset P/\mathcal NP$;
hence it defines a quadratic form $q^\rho$ on $P/fP$ which is the result of the construction. 
I.e. the resulting quadratic space is $(P/fP,q^\rho)$.

By a formula this means
$\rho_f(P,q) = red( g\cdot ( (P,q)\circ \langle\mathcal N\rangle ) )$
(see Def. \ref{def:rho_triple}), where 
$\langle \mathcal N \rangle = (k[Z(\mathcal N)], q^{\mathcal N})\in \GWCor(X,\bGX)$, $q^{\mathcal N}(a,b)=l^{\mathcal N}(ab)$,
$g\cdot (-)$ denotes operation of multiplication of quadratic form by the function $g$,
and $red(-)$ denotes the reduction of degenerate quadratic from to the factor-space.

Since $\rho$ is well defined only if $P/f^{+/-}P$ is fin.gen. projective over $k[X]$ 
precisely we define the set of homomorphisms of presheaves
$
GWCor(X\times\bGmw,Y\times\bGmw) \xleftarrow{j_\alpha} R_\alpha \xrightarrow{\rho^{GW,R}_\alpha} GWCor(X,Y) ,
$
$\rho^{GW,R}_\alpha(id_{\bGmw}\boxtimes -) \stackrel{\affl}{\sim} i_\alpha(-)
$
where $R_\alpha$ is a filtering systems $\varinjlim_{\alpha\in \mathfrak{A}} R_\alpha = GWCor(\GwXqGwY)$,
see Lm \ref{lm:leftinverse} for the case of 'left inverse'.
In distinct to the cases of other cor\-res\-pon\-den\-ces $j_\alpha$ aren't injections and 
$\mathfrak{A}$ is not $\bZ$, since it relates to the choice of the triple $(f,\mathcal N,g)$ instead of a one function $f$. 
In sections \ref{sect:Norm}, \ref{sect:CupProdQf}, \ref{sect:invHom} we show that this construction is correct and satisfies enough functoriality.

The construction above gives the homomorphism $\rho$ for presheaves.
But to prove cancellation in the form of isomorphism \eqref{eq:CancMotCompl}
following the mentioned scheme
we need to construct $\rho$ for sheaves $\GWCor_{nis}(-,Y)$.
In distinct to the cases of $Cor$ and framed correspondences the presheaves $GWCor_k(-,Y)$ aren't sheaves .
That is why
we prove firstly the cancellation theorem for presheaves, i.e. the quasi-isomorphism 
$
GWCor(X\times\Delta^\bullet, Y) \simeq
GWCor(X\times\bGmw\times\Delta^\bullet, Y\times\bGmw).
$
This yields that the adjunction
$- \otimes \bGm\colon  D^-_{\affl}(\PreGW) \leftrightharpoons D^-_{\affl}(\PreGW) \colon \mathcal Hom(\bGm,-) $
is a coreflection,
where $D^-_{\affl}(PreShWGtr)$ is the localisation of $\Dba(\PreGW)$ with respect to $\affl$-equivalences.
Next we show that this pair of functor remains to be a coreflection after the Nisnevich localisation, which is the claim according the paragraph before \eqref{eq:CancMotCompl}.
The presheave $K_0(-,Y)$ are not sheaves too, and we think that our reasoning actually is equivalent,
to the reasoning in \cite{Suslin-GraysonSpectralSeq}.
But the approach used here is more functorial and it allows us
to avoid references to internal arguments inside the proofs of some Voevodsky's lemmas about presheaves with transfers
(used in \cite{Suslin-GraysonSpectralSeq}), and it allows to avoid some technical details in this part of the text.

\subsection{The text review:}
In the section \ref{sect:QCor}
we recall the definition, and prove some properties of 
$QCor$, 
$\GWCor$, $\WCor$, and give the construction that produce a quadratic space from a regular function on a relative affine line. 
In section \ref{sect:DMGWeff} we recall some facts about $\DMGWeff(k)$ and $\DMWeff(k)$.

In sections \ref{sect:Norm}, \ref{sect:CupProdQf} and \ref{sect:invHom}
we prove some technical results on 
endomorphisms of locally free coherent sheaves of finite rank,
and construct the homomorphisms $\rho^{GW,L}_\alpha$, $\rho^{GW,R}_\alpha$,$\rho^{W,L}_\alpha$, $\rho^{W,R}_\alpha$.

Finally, in section \ref{sect:CancellationTh} we prove Cancellation Theorem \ref{th:intro:fullyfaithDMW(k)}, Theorem \ref{th:intro:fullyfaithDMW(k)}, and isomorphism \eqref{eq:intr:MTC}.

\subsection{Acknowledgements:}
Acknowledgement to I.~Panin who encouraged me to work on this project, for helpful discussions.

\subsection{Notation and conventions:}
By default we assume schemes being of finite type and separated over a field $k$, $char\,k\neq 2$;
$Sm_k$ is the category of smooth schemes over $k$; 
$Coh(X)=Coh_X$ denotes the category of coherent sheaves on a scheme $X$.
Denote by $\Gamma_f$ a graph of a regular map $f\colon X\to Y$ and $f^{-1}(Z)=Z\times_Y X$.
$k[X] = \Gamma(X,\mathcal O(X))$, 
and for any $f\in k[X]$,
$Z(f)=f^{-1}(0)$, 
and $Z_{red}(f)$ is its reduced subscheme.
For $P\in Coh(X)$ we denote by $\Supp P$ the closed subscheme in $X$ defined by the sheaf of ideals $\mathcal I(U)=Ann\,P\big|_{U}\subset k[U]$ and by $\Supp_{red} P$ its reduced subscheme.
Finally we denote by ${\mathcal F}_{nis}$ the Nisnevich sheafification of a presheave $\mathcal F$ on $Sm_k$.

\section{Quadratic correspondences}\label{sect:QCor}

\subsection{Categories $QCor$, $\GWCor$, $\WCor$.}
In this subsection we summarise definitions and used properties of GW-correspondences and Witt-correspondences, see \cite{AD_StrHomInv}
, \cite{AD_DMGWeff} for more details and proofs.

\begin{definition}\label{def:catCohfcalP}
For a morphism of schemes $p\colon Y\to X$
let $Coh_{fin}(p)$ (or $Coh_{fin}(Y\to X)$) 
denotes
the full subcategory of the category of coherent sheaves on $Y$ 
  spanned by sheaves $\cal F$ such that $\Supp \cal F$ is finite over $X$;
and let  
$\mathcal P(p)$ (or 
$\mathcal P^Y_X$) 
denotes the full subcategory of $Coh_{fin}(Y)$ 
  spanned by sheaves $\cal F$ such that $p_*(\cal F)$ is a locally free sheaf on $X$. 
%
For two schemes 
$X$ and $Y$ over a base scheme $S$
we denote $
Coh_{fin}^S(X,Y) = Coh_{fin}(X\times_S Y\to X)$, $\mathcal P^S(X,Y) = \mathcal P(X\times_S Y\to X).$
\end{definition}

\begin{remark}
For affine schemes $Y$, $X$,
$\mathcal P(Y\to X)$ is equivalent to the full subcategory in the $k[Y]-Mod$ spanned by finitely generated and projective modules over $k[X]$.
\end{remark}

The functors $k[Y]\cmod\to {k[Y]\cmod}^{op}\colon M\mapsto Hom_{k[X]}(M,k[X])$ for finite morphisms $Y\to X$ of affine schemes,
defines in a canonical way a functor $D_X\colon Coh_{fin}(Y_X)\to Coh_{fin}(Y_X)^{op}$ 
for any morphism of schemes $Y\to X$.
This gives an exact category with duality $(\mathcal P(Y\to X),D_X)$.
For any schemes $X$, $Y$ and $Z$ over a base scheme $S$ 
the tensor product over $Y$
induce a functor of categories with duality
\begin{equation}\label{eq:compQ}-\circ -\colon  (\mathcal P^S(Y,Z),D_Y) \times (\mathcal P^S(X,Y),D_X) \to (\mathcal P^S(X,Z),D_X),\end{equation}
where $(\mathcal P^S(X,Y),D_X) \times (\mathcal P^S(Y,Z),D_Y)=(\mathcal P^S(X,Y)\times \mathcal P^S(Y,Z),D_X \times D_Y)$.
This functor is natural in $X$,$Y$ and $Z$ and satisfies the associativity axiom.

\begin{definition}
Let $(\mathcal C, D)$ be an exact category with duality, then\begin{itemize}[leftmargin=15pt]
\item[1)] a pair $(P,q)$, where $P\in \mathcal C$ and $q\colon P\to D(P)$ is symmetric morphism,
is called a quadratic pre-space,
and let's denote by $preQ(\mathcal C,D)$ the set of isomorphism classes of quadratic pre-spaces;
\item[2)] a quadratic space is a pair $(P,q)$ whenever $q\colon P\to D(P)$ is isomorphism,
and we denote by $Q(\mathcal C,D)$ the set of isomorphism classes of quadratic spaces;
\item[3)] $GW^{\oplus}(\mathcal C,D)$ denotes the Grothendieck-Witt-group of $Q(\mathcal C,D)$ in respect to the direct sums, 
\item[4)] $W(\mathcal C,D)$ denotes the Witt-group of $(\mathcal C,D)$, i.e. factor-group of $GW(\mathcal C,D)$ by classes of metabolic spaces (see \cite{Bal_DerWitt}).
\end{itemize}
Some times we write $Q(\mathcal P(X,Y))$ for $Q(\mathcal P(X,Y),D_X)$ since we always consider the duality $D_X$.
\end{definition}


\begin{definition}\label{def:QGWWCor}
Let $k$ be a field.
The \emph{category} $QCor_k$ is the category such that 
objects of $QCor_k$ are smooth varieties and
morphisms $QCor(X,Y)=Q(\mathcal P(X,Y),D_X)$.
The composition is induced by the functor \eqref{eq:compQ},
and the identity morphism $Id_X$ is defined by the class of quadratic space $(\mathcal O(\Delta),1)$ 
    where $i\colon \Delta\to X\times X$. To shortify notations we write sometimes $Q(X,Y)$ for $QCor(X,Y)$.
The \emph{categories} $\GWCor_k$ and $\WCor_k$ are the additive categories 
with the same objects and such that $\GWCor(X,Y) = GW^{\oplus}(\mathcal P^Y_X,D_X)$, $\WCor(X,Y) = W(\mathcal P^Y_X,D_X)$,
the composition is induced by the one of $QCor$.
A \emph{presheave} with GW-transfers is an additive presheave $F\colon GWCor_k\to Ab$
and similarly for presheaves with Witt-transfers.     
\end{definition}

\begin{notation}\label{not:compQ}
For $X,Y,Z\in Sm_k$ and a pair of quadratic spaces $(P_1,q_1)\in QCor(X,Y)$, $(P_2,q_2)\in QCor(Y,Z)$ we call $(P_2,q_2)\circ (P_1,q_1)$ 
the composition of the quadratic spaces and denote it also sometimes by $(P_2,q_2)\circ (P_1,q_1)$ or by $(P_2 \otimes_Y P_1 ,q_2\otimes_Y q_1)$.
\end{notation}

\begin{definition}
We write $\Phi_1\stackrel{\affl}{\sim}\Phi_2$ for $\Phi_1,\Phi_2 \in \GWCor(X,Y)$
whenever
there is 
$\Theta\in \GWCor(X\times \affl,Y)$ such that  
$\Theta\circ i_0=\Phi_1, \Theta\circ i_1=\Phi_1$, where $i_0,i_1\colon X\to X\times\affl$ denotes zero and unit sections. 
In such a situation we call $\Theta$ by an $\affl$-homotopy joining  $\Phi_1$ and $\Phi_2$. 
Denote by $\overline{\GWCor}_k$ the factor-category of $\GWCor_k$ such that morphisms are classes up to homotopy equivalence.
\end{definition}

\begin{theorem}[see \cite{ChepInjLocHIiWtrPreSh}, theorem 1, and \cite{AD_DMGWeff}]\label{th:HGWCorinj} 
Suppose is a homotopy invariant presheave with GW-transfers (Witt-transfers). Then 
(a) for a local essentially smooth $U$ over a base filed $k$,
the restriction homomorphism 
$\mathcal F(U)\to \mathcal F(\eta)$
is injective, where $\eta\in U$ is generic point.

\end{theorem}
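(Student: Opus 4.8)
The plan is to prove the injectivity statement for a homotopy invariant presheaf with GW- or Witt-transfers by imitating Voevodsky's classical argument for presheaves with transfers, adapted to the quadratic setting. First I would reduce to a geometric situation: let $U = \Spec \mathcal{O}_{X,x}$ be a local essentially smooth scheme with $X$ smooth affine over $k$, $x$ a closed point, and $\eta$ the generic point; suppose $s \in \mathcal{F}(U)$ restricts to $0$ in $\mathcal{F}(\eta)$. Since $\mathcal{F}(\eta) = \colim_{V} \mathcal{F}(V)$ over nonempty open $V \subseteq X$ (after possibly shrinking $X$), there is an open dense $V \subseteq X$ containing $\eta$ but with $x \notin V$ in general, such that $s|_{V \cap U} = 0$; equivalently the section $s$ comes from $\mathcal{F}(X)$ (after shrinking $X$) and vanishes on $V$. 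Let $Z = X \setminus V$, a closed subset of codimension $\geq 1$ containing $x$. The goal becomes to show $s$ vanishes in a neighborhood of $x$.

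The key geometric input I would invoke is a Gabber-type presentation / geometric presentation lemma: after shrinking $X$ around $x$, there is a smooth morphism $p\colon X \to \mathbb{A}^{d-1}_k$ (with $d = \dim X$) such that $Z \to \mathbb{A}^{d-1}$ is finite and $p$ is \'etale along $Z$ in the relevant sense, so that $X$ sits inside its relative projective (or affine) line compactification over $\mathbb{A}^{d-1}$ and $Z$ is a "nice" closed subscheme. The point of this is that it produces, via the standard Voevodsky machinery, a correspondence (now a \emph{quadratic} correspondence in $\GWCor$ or $\WCor$) realizing the difference between the embedding $X \hookrightarrow \overline{X}$-type data and a section missing $Z$; concretely, one builds a $\GWCor$-correspondence $\Theta \in \GWCor(X \times \mathbb{A}^1, X)$ (a homotopy) whose endpoints are, on one hand, a correspondence supported away from $Z$ (hence annihilating $s$ because $s$ dies on $V = X \setminus Z$) and, on the other hand, the identity-like correspondence near $x$. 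Here the quadratic-space construction from a regular function on a relative affine line recalled in Section~\ref{sect:QCor} is exactly the tool needed to upgrade the classical support-moving trick to a construction of \emph{quadratic} correspondences with the right duality data.

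Having such a homotopy $\Theta$, homotopy invariance of $\mathcal{F}$ gives $\Theta \circ i_0$ and $\Theta \circ i_1$ act the same way on $\mathcal{F}$; applying this to $s$ and using that one endpoint factors through a correspondence supported in $V$ (on which $s = 0$) forces the other endpoint — which near $x$ agrees with the identity — to kill $s$ as well, whence $s$ vanishes near $x$, and since $x$ was an arbitrary point of $U$ we conclude $s = 0$ in $\mathcal{F}(U)$. The main obstacle, and the reason this is cited from \cite{ChepInjLocHIiWtrPreSh} and \cite{AD_DMGWeff} rather than reproved, is precisely step two: making the support-moving / geometric presentation argument compatible with the quadratic (Grothendieck-Witt and Witt) structure. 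One must track not just cycles or coherent sheaves but the symmetric isomorphisms $q\colon P \xrightarrow{\sim} D_X(P)$ and their behavior under the composition functor \eqref{eq:compQ}, ensuring that the homotopy $\Theta$ is genuinely an element of $\GWCor(X \times \mathbb{A}^1, X)$ (i.e. the form stays nondegenerate along the family) and that its restrictions to $\{0\}$ and $\{1\}$ decompose as claimed. Checking nondegeneracy of the quadratic forms along the whole $\mathbb{A}^1$-family — rather than merely on cycle-theoretic or $K_0$-level data — is the delicate point; everything else is a faithful transcription of Voevodsky's proof.
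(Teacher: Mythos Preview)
The paper does not actually prove this theorem: it is stated with a bare citation to \cite{ChepInjLocHIiWtrPreSh} and \cite{AD_DMGWeff} and no proof is given. So there is nothing in the paper to compare your argument against. You yourself recognize this in your proposal.

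That said, your outline is the correct strategy and matches what is done in the cited references: reduce to a section vanishing on an open dense $V\subset X$, use a geometric presentation (relative curve / Gabber-style) to build an explicit quadratic correspondence in $\GWCor(X\times\affl,X)$ serving as an $\affl$-homotopy between the identity (near $x$) and a correspondence supported over $V$, then apply homotopy invariance. The only comment I would add is that what you call ``the delicate point'' --- keeping the quadratic form nondegenerate along the whole $\affl$-family --- is handled in those references precisely via the construction recalled in the present paper as Proposition~\ref{prop:constrFP->Q} (the map $FP(U,V)\to Q(\mathcal P(U,V))$ producing a quadratic space from a function on a relative curve with finite vanishing locus). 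Once one has a suitable family of functions $f_\lambda$ on the relative curve with the right support behavior at $\lambda=0,1$, that proposition automatically supplies the nondegenerate form; the real work is the geometric presentation giving such a family, not the quadratic bookkeeping. Your sketch is otherwise accurate, but as written it is an outline rather than a proof; for the actual verification one must go to the cited papers.
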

\begin{theorem}[see \cite{AD_DMGWeff}]\label{th:HGWCorinj} 
For $F$ as above and
for any open subschemes $V_1\subset V_2\subset \affl_K$, $k=k(X)$, $X\in Sm_k$,
the restriction homomorphism $\mathcal F(V_2)\to \mathcal F(V_1)$ is injective.
\end{theorem}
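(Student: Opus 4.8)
The plan is to reduce the statement about $\mathcal{F}(V_2)\to \mathcal{F}(V_1)$ to the generic-point injectivity already established in the previous theorem. First I would observe that, by a standard colimit argument, it suffices to treat the case where $V_1$ is a basic open subset of $V_2$, i.e. $V_1 = V_2 \setminus Z$ for a finite set of closed points $Z$; an arbitrary open immersion is a filtered intersection of such, and $\mathcal{F}$ commutes with the relevant filtered colimits of schemes (essentially smooth limits). Moreover one may further reduce to $V_2 = \affl_K$ itself, or to $V_2$ an open in $\affl_K$, since both $V_1$ and $V_2$ are such opens; the key point throughout is that $K = k(X)$ is a field, so every local ring of a point of $\affl_K$ is a discrete valuation ring (or the function field), and in particular is a local essentially smooth $k$-scheme to which the first theorem applies.

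The main step is then the following: for $V_1 \subset V_2 \subset \affl_K$ with $V_2$ irreducible and $\eta$ the generic point of $V_2$ (equivalently of $\affl_K$), both restrictions $\mathcal{F}(V_2)\to \mathcal{F}(\eta)$ and $\mathcal{F}(V_1)\to\mathcal{F}(\eta)$ factor the inclusion-induced maps, and one wants injectivity of $\mathcal{F}(V_2)\to \mathcal{F}(V_1)$. Since $\mathcal{F}(V_2)\to \mathcal{F}(\eta)$ is already injective whenever $V_2$ is "local enough", the actual content is to pass from a non-local open $V_2$ to a local one. Here I would use a Zariski-local-to-global argument: cover $V_2$ by the spectra of its local rings $\mathcal{O}_{V_2,v}$ over closed points $v$, apply the homotopy invariance plus the Nisnevich-sheaf property (or rather the Zariski descent spectral sequence) to control $\mathcal{F}(V_2)$ in terms of the $\mathcal{F}(\mathcal{O}_{V_2,v})$ and $\mathcal{F}(\eta)$, and use that each $\mathcal{F}(\mathcal{O}_{V_2,v})\hookrightarrow \mathcal{F}(\eta)$ by Theorem \ref{th:HGWCorinj} (the generic-point statement). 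An element of $\ker(\mathcal{F}(V_2)\to\mathcal{F}(V_1))$ maps to zero in $\mathcal{F}(\eta)$, hence to zero in each $\mathcal{F}(\mathcal{O}_{V_2,v})$ for $v\in V_1$, and one must handle the finitely many points $v\in V_2\setminus V_1$: but near such a point the local ring is a DVR, and the restriction from $\mathcal{F}$ of the henselization/localization at $v$ to $\mathcal{F}(\eta)$ is again injective, so vanishing at $\eta$ forces vanishing at $v$ too, and gluing gives vanishing in $\mathcal{F}(V_2)$.

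The hard part will be making the gluing step precise: one needs that $\mathcal{F}$, being a homotopy-invariant presheaf with GW- (or Witt-) transfers, satisfies enough of a Zariski (or Nisnevich) sheaf-like exactness on opens of $\affl_K$ to recover $\mathcal{F}(V_2)$ from local data — and the excerpt explicitly warns that these presheaves are \emph{not} sheaves, so one cannot simply invoke descent. The resolution, following the Voevodsky-type machinery for presheaves with transfers, is that although $\mathcal{F}$ itself is not a Nisnevich sheaf, its homotopy invariance forces the relevant Gersten-type or Cousin-type resolution to be exact on smooth curves over a field; concretely, for $\affl_K$ one has an exact "Gersten" sequence expressing that $\mathcal{F}$ of an open is detected by its value at $\eta$ together with residues at the removed closed points, and injectivity of $\mathcal{F}(V_2)\to\mathcal{F}(V_1)$ is exactly the statement that removing \emph{more} closed points does not kill anything new. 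I would therefore organize the proof as: (i) reduce to $V_1, V_2$ opens of $\affl_K$ with $V_1 = V_2 \setminus \{v_1,\dots,v_r\}$; (ii) quote Theorem \ref{th:HGWCorinj} (generic point) for the local rings at the $v_i$; (iii) chase the Gersten/Cousin complex for $\mathcal{F}$ on $\affl_K$ — whose exactness in the needed range is where the real work (and the main obstacle) lies, and which in the cited companion paper \cite{AD_DMGWeff} is presumably established by the strict-homotopy-invariance results of \cite{AD_StrHomInv} — to conclude that the kernel is trivial.
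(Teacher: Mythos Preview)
The paper does not prove this theorem at all: it is simply quoted from the companion paper \cite{AD_DMGWeff}, so there is no in-paper argument to compare against. Two remarks on your sketch are nonetheless worth making.

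First, a simplification you are missing. The statement ``$\mathcal F(V_2)\to\mathcal F(V_1)$ is injective for all opens $V_1\subset V_2\subset\affl_K$'' is equivalent to ``$\mathcal F(V)\to\mathcal F(\eta)$ is injective for every open $V\subset\affl_K$'', because the map to the generic point factors through any smaller open. So there is no separate gluing step: once you know injectivity into $\eta$ for a \emph{non-local} open $V$, you are finished. Your Zariski-cover argument is both unnecessary and, as you yourself note, unavailable without sheaf exactness.

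Second, your step (iii) is circular. Gersten/Cousin resolutions for presheaves with transfers are \emph{consequences} of injectivity statements of exactly this kind, not tools for proving them; invoking the complex assumes what you are trying to show. The method actually used in \cite{AD_DMGWeff} (and in Voevodsky's original setting, and alluded to in the proof of Lemma~\ref{th:Gmwsqinj} here) is direct: given $s\in\mathcal F(V_2)$ vanishing on $V_1=V_2\setminus D$ with $D$ finite, one constructs an explicit GW-correspondence $\Phi\in\GWCor(V_2,V_1)$ such that the composite $V_2\xrightarrow{\Phi}V_1\hookrightarrow V_2$ is $\affl$-homotopic to $\id_{V_2}$; then $s=\Phi^*(s|_{V_1})=0$ by homotopy invariance. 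This is where the transfers and the quadratic-space constructions of Section~\ref{sect:QCor} do the work --- no descent or local-to-global principle is involved.
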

\begin{theorem}[see \cite{AD_DMGWeff},  theorem 3.1 and corollary 4.13;
\cite{AD_WtrSh}, theorem 3; and \cite{AD_StrHomInv}]\label{th:NshGWtr} 
For a presheave with GW-transfers (Witt-transfers) $\mathcal F$ over a field $k$,
the Nisnevich sheaf $\mathcal F_{nis}$ and Nisnevich cohomology presheaves $h_{nis}({\mathcal F}_{nis})$ are equipped with the structure of presheaves with GW-transfers (Witt-transfers) in a canonical way.

If $\mathcal F$ is homotopy invariant, then $H_{nis}^i(U)=0$, for any open $U\subset \affl_K$, $K=k(X)$, $X
\in Sm_k$.
If in addition $k$ be infinite, perfect, $char k\neq 2$;
then the associated Nisnevich sheaf ${\mathcal F}_{nis}$ and Nisnevich cohomology presheaves $h_{nis}({\mathcal F}_{nis})$ are homotopy invariant.
\end{theorem}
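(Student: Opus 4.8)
\textbf{Proof plan for Theorem \ref{th:NshGWtr}.}

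The plan is to reduce the three assertions to the analogous statements for presheaves with framed transfers (or, where available, to cite the corresponding results for $QCor$ and adapt them), using that a GW-correspondence is built from a quadratic space $(P,q)$ whose underlying coherent sheaf $P$ carries a natural transfer-like structure. First I would establish that $\mathcal F_{nis}$ inherits GW-transfers: the key point is that the presheaf $X \mapsto \GWCor(X,Y)$ and, more generally, any additive presheaf on $\GWCor_k$, satisfies enough of a Nisnevich-excision formalism that the sheafification can be computed functorially in $Y$, and that the GW-correspondence composition is compatible with restriction along Nisnevich covers. Concretely one checks that for a Nisnevich square the Mayer--Vietoris sequence for $\GWCor(-,Y)$ is exact after sheafification, using that $\mathcal P^Y_X$ is stable under the relevant base change and that the duality $D_X$ is preserved; this is essentially the content cited from \cite{AD_DMGWeff}, theorem 3.1 and corollary 4.13, so the work here is to invoke that machinery and note it applies verbatim to an arbitrary homotopy invariant $\mathcal F$, not just the representable ones. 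The same argument, applied to the Godement / Nisnevich-cohomology presheaves $U \mapsto H^i_{nis}(U,\mathcal F_{nis})$, equips the $h_{nis}(\mathcal F_{nis})$ with GW-transfers, because the transfer maps commute with the boundary maps in the hypercohomology spectral sequence.

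Next, for the vanishing $H^i_{nis}(U)=0$ with $U \subset \affl_K$ open, $K = k(X)$: I would combine the injectivity results already established. By Theorem \ref{th:HGWCorinj} (the version for open subschemes of $\affl_K$) the restriction $\mathcal F(V_2) \to \mathcal F(V_1)$ is injective for $V_1 \subset V_2 \subset \affl_K$; since $U$ is a finite-dimensional Noetherian scheme of dimension $\le 1$, only $H^0$ and $H^1$ can be nonzero, and $H^1_{nis}(U,\mathcal F_{nis})$ is computed by a \v{C}ech-type complex for covers of $U$ by open subschemes of $\affl_K$. The injectivity of all the restriction maps forces the \v{C}ech $H^1$ to vanish: given a cocycle, the compatibility on overlaps together with injectivity on the ambient generic point $\eta$ (Theorem \ref{th:HGWCorinj}(a)) lets one lift it to a global section, so $H^1_{nis}(U) = 0$. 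This is the standard argument from \cite{VSF_CTMht_Ctpretr}, \S\,4, transcribed to the present setting; the only thing to verify is that the sheaf $\mathcal F_{nis}$ on $\affl_K$ is flasque on a base of opens, which is exactly what the injectivity statements give.

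Finally, for the homotopy invariance of $\mathcal F_{nis}$ and $h_{nis}(\mathcal F_{nis})$ under the extra hypotheses that $k$ is infinite, perfect, $\chark k \neq 2$: here I would follow Voevodsky's strategy (as in \cite{MVW_LectMotCohom}, \S\,24, or the framed-motives version). One shows first that the contraction $\mathcal F_{-1}$ is again a homotopy invariant presheaf with GW-transfers, then uses a Gersten-type resolution: the injectivity theorems (Theorems \ref{th:HGWCorinj} above and the local injectivity on essentially smooth local schemes) give, for $X$ smooth over the infinite perfect $k$, an exact Cousin complex resolving $\mathcal F_{nis}$ by sheaves supported on the strata, whose terms are pushforwards of homotopy invariant presheaves with GW-transfers on residue fields. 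Homotopy invariance of $\mathcal F_{nis}$ then follows because the complex of sections over $X \times \affl$ is quasi-isomorphic to the one over $X$; and homotopy invariance of the cohomology presheaves $h_{nis}(\mathcal F_{nis})$ follows because these cohomologies are computed by the Gersten complex, whose terms are already homotopy invariant. The main obstacle is precisely this last step: the Gersten resolution for GW-transfers requires the full strength of the injectivity-and-purity package from \cite{ChepInjLocHIiWtrPreSh} and \cite{AD_DMGWeff}, \cite{AD_WtrSh}, \cite{AD_StrHomInv}, and one must be careful that the duality $D_X$ and the choice of orientation data (twisting by $\omega$) are tracked consistently through the specialization maps; the infinite-perfect-field hypothesis enters exactly to guarantee enough Nisnevich-local \'etale neighbourhoods (Quillen's trick / Gabber presentation) for the resolution to exist. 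Since the statement explicitly cites \cite{AD_DMGWeff} theorem 3.1 and corollary 4.13, \cite{AD_WtrSh} theorem 3, and \cite{AD_StrHomInv}, I would structure the proof as an assembly of those inputs rather than reproving them.
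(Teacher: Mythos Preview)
The paper gives no proof of this theorem at all: it is stated purely as an imported result, with the proof deferred entirely to the cited references \cite{AD_DMGWeff}, \cite{AD_WtrSh}, and \cite{AD_StrHomInv}. There is nothing in the present paper to compare your proposal against --- the theorem functions here as a black-box input to the construction of $\DMGWeff(k)$ and to Lemma~\ref{lm:coh((subAff)_loc)}.

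Your sketch is a reasonable outline of the Voevodsky-style strategy that those cited papers in fact follow, and you correctly recognize in your final paragraph that the right move is to assemble the cited inputs rather than rederive them. That said, a couple of points in your sketch are imprecise relative to what the cited papers actually do. First, the vanishing $H^i_{nis}(U,\mathcal F_{nis})=0$ for $i>0$ and $U\subset\affl_K$ open is not obtained by a direct \v{C}ech argument from the injectivity of restrictions; rather, in the cited works it comes from an explicit rigidity / excision statement for pairs $(\affl_K, U)$ proved via moving lemmas specific to GW- (or Witt-) correspondences, and the injectivity theorems are one ingredient in that, not the whole argument. Second, the transfer structure on $\mathcal F_{nis}$ is not built from a Mayer--Vietoris property of the representable presheaves $\GWCor(-,Y)$ (these are not Nisnevich sheaves, as the introduction notes), but via a more delicate argument showing that the sheafification of a presheaf with GW-transfers again carries GW-transfers; this is precisely the content of \cite{AD_WtrSh} and \cite{AD_DMGWeff}, theorem~3.1. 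For the purposes of the present paper, the correct ``proof'' is simply the citation.
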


For any pair of quadratic spaces $(P_1,q_1)\in Q(\mathcal P^{Y_1}_{X_1})$, $(P_2,q_2)\in Q(\mathcal P^{Y_2}_{X_2})$,
the isomorphism $$q_1\otimes_k q_2\colon  P_1\otimes_k P_2\simeq D_{X_1}(P_1)\otimes_k D_{X_2}(P_2)\simeq D_{X_1\times X_2}(P_1\otimes_k P_2)$$
defines a quadratic space $(P_1\otimes_k P_2,q_1\otimes_k q_2)\in Q(\mathcal P^{Y_1\times Y_2}_{X_1\times X_2})$
and if at least one of spaces $(P_1,q_1)$, $(P_2,q_2)$ is metabolic then $(P_1\otimes_k P_2,q_1\otimes_k q_2)$ is metabolic.
So we have the following:
\begin{lemma}\label{lm:boxprodunct}
The tensor product over a base field $k$ induce
a functors
$- \boxtimes -\colon QCor_k\times QCor_k \to QCor_k$, and
$- \boxtimes -\colon \GWCor_k \times \GWCor_k \to \GWCor_k$
(and similarly for $\WCor$),
such that $X\boxtimes Y=X\times Y$.

\end{lemma}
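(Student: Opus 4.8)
The plan is to realise $\boxtimes$ as the functor induced on $QCor_k$, and on $\GWCor_k$ and $\WCor_k$, by an external tensor product of exact categories with duality, in exactly the way that composition in these categories is induced by the functor \eqref{eq:compQ}. On objects there is nothing to choose: one sets $X\boxtimes Y=X\times Y$. On morphisms I would take the assignment already recorded in the paragraph preceding the lemma: a pair of quadratic spaces $(P_1,q_1)\in Q(\mathcal P^{Y_1}_{X_1})$, $(P_2,q_2)\in Q(\mathcal P^{Y_2}_{X_2})$ goes to $(P_1\otimes_k P_2,\,q_1\otimes_k q_2)$, which, as stated there, lies in $Q(\mathcal P^{Y_1\times Y_2}_{X_1\times X_2})$. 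This manifestly respects isomorphism classes and carries orthogonal direct sums in each variable to orthogonal direct sums; together with the fact, recalled just before the lemma, that the external tensor of a metabolic space with anything is metabolic, it follows that $\boxtimes$ descends to a bi-additive pairing $\GWCor(X_1,Y_1)\times \GWCor(X_2,Y_2)\to \GWCor(X_1\times X_2,Y_1\times Y_2)$ and likewise on $\WCor$. So in each of the three cases $\boxtimes$ is well-defined on Hom-sets.

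It remains to check that $\boxtimes$ is a bifunctor. For identities: $Id_X\boxtimes Id_Y$ is the class of $(\mathcal O(\Delta_X)\otimes_k \mathcal O(\Delta_Y),\,1\otimes 1)$ on $(X\times X)\times(Y\times Y)$, and the canonical reordering isomorphism $(X\times X)\times(Y\times Y)\simeq (X\times Y)\times(X\times Y)$ carries $\Delta_X\times\Delta_Y$ to $\Delta_{X\times Y}$ and the unit form to the unit form, so $Id_X\boxtimes Id_Y=Id_{X\times Y}$. For compatibility with composition I would produce, for composable quadratic spaces $(P_1,q_1)\in Q(\mathcal P^{Y_1}_{X_1})$, $(P_1',q_1')\in Q(\mathcal P^{Z_1}_{Y_1})$ and $(P_2,q_2)\in Q(\mathcal P^{Y_2}_{X_2})$, $(P_2',q_2')\in Q(\mathcal P^{Z_2}_{Y_2})$, the canonical interchange isomorphism
\[
(P_1'\otimes_{Y_1}P_1)\otimes_k(P_2'\otimes_{Y_2}P_2)\;\simeq\;(P_1'\otimes_k P_2')\otimes_{Y_1\times Y_2}(P_1\otimes_k P_2)
\]
of iterated tensor products, and verify it is an isometry for the corresponding quadratic forms; this identifies $(P_1'\circ P_1)\boxtimes(P_2'\circ P_2)$ with $(P_1'\boxtimes P_2')\circ(P_1\boxtimes P_2)$. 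The requisite membership in $Q(\mathcal P^{Z_1\times Z_2}_{X_1\times X_2})$ of both sides is already covered by the statement cited above.

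The only substantial point, and what I expect to be the main obstacle, is the compatibility of $\boxtimes$ with the dualities. Concretely one must produce a natural isomorphism $D_{X_1}(F_1)\otimes_k D_{X_2}(F_2)\simeq D_{X_1\times X_2}(F_1\otimes_k F_2)$ for $F_i\in\mathcal P^{Y_i}_{X_i}$, realising $D_{X_1\times X_2}$ restricted to external products as the exterior product of $D_{X_1}$ and $D_{X_2}$, and check that it is coherent with the associativity and symmetry constraints of $\otimes_k$ and with the symmetry of the morphisms $q_i$; this coherence is exactly what makes the interchange isomorphism above an isometry. The verification is a diagram chase inside the exact category with duality $(\mathcal P^{Y_1\times Y_2}_{X_1\times X_2},D_{X_1\times X_2})$ using flat base change and the projection formula for the finite affine morphisms in play. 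Once it is in place, applying $Q(-)$, $GW^{\oplus}(-)$ and $W(-)$ to exact categories with duality produces the three functors $\boxtimes$ simultaneously, with the equality $X\boxtimes Y=X\times Y$ on objects built in.
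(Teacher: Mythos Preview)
Your proposal is correct and follows the same approach as the paper: the paper's entire justification is the paragraph immediately preceding the lemma, which records that $(P_1\otimes_k P_2,\,q_1\otimes_k q_2)\in Q(\mathcal P^{Y_1\times Y_2}_{X_1\times X_2})$ and that metabolic tensor anything is metabolic, and then states the lemma without further argument. You have simply fleshed out the functoriality checks (identities, composition via the interchange isomorphism, and the duality coherence $D_{X_1}\otimes_k D_{X_2}\simeq D_{X_1\times X_2}$) that the paper leaves implicit.
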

\begin{remark}
We use symbol $\boxtimes$ here, to distinct products on the categories of correspondences 
from the tensor product on the category of presheaves with transfers for which we use the symbol $\otimes$.
\end{remark}
\begin{remark}
Since $\GWCor_k$ is additive category, 
all mentioned operations, functors and presheaves with GW-transfers can be passed to the  Karoubi envelope $Kar(GWCor_k)$. 
Similarly for $\WCor_k$.
\end{remark}

\subsection{Construction of quadratic correspondences by a function on an oriented curve}
\begin{lemma}\label{lm:CurveFinS->F}
Let $X\in Sm_k$ and $f\in k[\bbGX]$ such that $Z(f)$ is finite over $X$ and $Z(f)\neq \emptyset$,
then there is a unique regular map $\overline{f}\colon \prl\times X\to \prl$
such that $f$ is the restriction of $\overline{f}$ to $\bbGX$. 
Moreover for such a map $\overline{f}$ 
the morphism $(\overline{f},pr_X)\colon \prl\times X\to \prl\times X$ is finite and flat
and the scheme theoretical preimage $\overline{f}^{-1}(0)$ is equal to the vanish locus $Z(f)$.
\end{lemma}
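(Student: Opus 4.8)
The plan is to construct $\overline{f}$ explicitly in coordinates, check that it extends $f$ and is unique, and then verify finiteness, flatness, and the identification of the scheme-theoretic fibre over $0$ by a local computation on the standard affine charts of $\prl$.

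First I would establish the uniqueness and the extension. Since $X$ is smooth, it is in particular normal and reduced, so $\prl\times X$ is normal and $\bbGX = (\prl\setminus\{0,\infty\})\times X$ is a dense open subscheme whose complement has codimension $1$; thus a regular map $\prl\times X\to \prl$ is determined by its restriction to $\bbGX$, giving uniqueness. For existence, write $f = g/h$ with $g,h\in k[X][t]$ coprime after localizing, or more invariantly: the hypothesis that $Z(f)$ is finite over $X$ forces $f$, viewed as a rational function on $\prl\times X$, to have no vertical components in its divisor of zeros or poles; hence $f$ defines a morphism to $\prl$ on the locus where it is defined as a rational map, and by the valuative criterion (using normality of $\prl\times X$ and properness of $\prl$) this morphism extends over all of $\prl\times X$. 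Concretely, on the chart with coordinate $t$ one has $\overline{f} = g/h$ and on the chart with coordinate $s = 1/t$ one rewrites $f$ as a ratio of polynomials in $s$; the finiteness-over-$X$ hypothesis on $Z(f)$ guarantees the two descriptions glue to a well-defined map to $\prl$.

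Next I would prove finiteness and flatness of $\Phi := (\overline{f}, pr_X)\colon \prl\times X\to \prl\times X$. The map $\Phi$ is proper since $pr_X$ is proper and $\overline{f}$ is a morphism to a separated scheme; its fibres are the fibres of $\overline{f}$ over closed points of $\prl\times X$, which are closed subschemes of $\prl_{k(x)}$. The key point is that $\Phi$ has finite fibres: over $\infty\in\prl$ the fibre is $\overline{f}^{-1}(\infty)\cap pr_X^{-1}(x)$, a proper closed subscheme of $\prl_{k(x)}$ unless $\overline{f}$ is constant equal to $\infty$ on that fibre of $pr_X$, which is excluded because $Z(f)\to X$ is surjective onto its (nonempty, closed, hence after base change still nonempty) image — more carefully, one uses that $Z(f)$ being finite and the nonvanishing hypothesis rule out vertical fibres entirely. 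A proper morphism with finite fibres is finite, so $\Phi$ is finite. For flatness, since the target $\prl\times X$ is regular (as $X$ is smooth and $\prl$ is smooth) and the source $\prl\times X$ is Cohen--Macaulay of the same dimension, the miracle flatness criterion (a finite morphism from a Cohen--Macaulay scheme to a regular scheme with fibres of constant dimension $0$ is flat) gives flatness of $\Phi$.

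Finally, the scheme-theoretic preimage: $\overline{f}^{-1}(0)$ is defined, on the chart $\A^1\times X\subset\prl\times X$, by the ideal generated by $\overline{f}$ pulled back from the coordinate near $0\in\prl$; on $\bbGX$ this ideal is exactly $(f)$, so $\overline{f}^{-1}(0)\cap\bbGX = Z(f)$. It remains to check there is no contribution over $t = \infty$, i.e. that $\overline{f}$ does not vanish at any point of $\{\infty\}\times X$; but if it did, that point would lie in $\overline{f}^{-1}(0)$, a closed subscheme finite over $X$ by the previous paragraph, whereas $Z(f)$ was assumed finite over $X$ already inside $\bbGX$ — here I would argue that $\overline{f}^{-1}(0)$, being finite over $X$, cannot meet the divisor $\{\infty\}\times X$ without $f$ having a pole there, contradicting that $t=\infty$ is not in the closure of $Z(f)$ relative to the finite-over-$X$ structure. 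Hence $\overline{f}^{-1}(0) = Z(f)$ as schemes.

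I expect the main obstacle to be the careful bookkeeping in the extension step and in ruling out vertical/boundary components: one must use the finiteness of $Z(f)$ over $X$ in exactly the right way to see both that $f$ extends to a morphism (no base locus of the rational map after accounting for the $\infty$-chart) and that the extended map has no fibre equal to all of $\prl_{k(x)}$, which is what makes $\Phi$ finite and forces $\overline{f}^{-1}(0)$ to avoid $\{\infty\}\times X$. The flatness, once finiteness is in hand, is a formal application of miracle flatness and should not pose difficulty.
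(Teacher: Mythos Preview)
Your outline has the right pieces (extension, finiteness via proper plus quasi-finite, miracle flatness), but two steps have genuine gaps.

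\textbf{Existence of $\overline{f}$.} The valuative criterion does not extend a rational map across a codimension-one locus: for $Y$ regular, a rational map $Y\dashrightarrow\prl$ has indeterminacy of codimension $\geq 2$, but need not extend over it (witness $[x:y]$ on $\affl^2$). Here the complement $\{0,\infty\}\times X$ of $\bbGX$ has codimension one, so you must actually show the indeterminacy is empty, and your phrase ``the finiteness-over-$X$ hypothesis guarantees the two descriptions glue'' is exactly the missing content. The paper does this explicitly: write $f=r/(t_0^{l_0}t_\infty^{l_\infty})$ with $r\in\Gamma(\prl_X,\calO(l_0+l_\infty))$ and $l_0,l_\infty$ \emph{minimal}. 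Finiteness of $Z(f)$ over $X$ makes $Z(f)$ closed in $\prl_X$, hence open and closed in $Z(r)$; any remaining component of $Z(r)$ lies in $\{0\}\times X$ or $\{\infty\}\times X$, and by Krull's hauptidealsatz every component of $Z(r)$ has dimension $\dim X$, so it would have to be all of $\{0\}\times X$ or $\{\infty\}\times X$, forcing $t_0\mid r$ or $t_\infty\mid r$ and contradicting minimality. Thus $Z(r)=Z(f)$, the map $\overline f=[r:t_0^{l_0}t_\infty^{l_\infty}]$ has no base points, and $\overline f^{-1}(0)=Z(r)=Z(f)$ all at once.

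\textbf{The identification $\overline f^{-1}(0)=Z(f)$ and the order of the argument.} Your last paragraph tries to rule out points of $\overline f^{-1}(0)$ on $\{\infty\}\times X$ by appealing to finiteness of $\overline f^{-1}(0)$ over $X$, which is what you are proving; and you do not treat $\{0\}\times X$ at all. The paper avoids this by reversing the order: once $\overline f^{-1}(0)=Z(f)$ is known from the minimality argument above, the hypothesis $Z(f)\neq\emptyset$ plus $\dim Z(f)=\dim X$ plus finiteness give that $Z(f)\to X$ is surjective, so on every fibre $\prl_{k(x)}$ the map $\overline f$ hits $0$ but is not identically $0$, hence is nonconstant; therefore $(\overline f,pr_X)$ is quasi-finite, and being projective it is finite. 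Your finiteness sketch also needs this surjectivity to exclude the case $\overline f\big|_{\prl_{k(x)}}\equiv c$ with $c\in\affl\setminus\{0\}$, which is not ruled out by finiteness of $Z(f)$ alone (``surjective onto its image'' is vacuous).
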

\begin{proof}
If the map $\overline f$ exists, then $\Gamma_{\overline{f}} = \overline{\Gamma_f}\subset \prl\times X\times \prl$ .
Hence if $\overline{f}$ exists then it is unique.

Let $l_\infty$, $l_0$ be the minimal integers such that there is $r\in \Gamma(\prl\times X,\calO(l_\infty+l_0))\colon f=r/(t_\infty^{l_\infty} t_0^{l_0})$. \emph{We are going to prove that $Z(f) = Z(r)\subset \prl\times X$.}
Since $Z(f)$ is finite over $X$, it is closed in $\prl_X$. 
On the other hand $Z(f)=Z(r)\cap\bbGX$, so it is open in $Z(r)$. 
Then $Z(f)$ is disjoint component of $Z(r)$, and
let $Z(r)=Z(f)\amalg Z_\infty\amalg Z_0$, where $Z_\infty\subset \infty\times X$ and $Z_0\subset 0\times X$. 
If $Z_\infty\neq\emptyset $, then $dim\,Z_{\infty} \geq dim\,(\bbGX)-1=dim\,(\infty\times X)$;
hence $Z_\infty = \infty\times X$ 
and $r\big|_{\infty\times X}=0$.
This contradicts to the minimality of the choice of $l_\infty$.
So $Z_\infty=\emptyset$, and similarly $Z_0=\emptyset$.
Then $Z(f) = Z(r)\cap \bbGX=Z(r)$.

Define $\overline{f}=[r\colon t_\infty^{l\infty}t_0^{l_0}]\colon \prl\times X\to \prl$.
Then from the above we have $Z(f)= \overline{f}^{-1}(0)$. 
Since $dim\,Z(f)\geq dim\,\prl_X-1=dim\,X$ and since $Z(f)$ is finite over $X$, it follows that projection $Z(f)\to X$ is surjective.
Hence the fibre of $Z(f)$ over each point $x\in X$ is not empty and it is not equal to $\prl_x$. Hence
 the fibre of the map $(\overline{f},pr_X)\colon \prl\times X\to \prl\times X$ over each point $s\in \prl\times X$ is not empty and it is not equal to $\prl_s$,
and so $(\overline{f},pr_X)$ is quasi-finite.
Thus the morphism $(\overline{f},pr_X)$ is quasi-finite and projective, and whence it is finite. 
Finally, since it is finite morphism of equidimensional smooth varieties, it is flat. 
\end{proof}

\begin{definition}\label{def:FP}
For any $U\in Sm_k$ and open subscheme $V\subset \affl\times U$,
we denote by $FP(U,V)$ the set of regular maps $\overline{f}\colon \prl\times U\to \prl$
such that $\overline{f}^{-1}(0)$ is finite over $U$ and $\overline{f}^{-1}(0)\subset V$.
Let $FP(U,V,Z)\subset FP(U,V)$, $f\in FR(U,V,Z)$ iff $\overline{f}^{-1}(0)=Z\amalg Z^\prime$.
\end{definition}
\begin{proposition}\label{prop:constrFP->Q}
There are \begin{itemize}[leftmargin=15pt]
\item[1)] a natural (along base changes) map 
$ FP(U,V) \to Q(\mathcal P(U,V))$
such that any function $\overline{f}\in FP(U,V)$ goes to some quadratic space $\langle \overline{f}\rangle=(\mathcal O(f^{-1}(0)),u)$,
\item[2)] a natural map
$ FP(U,V,Z) \to Q(\mathcal P(U,Z))$
such that any function $\overline{f}\in FP(U,V,Z)$ goes to some quadratic space $\langle \overline{f},Z\rangle=(\mathcal O(Z),u)$, where $u\in k[Z]^\times$.
\end{itemize}\end{proposition}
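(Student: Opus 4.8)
The plan is to construct, for a function $\overline f \in FP(U,V)$, a canonical nondegenerate symmetric bilinear form on the finite locally free $\mathcal O_U$-algebra $\mathcal O(\overline f^{-1}(0))$, using the Euler/Scheja--Storch trace associated to the regular function $\overline f$. First I would invoke Lemma \ref{lm:CurveFinS->F}: the hypotheses on $\overline f$ (namely $\overline f^{-1}(0)$ finite over $U$, nonempty) allow us to rescale and assume $\overline f$ restricts to some $f \in k[\bbG_m\times U]$ with $Z(f)$ finite over $U$, and then $(\overline f, pr_U)\colon \prl\times U \to \prl\times U$ is finite and flat, with scheme-theoretic preimage $\overline f^{-1}(0) = Z(f)$. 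Consequently $p_*\mathcal O(Z(f))$ is a locally free $\mathcal O_U$-module of finite rank, i.e. $\mathcal O(Z(f)) \in \mathcal P(U,V)$, so we really are producing an object of $Q(\mathcal P(U,V))$.

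Next I would produce the form itself. Work locally on an affine open $\Spec A \subset U$ where everything trivialises; then $B := A[t]/(f)$ (after restricting the curve to the affine chart $\bbG_m\times U$, so $t$ is invertible) is finite free over $A$, and $f$ is a monic-up-to-unit polynomial in $t$. Because $B = A[t]/(f)$ is a complete intersection over $A$ cut out by the single function $f$, it carries its Scheja--Storch / Euler trace form: writing $f(t) - f(s) = (t-s)\,\Delta(t,s)$ in $A[t,s]$, the image of $\Delta(t,s)$ in $B\otimes_A B$ corresponds under the canonical identification $B\otimes_A B \cong \Hom_A(\Hom_A(B,A),\ldots)$ to an element that defines an $A$-linear isomorphism $\vartheta\colon B \xrightarrow{\sim} \Hom_A(B,A)$; equivalently, choosing the ``Euler trace'' functional $l^f\colon B \to A$ picking out the top coefficient (the coefficient of $t^{\deg f - 1}$), the pairing $q(a,b) = l^f(ab)$ is symmetric and nondegenerate. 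This is exactly the construction the paper alludes to later in Step 2 of the $\rho$-construction with $l^{\mathcal N}$; here it is applied directly to $f$. I would check symmetry (immediate) and nondegeneracy (the key point — it follows from the Scheja--Storch description, or from the fact that $l^f$ generates $\Hom_A(B,A)$ as a $B$-module, which holds precisely because $B$ is a hypersurface/complete-intersection $A$-algebra). The unit $u$ in the statement is the resulting class of $q$, and over a field it lands in $k[Z(f)]^\times$ as claimed because $q$ is a perfect pairing with values read off a single coefficient functional.

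Then I would verify the two functoriality/naturality claims. For naturality along base changes $U' \to U$: the formation of $B = A[t]/(f)$, the factorisation of $f(t)-f(s)$, and the trace functional $l^f$ all commute with $\otimes_A A'$, so the quadratic space $\langle \overline f\rangle$ pulls back to $\langle \overline f_{U'}\rangle$; since the local constructions glue (the functional $l^f$ is intrinsically the component of the Scheja--Storch dual basis, independent of the chart), we get a well-defined natural transformation of functors on $Sm_k$. For part (2): if $\overline f^{-1}(0) = Z \amalg Z'$ as a decomposition into open-and-closed pieces, then $\mathcal O(\overline f^{-1}(0)) = \mathcal O(Z) \times \mathcal O(Z')$ is an orthogonal decomposition for the trace form (idempotents are self-adjoint and the two summands annihilate each other under $l^f$ since $Z, Z'$ are disjoint), so restricting to the $\mathcal O(Z)$-factor gives a nondegenerate form on $\mathcal O(Z) \in \mathcal P(U,Z)$, and its class is the desired $\langle \overline f, Z\rangle$; naturality is inherited from part (1).

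The main obstacle I expect is nondegeneracy of the trace form $q(a,b) = l^f(ab)$, i.e. that $l^f$ is a generator of $\Hom_A(B,A)$ over $B$ — equivalently that $B$ is a Gorenstein (in fact hypersurface) $A$-algebra with $l^f$ realising the Gorenstein trace. Over a base that need not be regular (only smooth over $k$, but the relevant affine charts $\Spec A$ are smooth $k$-algebras, hence regular) one has to make sure the Scheja--Storch pairing is genuinely perfect, not merely fibrewise perfect; the cleanest route is to quote the Scheja--Storch construction (the element $\Delta(t,s) \bmod f(t)\otimes f(s)$ gives an explicit inverse to $\vartheta$), which works over an arbitrary commutative base ring whenever $B$ is presented as $A[t]/(f)$ with $f$ having invertible leading coefficient — and that invertibility is precisely guaranteed by Lemma \ref{lm:CurveFinS->F} via the finiteness of $\overline f^{-1}(0)$ over $U$. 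A secondary, more bookkeeping-heavy point is checking that the ad hoc local choices (affine chart, presentation of $f$) are canonically identified under the duality $D_X$ so that the global quadratic space and its naturality are unambiguous; this I would dispatch by noting the Scheja--Storch element is chart-independent.
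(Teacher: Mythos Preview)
Your proof is correct but takes a different, more hands-on route than the paper. The paper proceeds top-down via Grothendieck duality: it applies the duality isomorphism to the finite flat morphism $(\overline f,pr_U)\colon \prl_U\to\prl_U$, obtaining a quadratic space on $\mathcal O(\prl_U)$ twisted by the relative canonical class $f^*\omega_Y\otimes\omega_X^{-1}$, then base-changes along the zero section $0_U\hookrightarrow\prl_U$ and untwists using the standard trivialisation of $\omega(\affl_U)$ by $dt$. You instead work bottom-up: locally present the fibre as $B=A[t]/(f)$ and write down the Scheja--Storch/Euler trace form $q(a,b)=l^f(ab)$ directly, proving nondegeneracy via the factorisation $f(t)-f(s)=(t-s)\Delta(t,s)$, then glue. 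The two constructions are well known to agree (the Scheja--Storch element realises Grothendieck duality for relative hypersurfaces once the canonical bundle is trivialised by the chosen coordinate), so the outputs coincide. The paper's approach makes naturality along base change automatic from the duality formalism and avoids any gluing; your approach is more elementary and self-contained, at the cost of having to verify that the local Scheja--Storch forms patch --- which, as you note, comes down to the fact that the coordinate $t$ on $\affl$ (hence the trivialisation of $\omega_{\affl_U/U}$) is fixed globally, not chart by chart. For part (2) both arguments are identical: orthogonal splitting along $Z\amalg Z'$.
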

\begin{proof}
1)
Let $F=(\overline{f},pr_U)$ for $\overline{f} \in FP(U)$.
Let's denote the source of the morphism $F$ as $Y$ and the target by $X$.
Then $X$ and $X$ are isomorphic to two copies of the relative projective lines over $X$ and morphism $F$ is finite,
The Grothendieck duality for a finite morphism of smooth projective varieties leads to the natural isomorphism 
$\omega_X\simeq \mathcal Hom_Y(f_*(\calO(X)),\calO(Y))$,
and functoriality of this isomorphism in respect to endomorphisms of $\calO(X)$ implies that it 
defines isomorphism $\omega_X\simeq D_Y(\mathcal O(X))$ that can be considered as
symmetric quadratic space $(\calO(X),\overline{q})\in Q(\mathcal P(f),D_Y^{f^*(\omega_Y)\otimes\omega_X^{-1}})$, where $\mathcal L=f^*(\omega_Y)\otimes\omega_X^{-1}$, and $D_Y^{\mathcal L}(\mathcal F) = D_Y(\mathcal F)\otimes \mathcal L$. 
Then put
$(\calO(Z),q_f^\prime) = i^*(\calO(X),\overline{q})\in Q(\calP(f),D_X^{j^*(f^*(\omega_Y)\otimes\omega_X^{-1})})$, 
where $j\colon Z\hookrightarrow \affl_U$, $i\colon 0_U \hookrightarrow \prl_U$. 
Now since the standard trivialisation of $\omega(\affl_U)$ given by differential of coordinate 
gives an identifications $i^*(\omega_Y)\simeq \calO(U)$ and $j^*(\omega_X)\simeq \calO(Z)$, 
we get required quadratic space $(\calO(Z),q_f^\prime)\in Q(\calP(f),D_X)$.

2)The claim follows form that 
if $\overline{f}^{-1}(0)=Z\amalg Z^\prime$ then any quadratic space
$(\mathcal O(\overline{f}^{-1}(0)),q)$ splits in a canonical way into the sum 
$(\mathcal O(Z),u)\oplus (\mathcal O(Z^\prime),u^\prime)$. 
\end{proof}
\begin{definition}\label{def:fQclsp}
Let $U\in Sm_k$, $V\subset \affl$ be open subscheme 
and $f\colon V\times U\to \affl$ be regular function. 
Then  
we denote by $\langle f,V\times U\rangle \in Q(\mathcal P(U,V\times U))$ the image under the map from Proposition \ref{prop:constrFP->Q}
of the finite morphism $\overline f\colon \prl\times U\to \prl$ that is a lift of $f$ if it exists
(definition is correct since if such lift exists then it is unique). 
\end{definition}

\begin{lemma}\label{lm:constQuad}
Assume $char\,k\neq 2$, then for any finite scheme $Z$ over a variety $X$ and quadratic space $(\mathcal O(Z\times\affl),q)\in Q(\mathcal Coh_{fin}(Z\times\affl\to X\times\affl))$,
there is an isomorphism of quadratic spaces $(\mathcal O(Z),q_0)\simeq(\mathcal O(Z),q_1)$,
where $q_0 = i_0^*(q)$, $q_1=i_1^*(q)$ and $i_0,i_1\colon X\to X\times\affl$ denote zero and unit sections.
\end{lemma}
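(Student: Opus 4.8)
The statement to prove is Lemma~\ref{lm:constQuad}: for a finite scheme $Z$ over $X$ and a quadratic space $(\mathcal O(Z\times\affl),q)$ over $X\times\affl$, the two restrictions $i_0^*(q)$ and $i_1^*(q)$ to the zero and unit sections are isomorphic quadratic spaces over $X$. The plan is to reduce this to a statement about the automorphism group $\mathrm{GL}_1$ of the rank-one-over-$k[Z]$ module $\mathcal O(Z)$ (as a module over the duality structure), or rather to the fact that over $k[Z\times\affl]=k[Z][t]$ any two specializations of a nondegenerate symmetric form on the free $k[Z][t]$-module $\mathcal O(Z\times\affl)$ at $t=0$ and $t=1$ are isomorphic. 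The point is that $\mathcal O(Z\times\affl) = p^*\mathcal O(Z)$ where $p\colon Z\times\affl\to Z$, and $D_{X\times\affl}$ restricted to sheaves pulled back from $Z$ is compatible with $D_X$; concretely, using the standard trivialization of $\omega_{\affl}$ (as in the proof of Proposition~\ref{prop:constrFP->Q}), the duality $D_{X\times\affl}$ on $\mathcal Coh_{fin}(Z\times\affl\to X\times\affl)$ is $D_X\otimes_{k[X]} k[X][t]$.

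First I would set up the algebra: write $B=k[X]$, let $A$ be the finite $B$-algebra with $Z=\Spec A$, and identify $\mathcal O(Z)$ with $A$ viewed as an object of $\mathcal P(X,Z)$ via $M\mapsto M$ with dual $D_X(M)=\Hom_B(M,B)$. A quadratic space $(\mathcal O(Z\times\affl),q)$ is then a symmetric $A[t]$-linear isomorphism $q\colon A[t]\xrightarrow{\sim}\Hom_B(A,B)\otimes_B B[t]=\Hom_{B[t]}(A[t],B[t])$. Restricting at $t=0$ and $t=1$ gives symmetric isomorphisms $q_0,q_1\colon A\xrightarrow{\sim}\Hom_B(A,B)$. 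I want an $A$-linear (hence $B[Z\times\affl]$-compatible in the sense of the category) isometry $\phi\colon (A,q_0)\xrightarrow{\sim}(A,q_1)$.

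Second, the core step: the two forms $q_0$ and $q_1$ are homotopic through the family $q$, so the element $u:=q_1\circ q_0^{-1}\in\Aut_{A}(A,\text{--})$ — an automorphism of the rank-one $A$-module underlying everything, hence just multiplication by a unit-valued section, but twisted by the duality — actually lies in the ``orthogonal-type'' group, and the assertion becomes that this $u$ is a square (or more precisely that $q_0$ and $q_1$ lie in the same orbit). Here is where $\chark k\neq 2$ enters. The clean way: the functor $\mathrm{Isom}(q_0,q)$ over $\affl_X$ — parametrizing isometries from the constant form $q_0$ to the fibers of the family $q$ — is a torsor under the orthogonal group scheme $O(q_0)$, and it has a point over $t=0$; one shows this torsor is Zariski-locally trivial and in fact extends, using that $\affl_X$ is ``contractible'' in the relevant sense, to give an isometry over all of $\affl_X$, whose restriction at $t=1$ is the desired $\phi$. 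Alternatively and more elementarily, write $q=q_0\cdot(1+tN)$ for an appropriate nilpotent-modulo-something correction; but that only works near $t=0$. The robust argument uses that any quadratic space over $X\times\affl$ in this category, being an object of a homotopy-invariant theory, has isomorphic fiber restrictions — this is essentially the $\affl$-invariance built into the construction, and the honest proof goes through the explicit ``completing the square'' / diagonalization available because $2$ is invertible on the finite flat $A$-algebra $A[t]$.

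The main obstacle I expect is precisely this core step: producing a \emph{global} isometry over $\affl_X$ rather than just a formal one near $t=0$, i.e. lifting the obvious isometry at $t=0$ across the whole affine line. One must control the orthogonal-group torsor $\mathrm{Isom}(q_0,q)$ over $\affl_X$; the needed input is that such torsors under $O(q_0)$ (a smooth affine group scheme over $X$ since $2$ is invertible) over $\affl_X$ restricting to the trivial one over $0_X$ are trivial — a form of $\affl$-homotopy invariance for $O$-torsors, which over a regular base follows from descent plus the structure of $O$, or can be bypassed by working section-by-section over the (semi-)local rings of $X$ where diagonalization of symmetric forms is available and explicit, then patching. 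Once a global isometry $\Phi\colon(A[t],q)\xrightarrow{\sim}(A[t],q_0\otimes_B B[t])$ is in hand, restricting to $t=1$ and composing with the trivial identification at $t=0$ yields $(\mathcal O(Z),q_0)\simeq(\mathcal O(Z),q_1)$, completing the proof.
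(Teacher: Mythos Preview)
Your reduction is correct: writing $A=k[Z]$, the two forms $q_0,q_1$ are symmetric $A$-linear isomorphisms $A\to D_X(A)$, and an isometry amounts to a unit $s\in A^\times$ with $s^2=q_0/q_1$. But you then leave the argument unfinished, flagging the torsor triviality as an unresolved obstacle while dismissing the elementary route as ``only working near $t=0$''. That dismissal is the gap: the elementary route works globally, and it is exactly what the paper does.

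The point you are missing is that the unit $v:=q\cdot(q_0\otimes 1)^{-1}\in k[Z\times\affl]^\times=A[t]^\times$ is \emph{globally} of the form $1+\text{nilpotent}$, not just formally near $t=0$. Indeed, restrict $v$ to $Z_{red}\times\affl$: since $k[Z_{red}]$ is reduced, every unit in $k[Z_{red}][t]$ is constant in $t$ (a standard fact: in $R[t]$ with $R$ reduced, a unit has all higher coefficients nilpotent, hence zero). So $v|_{Z_{red}\times\affl}$ equals its value at $t=0$, namely $1$. In particular $u:=v(1)\in A^\times$ restricts to $1$ on $Z_{red}$, i.e.\ $u=1+n$ with $n$ in the nilradical of $A$. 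Since $\chark k\neq 2$, the binomial series $\sqrt{1+n}=1+\tfrac12 n-\tfrac18 n^2+\cdots$ terminates and yields $s\in A^\times$ with $s^2=u$; multiplication by the appropriate power of $s$ is the desired isometry $(\mathcal O(Z),q_0)\simeq(\mathcal O(Z),q_1)$.

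This is the paper's proof in its entirety: observe that units on $Z_{red}\times\affl$ are constant along $\affl$, deduce $q_0|_{Z_{red}}=q_1|_{Z_{red}}$, and take a square root of $q_0/q_1=1+\text{nilpotent}$. Your torsor framework (here the group is just $\mu_2$ over $Z$) would eventually recover the same fact, but it is heavy machinery for what is a two-line computation once you use that $R[t]^\times=R^\times$ for reduced $R$.
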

\begin{lemma}
For any regular function $q$ on a scheme $Z$ (over the base filed $k$, $\chark k\neq 2$) such that $q\big|_{Z_{red}} =1$ there is a square root $t\in \mathcal O(Z)$: $t^2=q\in \mathcal O(Z)$.
\end{lemma}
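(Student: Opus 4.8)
The statement asserts that for a regular function $q$ on a scheme $Z$ with $\chark k \neq 2$, if $q|_{Z_{red}} = 1$ then $q$ admits a square root in $\mathcal O(Z)$. The natural approach is to write $q = 1 + n$ where $n = q - 1$ lies in the nilradical $\mathcal N \subset \mathcal O(Z)$, and then to construct the square root by the binomial series, which terminates because $n$ is nilpotent. Concretely, I would set
$$
t = \sum_{j \ge 0} \binom{1/2}{j} n^j,
$$
where $\binom{1/2}{j} = \frac{(1/2)(1/2-1)\cdots(1/2-j+1)}{j!} \in \mathbb Z[1/2] \subset k$; this coefficient makes sense since $\chark k \neq 2$ so $2$ is invertible in $k$ and hence in $\mathcal O(Z)$. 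Because $n$ is nilpotent (at least Zariski-locally, and in fact globally if $Z$ is Noetherian, which it is under our standing finiteness conventions), the sum is finite, so $t$ is a well-defined element of $\mathcal O(Z)$.

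The verification that $t^2 = q$ is then a formal identity: the formal power series $\big(\sum_j \binom{1/2}{j} X^j\big)^2 = 1 + X$ holds in $\mathbb Z[1/2][[X]]$ (this is the classical binomial identity, provable by differentiating or by comparing coefficients via Vandermonde), and substituting $X = n$ respects this identity since both sides are polynomial identities once truncated past the nilpotency degree of $n$. Hence $t^2 = 1 + n = q$. One subtlety to address cleanly: if $Z$ is not assumed Noetherian so that the nilradical need not be nilpotent, one constructs $t$ locally on an affine open cover and checks that the local square roots agree on overlaps — but the square root of $q$ with constant term specialising to $1$ (i.e. with $t|_{Z_{red}} = 1$) is \emph{unique}, since the difference of two such would be a unit times a nilpotent square-zero-type element forced to vanish; uniqueness forces the local pieces to glue. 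Under the paper's convention that schemes are of finite type over $k$, $Z$ is Noetherian and this gluing issue does not arise, so I would simply note that the nilradical is nilpotent and the global series terminates.

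The only genuine point requiring care — the "main obstacle," though it is mild — is pinning down that $q - 1$ is actually nilpotent rather than merely contained in every prime (these coincide for Noetherian rings, hence here), and being explicit that $2 \in \mathcal O(Z)^\times$ is exactly what is needed for the binomial coefficients $\binom{1/2}{j}$ to lie in the base ring. Everything else is the formal binomial-series computation, which I would not spell out beyond citing the identity $(1+X)^{1/2} \cdot (1+X)^{1/2} = 1+X$ in $\mathbb Z[1/2][[X]]$.
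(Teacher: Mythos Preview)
Your proposal is correct and is the standard argument. The paper itself states this lemma without proof, evidently regarding it as elementary; it is used immediately in the proof of the preceding Lemma (on constancy of quadratic forms along $\affl$), where the fraction $i_0^*(q)/i_1^*(q)$ restricts to $1$ on $Z_{red}$ and the square root furnishes the required isometry. Your binomial-series construction with coefficients $\binom{1/2}{j}\in\mathbb Z[1/2]$ is exactly the expected proof, and your remarks on nilpotency under the finite-type hypothesis and on uniqueness via $(t_1-t_2)(t_1+t_2)=0$ with $t_1+t_2$ a unit are the right justifications.
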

\begin{proof}[Proof of Lemma \ref{lm:constQuad}]

Any quadratic form on $\mathcal O(Z\times \affl)$ is defined by invertible regular function $q$ on $Z\times\affl$.
In the case of reduced $Z$ any such function is constant along $\affl$, and hence $q_0=i^*_0(q) = i^*_1(q)=q_1$, that implies the claim.

In general case we have that restriction $q\big|_{Z_{red}\times\affl}$ is constant (along $\affl$).
Hence $i^*_0(q)$ and $i^*_1(q)$ are equal on $Z_{red}$.
To prove the claim let's note that any function on $Z$ that is equal to $1$ on $Z_{red}$ has a square root and 
applying this to fraction $i^*_0(q)/i^*_1(q)$
we get isomorphism of quadratic spaces $i^*_0(\mathcal O(Z\times\affl),q)$ and $ i^*_0(\mathcal O(Z\times\affl)$.
\end{proof}

\subsection{Locally splitting homomorphisms} 
\begin{lemma}\label{lm:diagDualMorphism}
For any endomorphism of coherent shaves $e\in End_{Coh_{fin}(X,Y)}(P)$ for any schemes $X$,$Y$ and $P\in Coh_{fin}(X,Y)$,
there is a unique commutative diagram 
\begin{equation}\label{eq:diagDualMorphisms}\xymatrix{
\Ker D(e)\ar@{^(->}[r] \ar[d]&
D_X(P) \ar@/^/[rr]^{D_X(e)} \ar@{->>}[r] \ar@{=}[d]&
\Image D(e) \ar@{^(->}[r] \ar[d]^{w}&
D_X(P) \ar@{->>}[r] \ar@{=}[d]&
\Coker D(e) \ar[d]
\\
D(\Coker e)\ar@{^(->}[r]&
D_X(P)\ar@/_/[rr]_{D_X(e)} \ar@{->>}[r] &
D_X(\Image(e)) \ar@{^(->}[r] &
D_X(P) \ar@{->>}[r]&
D(\Ker e)
.}\end{equation}
\end{lemma}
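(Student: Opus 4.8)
The plan is to construct the two horizontal exact sequences directly from the interaction of the duality functor $D_X$ with the canonical image factorization of $e$, and then to identify the pieces. First I would recall that $D_X$ is an exact contravariant functor on $Coh_{fin}(X,Y)$ (this follows from the fact that it is built, Zariski-locally on $X$, from $M\mapsto \Hom_{k[X]}(M,k[X])$ for finite morphisms of affine schemes, and over a regular affine base this $\Hom$ is exact when restricted to $\mathcal P$, and in general one uses the derived dual which agrees with $D_X$ on the relevant sheaves). Given $e\in \End(P)$, factor it as $P\twoheadrightarrow \Image(e)\hookrightarrow P$. Applying $D_X$ to the surjection $P\twoheadrightarrow \Coker(e)$ and to the injection $\Ker(e)\hookrightarrow P$ gives, by exactness, an injection $D_X(\Coker e)\hookrightarrow D_X(P)$ and a surjection $D_X(P)\twoheadrightarrow D_X(\Ker e)$; likewise $D_X$ applied to $P\twoheadrightarrow\Image(e)$ and $\Image(e)\hookrightarrow P$ produces the two maps through $D_X(\Image e)$ in the bottom row. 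This yields the bottom exact sequence $D(\Coker e)\hookrightarrow D_X(P)\xrightarrow{D_X(e)} D_X(P)\twoheadrightarrow D(\Ker e)$, factoring $D_X(e)$ through $D_X(\Image e)$.

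Next I would produce the top row purely formally: for any morphism $\varphi=D_X(e)$ in an abelian (or exact) category one has the canonical factorization $\Ker\varphi\hookrightarrow D_X(P)\twoheadrightarrow\Image\varphi\hookrightarrow D_X(P)\twoheadrightarrow\Coker\varphi$, which is the top row. So the content of the lemma is the comparison: the identification of $\Ker D(e)$ with $D(\Coker e)$, of $\Coker D(e)$ with $D(\Ker e)$, and the isomorphism $w\colon \Image D(e)\xrightarrow{\sim} D_X(\Image e)$ making the whole ladder commute. The first two identifications come from left-exactness and right-exactness of $D_X$ applied to $0\to\Ker e\to P\to \Image e\to 0$ and $0\to \Image e\to P\to\Coker e\to 0$: $D_X$ turns the first into $D(\Image e)\to D(P)\to D(\Ker e)\to$ and, since $D_X$ is exact, $D(\Ker e)\cong\Coker(D_X(P)\to D_X(P))=\Coker D(e)$; symmetrically $D(\Coker e)\cong\Ker D(e)$. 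For $w$, both $\Image D(e)$ and $D_X(\Image e)$ are canonically the image of $D_X(e)\colon D_X(P)\to D_X(P)$ — the former by definition, the latter because $D_X$ of the epi-mono factorization of $e$ is again (by exactness of $D_X$) an epi-mono factorization of $D_X(e)$ — so $w$ is the unique isomorphism between the two realizations of $\image D_X(e)$, and commutativity of all squares is then automatic. Uniqueness of the diagram follows since every vertical arrow is forced by a universal property (kernel, cokernel, or image) and the horizontal maps are the canonical ones.

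The step I expect to be the main obstacle is the exactness of $D_X$ on the relevant subcategory. In the affine local situation $D_X(M)=\Hom_{k[X]}(M,k[X])$ is only left exact on all modules, not exact; what saves the argument is that for the sheaves $P$ under consideration $p_*P$ is locally free over $k[X]$ (they lie in $\mathcal P$, or, for the general $Coh_{fin}$ statement, one works with the fact that $\Image e$, $\Ker e$, $\Coker e$ of such an endomorphism have finite support over $X$ and one may replace $D_X$ by the finite-morphism Grothendieck dual / $R\Hom_{k[X]}(-,k[X])$, which is exact after the appropriate shift because $k[X]$ is regular of the right dimension and the modules are Cohen–Macaulay of codimension $\dim X$ — equivalently one uses that $\underline{\Hom}_{p}$ for a finite morphism onto a regular base is exact on modules that are $p_*$-locally free, and reduces the general case to this by dévissage). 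So in carrying this out I would first isolate and prove the precise exactness statement for $D_X$ that is needed (perhaps already available from \cite{AD_StrHomInv}), and only then run the formal diagram chase above, which is otherwise routine.
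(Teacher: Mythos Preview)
You have misread what the lemma asserts. The diagram does \emph{not} claim that the vertical arrows are isomorphisms; that is precisely the content of the next lemma (Lemma~\ref{lm:exDlocsp}(2)), which requires the additional hypothesis that $e$ be locally splitting. Here the claim is only that the vertical comparison maps exist and are unique, making the ladder commute. Accordingly, the paper's entire proof is one sentence: ``The vertical arrows in the diagram are defined by universal properties of kernels and cokernels.''

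Concretely, only left-exactness of $D_X$ (automatic for any $\Hom$-functor) is needed. The bottom row is $D_X$ applied to the canonical factorization $\Ker e\hookrightarrow P\twoheadrightarrow\Image e\hookrightarrow P\twoheadrightarrow\Coker e$; the top row is the canonical factorization of $D_X(e)$ itself. Left-exactness gives that $D(\Coker e)$ is the kernel of $D_X(P)\to D_X(\Image e)$ and that $D_X(\Image e)\hookrightarrow D_X(P)$ is injective; from this one checks immediately that $\Ker D(e)$ lands in $D(\Coker e)$, that $\Image D(e)$ lands in $D_X(\Image e)$ (giving $w$), and that $D_X(P)\to D(\Ker e)$ kills $\Image D(e)$ (giving the rightmost arrow). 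Uniqueness is forced by the universal properties.

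Your long detour into the full exactness of $D_X$ --- the d\'evissage, Cohen--Macaulay arguments, and derived duals --- is therefore unnecessary for this lemma and is in fact what is deferred to Lemma~\ref{lm:exDlocsp}. Your final sentence about uniqueness is the whole proof; everything before it is proving a stronger statement that is neither asserted here nor true in the generality of arbitrary $P\in Coh_{fin}(X,Y)$.
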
\begin{proof}
The vertical arrows in the diagram are defined by universal properties of kernels and cokernels.  
\end{proof}

\begin{definition}\label{def:locspl}
A morphism of coherent sheaves $e\colon P_1\to P_2$ on a scheme $X$ is called {\em locally splitting}
whenever both short exact sequences in the diagram 
$$\xymatrix{
ker e \ar@{^(->}[r] & P_1 \ar@/^/[rr]^e \ar@{->>}[r] & Im e \ar@{^(->}[r] & P_2 \ar@{->>}[r] & Coker e
}$$
splits locally on $X$.
A morphism $e\colon P_1\to P_2\in \mathcal Con_{fin}(X,Y)$ for a varieties $X,Y$ is called locally splitting
whenever its direct image along the projection $X\times Y\to X$ is locally splitting on $X$. 
\end{definition}

\begin{lemma}\label{lm:exDlocsp}
1)
For any varieties $X,Y$ the subcategory of locally splitting morphisms is an abelian subcategory of $Coh_{fin}(X,Y)$,
and the functor $D_X$ is exact on the subcategory of locally splitting morphisms in $Coh_{fin}(X,Y)$.

2)
For any locally splitting morphism the vertical arrows in the commutative diagram \eqref{eq:diagDualMorphisms} are isomorphisms.

3)
Any short exact sequence in $\mathcal P(X,Y)$ is locally splitting.
\end{lemma}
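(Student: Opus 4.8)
\textbf{Proof proposal for Lemma \ref{lm:exDlocsp}.}

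The plan is to reduce all three parts to the local situation: since the three kernel/image/cokernel short exact sequences appearing in Definition \ref{def:locspl} split Zariski-locally on $X$, we may work over an affine open $U = \Spec R$ on which both sequences split, and there everything becomes a statement about split exact sequences of finitely generated $R$-modules (for part 3), or about a direct-image module on $X$ decomposing as $\ker e \oplus \Image e$ and $\Image e \oplus \Coker e$. Here I use that for a locally splitting morphism the module $p_*(P)$ (with $p\colon X\times Y\to X$) is automatically a finite $R$-module on which the pieces are direct summands, hence finitely generated projective, so the duality functor $D_X = \underline{\Hom}_R(-,R)$ behaves well. I would handle part 3 first, since it is the cleanest: any short exact sequence $0\to P'\to P\to P''\to 0$ in $\mathcal P(X,Y)$ has $p_*$ applied to it a short exact sequence of locally free sheaves on $X$, and a short exact sequence of locally free sheaves splits Zariski-locally (lift a local basis of the quotient); this gives $P'\hookrightarrow P$ and $P\twoheadrightarrow P''$ the splitting required, with $\ker(P\to P'')=P'$, $\Image(P\to P'')=P''$, $\Coker = 0$, so both sequences in Definition \ref{def:locspl} split locally.

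For part 1, I would argue that the locally splitting morphisms form a subcategory closed under the relevant operations by checking kernels, cokernels, images and coimages locally: over an affine $U$ where a given locally splitting $e$ splits, $P_1 \cong \ker e \oplus \Image e$ and $P_2\cong \Image e \oplus \Coker e$ with all four summands finitely generated projective over $R$, hence for composable or parallel locally splitting morphisms the usual homological constructions produce again finitely generated projective modules and split sequences. This is where one must be slightly careful: abelianness requires that coimage equals image for every morphism in the subcategory, which over the split affine chart is immediate because the relevant map $\Coim e \to \Image e$ is an isomorphism of projective modules. Exactness of $D_X$ on this subcategory then follows because $\underline{\Hom}_R(-,R)$ is exact on split short exact sequences of (arbitrary) $R$-modules — indeed $\Hom_R(-,R)$ carries $0\to A\to A\oplus B\to B\to 0$ to $0\to \Hom(B,R)\to \Hom(A,R)\oplus\Hom(B,R)\to \Hom(A,R)\to 0$, again split exact — and exactness is a local property, so it passes from the affine charts to $X$. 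One should also note that $D_X$ on $Coh_{fin}(X,Y)$ was defined via the affine-local duality $\Hom_{k[X]}(-,k[X])$ for finite morphisms, so these local computations are exactly the right ones; there is no subtlety with the twist by $\omega$ here because Definition \ref{def:catCohfcalP} works with the untwisted duality $D_X$.

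For part 2, once part 1 gives that $D_X$ is exact on locally splitting morphisms, the two rows of diagram \eqref{eq:diagDualMorphisms} are both short-exact-sequence decompositions of $D_X(P)$ through $D_X(e)$: the top row is the general epi-mono factorization of $D_X(e)$ together with its kernel and cokernel, while the bottom row is obtained by applying the exact functor $D_X$ to the epi-mono factorization $\ker e\hookrightarrow P_1\twoheadrightarrow \Image e\hookrightarrow P_2\twoheadrightarrow\Coker e$ of $e$ itself. Since $D_X$ is exact on this subcategory it commutes with kernels, images and cokernels of locally splitting morphisms, so $D_X(\Ker e)\cong \Coker D_X(e)$, $D_X(\Image e)\cong \Image D_X(e)$, $D_X(\Coker e)\cong \Ker D_X(e)$; these isomorphisms are precisely the vertical maps $w$ and its companions in \eqref{eq:diagDualMorphisms}, so they are isomorphisms, and uniqueness of the diagram (Lemma \ref{lm:diagDualMorphism}) identifies them with the canonical comparison maps. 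The only real obstacle is bookkeeping: making sure the comparison maps produced abstractly in Lemma \ref{lm:diagDualMorphism} by universal properties agree, on each affine chart, with the concrete $\Hom_R(-,R)$ of the split factorization — but this is forced by uniqueness, so no computation beyond the local splitting is needed.
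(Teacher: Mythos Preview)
Your proposal is correct and follows essentially the same approach as the paper: reduce to a local affine chart where the sequences split, use that $\Hom_R(-,R)$ (or any additive functor) preserves split exact sequences to get exactness of $D_X$, and then deduce part 2 from that exactness; part 3 is the standard fact that short exact sequences of locally free sheaves split locally. The paper's proof is much terser but makes exactly these moves in the same order of logical dependence (1 $\Rightarrow$ 2, and 3 independently), so there is no substantive difference.
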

\begin{proof}
1)
Since for any locally splitting homomorphism $e$, the homomorphisms $\ker e$ and $\coker e$ are locally splitting
the first statement follows. 

The second claim follows from that base change of the duality $D_X$ along the open immersion $U\hookrightarrow X$ (or along the embedding of local subscheme $X_x\to X$)
is equal to $D_U$ ($D_{X_x}$), and for any scheme $U$ the functor $D_U$ sends splitting sequences to splitting ones. 

2) The claim follows from the previous point, since by definition exact functor preserves kernels, images, and cokernels.

3) The last statement follows form that short exact sequence of locally free coherent sheaves is locally splitting.
\end{proof}

\subsection{Homotopy for permutation on $\bGmw\times\bGmw$}
\begin{lemma}
\label{th:Gmwsqinj}
For a homotopy invariant presheave with GW-transfers $\mathcal F$ the homomorphism
$F(\bGmw\times\bGmw)\to F(\bGmw\times\bGmw\Delta_{\bGmw})$ is injective.
\end{lemma}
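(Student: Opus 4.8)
The plan is to reduce the statement to the one-variable injectivity theorem for homotopy invariant presheaves with GW-transfers --- Theorem~\ref{th:HGWCorinj}, injectivity of the restriction map between nested open subschemes of $\affl$ --- by a linear change of coordinates that identifies the complement of the diagonal with a product.

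First I would record the geometric input. Let $\phi\colon\bGm\times\bGm\to\bGm\times\bGm$ be the automorphism given on points by $(t,s)\mapsto(t,st^{-1})$, with inverse $(t,u)\mapsto(t,ut)$. It sends the diagonal $\Delta_{\bGmw}$, i.e.\ the image of $t\mapsto(t,t)$, onto $\bGm\times\{1\}$; hence it carries the open complement $(\bGm\times\bGm)\setminus\Delta_{\bGmw}$ isomorphically onto $\bGm\times(\bGm\setminus\{1\})$, and it fixes the point $(1,1)$. Moreover the open immersion $(\bGm\times\bGm)\setminus\Delta_{\bGmw}\hookrightarrow\bGm\times\bGm$ becomes, after conjugating by $\phi$ and its restriction, the open immersion $\id_{\bGm}\times j$ with $j\colon\bGm\setminus\{1\}\hookrightarrow\bGm$. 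Since $\phi$ and its restriction induce isomorphisms on values of any presheaf, the restriction map $\mathcal F(\bGm\times\bGm)\to\mathcal F((\bGm\times\bGm)\setminus\Delta_{\bGmw})$ --- whose restriction to the direct summand $\mathcal F(\bGmw\times\bGmw)$ is the map of the lemma --- is identified with $\mathcal F(\id_{\bGm}\times j)$.

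Next I would slice off the first factor. Using the box product of Lemma~\ref{lm:boxprodunct}, put $\mathcal G(X)=\mathcal F(\bGm\times X)$, with transfer structure $\mathcal G(\alpha)=\mathcal F(\id_{\bGm}\boxtimes\alpha)$ for $\alpha\in\GWCor(X',X)$; this is functorial because $\boxtimes$ is, and $\mathcal G$ is homotopy invariant since $\mathcal G(X)=\mathcal F(\bGm\times X)\simeq\mathcal F(\bGm\times X\times\affl)=\mathcal G(X\times\affl)$, the isomorphism being induced by the projection and using homotopy invariance of $\mathcal F$. Under this notation the map $\mathcal F(\id_{\bGm}\times j)$ is precisely $\mathcal G(j)\colon\mathcal G(\bGm)\to\mathcal G(\bGm\setminus\{1\})$. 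Now $\bGm=\affl_k\setminus\{0\}$ and $\bGm\setminus\{1\}=\affl_k\setminus\{0,1\}$ are open subschemes of $\affl_k$ with $\bGm\setminus\{1\}\subset\bGm$, so applying Theorem~\ref{th:HGWCorinj} to the homotopy invariant presheaf with GW-transfers $\mathcal G$ --- with $X=\Spec k$ there, so that $K=k$ --- shows that $\mathcal G(j)$ is injective. Hence $\mathcal F(\bGm\times\bGm)\to\mathcal F((\bGm\times\bGm)\setminus\Delta_{\bGmw})$ is injective, and so is its restriction to the summand $\mathcal F(\bGmw\times\bGmw)$, which is the assertion. The case of Witt-transfers is identical.

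The only step requiring care is this last one: one must check that $\mathcal G=\mathcal F(\bGm\times-)$ genuinely inherits GW-transfers and homotopy invariance from $\mathcal F$, that Theorem~\ref{th:HGWCorinj} is available over the base field $k$ itself (i.e.\ with $X=\Spec k$, $K=k$) and not merely over function fields of positive transcendence degree, and that $\mathcal F(\bGmw\times\bGmw)$ is the expected direct summand of $\mathcal F(\bGm\times\bGm)$ --- so that the lemma's map is indeed the asserted restriction. Granting these bookkeeping points, the content of the argument is the coordinate change $\phi$; the rest is routine.
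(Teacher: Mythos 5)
Your argument is correct, and it is sufficient for the only place the lemma is used (the proof of Lemma~\ref{prop:permutGmGm}, where an element of the summand $\mathcal F(\bGmw\times\bGmw)\subset\mathcal F(\bGm\times\bGm)$ is shown to die in the full group $\mathcal F(\bGm\times\bGm-\Delta_{\bGm})$). But it takes a genuinely different route from the paper. The paper's proof is a sketch asserting that the \emph{method} of the injectivity theorem on the relative affine line extends to a relative statement over a non-field base: for $U\subset V\subset\affl_X$ with $U=\affl_X-(T\amalg D)$, $V=\affl_X-T$, $T$ quasi-finite and $D$ finite over $X$, restriction is injective; this is then applied with $X=\bGm$ (the first factor), $T=\{0\}\times\bGm$ and $D=\Delta_{\bGm}$, so it requires reopening the proof of Theorem~\ref{th:HGWCorinj} rather than quoting its statement. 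You instead exploit the group structure of $\bGm$: the automorphism $(t,s)\mapsto(t,st^{-1})$ straightens the diagonal into the constant section $\bGm\times\{1\}$, after which the complement is a product and the map becomes $\mathcal G(\bGm)\to\mathcal G(\bGm\setminus\{1\})$ for the auxiliary homotopy invariant presheaf with GW-transfers $\mathcal G=\mathcal F(\bGm\times-)$, so the stated one-variable theorem (with $K=k$, i.e.\ $X=\Spec k$, and $k$ infinite as in the paper's standing hypotheses) applies as a black box. What each approach buys: yours is a complete formal reduction needing no new input beyond Theorem~\ref{th:HGWCorinj} and Lemma~\ref{lm:boxprodunct}, but it works only because the removed divisor is the diagonal, which the multiplicative translation makes constant; the paper's relative version is stronger (any $D$ finite over the base) at the cost of actually having to redo the injectivity argument in families. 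The bookkeeping points you flag (transfers and homotopy invariance of $\mathcal F(\bGm\times-)$, applicability with $K=k$, and reading the lemma's target as the restriction of the full map $\mathcal F(\bGm\times\bGm)\to\mathcal F(\bGm\times\bGm-\Delta_{\bGm})$ to the canonical summand) are all unproblematic and match how the lemma is invoked.
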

\begin{proof}
The prove is based on the same method on the prove of the injectivity on the relative affine line.
Indeed, the method proves the injectivity for a pair of open subsets $U\subset V$ in the relative affine line $\affl_X$ such that 
$U=\affl-(T\amalg D)$, $V=\affl-T$, where $T$ is quasi-finite and $D$ is finite over $X$.
\end{proof}

\begin{definition}
Let $pr^{\bGm}_{pt}\colon\bGm\to pt$, and let $1\colon pt\to \bGm$ denotes the unit section.
Let 
$\bGmw = \Coker(\bGm\to pt)\in Kar(GWCor_k)$ 
and let 
$1_\bGm = 1\circ pr^{\bGm}_{pt}$ 
Finally, let
$pr^\wedge \colon 
\GWCor(\bGm,\bGm)\to \GWCor(\bGmw,\bGmw)$
denote the canonical projection to a direct summand,
and let
$e = 
(id_\bGm-1_\bGm)\circ - \circ (id_\bGm-1_\bGm)\colon 
\GWCor(\bGm,\bGm)\to \GWCor(\bGm,\bGm)$
be the corresponding idempotent.
 The same notation we use for $\WCor$.
\end{definition}

\begin{lemma}\label{prop:permutGmGm}
Let $T$ denote 
the transposition on
$\bGmw\times \bGmw $,
then
$$[T] = [\langle-1\rangle \cdot id_{\bGmw\times \bGmw}]\in \overline{\GWCor}_k(\bGmw\times \bGmw,\bGmw\times \bGmw).$$
\end{lemma}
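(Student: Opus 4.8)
The plan is to compute $[T]$ by transporting the question to the punctured affine plane, where linear automorphisms are plentiful; this is the analogue of Morel's computation of the transposition of $\Gm^{\wedge 2}$ in $\mSH(k)$ and of the corresponding statement for framed correspondences. Consider the Zariski square $\mathbb{A}^2\setminus 0 = (\Gm\times\affl)\cup_{\Gm\times\Gm}(\affl\times\Gm)$. Both charts are $\affl$-contractible and $M^{GW}(\Gm)=\triv\oplus\bGmw$, so the associated Mayer--Vietoris triangle, together with the splitting off of a rational point, gives a decomposition $M^{GW}(\mathbb{A}^2\setminus 0)\simeq\triv\oplus(\bGmw\times\bGmw)[1]$, in which the shifted summand maps isomorphically, under the connecting morphism, onto the shift of the summand $\bGmw\times\bGmw$ of $M^{GW}(\Gm\times\Gm)$. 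Under this identification the coordinate transposition $\sigma\colon(x,y)\mapsto(y,x)$ of $\mathbb{A}^2\setminus 0$ acts on $(\bGmw\times\bGmw)[1]$ through $T$ together with the sign that the interchange of the two charts produces in the connecting morphism, whereas the diagonal automorphism $d=\operatorname{diag}(-1,1)\colon(x,y)\mapsto(-x,y)$ preserves each chart and acts through $\la -1\ra\cdot\id_{\bGmw\times\bGmw}$, because the scaling $t\mapsto -t$ acts on the $\Gm$-sphere $\bGmw$ by $\la -1\ra$ (the usual action of units, established in the references quoted in the introduction). It therefore suffices to prove $[\sigma]=[d]$ as self-maps of $M^{GW}(\mathbb{A}^2\setminus 0)$.

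For this I would factor $\left(\begin{smallmatrix}0&1\\1&0\end{smallmatrix}\right)=\left(\begin{smallmatrix}-1&0\\0&1\end{smallmatrix}\right)\left(\begin{smallmatrix}0&-1\\1&0\end{smallmatrix}\right)$ and use that $\SL_2(\Z)$ is generated by elementary matrices: for each elementary matrix $E$ the family $E(s)$, $s\in\affl$, interpolating between $\id$ (at $s=0$) and $E$ (at $s=1$), acts on $\mathbb{A}^2\setminus 0$ by an honest morphism $(\mathbb{A}^2\setminus 0)\times\affl\to\mathbb{A}^2\setminus 0$, since an elementary linear substitution never carries a nonzero vector to $0$; read as an $\affl$-homotopy of correspondences this yields $\id\simA E$. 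Hence $\left(\begin{smallmatrix}0&-1\\1&0\end{smallmatrix}\right)_*\simA\id$, so $[\sigma]=[d]$, which by the previous step unwinds to the asserted formula.

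The point requiring genuine care — and the step I expect to be the main obstacle — is that the argument above lives naturally in the triangulated category $\DMGWeff(k)$ (the shift $[1]$ and the Mayer--Vietoris triangle), while the assertion is an equality in the category $\overline{\GWCor}_k$ of correspondences up to honest $\affl$-homotopy, and the comparison functor $\overline{\GWCor}_k\to\DMGWeff(k)$ is a priori not faithful. One way to bridge this is to show, using the strict homotopy invariance and injectivity statements (Theorem~\ref{th:NshGWtr} and Lemma~\ref{th:Gmwsqinj}), that the map from $\overline{\GWCor}_k(\bGmw\times\bGmw,\bGmw\times\bGmw)$ to the corresponding $\DMGWeff(k)$-group is injective on the classes in question, so that the equality in $\DMGWeff(k)$ already forces it in $\overline{\GWCor}_k$. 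The other, more concrete, is to realise all the homotopies directly as $\affl$-homotopies of quadratic correspondences $\Gm\times\Gm\to\Gm\times\Gm$: there the elementary substitutions and $d$ are only partially defined, but over the locus of indeterminacy their graphs can be completed to honest quadratic correspondences via the curve-to-quadratic-space construction of Proposition~\ref{prop:constrFP->Q} and Lemma~\ref{lm:constQuad}, and the injectivity of Lemma~\ref{th:Gmwsqinj} (in the form proved there over relative affine lines) shows that these completions do not affect the resulting homotopy class. I expect this last point to be the technically heaviest part.

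Finally, the exact constant, sign included, falls out of the bookkeeping of the first paragraph: one combines the sign $-1$ coming from the interchange of the two charts — equivalently, of the two suspension coordinates — with the factor $\la -1\ra$ coming from the scaling $t\mapsto -t$ on $\bGmw$, and matches the result with the sign conventions fixed in the construction of $\GWCor_k$. The Witt-correspondence case is entirely parallel; there $\la -1\ra=-1$ in $W(k)$, which is exactly why the transposition becomes $[\id_{\bGmw\times\bGmw}]$ in that setting.
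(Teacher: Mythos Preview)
Your approach is genuinely different from the paper's and is the natural ``Morel-style'' computation via $\mathbb{A}^2\setminus 0$ and the $\SL_2$-action. You correctly identify the central obstacle --- the argument lives in $\DMGWeff(k)$, where Mayer--Vietoris triangles and connecting morphisms exist, while the assertion is in $\overline{\GWCor}_k$, where they do not --- but neither of your proposed bridges closes it.

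For the first bridge: injectivity of the comparison $\overline{\GWCor}_k\to\DMGWeff(k)$ on these hom-groups is nowhere established, and the results you cite (Theorem~\ref{th:NshGWtr}, Lemma~\ref{th:Gmwsqinj}) concern restriction maps of homotopy-invariant presheaves along open immersions, not the passage from naive $\affl$-homotopy classes to derived hom-groups. For the second bridge: the graph of an elementary substitution $(x,y)\mapsto(x+sy,y)$ restricted to $\bGm\times\bGm$ is not finite over the source (it is empty over the locus $x+sy=0$), and Proposition~\ref{prop:constrFP->Q} manufactures quadratic correspondences from functions on a relative \emph{curve}, not from partial self-maps of a surface, so there is no evident mechanism for ``completing'' these homotopies to elements of $QCor$. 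You also take for granted that $t\mapsto -t$ acts on $\bGmw$ by $\langle-1\rangle$ in $\overline{\GWCor}_k$; that is a separate computation of the same flavour as the lemma itself.

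The paper works entirely inside $\overline{\GWCor}_k$ by a different and very concrete device. It takes the universal monic quadratic $Q=Z(t^2-at+m)\hookrightarrow\affl_{\affl\times\bGm}$ with its associated quadratic correspondence $\Pi=\langle t^2-at+m,Q\rangle$, pulls back along the elementary-symmetric-function map $C\colon(x,y)\mapsto(x+y,xy)$, and on $\bGm\times\bGm-\Delta_{\bGm}$ the two roots separate to give the explicit identity $\Pi\circ C\circ i=\langle x-y\rangle\,(id_{\bGm\times\bGm}+\langle-1\rangle\circ T)$. A concrete affine homotopy $(a,m,\lambda)\mapsto((1-\lambda)a+(1+m)\lambda,m)$ then makes this class vanish after projection to $\bGmw\times\bGmw$, and Lemma~\ref{th:Gmwsqinj} (injectivity of restriction to the complement of the diagonal) finishes. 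Every step is an honest $\affl$-homotopy of quadratic correspondences, which is exactly what the downstream Lemma~\ref{lm:permhomQCor} requires.
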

\begin{proof}
Let $i_Q\colon Q = Z(t^2 -a t +m)\hookrightarrow \affl_{\affl\times\bGm}$, where we use 
$k[\affl\times \bGm]=k[a,m,m^{-1}]$.
Define
$$
\Pi = \langle t^2 -a t +m , Q\rangle\in \GWCor(\affl\times\bGm, Q),\;
C\colon \bGm\times\bGm\to \affl\times\bGm\colon C(x,y)=(x+y,xy)
.$$
Then 
$\Pi\circ C = \langle (t-x)(t-y) , Q\times_{,\bGm\times\affl,C} (\bGm\times\bGm) \rangle \in GWCor(\bGm\times\bGm, Q)$.
The map $\bGm\times\bGm\to \affl\times\affl\times\bGm\colon (x,y)\mapsto (x,x+y,xy)$)
induce isomorphism $\bGm\times \bGm \simeq Q$.
Hence 
\begin{multline}\label{eq:Split}
\Pi\circ C\circ i = 
\langle x-y\rangle\langle t-x, Z(t-x) \rangle + \langle y-x\rangle \langle t-y, Z(t-y) \rangle = \\
\langle x-y\rangle  (id_{\bGm\times\bGm} + \langle-1\rangle \circ T)\in 
GWCor( \bGm\times\bGm-\Delta_{\bGm} , Q ) ,\end{multline}
where 
$i\colon\bGm\times\bGm - \Delta_{\bGm}\hookrightarrow \bGm\times\bGm$.

On the other hand
$$\Pi\circ sect \circ \mu \circ i = \langle 1-xy\rangle [(x,y)\mapsto (1,xy)] + \langle xy-1\rangle [(x,y)\mapsto (xy,1)] ,$$ 
where
$\mu\colon \bGm\times\bGm\to \bGm\colon (x,y)\mapsto xy$ and $sect\colon \bGm\to \affl\times\bGm\colon (a,m)\mapsto (m+1,m)$;
and whence
$e\circ \Pi\circ sect \circ pr_{\bGm} \circ  C\circ i = 0\in GWCor_k(\bGm\times\bGm,\bGmw\times\bGmw).$ 

Now using homotopy $h\colon \affl\times\bGm\times\affl\to \affl\times\bGm\colon (a,m,\lambda)\mapsto (a(1-\lambda)+(1+m)\lambda, m),$ and since $h\circ i_0=id_{\affl\times\bGm}$, $h\circ i_1=sect \circ pr_{\bGm}$, we get
\begin{multline*}
pr^{\wedge} (id_{\bGm\times\bGm} + \langle-1\rangle \circ T) = 
\langle (x-y)^{-1} \rangle\circ [\Pi\circ C\circ i] = \\ = \langle (x-y)^{-1} \rangle\circ [\Pi\circ sect \circ \mu \circ  C\circ i]=0
\in \overline{GWCor_k}(\bGm\times\bGm-\Delta_{\bGm},\bGmw\times\bGmw)
.\end{multline*}
Now the claim follows form that homomorphism $\overline{GWCor_k}(\bGmw\times\bGmw, \bGmw\times\bGmw)\to \overline{GWCor_k}(\bGmw\times\bGmw-\Delta_{\bGmw}, \bGmw\times\bGmw)$ is injective.
(see lemma \ref{th:Gmwsqinj}). 
\end{proof}

\begin{remark}
In the previous proof in the equation
we use base change along closed embeddings for the duality isomorphism for finite morphisms $f$
and we assume that choice in the duality theorem is made in a way that duality isomorphism $\xi_f$ for the identity morphism of schemes $f\colon X\to X$, is equal to the isomorphism defined by canonical isomorphisms $\mathcal O(X)\simeq f^*(\mathcal O(X)$ and $\omega(X)\simeq f^*(\omega(X))$.

Let's note also that for this computation it is enough to use flat base changes, and actually base changes along isomorphisms.
To do this we need to apply base change for the square defined by two copies of morphism $Q\to \affl\times \bGm$, identity isomorphism on $\affl\times\bGm$ and isomorphism $T$ on $Q$ (using isomorphism $Q\simeq \bGm\times\bGm$). Then we get that $\langle (t-x_1)(t-x_2) \rangle_{Z(t-x_1)}=\langle \lambda \rangle$ and $\langle (t-x_1)(t-x_2) \rangle_{Z(t-x_2)}=\langle -\lambda \rangle$ for some invertible function $\lambda\in k[\bGm^2]^\times$. So the required equality $[id_{\bGm^2\Delta}]=[\langle -1\rangle id_{\bGm^2\Delta}]\in \overline{GWCor}(\bG_m^2-\Delta,\bG_m^2-\Delta)$ follows after the multiplication of quadratic forms in \eqref{eq:Split} and in other equalities form the proof above by $\lambda^{-1}$. 
\end{remark}

\begin{corollary}\label{cor:permut}
1)
$[T] = [id_{\bG_m^{\wedge 2}}]  $ 
in $\overline{WCor_k}$,
2)
$[C_3] = [id_{\bG_m^{\wedge 3}}] $
in $\overline{GWCor_k}$,
where $C_3$ denotes the 3-cycle permutation. 
\end{corollary}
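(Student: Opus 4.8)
The plan is to deduce both statements from Lemma \ref{prop:permutGmGm} by purely formal manipulations, using the relevant relations in $\overline{\GWCor}_k$ and $\overline{\WCor}_k$. For part (1), recall that $\WCor(X,Y)$ is the quotient of $\GWCor(X,Y)$ by classes of metabolic spaces, and in the Witt-ring of a field the hyperbolic form $\langle 1\rangle + \langle -1\rangle$ vanishes, so $\langle -1\rangle = -\langle 1\rangle = -\id$ as an endomorphism acting on $\WCor$-groups (more precisely, the image of $\langle -1\rangle\cdot\id_{\bGmw\times\bGmw}$ in $\overline{\WCor}_k$ equals $-[\id_{\bGmw\times\bGmw}]$). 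Applying the image functor $\GWCor_k\to\WCor_k$ to the identity $[T] = [\langle -1\rangle\cdot\id_{\bGmw\times\bGmw}]$ of Lemma \ref{prop:permutGmGm} therefore gives $[T] = -[\id_{\bG_m^{\wedge 2}}]$ in $\overline{\WCor}_k$. To upgrade this to $[T] = [\id_{\bG_m^{\wedge 2}}]$ one uses that $\bGmw\wedge\bGmw$ carries a sign involution coming from $\bGmw$ being a "reduced suspension" coordinate: the minus sign is absorbed because the generator $t\mapsto t^{-1}$ of $\bGm$ induces $-\id$ on $\bGmw$ already in $\overline{\WCor}_k$ (this is the $\WCor$-analogue of $\eta$-invertibility / the $\mathcal{SH}^-$ behaviour), so $-[\id_{\bGmw\wedge\bGmw}] = [\id_{\bGmw\wedge\bGmw}]$ there; combining the two equalities yields the claim. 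Alternatively, and more robustly, one observes directly in $\overline{\WCor}$ that $T$ and $-T$ are homotopic because $\langle -1\rangle$ acts trivially on $\WCor$-transfers after one $\bGmw$-suspension, which is exactly the content needed.

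For part (2), I would write the $3$-cycle $C_3$ on $\bGmw^{\wedge 3}$ as a product of two transpositions, $C_3 = T_{12}\circ T_{23}$ (say), where $T_{ij}$ swaps the $i$-th and $j$-th smash factors. Each $T_{ij}$ acts on $\bGmw^{\wedge 3} = \bGmw^{\wedge 2}\wedge\bGmw$ (after reindexing) as the transposition on a $\bGmw^{\wedge 2}$-factor smashed with the identity on the remaining $\bGmw$-factor, so Lemma \ref{prop:permutGmGm} together with functoriality of $\boxtimes$ (Lemma \ref{lm:boxprodunct}) gives $[T_{ij}] = [\langle -1\rangle\cdot\id_{\bGmw^{\wedge 3}}]$ in $\overline{\GWCor}_k$. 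Hence $[C_3] = [T_{12}]\circ[T_{23}] = [\langle -1\rangle\cdot\id]\circ[\langle -1\rangle\cdot\id] = [\langle -1\rangle\langle -1\rangle\cdot\id_{\bGmw^{\wedge 3}}] = [\id_{\bGmw^{\wedge 3}}]$, since $\langle -1\rangle\langle -1\rangle = \langle 1\rangle$ in the Grothendieck-Witt ring (as $(-1)^2 = 1$). The only points requiring care are that the decomposition $C_3 = T_{12}T_{23}$ really is the correct factorisation of the chosen $3$-cycle and that composition of correspondences is compatible with multiplication of the scalar units $\langle u\rangle$, which follows from the monoidal structure on $\GWCor_k$.

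The main obstacle is the sign bookkeeping in part (1): passing from $[T] = -[\id]$ to $[T] = +[\id]$ in $\overline{\WCor}_k$ relies on identifying precisely how $\langle -1\rangle$ and the coordinate-inversion automorphism of $\bGmw$ interact, i.e. on the fact that in the Witt-correspondence world one extra $\bGmw$-suspension kills the sign. Once that normalisation is pinned down (it is implicit in the construction of $\WCor_k$ and in the quoted behaviour "$[T] = [\id]$ like in $\mathcal{SH}^-(k)$"), everything else is formal. Part (2) is entirely routine given Lemma \ref{prop:permutGmGm} and the ring relation $\langle -1\rangle^2 = 1$; no homotopy-invariance input beyond what is already used in Lemma \ref{prop:permutGmGm} is needed.
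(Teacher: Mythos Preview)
Your argument for part~(2) is correct and matches the paper's: write $C_3$ as a product of two transpositions, apply Lemma~\ref{prop:permutGmGm} to each factor, and use $\langle -1\rangle^2 = \langle 1\rangle = \id_{pt}$ in $GW(k)$.

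For part~(1) there is a real gap. Having arrived at $[T] = -[\id]$ in $\overline{\WCor}_k$, each of your proposed fixes --- that $t\mapsto t^{-1}$ induces $-\id$ on $\bGmw$, that $-[\id_{\bG_m^{\wedge 2}}] = [\id_{\bG_m^{\wedge 2}}]$, or that $\langle -1\rangle$ acts trivially after one $\bGmw$-suspension --- boils down to the relation $2=0$ in $W(k)$ (or in a $\bGmw$-twist of it). This is false in general: already $W(\mathbb{R})\cong\mathbb{Z}$. None of these identities is available in $\WCor_k$, and invoking ``$\mathcal{SH}^-$ behaviour'' does not supply them.

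The source of the difficulty is a dropped sign in the \emph{statement} of Lemma~\ref{prop:permutGmGm}. Its proof actually establishes
\[
pr^\wedge\bigl([\id_{\bG_m^{\wedge 2}}] + [\langle -1\rangle\cdot T]\bigr) = 0
\quad\text{in }\overline{\GWCor}_k,
\]
i.e.\ $[T] = -[\langle -1\rangle\cdot\id]$, consistent with the formula $[T]=[-\langle -1\rangle]$ quoted in the introduction. With this sign the paper's one-line proof goes through: passing to $\overline{\WCor}_k$ and using only that $[\langle 1\rangle]+[\langle -1\rangle]=0$ there (the hyperbolic plane is metabolic), the displayed relation becomes $[\id]-[T]=0$, hence $[T]=[\id]$, with no further sign gymnastics required.
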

\begin{proof}
1) The claim follows form that $[\langle 1\rangle] + [\langle -1 \rangle] = 0 \in WCor_k(pt,pt)$, since it is class of a metabolic space.
2) The claim follows from that a 3-cycle is a composition of two transpositions and $[\langle -1 \rangle]^2 = id_{pt} \in GWCor_k(pt,pt)$.
\end{proof}

\section{Categories of GW-motives and Witt-motives}\label{sect:DMGWeff}

Starting form this section we assume that the base field $k$ is infinite, perfect, and $char\,k\neq 2$.
Here we recall the results of \cite{AD_DMGWeff} on the categories of effective GW-motives and Witt-motives used in the article. 
We consider the case of GW-motives,
and the case of Witt-motives is similar.

Let $Pre$ and $\ShN$ denote categories of presheaves and Nisnevich sheaves on $Sm_k$,
$\PreGW$ (or $\PreGWs$) the category of presheaves with GW-transfers, $\ShNGW$ (or $\ShNGWs$) the category of Nisnevich sheaves with GW-transfers.
It is proven in \cite{AD_DMGWeff} that$\ShNGW$ is abelian.


\begin{definition}
The \emph{category of effective GW-motives} $\DMGWeff(k)$
is the full subcategory in $\Dba(\ShNGW)$
spanned by the complexes with homotopy invariant cohomology sheaves.
The functor $M^{GW}\colon Sm_k\to \DMGWeff(k)$, which sends $X\to\Sm_k$ to its \emph{effective GW-motive}, is defined as
$M^{GW}_{eff}(X) = \mathcal Hom_{\Dba(\PreGWs)}(\Delta^\bullet, \nisGWCor(-,X) )=\nisGWCor(-\times \Delta^\bullet,X),$
where $\Delta^i$ denotes affine simplexes. 

\end{definition}
\begin{theorem}\label{th:DMWeff}
Suppose $k$ is an infinite perfect filed, $char\,k\neq 2$.

 There is an adjunction $L_{\affl}^{pre}\colon \Dba(\PreGWs_k) \leftrightharpoons \DaffGW(k) \colon R_{\affl}$
such that 
$R_{\affl}^{pre}$ is equivalent to the full embedding functor of the subcategory spanned by complexes with homotopy invariant cohomology presheaves,
$L_{\affl}^{pre}$ is equal to the localisation with respect to morphisms of the form $\mathbb Z_{GW}(X)\times \mathbb Z_{GW}(\affl\to X)$, for all $X\in Sm_k$,
and $L_{\affl}\circ R_{\affl}(\mathbb Z_{GW}(X)) = \GWCor(\Delta^\bullet\times -, X)$.

Moreover the functors $L_{\affl}^{pre}$ and $R_{\affl}^{pre}$ are exact with respect to the Nisnevich quasi isomorphisms.
The localisation of $\DaffGW(k)$ with respect to Nisnevich quasi isomorphisms is equivalent to $\DMGWeff(k)$.
The adjunction $L_{\affl}^{pre}\dashv R_{\affl}$ induces the adjunction $L_{\affl}\colon \Dba(\ShNGWs_k) \leftrightharpoons \DMGWeff(k) \colon R_{\affl}$ such that $L_{\affl}\circ R_{\affl}(\mathbb Z_{GW,nis}(X)) = M^{GW}(X)$.


\end{theorem}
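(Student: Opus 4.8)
The statement to be proven (Theorem \ref{th:DMWeff}) assembles several facts about the category $\DaffGW(k)$ and its relation to $\DMGWeff(k)$; the plan is to reduce everything to the structural results of \cite{AD_DMGWeff} on homotopy invariance and Nisnevich sheafification (Theorems \ref{th:HGWCorinj}, \ref{th:NshGWtr}), exactly as in the Voevodsky--Suslin treatment of $\DM(k)$. First I would construct the adjunction $L_{\affl}^{pre}\dashv R_{\affl}^{pre}$ at the level of $\Dba(\PreGWs_k)$: set $R_{\affl}^{pre}$ to be the inclusion of the localising subcategory of complexes with homotopy invariant cohomology presheaves, and obtain $L_{\affl}^{pre}$ as the corresponding Bousfield localisation, with explicit model the total complex of $C_*^{\Delta}(A^\bullet) = A^\bullet(-\times\Delta^\bullet)$. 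The key input here is that $C_*^{\Delta}$ lands in the subcategory of homotopy invariant objects and that the natural map $A^\bullet\to C_*^{\Delta}(A^\bullet)$ is an $\affl$-equivalence; the former is the Suslin--Voevodsky contractibility argument applied to the simplicial object $\Delta^\bullet$, using that $GW$-transfers compose. From this one reads off that $L_{\affl}^{pre}$ inverts precisely the morphisms $\mathbb Z_{GW}(X\times\affl)\to\mathbb Z_{GW}(X)$ (and their shifts/cones), and that $L_{\affl}^{pre}\circ R_{\affl}^{pre}$ applied to $\mathbb Z_{GW}(X)$ gives $\GWCor(\Delta^\bullet\times -,X)$ by the explicit model.

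Next I would establish exactness of $L_{\affl}^{pre}$ and $R_{\affl}^{pre}$ with respect to Nisnevich quasi-isomorphisms. For $R_{\affl}^{pre}$ this is formal once one knows, by Theorem \ref{th:NshGWtr}, that the Nisnevich sheafification and the Nisnevich cohomology presheaves of a homotopy invariant presheaf with $GW$-transfers are again homotopy invariant (this uses $k$ infinite perfect, $\chark k\neq 2$); so a Nisnevich-local object of the homotopy invariant subcategory stays in it. For $L_{\affl}^{pre}$ one checks that $C_*^{\Delta}$ commutes with Nisnevich sheafification up to Nisnevich quasi-isomorphism, which again reduces to the homotopy invariance statements and the vanishing $H^i_{Nis}(U,\mathcal F)=0$ for $U\subset\affl_K$ from Theorem \ref{th:NshGWtr}. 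These two facts together give that the localisation of $\DaffGW(k)$ at Nisnevich quasi-isomorphisms is computed by complexes with homotopy invariant cohomology \emph{sheaves}, i.e. is equivalent to $\DMGWeff(k)$ as defined; and descending the adjunction $L_{\affl}^{pre}\dashv R_{\affl}^{pre}$ through the sheafification functor $a_{Nis}$ yields $L_{\affl}\dashv R_{\affl}$ on the sheaf-theoretic categories, with $L_{\affl}\circ R_{\affl}(\mathbb Z_{GW,nis}(X))=M^{GW}(X)$ by construction.

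The main obstacle, and the reason this is \emph{cited} rather than reproved in this paper, is precisely the ``strict homotopy invariance'' package behind Theorem \ref{th:NshGWtr}: that Nisnevich sheafification and higher Nisnevich cohomology preserve homotopy invariance for presheaves with $GW$-transfers. This is the $GW$-analogue of Voevodsky's hardest technical theorem about presheaves with transfers, and it is where the hypotheses on $k$ are genuinely used; in the present article it is imported from \cite{AD_DMGWeff}, \cite{AD_WtrSh}, \cite{AD_StrHomInv}. Granting it, the remaining work is the bookkeeping of Bousfield localisations and the compatibility of $C_*^{\Delta}$ with $a_{Nis}$, which is routine and parallel to \cite{MVW_LectMotCohom}. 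I would therefore present the proof as: (i) recall the homotopy invariance results as black boxes; (ii) build $L_{\affl}^{pre}, R_{\affl}^{pre}$ and identify $L_{\affl}^{pre}R_{\affl}^{pre}\mathbb Z_{GW}(X)$; (iii) verify exactness w.r.t. Nisnevich quasi-isomorphisms; (iv) pass to sheaves and conclude the identification with $\DMGWeff(k)$ and the formula for $M^{GW}(X)$.
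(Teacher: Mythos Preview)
Your reading is correct: the paper does not prove Theorem~\ref{th:DMWeff} at all. Section~\ref{sect:DMGWeff} is explicitly a summary of results imported from \cite{AD_DMGWeff}, and the theorem is stated without proof and immediately followed by its corollary. So there is no ``paper's own proof'' to compare against; your proposal is in fact more than the paper offers here.

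That said, your outline is the right one and matches what \cite{AD_DMGWeff} does: build the $\affl$-localisation on $\Dba(\PreGWs_k)$ via the Suslin complex $C_*^{\Delta}$, then invoke the strict homotopy invariance package (Theorem~\ref{th:NshGWtr}) to show compatibility with Nisnevich sheafification and descend to $\DMGWeff(k)$. You correctly isolate the only nontrivial input as the preservation of homotopy invariance under $a_{Nis}$ and under $H^i_{Nis}$, which is exactly what the cited papers \cite{AD_DMGWeff}, \cite{AD_WtrSh}, \cite{AD_StrHomInv} supply. One small wording issue: you write ``$L_{\affl}^{pre}\circ R_{\affl}^{pre}$ applied to $\mathbb Z_{GW}(X)$'' where you mean $R_{\affl}^{pre}\circ L_{\affl}^{pre}$ (localise, then include back), matching the paper's $L_{\affl}\circ R_{\affl}(\mathbb Z_{GW}(X))$ which itself appears to have the composition order reversed from the adjunction convention; the intended object in both cases is the Suslin complex $\GWCor(\Delta^\bullet\times -,X)$.
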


\begin{corollary}\label{cor:MPMeff}
For an infinite perfect field $k$, $\chark k \neq 2$, $X\in Sm_k$ and 
a motivic complex $A^\bullet\in \DMGWeff(k)$ there is a natural isomorphism
$Hom_{\DMGWeff(k)}(M^{GW}(X),\Sigma^\infty_{\bGmw}A^\bullet[i]) \simeq H^i_{Nis}(X,A^\bullet),$
\end{corollary}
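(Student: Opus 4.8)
\textbf{Proof plan for Corollary \ref{cor:MPMeff}.}

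The plan is to recognize this as essentially a formal consequence of the two main results already established: Theorem \ref{th:intro:fullyfaithDMW(k)} (equivalently Theorem \ref{th:DMWeff}, giving the description of $\DMGWeff(k)$ as motivic complexes together with the reflection $R_{\affl}$), and the fact that the motive $M^{GW}(X)$ is built from the representable Nisnevich sheaf $\nisGWCor(-,X) = \bZ_{GW,nis}(X)$ via the $\affl$-localization functor. First I would invoke the adjunction $L_{\affl}\colon \Dba(\ShNGWs) \leftrightharpoons \DMGWeff(k)\colon R_{\affl}$ from Theorem \ref{th:DMWeff}, under which $R_{\affl}$ is the full embedding of motivic complexes. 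For a motivic complex $A^\bullet\in\DMGWeff(k)$ one has $R_{\affl}(A^\bullet) = A^\bullet$ (the reflection identity), so
$$
\Hom_{\DMGWeff(k)}(M^{GW}(X), A^\bullet[i]) = \Hom_{\DMGWeff(k)}(L_{\affl}(\bZ_{GW,nis}(X)), A^\bullet[i]) \simeq \Hom_{\Dba(\ShNGWs)}(\bZ_{GW,nis}(X), A^\bullet[i]),
$$
using the adjunction together with $L_{\affl}(\bZ_{GW,nis}(X)) = M^{GW}(X)$.

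Second, the right-hand $\Hom$ in the derived category of Nisnevich sheaves with GW-transfers is computed by the standard Yoneda-type identification for representable sheaves: $\Hom_{\Dba(\ShNGWs)}(\bZ_{GW,nis}(X), A^\bullet[i]) \simeq \mathbb H^i_{Nis}(X, A^\bullet)$, the Nisnevich hypercohomology of $X$ with coefficients in the complex $A^\bullet$. This is the GW-transfers analogue of the corresponding fact for Nisnevich sheaves with ordinary transfers in \cite{MVW_LectMotCohom}; it relies on $\bZ_{GW,nis}(X)$ being the free GW-transfers sheaf on $X$ and on $\ShNGW$ being abelian (stated in Section \ref{sect:DMGWeff}) with enough injectives, so that derived homs against a representable compute sheaf (hyper)cohomology. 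Third, I would note the notational point that in the statement the complex is written $\Sigma^\infty_{\bGmw}A^\bullet$; since $A^\bullet$ is already a motivic complex and, as remarked in footnote to \eqref{eq:intr:ajsq}, stabilization by $\bGmw$ does not change the relevant hom-groups — more precisely the composite $\Omega^\infty_{\bGmw}\Sigma^\infty_{\bGmw}$ is the identity on $\DMGWeff(k)$ by Theorem \ref{th:intro:fullyfaithDMW(k)} — the suspension symbol here is just recording that we view $A^\bullet$ inside $\DMGW(k)$, and the hom may be computed effectively. Finally, combining the two displayed isomorphisms gives $\Hom_{\DMGWeff(k)}(M^{GW}(X), \Sigma^\infty_{\bGmw}A^\bullet[i]) \simeq \mathbb H^i_{Nis}(X, A^\bullet)$; when $A^\bullet$ has homotopy invariant cohomology sheaves (which is exactly what a motivic complex is) one moreover has, by Theorem \ref{th:NshGWtr}, that the Nisnevich cohomology presheaves are again homotopy invariant, and when $A^\bullet$ is concentrated in a single degree this recovers $H^i_{Nis}(X,\mathcal F)$ as in \eqref{eq:intr:MTC}.

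The main obstacle I expect is not in the formal manipulations but in pinning down precisely the identification $\Hom_{\Dba(\ShNGWs)}(\bZ_{GW,nis}(X), A^\bullet[i]) \simeq \mathbb H^i_{Nis}(X, A^\bullet)$ in a way that is compatible with the GW-transfer structure — one needs that an injective resolution in $\ShNGW$ is simultaneously (after forgetting transfers) a flasque-enough resolution to compute Nisnevich cohomology, equivalently that the forgetful functor $F_{GW}\colon \ShNGW \to \ShN$ sends injectives to objects with vanishing higher Nisnevich cohomology. This is the GW-analogue of a lemma of Voevodsky and should follow from the existence and exactness properties of the sheafification-with-transfers functor and the adjunction $L_{GW}\dashv F_{GW}$ in \eqref{eq:intr:ajsq}; alternatively one cites the corresponding statement from \cite{AD_DMGWeff} directly. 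The remaining steps — the reflection identity for $R_{\affl}$, the computation $L_{\affl}(\bZ_{GW,nis}(X)) = M^{GW}(X)$, and naturality in $X$ — are all contained in or immediate from Theorem \ref{th:DMWeff}.
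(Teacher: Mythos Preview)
Your core argument is correct and matches the paper's (implicit) reasoning: the paper states Corollary~\ref{cor:MPMeff} immediately after Theorem~\ref{th:DMWeff} with no proof, so it is meant as a direct consequence of the adjunction $L_{\affl}\dashv R_{\affl}$ together with the identification of derived homs out of the representable sheaf $\bZ_{GW,nis}(X)$ with Nisnevich hypercohomology, exactly as you outline.

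However, your handling of the symbol $\Sigma^\infty_{\bGmw}$ introduces a circularity. Corollary~\ref{cor:MPMeff} sits in Section~\ref{sect:DMGWeff}, \emph{before} the cancellation theorem is proved; indeed the paper later deduces the stable statement (Corollary~\ref{cor:MTC}, with $\Hom$ taken in $\DMGW(k)$) from Corollary~\ref{cor:MPMeff} together with Theorem~\ref{th:fullyfaithDMW(k)}. So you cannot invoke Theorem~\ref{th:intro:fullyfaithDMW(k)} here. The appearance of $\Sigma^\infty_{\bGmw}$ in the statement of Corollary~\ref{cor:MPMeff} is a typographical slip: the $\Hom$ is taken in $\DMGWeff(k)$, where $\Sigma^\infty_{\bGmw}A^\bullet$ does not even live, and the intended target is simply $A^\bullet[i]$. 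Once you drop that symbol your argument is complete and self-contained, and the point you identify as the ``main obstacle'' (that injectives in $\ShNGW$ compute Nisnevich cohomology after forgetting transfers) is indeed the only substantive input beyond Theorem~\ref{th:DMWeff}; as you say, this is established in \cite{AD_DMGWeff}.
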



The following lemma is used in section \ref{sect:CancellationTh} to deduce the sheaf cancellation theorem form the presheaf one.

\begin{lemma}\label{lm:coh((subAff)_loc)}
Let $U$ is local essential smooth $k$-scheme, and $V\subset \affl_k$ is open over an infinite perfect base filed $k$, $char\,k\neq 2$.
Let $\mathcal F\in Pre^{GW}_k$ (or $Pre^W$) be homotopy invariant.
Then
$${\mathcal F}_{nis}(U\times V) \simeq \mathcal F(U\times V), H^i_{nis}(U\times V, {\mathcal F}_{nis})\simeq 0, \text{ for }i>0.$$
\end{lemma}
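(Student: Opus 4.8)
The plan is to reduce both assertions to the injectivity/vanishing statements already available in the excerpt for open subschemes of relative affine lines, applied over the essentially smooth local base $U$. Write $K = k(U)$ for the fraction field of $U$; since $U$ is a local essentially smooth $k$-scheme, $K$ is a finitely generated field extension of $k$, and the relevant facts (Theorem~\ref{th:HGWCorinj}, Theorem~\ref{th:NshGWtr}) are stated for opens of $\affl_{k(X)}$ with $X \in \Sm_k$. The key input is that $\mathcal F$ homotopy invariant with GW-transfers implies: its Nisnevich cohomology presheaves $h^i_{nis}(\mathcal F_{nis})$ are again homotopy invariant presheaves with GW-transfers (Theorem~\ref{th:NshGWtr}), that $H^i_{nis}(W, \mathcal F_{nis}) = 0$ for $i>0$ when $W$ is an open of $\affl_K$, and that the restriction maps $\mathcal F(W_2) \to \mathcal F(W_1)$ are injective for opens $W_1 \subset W_2 \subset \affl_K$ and more generally $\mathcal F(W) \hookrightarrow \mathcal F(\eta)$ at the generic point.

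\textbf{Step 1: the cohomology vanishing.} First I would treat $H^i_{nis}(U \times V, \mathcal F_{nis}) = 0$ for $i>0$. Consider the projection $U \times V \to U$. Since $U$ is local essentially smooth, one may pass to a limit: $U = \lim_\lambda U_\lambda$ over smooth affine $k$-schemes, and $\mathcal F_{nis}$ and its cohomology commute with such cofiltered limits of schemes with affine transition maps, so it suffices to control $H^i$ of $U_\lambda \times V$ in the limit, or equivalently to control the cohomology of $V_K := \Spec K \times_k V$, an open subscheme of $\affl_K$. By Theorem~\ref{th:NshGWtr} applied to $\mathcal F$ and then to each $h^j_{nis}(\mathcal F_{nis})$ — which are themselves homotopy invariant presheaves with GW-transfers — we get $H^i_{nis}(V_K, h^j_{nis}(\mathcal F_{nis})_{nis}) = 0$ for $i>0$ and all $j$. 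But one has to be careful: $H^i_{nis}(U \times V, \mathcal F_{nis})$ is not literally $H^i_{nis}(V_K, -)$; rather there is a Leray-type spectral sequence for $U \times V \to U$ (or a base-change spectral sequence as $U$ is a limit of the $U_\lambda$, with the fiber over the closed point controlled by the cohomology over $K$). The cleanest route is to use that $\mathcal F_{nis}$ restricted to the Nisnevich site of $U\times V$ has a resolution whose sections and higher cohomology are computed fiberwise over $U$, and that over the (essentially smooth, local, hence Nisnevich-local) base $U$ the higher direct images along $U\times V\to U$ vanish by the affine-line computation, leaving only the contribution of the generic fiber $V_K$, where higher cohomology vanishes by Theorem~\ref{th:NshGWtr}.

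\textbf{Step 2: the sections identification.} For $\mathcal F_{nis}(U \times V) \simeq \mathcal F(U \times V)$, the point is that $\mathcal F_{nis}$ is the Nisnevich sheafification, so there is a natural map $\mathcal F(U\times V) \to \mathcal F_{nis}(U \times V)$, and I must show it is an isomorphism. Injectivity follows from Theorem~\ref{th:HGWCorinj}: $U\times V$ is irreducible with generic point $\eta$ lying over the generic point of $\affl_K$, and both $\mathcal F(U\times V) \hookrightarrow \mathcal F(\eta)$ and $\mathcal F_{nis}(U\times V) \hookrightarrow \mathcal F_{nis}(\eta) = \mathcal F(\eta)$ factor compatibly, with $\mathcal F_{nis}$ also injecting into its generic stalk because its Nisnevich cohomology presheaves are homotopy invariant with transfers and hence satisfy the same injectivity. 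Surjectivity then follows because a section of $\mathcal F_{nis}$ over $U\times V$ is, locally in the Nisnevich topology, in the image of $\mathcal F$, and the injectivity on all the overlaps — again via restriction to generic points and the affine-line injectivity of Theorem~\ref{th:HGWCorinj} applied to opens of $\affl_K$ — forces the local sections to glue to a genuine element of $\mathcal F(U \times V)$; equivalently $H^0_{nis}(U\times V, \mathcal F_{nis}) = \mathcal F(U\times V)$ is the $i=0$ case of the computation in Step~1.

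\textbf{Main obstacle.} The hard part will be Step~1: making rigorous the claim that Nisnevich cohomology of $U \times V$ with coefficients in $\mathcal F_{nis}$ is computed by the affine-line vanishing over $K = k(U)$, i.e. controlling the base change along the fibration $U \times V \to U$ over the essentially smooth \emph{local} base $U$. One must either invoke a limit argument reducing $U$ to its generic point (using continuity of Nisnevich cohomology of $\mathcal F_{nis}$, which should follow from $\mathcal F_{nis}$ being a sheaf with transfers of finite-type-scheme origin) together with the fact that $\mathcal F_{nis}$ and all its cohomology presheaves are homotopy invariant with GW-transfers so that Theorem~\ref{th:NshGWtr} applies on each $\affl_{k(U_\lambda)}$ and passes to the limit, or prove directly a Nisnevich-local acyclicity of $U\times V \to U$ for such sheaves. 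Everything else is a formal consequence of the injectivity theorems and Theorem~\ref{th:NshGWtr} already recalled in the text.
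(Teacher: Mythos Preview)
Your proof has a genuine gap in Step~1, and you yourself flag it as the main obstacle. The Leray/limit argument you sketch is never made precise, and carrying it out would require continuity statements and base-change control that are not trivial to justify. The paper avoids this entirely by an observation you missed.

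The key point is that the presheaf
\[
X \longmapsto H^i_{nis}(X\times V,\mathcal F_{nis})
\]
is itself a homotopy invariant presheaf with GW-transfers. This is immediate from Theorem~\ref{th:NshGWtr} (the cohomology presheaves $h^i_{nis}(\mathcal F_{nis})$ are homotopy invariant with GW-transfers) together with the fact that precomposition with $-\times V$ preserves both the transfer structure and homotopy invariance. Once you have this, Theorem~\ref{th:HGWCorinj}(a) applied to this presheaf on the local scheme $U$ gives an injection
\[
H^i_{nis}(U\times V,\mathcal F_{nis}) \hookrightarrow H^i_{nis}(\eta\times V,\mathcal F_{nis}),
\]
where $\eta$ is the generic point of $U$. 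The right-hand side vanishes for $i>0$ by Theorem~\ref{th:NshGWtr}, since $\eta\times V$ is open in $\affl_K$. No spectral sequence, no limit argument.

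Your Step~2 injectivity argument is essentially the paper's. For surjectivity, however, the paper again uses a cleaner device than your Nisnevich-gluing sketch: the cokernel $\Coker(\nu)$ of $\nu\colon \mathcal F\to \mathcal F_{nis}$ is a homotopy invariant presheaf with GW-transfers (both source and target are), and so is $\Coker(\nu)(-\times V)$. Applying Theorem~\ref{th:HGWCorinj} twice gives injections $\Coker(\nu)(U\times V)\hookrightarrow \Coker(\nu)(\eta\times V)\hookrightarrow \Coker(\nu)(\eta')$, where $\eta'$ is the generic point of $\eta\times V$; the last group is zero since sheafification does not change stalks at points.

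In short: the single idea you are missing is to treat the various auxiliary presheaves (Nisnevich cohomology, the cokernel of $\nu$) as homotopy invariant presheaves with GW-transfers in the $U$-variable and apply the local injectivity theorem directly. This replaces all of the fibration/spectral-sequence machinery you were reaching for.
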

\begin{proof}
Consider the case of $\GWCor$.
Let $\eta\in U$ denote the generic point.
By Th. \ref{th:NshGWtr} $U\mapsto H^i_{nis}(U\times V, {\mathcal F}_{nis})$
are homotopy invariant presheaves with GW-transfers. 
Hence Th. \ref{th:HGWCorinj} yields the injection
$H^i_{nis}(U\times V,{\mathcal F}_{nis})\to H^i_{nis}(\eta\times V,{\mathcal F}_{nis})$ for all $i$,
and by Th. \ref{th:NshGWtr}
${\mathcal F}_{nis}(\eta\times V)\simeq \mathcal F(\eta\times V)$,
$H^i_{nis}(\eta\times V,{\mathcal F}_{nis}) = 0$, for $i>0$. 

Thus since
the injection $\mathcal F(U\times V)\hookrightarrow \mathcal F(\eta\times V)$
factors as
$\mathcal F(U\times V)\to {\mathcal F}_{nis}(U\times V)\hookrightarrow {\mathcal F}_{nis}(\eta\times V)\simeq \mathcal F(\eta\times V)$,
it follows that $\nu\colon \mathcal F(U\times V)\hookrightarrow {\mathcal F}_{nis}(U\times V)$ is injective. 
On other side, applying Th \ref{th:HGWCorinj} to the hom. inv. presheaves $\Coker(\nu),\Coker(\nu)(-\times V)\in Pre^{GW}$
$\Coker(\nu)(U\times V)=\Coker(\nu)(\eta\times V)\Coker(\nu)(\eta^\prime) =0$, where $\eta^\prime$ is generic point of $\eta\times V$.
Thus $\nu$ is an isomorphism.
\end{proof}


As shown in \cite{AD_DMGWeff} there is a tensor structure on the category $\DMGWeff(k)$ such that $M^{GW}(X)\otimes M^{GW}(Y)\simeq M^{GW}(X\times Y)$. 
(Note that this tensor structure 
doesn't relates to the tensor structure $\boxtimes$ on the category of correspondences defined in Lemma \ref{lm:boxprodunct}.)
Let $\mathbb G_m^{\wedge k} = (\bGmw)^{\otimes k}\in \DMGWeff(k)$, where $\bGmw = Cone(pt\stackrel{1}{\hookrightarrow} \bGm)$,
and for any $A^\bullet\in \DMGWeff(k)$, we denote $A^\bullet(n) = \bGm^{\wedge k}\otimes A^\bullet$.

\begin{definition}
Since $\DMGWeff(k)$ is the full subcategory in $\Dba(\ShNGWs_k)$, it is dg-category and it is equipped with the injective model structure.  
Define the category of (non-effective) motives $\DMGW(k)$ 
as the category of $\bGmw$-spectra with respect to $\DMGWeff(k)$. 
So objects of $\DMGWeff(k)$ are 
sequences
$E = (E_0^\bullet,s_0,E_1^\bullet,s_1,\dots,E_n^\bullet,s_n,\dots,),$ 
$E_i^\bullet\in \DMGWeff(k),$
$s_i\in Hom_{\DMGWeff(k)}(E_i^\bullet(1),E_{i+1}^\bullet)$ 
and for two such sequences $E= (E_i)$ and $F = (F_i)$ the homomorphism group is defined as
$Hom_{\DMGW(k)}(E,F) = \varinjlim_i Hom_{\DMGWeff(k)}(E_i,F_i)$.

A functor 
$\Sigma^\infty_{\bGm}\colon \DMGWeff(k) \to \DMGW(k)$ takes a motivic complex
$E^\bullet$  to the spectrum  $(E^\bullet,E^\bullet(1),\dots E^\bullet(n)\dots )$ with $s_i$ being identity morphisms $id_{E^\bullet(i)}$.
\end{definition}


\section{Norm of the multiplication endomorphism $m^P_f$. }
\label{sect:Norm}

\begin{definition}\label{def:NormP}
Let $p\colon V\to S$ a morphism of schemes, $P\in \calP(V\to S)$,
and $f\in k[V]$.
Then the multiplication by $f$ defined the \emph{endomorphism} $m^{p_*(P)}_f\in End(p_*(P))$,
and denote
$det_P(f)=\bigwedge\limits^{r}m^{p_*(P)}_f\in End(\bigwedge\limits^{r} p_*(P))=k[S]$, where $r = \rank_{\mathcal O(S)}\, p_*(P)$.

\end{definition}

%

\begin{lemma}\label{lm:redSupN_P}
For any morphism of schemes $p\colon V\to S$, $P\in \mathcal P(V\to S)$, $P\neq 0$ and $f\in k[V]$, 
we have $Z_{red}(\mathcal N_P(f)) = p(\Suppred P/fP)$, where $P/fP=\Coker(m^P_f)$.
\end{lemma}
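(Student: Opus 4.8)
The statement is local on $S$, so I would first reduce to the case where $S = \Spec R$ is affine and $p_*(P)$ is a \emph{free} $R$-module of rank $r$ (using that $P \in \mathcal P(V \to S)$ means $p_*(P)$ is locally free of some rank $r$, and that both sides of the claimed equality are compatible with restriction to an open cover of $S$; here one should observe that the formation of $\mathcal N_P(f) = \det_P(f)$ commutes with localization on $S$ up to a unit, which is enough since we only care about the reduced vanishing locus). After this reduction, $m^P_f$ is an $r \times r$ matrix $M$ over $R$ (writing $B = k[V]$, which is finite as an $R$-algebra via $p$), and $\mathcal N_P(f) = \det M \in R$, while $P/fP = \Coker(M)$ as an $R$-module.

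\textbf{Key steps.} The core of the argument is a pointwise check: for a point $s \in S$ with residue field $\kappa(s)$, I claim $s \in Z_{\mathrm{red}}(\det M)$ iff $s \in p(\Supp_{\mathrm{red}}(P/fP))$. First, $s \in Z_{\mathrm{red}}(\det M)$ iff $\det M$ lies in the prime $\mathfrak p_s$, i.e. iff $\overline{M} := M \otimes_R \kappa(s)$ is a non-invertible (hence non-injective, by finite-dimensionality over $\kappa(s)$) endomorphism of the $\kappa(s)$-vector space $p_*(P) \otimes \kappa(s)$. Now $\det M \in \mathfrak p_s \iff \det \overline{M} = 0 \iff \Coker(\overline M) \neq 0$. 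On the other side, $s \in p(\Supp_{\mathrm{red}}(P/fP))$ iff the fiber of $\Supp(P/fP) \to S$ over $s$ is nonempty, iff $(P/fP) \otimes_R \kappa(s) \neq 0$. By right-exactness of $\otimes$, $(P/fP) \otimes \kappa(s) = \Coker(M) \otimes \kappa(s) = \Coker(\overline M)$. So the two conditions coincide. To turn this pointwise equality of supports into equality of reduced closed subschemes I would invoke that a reduced closed subscheme of $S$ is determined by its underlying set of points (and that $p(\Supp_{\mathrm{red}}(P/fP))$ is indeed closed: $\Supp(P/fP)$ is finite over $S$ since $P \in Coh_{fin}$, so its image is closed). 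One small point to spell out: the equality $Z_{\mathrm{red}}(\det M) = V(\sqrt{(\det M)})$ as a subset of $S$ is exactly the set of primes containing $\det M$, which is what the pointwise computation handles.

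\textbf{Main obstacle.} The only genuinely delicate point is making the reduction to free $p_*(P)$ legitimate, i.e. checking that $\mathcal N_P(f)$, which a priori is only a global section of $\mathcal O_S$ after trivializing $\det p_*(P)$, has a vanishing locus that is intrinsically defined. The cleanest fix is to note $\det_P(f)$ is a section of $\underline{\Hom}(\det p_*P, \det p_*P) = \mathcal O_S$ canonically, so its zero locus \emph{is} well-defined globally, and on an open $U$ where $p_*P|_U$ is free it is computed by the determinant of the matrix as above; then the pointwise criterion is genuinely local and the gluing is automatic. Everything else is the standard right-exactness-of-tensor / Nakayama-flavored bookkeeping, which I would not write out in full. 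I would also remark that the hypothesis $P \neq 0$ is used only to know $r \geq 1$ so that both sides are honestly the whole of $S$ exactly when $f$ is a nonunit on all of $V$ over $S$ — without it the statement is vacuous but the proof is unaffected.
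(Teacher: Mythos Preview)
Your proposal is correct and follows essentially the same approach as the paper: a pointwise check on $S$ showing that $\mathcal N_P(f)$ vanishes at $s$ iff the fiber $(m^P_f)_s$ fails to be invertible iff $(P/fP)\otimes\kappa(s)\neq 0$. The paper's proof compresses this into two sentences, while you spell out the reduction to free $p_*(P)$, the right-exactness step, and the closedness of the image; one minor imprecision is that $k[V]$ need not be finite over $R$ (only $\Supp P$ is finite over $S$), but this does not affect your argument.
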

\begin{proof}
One can easily check that for any point $z\in S$, $\mathcal N_P(f)=det_{P_z}(f)$, where $P_z$ is a fibre of $P$ over $z$.
So $z\not\in Z_{red}(\mathcal N_P(f)) \Leftrightarrow \text{ the fibre }(m^f_P)_z\text{ is invertible }\Leftrightarrow P/fP=0$.\end{proof}

\begin{definition}\label{def:univ-inj}
For any $p\colon V\to S$, 
a homomorphism $h\colon P\to Q\in Coh(V)$ is called {\em universally injective} over $S$ 
if  $\forall F\in Coh(S)$ the homomorphism $h\otimes p^*(F)$ is injective.
\end{definition}
\begin{lemma}\label{lm:univ-inj}
Suppose $V\xrightarrow{\alpha} U\xrightarrow{p} S$ are morphisms of schemes, $\alpha$ is affine,
and $h\colon P\to Q\in Coh(V)$; 
then $h$ is universally injective over $S$, iff $\alpha_*(h)$ is universally injective on $S$.

2) Suppose $p\colon V\to S$ is affine and flat, and $h\colon P\to Q\in Coh(V)$; 
then the following conditions are equivalent:
(1) $h$ is universally injective over $S$,
(2) $p_*(Coker(h))=Coker(p_*(h))\in Coh(S)$ is flat,
(3) the fibre $h_x=i_x^*(h)$ is injective for any $x\in S$, where $i_x\colon x\to S$ is the injection.

\end{lemma}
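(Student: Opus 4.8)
The plan is to reduce everything to the affine case and then to commutative algebra over a ring. For part (1), note that being universally injective over $S$ is a condition tested against all $F\in \mathrm{Coh}(S)$, and pulling back to $V$ factors through pulling back to $U$: for $F\in \mathrm{Coh}(S)$ we have $h\otimes p^*(F) = h\otimes \alpha^*(p^*F)$, and since $\alpha$ is affine, the functor $\alpha_*$ is exact and faithful on quasi-coherent sheaves, so $h\otimes \alpha^*(p^*F)$ is injective if and only if $\alpha_*(h\otimes \alpha^*(p^*F))$ is injective. By the projection formula $\alpha_*(h\otimes \alpha^*(p^*F)) \simeq \alpha_*(h)\otimes p^*(F)$. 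Thus $h$ is universally injective over $S$ iff $\alpha_*(h)\otimes p^*(F)$ is injective for all $F$, i.e. iff $\alpha_*(h)$ is universally injective over $S$. I would write this out as a short chain of natural isomorphisms; the only thing to be slightly careful about is that the projection formula holds for an affine (hence quasi-compact quasi-separated) morphism and arbitrary quasi-coherent sheaves, which is standard.

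For part (2), since $p$ is affine I can replace $h\colon P\to Q$ by $p_*(h)\colon p_*P\to p_*Q$, a morphism of quasi-coherent $\mathcal O_S$-modules, and since $p$ is flat, $p_*$ commutes with cokernels and with arbitrary base change on $S$; combined with part (1) this lets me assume from the start that $V=S$, $p=\mathrm{id}$, and work Zariski-locally, i.e. with a morphism $h\colon M\to N$ of modules over a commutative ring $A$, with $C=\mathrm{coker}(h)$. Then the three statements become: (1$'$) $h\otimes_A F$ is injective for every $A$-module $F$; (2$'$) $C$ is flat over $A$; (3$'$) $h\otimes_A \kappa(\mathfrak p)$ is injective for every prime (or maximal) $\mathfrak p$. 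The implication (1$'$)$\Rightarrow$(3$'$) is immediate by taking $F=\kappa(\mathfrak p)$. For (3$'$)$\Rightarrow$(2$'$), from the exact sequence $0\to \ker h\to M\to N\to C\to 0$, or rather from $M\twoheadrightarrow \mathrm{im}\,h\hookrightarrow N$ and $0\to \mathrm{im}\,h\to N\to C\to 0$, I would apply $-\otimes_A\kappa(\mathfrak p)$ and chase: the Tor long exact sequence gives $\mathrm{Tor}_1^A(C,\kappa(\mathfrak p)) \hookrightarrow \mathrm{im}(h)\otimes\kappa(\mathfrak p)\to N\otimes\kappa(\mathfrak p)$, and $\mathrm{im}(h)\otimes\kappa(\mathfrak p)$ surjects onto $\mathrm{im}(h\otimes\kappa(\mathfrak p))$, so injectivity of $h\otimes\kappa(\mathfrak p)$ forces $\mathrm{Tor}_1^A(C,\kappa(\mathfrak p))=0$ for all $\mathfrak p$; since the modules involved are coherent over a reasonable base (here the reader may invoke finite presentation, which holds because all our sheaves are coherent and $S$ is a variety, hence Noetherian) this vanishing of $\mathrm{Tor}_1$ against all residue fields gives flatness of $C$. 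Finally (2$'$)$\Rightarrow$(1$'$): if $C$ is flat then $\mathrm{Tor}_1^A(C,F)=0$ for every $F$, so tensoring $0\to \mathrm{im}\,h\to N\to C\to 0$ with $F$ keeps $\mathrm{im}(h)\otimes F\hookrightarrow N\otimes F$ injective; and tensoring $\ker h\to M\twoheadrightarrow \mathrm{im}\,h$ — here one uses that over $S$ the sheaf $\mathrm{im}\,h$ is a quotient, and on the relevant subcategory (locally splitting morphisms, cf. Lemma \ref{lm:exDlocsp}) or by a further localisation argument — one concludes $h\otimes F$ is injective. I would phrase (2$'$)$\Leftrightarrow$(3$'$) as the local flatness criterion and cite it rather than reprove it.

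The main obstacle is the bookkeeping in (3$'$)$\Rightarrow$(2$'$): one must be careful that "$h\otimes\kappa(\mathfrak p)$ injective" is being used correctly — it controls $\mathrm{Tor}_1(C,\kappa(\mathfrak p))$ only after factoring $h$ through its image, and the map $\mathrm{im}(h)\otimes\kappa(\mathfrak p)\to \mathrm{im}(h\otimes\kappa(\mathfrak p))$ need not be an isomorphism in general, so one should argue that its kernel also maps to zero, or equivalently work with the two short exact sequences separately and track that $\ker h$ contributes nothing. I expect this to be a couple of lines of diagram chase once set up correctly. The affineness/flatness reductions in the first paragraph and the start of the second are routine, and the equivalence with flatness is the standard local criterion for flatness (Matsumura, or \cite{MVW_LectMotCohom} for the form used in the motivic literature), so those can be stated with a reference.
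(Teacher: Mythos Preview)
Your approach matches the paper's: both use exactness and faithfulness of affine pushforward for part~(1) (you make the projection formula explicit, the paper leaves it implicit in its one-line ``exact and faithful''), and both reduce part~(2) to a $\tor$ computation on $S$.

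The difficulty you flag in your last paragraph is real and is not merely bookkeeping. The paper's own proof quietly strengthens the hypotheses, writing ``since $P$ and $Q$ are flat objects in $Coh(V)$'' --- not in the lemma's statement, but true in every application in the paper, where $P,Q\in\mathcal P$ are locally free. Under that extra hypothesis, condition (1) or (3) forces $h$ itself to be injective (for (3), this is a Nakayama argument with $p_*P$ finite locally free over a local ring), and then $0\to p_*P\to p_*Q\to p_*C\to 0$ is a two-term flat resolution of $p_*C$; hence $\tor_1(p_*C,F)=\ker(p_*h\otimes F)$ for every $F$, and (1), (2), (3) are each the vanishing of this $\tor$ (for all $F$, respectively for all residue fields, these being equivalent by the local criterion for flatness). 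That is the paper's entire argument for part~(2). Your attempt to rescue (2$'$)$\Rightarrow$(1$'$) via Lemma~\ref{lm:exDlocsp} would be circular; without flatness of $P$ and $Q$ the implication (2)$\Rightarrow$(1) is in fact false (take $h=0\colon\mathcal O_S\to\mathcal O_S$), so the correct fix is to add the flatness hypothesis, after which no further diagram chase is needed.
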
\begin{proof}
1) The claim follows form that the direct image functor with respect to an affine morphism is exact.
2) (1) $\Leftrightarrow$ (2):
By the first point of the lemma 
(1) is equivalent that $p_*(h)$ is universally injective. 
Now we use the reasoning used by Suslin in the case of $K_0$-corr.
Since $p$ is flat, and $P$  and $Q $ are  flat objects in $Coh(V)$, 
we see that $p_*(P)$ and $p_*(Q)$ are flat on $S$ too. 
Since $p_*(h)$ is universally injective, computing $Tor_i(p_*(Coker\, h))$ using the flat resolvent given by $0\hookrightarrow p_*(P)\rightarrow
p_*(Q)\twoheadrightarrow p_*(Coker\, h)$, we see that $Tor_i(p_*(Coker\,h,G))=0$ $\forall G\in Coh(S),$ $i>0$. 
The equivalence (2) $\Leftrightarrow$ (3) follows form that
$F\in Coh(S)$ is flat iff $Tor_i(F,k(x))=0$ for all points $x\in S$.
\end{proof}

\begin{lemma}\label{lm:FinGenSup}
Let $X$, $Y$ be schemes over the base field $k$,
let $P\in \mathcal P(\bbG_m\times X,\bbG_m\times Y)$,
$f\in k[\bbG_m\times X\times \bbG_m \times Y]$,
and $pr_X\colon \GXGY\to X$ denote the canonical projection;
then ${pr_X}_*(P/fP)$ is coherent on $X$ if and only if $\Supp P/fP$ is finite over $X$.
\end{lemma}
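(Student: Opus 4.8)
\textbf{Proof proposal for Lemma \ref{lm:FinGenSup}.}

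The plan is to reduce the statement to a standard fact about coherent sheaves supported finitely over a base, using the projectivity of $\bGm$'s partial compactification and the finiteness of the support of the original correspondence $P$. First I would recall that by the definition of $\calP(\bGm\times X,\bGm\times Y)$, the sheaf $P$ lives on $\bGm\times X\times\bGm\times Y$ and has support finite over $\bGm\times X$ (the first factor), and $(\mathrm{pr}_{\bGm\times X})_*(P)$ is locally free over $\bGm\times X$; in particular $\Supp P$ is already a closed subscheme of $\bGm\times X\times\bGm\times Y$ quasi-finite over $\bGm\times X$. The quotient $P/fP = \Coker(m^P_f)$ is a coherent subquotient, so $\Supp P/fP\subseteq\Supp P$ is again quasi-finite, hence of dimension at most $\dim(\bGm\times X)$, and its image under $\mathrm{pr}_{\bGm\times X}$ is contained in $Z(\mathcal N_P(f))$ by Lemma \ref{lm:redSupN_P} (with $V=\bGm\times X\times\bGm\times Y$, $S=\bGm\times X$).

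The ``only if'' direction is the easy one: if $(\mathrm{pr}_X)_*(P/fP)$ is coherent on $X$, then since $P/fP$ is already finite (and hence its pushforward to $X$ is quasi-coherent and supported on $\mathrm{pr}_X(\Supp P/fP)$), coherence of the pushforward forces $\mathrm{pr}_X(\Supp P/fP)$ to be a closed subscheme over which $P/fP$ is finite; combined with quasi-finiteness this gives finiteness over $X$. More directly: $P/fP$ has support quasi-finite over $X$, and a quasi-coherent sheaf with quasi-finite support whose pushforward to $X$ is coherent must have support finite over $X$, since coherence of the pushforward of a quasi-finite morphism is equivalent to properness on the support (one can check this by base change to points of $X$ and using that a quasi-finite morphism is finite iff it is proper). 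For the ``if'' direction, suppose $\Supp P/fP$ is finite over $X$. Then the composite $\Supp P/fP\hookrightarrow\bGm\times X\times\bGm\times Y\xrightarrow{\mathrm{pr}_X}X$ is finite, so $(\mathrm{pr}_X)_*(P/fP) = (\mathrm{pr}_X|_{\Supp P/fP})_*(P/fP)$ is the pushforward of a coherent sheaf along a finite morphism, hence coherent on $X$.

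The key technical point, and the step I expect to need the most care, is the ``only if'' direction: translating ``pushforward coherent'' into ``support proper (hence finite) over $X$'' for the quasi-finite morphism $\Supp P/fP\to X$. The natural route is to use the factorization through the partial compactification: replace $\bGm$ by $\PP^1$ on both factors, so $P/fP$ extends (by zero outside its support, which is already closed in $\bGm\times X\times\bGm\times Y$) — but one must be careful that the support, though finite over $\bGm\times X$, need not be closed in $\PP^1\times X\times\PP^1\times Y$. Instead I would argue fibrewise: for each point $x\in X$, coherence of $(\mathrm{pr}_X)_*(P/fP)$ together with the compatibility of pushforward with the flat base change $\Spec k(x)\to X$ (here using that $P/fP$ has support quasi-finite, so its pushforward commutes with such base change up to the usual comparison) shows that the fibre $(P/fP)_x$ is a finite-dimensional $k(x)$-vector space; hence $\Supp(P/fP)_x$ is a finite set of closed points in the fibre $\bGm\times\bGm\times Y$ over $x$. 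Thus $\mathrm{pr}_X\colon\Supp P/fP\to X$ has finite fibres and, being quasi-finite, is finite once one verifies it is also closed; closedness follows because $\Supp P/fP$ is closed in $\bGm\times X\times\bGm\times Y$ and finite over the first factor $\bGm\times X$ (Lemma \ref{lm:redSupN_P} pins its image in $\bGm\times X$ down to the closed subscheme $Z(\mathcal N_P(f))$, which maps to $X$; finiteness of $P/fP$ over $X$ is then equivalent to $Z(\mathcal N_P(f))\to X$ being finite on the relevant component, which is exactly the condition that $(\mathrm{pr}_X)_*(P/fP)$ be coherent). This last bookkeeping — matching up ``$\mathrm{pr}_X$-pushforward coherent'' with ``$Z(\mathcal N_P(f))$ finite over $X$ near the image of $\Supp P/fP$'' — is where the argument must be written out carefully, but it is essentially formal once Lemma \ref{lm:redSupN_P} is invoked.
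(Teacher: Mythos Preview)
Your ``if'' direction is fine, but the ``only if'' direction is both overcomplicated and incomplete. The paper dispatches the entire lemma in one line by citing the general fact: for an affine morphism $p\colon T\to S$ with $S$ Noetherian and $\mathcal F$ coherent on $T$, the pushforward $p_*\mathcal F$ is coherent on $S$ if and only if $\Supp\mathcal F$ is finite over $S$. You never isolate this fact, and your attempt to prove the ``only if'' half directly runs into trouble.

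Specifically: your first assertion that ``$P/fP$ has support quasi-finite over $X$'' is not justified a priori --- the support is finite over $\bGm\times X$, but $\bGm\times X\to X$ has one-dimensional fibres. Your fibrewise argument can recover quasi-finiteness (note that pushforward along an affine morphism commutes with \emph{arbitrary} base change, not just flat base change as you write), but you still owe properness. Your attempt to extract it from Lemma~\ref{lm:redSupN_P} is circular: you conclude by saying that finiteness of $Z(\mathcal N_P(f))\to X$ on the relevant component ``is exactly the condition that $(\mathrm{pr}_X)_*(P/fP)$ be coherent'' --- but that equivalence is the statement you are trying to prove.

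The clean argument for the general fact is purely algebraic and avoids all of this. Reduce to $S=\Spec A$ (Noetherian), $T=\Spec B$, $\mathcal F=\widetilde{M}$ with $M$ a finitely generated $B$-module. If $M$ is also finitely generated over $A$, pick $A$-generators $m_1,\dots,m_n$; then $b\mapsto(bm_1,\dots,bm_n)$ embeds $B/\Ann_B(M)$ into $M^n$, so $B/\Ann_B(M)$ is a finitely generated $A$-module and $\Supp M=\Spec(B/\Ann_B(M))$ is finite over $S$. No fibrewise analysis, no appeal to $\mathcal N_P(f)$, and no separate properness check is needed.
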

\begin{proof}
The claim follows from the general fact that direct image of a coherent $\mathcal F$ sheaf along a affine morphism $p\colon T\to S$ is coherent iff $\Supp \mathcal F$ is finite over $S$. 
\end{proof}

\begin{lemma}\label{lm:Nfg}
Let $P\in \mathcal P(\bbG_m\times X,\bbG_m\times Y)$, 
$f\in k[\bbG_m\times X\times \bbG_m \times Y]$,
$\calN=det_P(f)\in k[\bbGX]$. 
(1) Then $m_{\calN,P} = m_{f,P}\circ m_{g,P}$ for some $g\in k[\bbG_m\times X\times \bbG_m \times Y]$, (1') and if $f\in k[\bGm\times\bGm]$, there is a such a function $g$ in $k[X\times \bGm\times Y\times\bGm ]$.

If the endomorphism $m^P_f\in End(P)$ is injective, then
(2) 
$m^P_{\mathcal N}$, $m^P_g$ are injective,
(2') 
and moreover $\Ker m_g^{\pri P} = f\pri P$, $g\pri P\simeq \Coim m_g^{\pri P} \simeq \Image m_g^{\pri P} \simeq \pri P/f\pri P $, $\Coker m_g^{\pri P} \simeq \pri P/g\pri P$,
where 
$\pri P=P/\calN P$.

If $\Supp P/fP$ is finite over $X$, then
(3)
$Z(\mathcal N)$ is finite over $X$, 
(3') $m^P_f$, $m^P_{\mathcal N}$, and $m^P_{g}$ are injective over each point of $X$;
(3'')
the short exact sequences 
$f\pri P \hookrightarrow \pri P \twoheadrightarrow g\pri P$, $\pri P/f\pri P\hookrightarrow \pri P\twoheadrightarrow \pri P/g\pri P$
(given by point (2'))
splits locally (see Def. \ref{def:locspl}), 
and 
$f\pri P, g\pri P, \pri P/f\pri P, \pri P/g\pri P $ are locally free of a finite rank.
%
\end{lemma}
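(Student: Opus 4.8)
\textbf{Proof proposal for Lemma \ref{lm:Nfg}.}

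The plan is to reduce everything to the affine case via Lemma \ref{lm:FinGenSup} and the fact that the projection $pr_X\colon \bbG_m\times X\times \bbG_m\times Y\to X$ factors through the affine morphism $\bbG_m\times X\times\bbG_m\times Y\to \bbG_m\times X$; so after taking direct images along $pr_X$ (which by Lemma \ref{lm:univ-inj}(1) preserves universal injectivity) the statements become statements about a finitely generated projective $\mathcal O(X)$-module $M = (pr_X)_*(P)$, the endomorphism $m_f$, and its determinant $\mathcal N = \det_M(f)$. First, for (1) and (1'): over the ring $R=\mathcal O(\bbG_m\times X)$ (or $R=\mathcal O(\bbG_m\times\bbG_m)$ in the primed case) the Cayley--Hamilton theorem applied to the $R$-linear endomorphism $m_f$ of the faithfully projective module $M$ gives $\mathcal N\cdot\mathrm{id}_M = m_f\circ (-\,\mathrm{adj}(m_f))$ up to sign conventions, i.e.\ $m_{\mathcal N}=m_f\circ m_g$ where $g$ is the regular function representing the classical adjoint/the lower coefficients of the characteristic polynomial; since $\det_M(f)$ and the adjoint entries are polynomial expressions in the matrix entries of $m_f$, they live in $k[\bbG_m\times X]$ resp.\ $k[\bbG_m\times\bbG_m]$ as required. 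Then (2) is immediate: if $m_f$ is injective then so is any composite $m_f\circ m_g$, hence $m_{\mathcal N}$ is injective, and then $m_g$ is injective because $m_{\mathcal N}=m_f m_g$ is. For (2'), since $m_f$ is injective and $m_{\mathcal N}=m_f m_g$, one has $\ker(\overline{m_g})$ on $\pri P=P/\mathcal N P$ equal to $\{a : m_g a\in \mathcal N P\} = \{a : m_f m_g a = \mathcal N\, m_f(\text{something})\}$; using injectivity of $m_f$ this becomes $m_g a\in \mathcal N P \Leftrightarrow m_f m_g a\in m_f(\mathcal N P) \Leftrightarrow \mathcal N a\in \mathcal N P$ (again by injectivity of $m_f$ and of $m_{\mathcal N}$), i.e.\ $a\in \ker(\overline{m_{\mathcal N}})=0$ — wait, that is not quite the claim; rather the standard computation is $\ker(m_g\bmod \mathcal N P) = (f P + \mathcal N P)/\mathcal N P = f\pri P$, which follows by a diagram chase from $m_{\mathcal N}=m_f m_g = m_g m_f$ together with injectivity of $m_f$, and dually $\mathrm{coker}(m_g \bmod \mathcal N P)\simeq \pri P/g\pri P$, $\mathrm{im}\simeq \mathrm{coim}\simeq g\pri P\simeq \pri P/f\pri P$; I would write this out as an application of the snake lemma to the two-row diagram with rows $P\xrightarrow{m_f}P$ and $P\xrightarrow{m_g}P$ connected by $m_{\mathcal N}$.

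For part (3), $Z(\mathcal N)$ finite over $X$ follows from Lemma \ref{lm:redSupN_P}, which gives $Z_{red}(\mathcal N)=pr_X(\Suppred P/fP)$ set-theoretically, together with the hypothesis that $\Supp P/fP$ is finite over $X$ — a closed subscheme whose reduction maps finitely and which is itself proper (being a closed subscheme of the affine-over-$X$ scheme $\bbG_m\times X$ supported over a finite-over-$X$ set) is finite over $X$; alternatively, $Z(\mathcal N)\subset \bbG_m\times X$ is cut out in the affine-over-$X$ scheme by one equation and has the same reduction as a finite-over-$X$ scheme, hence is quasi-finite and, being closed in $\bbG_m\times X = \mathbb A^1_X\setminus(0\text{-section})$ compactifiable over $X$, is finite. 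Part (3') is pointwise injectivity: by Lemma \ref{lm:univ-inj}(2), since $(pr_X)_*(P/fP)$ is coherent over $X$ (Lemma \ref{lm:FinGenSup}) and, being supported on a finite-over-$X$ scheme, is in fact finite projective over the relevant local rings after shrinking — actually the cleanest route is: $m_f$ injective plus $(pr_X)_*(\mathrm{coker}\, m_f)=(pr_X)_*(P/fP)$ locally free over $X$ (which holds because $\Supp P/fP$ finite over $X$ forces its pushforward to be a finitely generated torsion-free, hence after the usual argument locally free, $\mathcal O(X)$-module; this uses $P\in\mathcal P$ and Lemma \ref{lm:univ-inj}(2)$\Rightarrow$(3)) gives that $m_f$ is universally injective over $X$, hence its fibres $i_x^*(m_f)$ are injective; the same then applies to $m_{\mathcal N}=m_f m_g$ and, using injectivity of the fibres of $m_f$, to $m_g$. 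Finally (3''): once we know $(pr_X)_*$ of $\pri P/f\pri P$ and $\pri P/g\pri P$ are locally free of finite rank over $X$, the four modules $f\pri P, g\pri P, \pri P/f\pri P, \pri P/g\pri P$ sit in the two short exact sequences of (2'), and since their pushforwards to $X$ are locally free, the sequences split locally over $X$, i.e.\ the morphisms are locally splitting in the sense of Definition \ref{def:locspl}; local freeness of $f\pri P\simeq \pri P/g\pri P$ and $g\pri P\simeq \pri P/f\pri P$ then follows. To establish the local freeness of $(pr_X)_*(\pri P/f\pri P)$ I would use the pointwise injectivity from (3') together with Lemma \ref{lm:univ-inj}(2): universal injectivity of $m_f$ (already obtained) gives flatness of the cokernel, and a finitely generated flat module over a Noetherian ring whose support is finite over the base is, Zariski-locally on $X$, finite projective.

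The main obstacle I expect is part (3) — proving $Z(\mathcal N)$ is finite over $X$ and, more importantly, threading the equivalences of Lemma \ref{lm:univ-inj}(2) to upgrade "$\Supp P/fP$ finite over $X$" into the honest local freeness statements of (3'') rather than just flatness. The subtlety is that $\mathcal P$ only guarantees $P$ is projective over $X$ after pushforward, not that quotients like $\pri P/f\pri P$ are; one has to carefully check that the snake-lemma identifications in (2') are compatible with the pushforward functor $(pr_X)_*$ (which is exact here since $pr_X$ restricted to the relevant supports is affine and even finite) so that local freeness over $X$ can be deduced. The computations in (1)/(1') (Cayley--Hamilton and tracking which polynomial ring the adjoint lives in) and the diagram chase in (2') are routine once the affine reduction is in place.
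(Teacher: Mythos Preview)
Your overall strategy matches the paper's, and several parts---(1)/(1') via Cayley--Hamilton, (2') via the kernel computation $m_g^{-1}(\mathcal N P)=fP$, and (3'') via Lemma~\ref{lm:univ-inj}---are essentially the same. Your argument for (3) (that $Z(\mathcal N)$ is finite over $X$ because $Z_{red}(\mathcal N)$ is and an affine morphism whose reduction is proper is itself finite) is in fact a bit cleaner than the paper's, which explicitly compactifies in $\mathbb P^1_X$ and checks the closure avoids $\{0,\infty\}\times X$.

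There are, however, two genuine gaps. In (2) you write ``if $m_f$ is injective then so is any composite $m_f\circ m_g$'', which is false: injectivity of $m_f$ only gives $\ker(m_f m_g)=\ker m_g$. The paper instead uses that $(pr_{\bGm\times X})_*P$ is locally free of finite rank on $\bGm\times X$, so injectivity of an endomorphism can be tested at the generic point $\nu$ of (each component of) $\bGm\times X$; there the module is a finite-dimensional $k(\nu)$-vector space, and $\det((m_{\mathcal N})_\nu)=\mathcal N(\nu)^{\rank P}$ with $\mathcal N(\nu)=\det((m_f)_\nu)\neq 0$ by the assumed injectivity of $m_f$.

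Second, your route to (3') is circular. You attempt to deduce fibrewise injectivity of $m_f$ from local freeness of $(pr_X)_*(P/fP)$, justified by ``finitely generated torsion-free, hence locally free''. Neither step holds: torsion-freeness is not established, and torsion-free coherent sheaves on smooth schemes of dimension $>1$ need not be locally free. The paper proves (3') directly, by the same generic-point trick one dimension down: for each $x\in X$ the fibre $P_x$ is locally free over the smooth curve $\bGm\times x$; since $\Supp P/fP$ is finite over $X$, the cokernel vanishes at the generic point $\nu$ of $\bGm\times x$, so $(m_f)_\nu$ is surjective and hence bijective; torsion-freeness of $P_x$ over the curve then gives injectivity of $(m_f)_x$. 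Only \emph{after} (3') is in hand does one invoke Lemma~\ref{lm:univ-inj} to get flatness of the pushforwards, hence (with coherence from Lemma~\ref{lm:FinGenSup}) local freeness, and then (3'') follows exactly as you describe.
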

\begin{proof}
1) Consider $p(\lambda)=det_P(\lambda - m_f)$, $p(\lambda)=\lambda^{\rank P} + a_1 \lambda^{\rank P-1} +\dots + a_{\rank P} $, $a_i\in k[\bG_m\times X]$, where $m_f=m_f^P$, 
Since $det_P(f-m_f)=0$, it follows that 
$p(\lambda)= (\lambda -  f)\tilde g(\lambda)$ for some $g(\lambda)\in k[\bG_m\times X\times\bG_m\times Y][\lambda]$. 
Hence $\mathcal N = p(0)= f g$, where $g=\tilde g(0)$.
Point (1') 
follows from that if $f\in k[\bG_m^2]$, then $a_i,f\in k[X\times \bGm\times\bGm]$, and
$g$ is contained in the subalgebra  in $k[\bG_m\times X\times\bG_m\times Y]$ generated $a_i$ and $f$.

2) Suppose $m^P_f$ is injective.
Since $\mathcal N=fg$,  $m^P_g$, and $m^P_f$ are injective.
Since $\Supp P$ is finite over $\bGm$, 
for any $h\in k[X\times\bGm\times \bbGY]$ 
 $m^{P}_h$ is injective iff
 $pr_{X \times\bGm}(m^{P}_h)$  injective.
By the definition of $\calP(X\times\bGm,\bbGY)$ 
the sheaf $pr_{x\times\bGm}(m^{P_x}_h)$ is coherent and locally free. 
Hence
$pr_{X\times\bGm}(m^{P}_h)$ is injective iff
the homomorphism $\nu^*(pr_{x\times\bGm}(m^{P_x}_h))$ induced over generic point $\nu\hookrightarrow X\times\bGm$ is injective.
Then $\nu^*(pr_{x\times\bGm}(m^{P_x}_h))$ is injective iff $det(\nu^*(pr_{x\times\bGm}(m^{P_x}_h)))\in k(\nu)$ is invertible,
since $\dim_{k(\nu)} \nu^*(pr_{x\times\bGm}(m^{P_x}_h))<\infty$. 
Now since $det(\nu^*(pr_{x\times\bGm}(m^{P_x}_h)))=\calN(\nu)$, we see that $m^P_{\mathcal N}$ is injective. 

2')
Since $m_g$ is injective by (2), and since $m_g(fP) = \mathcal N P\subset P$,
it follows that
$m_g^{-1}(\calN P) = f P$, and hence $Ker(\pri m_g)=f\pri P$.
Other qualities are by the definition.

4)
The surjection $\Suppred (P/fP)\twoheadrightarrow Z(\calN)_{red}$ implies that $Z(\calN)_{red}$ is finite over $X$.
Let $l_\infty$, $l_0$ be minimal integers such that 
$\exists r\in \Gamma(\prl\times X,\calO(l_\infty+l_0))$, $\calN=r/(t_\infty^{l_\infty} t_0^{l_0})$.

We are going to show that $Z(r)\cap \{0,\infty\}\times X=\emptyset$.
Since $Z(\mathcal N)_{red}$ is finite over $X$, $Z(\mathcal N)_{red}$ is closed in $Z(r)_{red}$; since $Z(\calN)_{red}$ is equal to $Z(r)_{red}\cap\bbGX$, $Z(\mathcal N)_{red}$ is open in $Z(r)_{red}$. 
Hence $Z(r)_{red}=Z(\mathcal N)_{red}\amalg Z_\infty\amalg Z_0$, $Z_\infty\subset \infty\times X$, $Z_0\subset 0\times X$. 
Suppose that $Z_\infty\neq\emptyset $. Then $dim\,Z_{\infty} = dim\,(\bbGX)-1=dim\,(\infty\times X)$, and $Z_\infty = \infty\times X$. So $r\big|_{\infty\times X}=0$. But this contradicts to the minimality of $l_\infty$. Thus $Z_\infty=\emptyset$, and similarly $Z_0=\emptyset$.

So we get $Z(r) = Z(r)\cap \bbGX=Z(\calN)$.
Hence $Z(\calN)$ is projective over $X$.  Since as was shown $Z(\calN)_{red}$ is finite over $X$, it follows that $Z(\calN)$ is quasi-finite over $X$. Whence $Z(\calN)$ is finite over $X$.

3') Let $x\in X$.
By the same reasoning as in (2') applied to the fibre $x^*(P)$ over a point $x\in X$ we see that
if $(m_f^P)_x$ is injective, then $(m^P_{\mathcal N})_x$, $(m^P_g)_x$ are injective. And moreover,
the $(m_f^P)_x$ is injective iff $m^{\nu^*(P_x)}_f$ is invertible,
where $\nu$ is generic point of $ x\times\bGm$.
If $\Supp (P/fP)$ is finite over $X$, then $\mu^*(P/fP)=0$, so $\coker(m^{\nu^*(P_x)}_f)=0$, and hence $m^{\nu^*(P_x)}_f$ is invertible, since $\nu^*(P_x)$ is finite dimensional vector space over $k(\nu)$. 

3'')
By the point (3) $Z(\mathcal N)$ is finite over $X$.
Hence ${pr_X}_*(P/\mathcal NP)\in coh(X)$ and consequently ${pr_X}_*(P/f P), {pr_X}_*(P/g P)\in coh(X)$.  
Now by (3') $m^P_f$, $m^P_{\mathcal N}$ and $m^P_g$ are injective over each point in $X$.
By Lemma \ref{lm:univ-inj} this implies that ${pr_X}_*(P/fP)$ , ${pr_X}_*(P/\mathcal NP)$ and ${pr_X}_*(P/gP)$ are flat over $X$.
The claim follows, 
since any finitely generated flat coherent sheaf is locally free, 
and any short exact sequence with locally free cokernels locally splits.
\end{proof}

\section{$\cup$-product of quadratic spaces with a function.}\label{sect:CupProdQf} {
We construct some operation $\rho$ which
takes $(P,q)\in Q(X\times\bGm,Y\times\bGm)$
and a suitable $f\in k[\bGm\times\bGm]$
to some $(P/fP, q^\prime)\in Q(X,Y)$ (see Def. \ref{def:rho_triple}).
Informally it can be considered as a $\cup$-product of a quadratic space with a regular function.
The function $f$ is applicable to $(P,q)$ 
only if
$Z(f)$ is finite over $Y\times\bGm$ and transversal to $\Supp P$, but it isn't enough.
Moreover, by the construction $\rho$ depends on a triple of functions $(f,\mathcal N,g)$, where 
$\mathcal N\in k[X\times\bGm]$, $g\in k[X\times\bGm\times Y\times\bGm]$ such that $\mathcal N = f g$, $Z(\mathcal N)$ is finite over $X$ (see Def. \ref{def:ftripl}).
A set of 'f'-triples that we can use in our construction with a given quadratic space $(P,q)$ we call as $(P,q)$-applicable 'f'-triples.

So to apply our construction 
we should firstly prove the existence of 
an applicable triple (Lemma \ref{lm:appltripl-fNg_min}) and a partly ndependence form the choice of $\cal N$ and $g$
Also 
we show that the operation is well defined on Grothendieck-Witt and Witt groups. 

In this and the next section we assume that $char\,k\neq 2$, where $k$ is the base field.

\begin{definition}\label{def:Qe}
Let $X$ be a scheme, 
$P\in coh(X)$, 
$q$ a symmetric quadratic form $q\colon P\to D_X(P)$,
and 
$e\in End(P)$, $q\circ e = D(e)\circ q\in Hom(P,D_X(P))$;
then 
denote by $e\cdot (P,q)$ the pair $(P,q^\prime)$,
where
$q^\prime = q\circ e\in Hom(P,D_X(P))$, which symmetric quadratic form, since $e$ is self-adjoint.  
\end{definition}

\begin{definition}\label{def:Qred}
For any $(P,q)\in preQ(coh_{fin}(X,Y))$,
we denote by $red(P,q)$ (or $(P,q)_{red}$) 
the pair $(Im q, q_{red}) \in preQ(\mathcal Coh_{fin}(X,Y))$, 
where $q_{red}\colon \Im q\to D(\Im q)$ is a unique homomorphism such that 
$q= D(\image q)\circ q_{red}\circ \image q$ 
%
defined by universal property of the kernels and cokernels.
\end{definition}

\begin{lemma}\label{lm:eQred}
Let $(P,q)\in Q(\mathcal P(X,Y))$ and $e\colon P\to P$ be a locally splitting (see Def. \ref{def:locspl}) self-adjoint endomorphism;
then $red(e\cdot (P,q))\in Q(\mathcal P(X,Y))$.
\end{lemma}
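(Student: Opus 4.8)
The plan is to show that $\red(e\cdot(P,q))$, which a priori lives in $preQ(\mathcal P(X,Y))$ by Definition \ref{def:Qred}, is actually a non-degenerate quadratic space, i.e. its structure map is an isomorphism, and that it stays inside $\mathcal P(X,Y)$ (so the underlying sheaf is in $\calP^Y_X$, meaning locally free after pushing to $X$). Both points are local on $X$, so after base change along an open $U\subset X$ (or a local scheme $X_x\to X$) the duality $D_X$ restricts to $D_U$ by the base-change property used in Lemma \ref{lm:exDlocsp}(1), and by hypothesis $e$ remains locally splitting. Hence I may assume $X$ local and, shrinking further, that both short exact sequences $\ker e\hookrightarrow P\twoheadrightarrow \image e$ and $\image e\hookrightarrow P\twoheadrightarrow \coker e$ actually split.

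First I would identify $\red(e\cdot(P,q))$ concretely. Write $q'=q\circ e$. Since $e$ is self-adjoint (with respect to $q$, as in Definition \ref{def:Qe}) and $q$ is an isomorphism, the image of $q'$ is $\image(D(e)\circ q)=D(e)(D(P))$, and via the isomorphism $q$ this is identified with the image of $e$ inside $P$ up to applying $D$; more usefully, $\Image q' \simeq \Image e$ after composing with $q^{-1}$ on the appropriate side. Using the splitting $P=\image e\oplus \ker e$ (as $e$ is locally splitting and we have localized) together with the dual splitting from Lemma \ref{lm:exDlocsp}(2) — which says the vertical maps in diagram \eqref{eq:diagDualMorphisms} are isomorphisms — one gets $D(P)=D(\image e)\oplus D(\ker e)$ with $q'$ carrying $\image e$ isomorphically onto $D(\image e)$ and killing $\ker e$. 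Concretely, writing $\iota\colon \image e\hookrightarrow P$ for the inclusion and $\pi\colon P\twoheadrightarrow \image e$ for the retraction coming from the splitting, the reduced form $q'_{\red}\colon \image e\to D(\image e)$ equals $D(\iota)\circ q'\circ \iota = D(\iota)\circ q\circ e\circ\iota$. The key computation is that this is an isomorphism: because $e$ is self-adjoint, $q$ restricted appropriately pairs $\image e$ with $\image e$ non-degenerately (the restriction of a non-degenerate form $q$ along the summand $\image e$ complemented by $\ker e = \ker(D(e)\circ q)$ is again non-degenerate), and $e|_{\image e}$ is an automorphism of $\image e$ since $\image e\oplus \ker e = P$ and $e$ acts as an automorphism on its image summand in the split situation. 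Composing an automorphism with a non-degenerate symmetric form gives a non-degenerate symmetric form.

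It remains to check the underlying sheaf $\image e = \image q'$ (up to the canonical identifications) lies in $\calP^Y_X$, i.e. $\Supp$ is finite over $X$ and ${pr_X}_*$ of it is locally free. Finiteness of support is inherited from $P$ since $\image e$ is a subquotient of $P$ and $\Supp P$ is finite over $X$. Local freeness of the pushforward follows because $\image e$ is a locally split summand of $P$ (again using that $e$ is locally splitting and we have localized), so ${pr_X}_*(\image e)$ is a locally split summand of the locally free sheaf ${pr_X}_*(P)$, hence locally free. Finally, the global statement is recovered by observing these local identifications glue: both the non-degeneracy and the local-freeness are local conditions that we have just verified on an open cover, and $\red$ and $e\cdot(-)$ are defined by universal properties compatible with restriction. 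The main obstacle I anticipate is bookkeeping the duality identifications carefully — in particular verifying that $\ker(D(e)\circ q) = q^{-1}(D(\coker e))$ is genuinely the orthogonal complement of $\image e$ so that the restriction of $q$ to $\image e$ is non-degenerate; this is exactly where Lemma \ref{lm:exDlocsp}(2) (the vertical arrows in \eqref{eq:diagDualMorphisms} being isomorphisms for locally splitting morphisms) does the real work, and everything else is formal.
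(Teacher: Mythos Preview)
There is a genuine gap. Your central move is the internal direct-sum decomposition $P=\image e\oplus\ker e$, from which you deduce both that $e|_{\image e}$ is an automorphism and that the restriction $D(\iota)\circ q\circ e\circ\iota$ is the reduced form and is non-degenerate. But ``locally splitting'' in Definition~\ref{def:locspl} only says that the two short exact sequences $\ker e\hookrightarrow P\twoheadrightarrow\image e$ and $\image e\hookrightarrow P\twoheadrightarrow\coker e$ split \emph{abstractly} after localizing; it does \emph{not} say that the submodules $\image e$ and $\ker e$ are complementary in $P$. A concrete counterexample over a field (so everything splits): take $P=k^2$ with the hyperbolic form $q=\begin{psmallmatrix}0&1\\1&0\end{psmallmatrix}$ and $e=\begin{psmallmatrix}0&1\\0&0\end{psmallmatrix}$. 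Then $e$ is self-adjoint for $q$, both sequences split, yet $\ker e=\image e$ is the span of $(1,0)$, so $\image e\cap\ker e\neq 0$, $e|_{\image e}=0$, and your formula $D(\iota)\circ q\circ e\circ\iota$ vanishes---while the true reduced space is the rank-one form $\langle 1\rangle$.

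The underlying confusion is between $\Image e$ as a \emph{submodule} of $P$ (via $\iota$) and as the \emph{quotient} $P/\Ker e=\Coim q'$; the reduced form lives on the latter, and the identification with the former goes through $e$, not through a chosen section. The paper sidesteps this entirely: it writes $q'_{\red}$ as the composite $\Image e\xrightarrow{u}\Image D(e)\xrightarrow{w}D(\Image e)$, where $u$ is induced by the isomorphism $q$ (using self-adjointness $q\circ e=D(e)\circ q$) and $w$ is the comparison map from diagram~\eqref{eq:diagDualMorphisms}. Then $u$ is an isomorphism because $q$ is, and $w$ is an isomorphism precisely by Lemma~\ref{lm:exDlocsp}(2). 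Your instinct that Lemma~\ref{lm:exDlocsp}(2) does the real work is right, but it enters as the isomorphy of $w$, not as an internal splitting of $P$. Your verification that $\Image e\in\mathcal P(X,Y)$ is fine and is the one point the paper leaves implicit.
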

\begin{proof}

By definition $(e\cdot q)_{red}$ is equal to composition $Im e\xrightarrow{u} Im D_X(e) \xrightarrow{w} D_X(Im e)$,
where 
$u$ is defined by universal property of the image.
So there is the commutative diagram
$$\xymatrix{
\Ker e \ar@{^(->}[r] & 
P\ar@/^/[rr]^e \ar[d]_{q} \ar@{->>}[r] 
& \Image(e)  \ar[d]^u \ar@/^2pc/[dd]^<<<<<<<{q_{red}} \ar@{^(->}[r] 
& P \ar[d]_{q}^{\simeq} \ar@{->>}[r]&
\Coker e\\
\Ker D_X(e)\ar@{^(->}[r] \ar[d]&
D_X(P) \ar@{->>}[r] \ar@{=}[d]&
\Image D_X(e) \ar@{^(->}[r] \ar[d]^{w}&
D_X(P) \ar@{->>}[r] \ar@{=}[d]&
\Coker D_X(e) \ar[d]
\\
D_X(\Coker e)\ar@{^(->}[r]&
D_X(P)\ar@/_/[rr]_{D(e)} \ar@{->>}[r] &
D_X(\Image(e)) \ar@{^(->}[r] &
D_X(P) \ar@{->>}[r]&
D_X(\Ker e)
.
}$$
Since $(P,q)\in Q(\mathcal P(X,Y))$, we see that $q$ is isomorphism and hence $u$ is isomorphism. 
Since $e$ is locally splitting, if follows from Lm \ref{lm:exDlocsp} that $w$ is isomorphism.
The claim follows.
%
%
%
%
\end{proof}

\begin{remark}\label{rem:appltripl} 
For any $P$ as above 
$\Supp P/fP$ is finite over $X$ iff ${pr_{X}}_*(P/fP)\in coh(X)$.
\end{remark}

\begin{definition}\label{def:ftripl} 
For a pair of varieties $X, Y$
we call by an \emph{'f'-tripe} a 
triple $(f,\mathcal N,g)$, $f,g\in k[\GXGY]$, $\calN\in k[\bGX]$ such that $Z(\mathcal N)$ is finite over $X$.

An 'f'-tripe $(f,\mathcal N,g)$,
is called {\em applicable} with respect to $(P,q)$ or just a $(P,q)$-triple
iff $\Supp  P/fP$ is finite over $X$
and $m_{\mathcal N,P} = m_{f,P} \circ m_{g,P}\in End(P)$.

A $(P,q)$-triple $(f,\mathcal N,g)$ is called \emph{normal} $(P,q)$-triple, 
whenever there are regular functions $\mathcal N_{ad}$ on $\bGX$ and $g_{min}$ on $\GXGY$,
such that
$\mathcal N = \mathcal N_P(f) \mathcal N_{ad}$ and $g = g_{min} \mathcal N_{ad}$.
\end{definition}

\begin{definition}[\bf  The map $\rho$]\label{def:rho_triple}
For an $(P,q)$-triple $(f,\mathcal N,g)$ (see Definition \ref{def:ftripl})
put
$$\rho_{(f,\mathcal N,g)}(P,q) = red( m_{g,P}\cdot ( (P,q) \circ \langle \mathcal N , \bGX \rangle ) ) \in preQ(\mathcal P(X,Y))$$
(see Def. \ref{def:fQclsp}, \ref{def:Qe}, \ref{def:Qred}) 
\end{definition}

\begin{lemma}\label{lm:rho_triple}
For a $(P,q)$-triple $(f,\mathcal N,g)$,
if $(P,q)\in Q(\mathcal P(\GXqGY))$ 
then $\rho_{(f,\mathcal N,g)}(P,q)\in Q(\mathcal P(X,Y))$
\end{lemma}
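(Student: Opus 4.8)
The plan is to unwind the definition of $\rho_{(f,\mathcal N,g)}(P,q)$ step by step, tracking at each stage which ambient category with duality we live in and checking that the relevant morphisms are \emph{locally splitting} so that Lemma \ref{lm:eQred} applies. The key input is Lemma \ref{lm:Nfg}, whose hypotheses are met: since $(f,\mathcal N,g)$ is a $(P,q)$-triple we have $\Supp P/fP$ finite over $X$ and $m_{\mathcal N,P}=m_{f,P}\circ m_{g,P}$; and since $(P,q)\in Q(\mathcal P(\GXqGY))$ the form $q$ is an isomorphism, so in particular $m^P_f$ is injective (its kernel is killed by the self-duality). Thus part (3$''$) of Lemma \ref{lm:Nfg} gives that, writing $\pri P=P/\mathcal N P$, the endomorphism $m_g^{\pri P}$ is locally splitting with $\Coker m_g^{\pri P}\simeq \pri P/g\pri P$, $\Image m_g^{\pri P}\simeq \pri P/f\pri P$, and all four of $f\pri P$, $g\pri P$, $\pri P/f\pri P$, $\pri P/g\pri P$ locally free of finite rank over $X$.

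First I would observe that $\langle \mathcal N,\bGX\rangle\in Q(\mathcal P(X,\bGX))$ by Proposition \ref{prop:constrFP->Q}(2) (using that $Z(\mathcal N)$ is finite over $X$, which holds by Lemma \ref{lm:Nfg}(3)), so the composition $(P,q)\circ\langle\mathcal N,\bGX\rangle$ is a genuine quadratic space in $Q(\mathcal P(X,Y))$, because composition of quadratic \emph{spaces} is again a quadratic space (functoriality of \eqref{eq:compQ} on the subcategories $\mathcal P$). Concretely the underlying sheaf is $\pri P=P\otimes_{k[\bGX]} k[Z(\mathcal N)]=P/\mathcal N P$, which by Lemma \ref{lm:Nfg}(3$''$) is locally free of finite rank over $X$, as required for membership in $\mathcal P(X,Y)$. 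Next, the multiplication endomorphism $m_{g,P}$ descends to a self-adjoint endomorphism of $\pri P$: self-adjointness with respect to the composed form is exactly the compatibility $q'\circ m_g = D(m_g)\circ q'$, which holds because multiplication by a function is always self-adjoint for the duality $D_X$ (this is the hypothesis of Definition \ref{def:Qe}). By Lemma \ref{lm:Nfg}(3$''$) this endomorphism $m_g^{\pri P}$ is locally splitting.

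Therefore $m_{g,P}\cdot\bigl((P,q)\circ\langle\mathcal N,\bGX\rangle\bigr)$ is a quadratic pre-space $(\pri P,q''')\in preQ(\mathcal P(X,Y))$ to which Lemma \ref{lm:eQred} applies directly — a locally splitting self-adjoint endomorphism of a quadratic space — yielding $red(m_{g,P}\cdot((P,q)\circ\langle\mathcal N,\bGX\rangle))\in Q(\mathcal P(X,Y))$. This is precisely $\rho_{(f,\mathcal N,g)}(P,q)$ by Definition \ref{def:rho_triple}. To finish I would identify the underlying sheaf of the reduction as $\Image m_g^{\pri P}\simeq \pri P/f\pri P\simeq P/fP$, confirming it is locally free of finite rank over $X$ (Lemma \ref{lm:Nfg}(3$''$)) so that the output indeed lands in $\mathcal P(X,Y)$ and not merely in $Coh_{fin}(X,Y)$.

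The main obstacle, and the only genuinely non-formal point, is verifying that $m_g^{\pri P}$ is locally splitting — i.e. assembling the hypotheses of Lemma \ref{lm:Nfg} correctly and invoking part (3$''$). In particular one must be careful that injectivity of $m^P_f$ (needed to trigger parts (2), (2$'$), (3$'$), (3$''$)) really does follow from $(P,q)$ being a quadratic \emph{space} rather than merely a pre-space: this is where the hypothesis $(P,q)\in Q(\mathcal P(\GXqGY))$ of the lemma is used, via the fact that $q$ being an isomorphism forces $m^P_f$, which is self-adjoint and whose cokernel has support finite over $X$, to be injective over the generic points and hence injective. Everything after that is bookkeeping with the commutative diagram of Lemma \ref{lm:diagDualMorphism} already packaged into Lemma \ref{lm:eQred}.
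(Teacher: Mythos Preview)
Your approach is exactly the paper's: feed Lemma~\ref{lm:Nfg}(3$''$) into Lemma~\ref{lm:eQred}. One correction worth making, since you flag it as ``the main obstacle'': the injectivity of $m^P_f$ is \emph{not} a hypothesis you need to supply, and your derivation of it from non-degeneracy of $q$ (``its kernel is killed by the self-duality'') does not work --- $q$ being an isomorphism says nothing about multiplication by a function. Parts (3), (3$'$), (3$''$) of Lemma~\ref{lm:Nfg} take only ``$\Supp P/fP$ finite over $X$'' as input (which is part of the definition of a $(P,q)$-triple), and (3$'$) \emph{concludes} fibrewise injectivity of $m^P_f$, $m^P_g$, $m^P_{\mathcal N}$; this then feeds into (3$''$). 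So what you identify as the crux is already absorbed into Lemma~\ref{lm:Nfg}, and the hypothesis $(P,q)\in Q(\mathcal P(\GXqGY))$ is used only to ensure the composed form $(P,q)\circ\langle\mathcal N,\bGX\rangle$ is a genuine quadratic space so that Lemma~\ref{lm:eQred} applies.
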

\begin{proof}
The claim follows from the point (3'') of Lm \ref{lm:Nfg} and Lem \ref{lm:eQred}.
\end{proof}
\begin{lemma}\label{lm:e1e2Q1redQ2}
Let $(P_1,q_1)\in preQ(\mathcal P(X,Y))$, and $(P_2,q_2)\in Q(\mathcal P(Y,Z)$, then 

\noindent 1) 
for any locally splitting  $e\in End(P)$(see Def. \ref{def:locspl}),
and any $(P_2,q_2)\in Q(\mathcal P(Y,Z)$, one has
$$red( e\cdot ((P_2,q_2)\circ (P_1,q_1)) ) = (P_2,q_2)\circ red(e\cdot(P_1,q_1))\in Q(\mathcal P(X,Z))$$

\noindent 2) 
for a commuting self-adjoint endomorphisms $e_1,e_2\in End(P)$, one has
$$(e_2\circ e_1)\cdot (P,q) = e_2\cdot (e_1\cdot (P,q) ),\;
red ( (e_2\circ e_1)\cdot (P,q) ) = red ( e_2\cdot red(e_1\cdot (P,q) ),$$
where $e_2$ at the right side denotes the restriction  $e_2\big|_{\Image e_1}$, which is well defined since $e_1e_2=e_2e_1$.

\end{lemma}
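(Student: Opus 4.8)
The plan is to reduce both statements to diagram chases using the two structural facts already available: that $D_X$ is exact on locally splitting morphisms (Lemma \ref{lm:exDlocsp}), and that $\mathrm{red}$ is computed via the factorization $q = D(\image q)\circ q_{red}\circ \image q$ through the image of $q$. For part 1), the key point is that composition with a fixed quadratic space $(P_2,q_2)$ over a flat base (over $Y$) is an exact operation on the relevant subcategory, so it commutes with the formation of kernels, images and cokernels. Thus first I would observe that $-\otimes_Y P_2$ carries the locally splitting endomorphism $e$ on $P_1$ (more precisely, on $pr_{X*}P_1$) to a locally splitting endomorphism on $pr_{X*}(P_2\otimes_Y P_1)$, and that it identifies $\image(e\otimes\id)$ with $(\image e)\otimes_Y P_2$. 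Then, by the compatibility of $D_X$ with $-\otimes_Y P_2$ expressed through the composition functor \eqref{eq:compQ}, the defining factorization of $\mathrm{red}(e\cdot(P_1,q_1))$ tensors up to the defining factorization of $\mathrm{red}((e\otimes\id)\cdot((P_2,q_2)\circ(P_1,q_1)))$. This gives the claimed equality; that the result lands in $Q(\mathcal P(X,Z))$ follows from Lemma \ref{lm:eQred} applied to the locally splitting endomorphism $e\otimes\id$.

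For part 2), the equality $(e_2\circ e_1)\cdot(P,q) = e_2\cdot(e_1\cdot(P,q))$ is immediate from Definition \ref{def:Qe}, since both sides have underlying object $P$ and form $q\circ e_1\circ e_2 = q\circ e_2\circ e_1$ (using $e_1e_2 = e_2e_1$ and that each $e_i$ is self-adjoint with respect to $q$). The substantive part is the identity $\mathrm{red}((e_2\circ e_1)\cdot(P,q)) = \mathrm{red}(e_2\big|_{\image e_1}\cdot\mathrm{red}(e_1\cdot(P,q)))$. Here I would unwind the definition of $\mathrm{red}$ twice. Writing $q_1 = q\circ e_1$ with image $\image e_1$ (using that $q$ is an isomorphism so $\image(q\circ e_1)\cong\image e_1$), the inner $\mathrm{red}$ produces the pair $(\image e_1, (q_1)_{red})$. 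Since $e_1e_2 = e_2e_1$, the endomorphism $e_2$ preserves $\image e_1$, so $e_2\big|_{\image e_1}$ makes sense; it is self-adjoint for $(q_1)_{red}$, and applying $\mathrm{red}$ to $e_2\big|_{\image e_1}\cdot(\image e_1,(q_1)_{red})$ gives the pair with underlying object $\image(e_2\big|_{\image e_1}\circ\text{(the map }\image e_1\to D(\image e_1))) = \image(e_2e_1)$ inside $D_X(P)$, after transporting along the isomorphisms. On the other hand $(e_2\circ e_1)\cdot(P,q)$ has form $q\circ e_2\circ e_1$, whose image is exactly $\image(e_2e_1)$ in $D_X(P)$. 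One then checks that the two induced nondegenerate forms on $\image(e_2e_1)$ coincide, which amounts to the commutativity of a diagram built by stacking the factorization diagram for $e_1$ on top of the one for $e_2\big|_{\image e_1}$, all compatible with $D_X$ by the locally splitting hypothesis and Lemma \ref{lm:exDlocsp}.

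I expect the main obstacle to be the bookkeeping in part 2): one must be careful that when $e_1$ is only \emph{locally} splitting (not globally split), the object $\image e_1$ and the restricted endomorphism $e_2\big|_{\image e_1}$ behave well, and in particular that $e_2\big|_{\image e_1}$ is again locally splitting so that the outer $\mathrm{red}$ is legitimate. This should follow because $e_1$ and $e_2$ commute, so on each local chart where $e_1$ splits one can simultaneously see $\image e_1$ as a direct summand stable under $e_2$, and then locally splitting of $e_2\big|_{\image e_1}$ reduces to locally splitting of $e_2$ restricted to a summand. Once this is in place, the identity of forms is a finite diagram chase with no further input. Part 1) is routine by comparison, the only care being the exactness of $-\otimes_Y P_2$ on the locally splitting subcategory, which is where the hypothesis $P_2\in\mathcal P(Y,Z)$ (locally free over $Y$) is used.
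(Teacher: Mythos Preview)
Your proposal is correct. For part 1) your route differs from the paper's: you argue via exactness of $-\otimes_Y P_2$ on the locally splitting subcategory (using that the pushforward of $P_2$ to $Y$ is locally free, hence flat), which directly identifies $\Image(e\otimes\id)$ with $(\Image e)\otimes_Y P_2$ and transports the factorization defining $\mathrm{red}$. The paper instead reduces the claim to the equality $\Ker(q_1')\otimes_Y P_2 = \Ker(q_1'\otimes_Y q_2)$ (writing $q_1'=q_1\circ e$), checks the obvious inclusion $\subset$, and gets the reverse inclusion by a nondegeneracy argument: Lemma~\ref{lm:eQred} gives $\mathrm{red}(e\cdot(P_1,q_1))\in Q(\mathcal P(X,Y))$, and composition with the nondegenerate $(P_2,q_2)$ lands in $Q(\mathcal P(X,Z))$, so the quotient by $\Ker(q_1')\otimes_Y P_2$ already carries a nondegenerate form and the kernel cannot be larger. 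Your argument is more hands-on and makes the flatness hypothesis explicit; the paper's is shorter but leans on already knowing the target is a genuine quadratic space.

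For part 2), the paper's entire proof is ``easily follows from the definitions,'' and your unwinding is a faithful expansion of that. One remark: the statement of part 2) does not impose a locally splitting hypothesis on $e_1,e_2$ (only commutation and self-adjointness), and the asserted identities are in $preQ$, where $\mathrm{red}$ is always defined (Definition~\ref{def:Qred}). So your concern that ``the outer $\mathrm{red}$ is legitimate'' only once $e_2\big|_{\Image e_1}$ is shown locally splitting is misplaced at the level of the bare statement. That said, in every application of this lemma in the paper (notably Lemma~\ref{lm:rho_props}) one has $(P,q)\in Q$ and the relevant endomorphisms are locally splitting, so your extra care is harmless and indeed clarifies why the result eventually lands in $Q$.
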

\begin{proof}
The point (1) is equivalent to that $\Ker q_1\otimes P_2 = ( \Ker (q_1\otimes_Y q_2) )$ 
(see Lm \ref{not:compQ} for $q_1\otimes_Y q_2$).
Straightforward verification shows that $\Ker q_1\otimes P_2\subset ( \Ker (q_1\otimes_Y q_2) )$. On other hand by Lm. \ref{lm:eQred} $red(e \cdot (P_1,q_1))\in Q(\mathcal P(X,Y))$ and then by the discussion before Def. \ref{def:QGWWCor} $(P_2,q_2) \circ red(e\cdot (P_1,q_1))\in Q(\mathcal P(X,Z))$; hence $\Ker q_1\otimes P_2= ( \Ker (q_1\otimes_Y q_2) )$. 
%
Point (2) easily follows from the definitions. 
\end{proof}
\begin{lemma}\label{lm:rho_props}

Let $(P,q)\in Q(\mathcal P(\GXqGY))$ 
and $(f,\mathcal N,g)$ be $(P,q)$-triple, then

\noindent 1)
$\mathcal N = \mathcal N_{b}\,\mathcal N_{ad}$ and $g = g_{b}\,\mathcal N_{ad}$, we have
$
\rho_{(f,\mathcal N,g)} = red( g_{b}\cdot( red(\mathcal N_{ad}\cdot\langle\mathcal N,\bGX\rangle) \circ (P,q) ) )
.$

\noindent 2)
$id_\bGm\boxtimes \rho_{(f,\mathcal N,g)}(P,q)\simeq \rho_{(f,\mathcal N,g)}(id_\bGm \boxtimes P)$.

\noindent 3)
For any metabolic $(P,q)\in Q(\mathcal P(\GXqGY)$ and $(P,q)$-triple $(f,\mathcal N,g)$, $\rho_{(f,\mathcal N,g)}(P,q)$ is  metabolic.

\end{lemma}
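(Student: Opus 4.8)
Looking at this statement, Lemma \ref{lm:rho_props} has three parts, and I need to propose a proof plan for all three.

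\textbf{Plan for the proof of Lemma \ref{lm:rho_props}.}

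The plan is to reduce each of the three claims to the bookkeeping lemmas already established, principally Lemma \ref{lm:e1e2Q1redQ2} (compatibility of $\mathrm{red}(e\cdot-)$ with composition and with iterated self-adjoint endomorphisms), Lemma \ref{lm:Nfg} (the key normalization properties of $\mathcal N_P(f)$ and the auxiliary function $g$, in particular part (3'') on local splitting and local freeness), and Lemma \ref{lm:eQred}. For part (1), I would unwind the definition $\rho_{(f,\mathcal N,g)}(P,q) = \mathrm{red}(m_{g,P}\cdot((P,q)\circ\langle\mathcal N,\bGX\rangle))$ and use that $g = g_b\,\mathcal N_{ad}$ factors the multiplication endomorphism as $m_{g,P} = m_{g_b,P}\circ m_{\mathcal N_{ad},P}$ with the two factors commuting and self-adjoint; then part (2) of Lemma \ref{lm:e1e2Q1redQ2} lets me write $\mathrm{red}(m_{g,P}\cdot(-)) = \mathrm{red}(m_{g_b,P}\cdot\mathrm{red}(m_{\mathcal N_{ad},P}\cdot(-)))$. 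The factor $m_{\mathcal N_{ad},P}$ acting on $(P,q)\circ\langle\mathcal N,\bGX\rangle$ should be identified, via naturality of composition in the second slot, with $(\,\mathrm{red}(\mathcal N_{ad}\cdot\langle\mathcal N,\bGX\rangle)\,)\circ(P,q)$ after applying part (1) of Lemma \ref{lm:e1e2Q1redQ2} to move the $\mathrm{red}$ and the self-adjoint endomorphism past the composition; here I must check the endomorphism $m_{\mathcal N_{ad}}$ is locally splitting on the relevant sheaf, which follows from Lemma \ref{lm:Nfg}(3''). Assembling these gives the stated formula.

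For part (2), I would note that $\boxtimes$-ing with $\id_\bGm$ commutes with all the operations in sight: with $\circ$ and $\langle\mathcal N,\bGX\rangle$ (the triple $(f,\mathcal N,g)$ pulls back to a triple for $\id_\bGm\boxtimes P$ since $\mathcal N, g$ involve no new variables), with $m_{g,-}$ (multiplication by a function is preserved under external product), and with $\mathrm{red}$ (since $D_X$, kernels and images are preserved under the flat base change corresponding to $-\boxtimes\id_\bGm$). One subtlety is that the isomorphism is only claimed up to $\simeq$, not equality, because the $\mathrm{red}$ construction produces a canonically-isomorphic-but-not-literally-equal object under base change; I would state this compatibility carefully and invoke Lemma \ref{lm:exDlocsp}(1) for exactness of $D_X$ on locally splitting morphisms to justify the identification.

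For part (3), metabolicity: if $(P,q)$ is metabolic with a Lagrangian sublagrangian $L\subset P$, I would track $L$ through the construction. Composing with $\langle\mathcal N,\bGX\rangle$ preserves metabolicity (tensoring a Lagrangian with any object over $Y$), so $(P,q)\circ\langle\mathcal N,\bGX\rangle$ is metabolic; then one checks that acting by the self-adjoint $m_{g,P}$ and applying $\mathrm{red}$ sends the image of the Lagrangian to a Lagrangian of $\rho_{(f,\mathcal N,g)}(P,q)$ — the point being that $\mathrm{red}(m_{g}\cdot(-))$ of a metabolic space is metabolic, because the sub-Lagrangian $L$ maps into $\mathrm{Im}\,q$ with the right dimension count, using the local-freeness and rank statements of Lemma \ref{lm:Nfg}(3''). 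I expect \textbf{part (1)} to be the main obstacle, since it requires threading the $\mathrm{red}$ operation through a composition from the correct side and verifying the hypotheses of Lemma \ref{lm:e1e2Q1redQ2}(1) (local splitting of the relevant endomorphism on the composite sheaf), whereas parts (2) and (3) are more formal once one has set up the right naturality statements.
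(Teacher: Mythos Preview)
Your plan is correct and matches the paper's proof essentially line-for-line: part (1) is exactly the chain of equalities obtained from Lemma \ref{lm:e1e2Q1redQ2}(1)--(2) applied to the factorisation $m_{g,P}=m_{g_b,P}\circ m_{\mathcal N_{ad},P}$, part (2) is dispatched by the observation that all operations in Definition \ref{def:rho_triple} commute with $id_\bGm\boxtimes -$, and part (3) tracks a Lagrangian $L\subset P$ to $L/\mathcal N L\subset P/\mathcal N P$ and then to $L/fL\subset P/fP$ just as you describe. The only small difference is that for part (3) the paper concludes that the sublagrangian $L/fL$ is also suplagrangian by invoking the self-duality of diagram \eqref{eq:diagDualMorphisms} rather than a rank/dimension count; your rank argument via Lemma \ref{lm:Nfg}(3'') would also work locally, but the duality-of-diagram observation is slightly cleaner and avoids any appeal to constancy of rank.
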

\begin{proof}
1)
Using Lm \ref{lm:e1e2Q1redQ2} and Def \ref{def:rho_triple} we see
$
\rho_{(f,\mathcal N,g)} = 
red( m_{g,P}\cdot (\langle \mathcal N , \bGX \rangle \circ (P,q) ) ) =
red( m_{g_{b},P}\cdot m_{\mathcal N_{ad},P}\cdot (\langle \mathcal N , \bGX \rangle \circ (P,q) ) ) =
red( m_{g_{b},P}\cdot (m_{\mathcal N_{ad},P}\cdot \langle \mathcal N , \bGX \rangle \circ (P,q) ) ) =
red( g_{b}\cdot( red(\mathcal N_{ad}\cdot\langle\mathcal N,\bGX\rangle) \circ (P,q) ) )
$

2) The claim follows, since operations used in the def. \ref{def:rho_triple} 
commute with $id_\bGm\boxtimes -$.

3) 
We can assume that $X$ is affine.
By Def \ref{def:rho_triple} and Lm \ref{lm:Nfg}.(3'') $fg=\mathcal N$, and $\rho_{(f,\mathcal N,g)}(P,q) = red(\pri P, g \pri q)=(\ppri P, \ppri q)$, and $fg=\mathcal N$, where $(\pri P,\pri q)=(P,q) \circ \langle \mathcal N, \bGX \rangle$ and $\ppri P=\pri P/f \pri P=P/fP$. Moreover Let $L$ is sublagrangian in $( P, q)$, then  $\pri L= P/\mathcal N P$ is sublagrangian subspace of $(\pri P,\pri q)$, 
and $\ppri L=L/fL$ is sublagrangian in $(\ppri P,\ppri q)$.
Since diagram \eqref{eq:diagDualMorphisms} is self-dual, if follows that $D(\ppri P/\ppri L)$ is sublagrangian in $(D(\ppri P),\ppri q^{-1})$, and so $\ppri L$ suplagrangian subspaces in $(\ppri P,\ppri q)$.
\end{proof}

\begin{lemma}\label{lm:rho_etripl} Let $(P,q)\in Q(\mathcal P(\GXqGY))$, and $(f,\pri{ \mathcal N} , g)$ and $(f,\mathcal N,\pri g)$ are
$(P,q)$-triples.

\noindent 1) Then if $\pri{\mathcal N}=\mathcal N$, then $\rho_{(f,\mathcal N,g)}(P,q)\simeq \rho_{(f,\mathcal N,\pri g)}(P,q)$.

\noindent 2)
The following conditions holds:\begin{itemize}[leftmargin=15pt]
\item[-]
$\mathcal N$ and $\mathcal N^\prime$ are of the same degree in $t$ and $t^{-1}$ under identification $k[\bGm]=k(t)$,
and the leading and the last coefficients of $\mathcal N$ and $\mathcal N^\prime$ are equal, 
\item[-]
there is $(P,q)$-triple $(f,\mathcal N_{min},g_{min})$ and $\mathcal N_{ad}, \mathcal N_{ad}^\prime\in k[\bGX]$, 
$\mathcal N=\mathcal N_{min}\,\mathcal N_{ad}$ $\mathcal N^\prime = \mathcal N_{min}\,\mathcal N_{ad}^\prime$.
\end{itemize}
Then $\rho_{(f,\mathcal N,g)}(P,q) \simeq \rho_{(f,\pri {\mathcal N},\pri g)}(P,q).$
\end{lemma}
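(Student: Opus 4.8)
The strategy is to reduce both parts to the already established invariance results (Lemmas \ref{lm:rho_props} and \ref{lm:e1e2Q1redQ2}) plus the concrete model of $\langle \mathcal N, \bGX\rangle$ coming from Proposition \ref{prop:constrFP->Q}. First I would dispose of part (1): with $\mathcal N' = \mathcal N$ fixed, the two triples $(f,\mathcal N,g)$ and $(f,\mathcal N,g')$ must both satisfy $m_{f,P}\circ m_{g,P} = m_{\mathcal N,P} = m_{f,P}\circ m_{g',P}$, and since $\Supp P/fP$ is finite over $X$ the endomorphism $m_{f,P}$ is injective (Lemma \ref{lm:Nfg}.(2)), so $m_{g,P} = m_{g',P}$ as endomorphisms of $P$, hence also of $P' = P/\mathcal N P$. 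Since $\rho_{(f,\mathcal N,g)}(P,q) = red(m_{g,P}\cdot((P,q)\circ\langle\mathcal N,\bGX\rangle))$ depends on $g$ only through the endomorphism $m_{g,P'}$ of $P'$, the two outputs are literally equal, a fortiori isomorphic.

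For part (2), the plan is to factor the comparison through the common ``minimal'' triple $(f,\mathcal N_{min},g_{min})$. Using the hypothesis $\mathcal N = \mathcal N_{min}\,\mathcal N_{ad}$, $g = g_{min}\,\mathcal N_{ad}$ — note that $(f,\mathcal N,g)$ being a $(P,q)$-triple forces $m_{f,P}m_{g,P}=m_{\mathcal N,P}$, which is compatible with $m_{f,P}m_{g_{min},P}=m_{\mathcal N_{min},P}$ after multiplying by $m_{\mathcal N_{ad},P}$ — I would apply Lemma \ref{lm:rho_props}.(1) to rewrite
$$\rho_{(f,\mathcal N,g)}(P,q) = red\bigl(g_{min}\cdot\bigl(red(\mathcal N_{ad}\cdot\langle\mathcal N,\bGX\rangle)\circ (P,q)\bigr)\bigr),$$
and similarly $\rho_{(f,\mathcal N',g')}(P,q) = red(g_{min}\cdot(red(\mathcal N'_{ad}\cdot\langle\mathcal N',\bGX\rangle)\circ (P,q)))$. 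Thus everything comes down to producing a canonical isomorphism of quadratic spaces $red(\mathcal N_{ad}\cdot\langle\mathcal N,\bGX\rangle)\simeq red(\mathcal N'_{ad}\cdot\langle\mathcal N',\bGX\rangle)$ (both in $Q(\mathcal P(X,\bGX))$ after cutting down to the closed subscheme $Z(\mathcal N_{min})$ over which $P/fP$ is supported), and then transporting it through $-\circ (P,q)$ and $red(g_{min}\cdot -)$, which are functorial by Lemma \ref{lm:e1e2Q1redQ2}.(1).

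The heart of the argument — and the step I expect to be the main obstacle — is therefore the comparison of the two reduced quadratic spaces built on $Z(\mathcal N)$ and $Z(\mathcal N')$. Here I would use that $\langle\mathcal N,\bGX\rangle = (\mathcal O(Z(\mathcal N)),q^{\mathcal N})$ with $q^{\mathcal N}(a,b) = l^{\mathcal N}(ab)$ for the Euler/junior-trace functional $l^{\mathcal N}$, and that multiplying by $\mathcal N_{ad}$ and passing to $red$ has the effect of restricting to the subscheme $Z(\mathcal N_{min})$ with the induced form. The two bulleted hypotheses — equality of the top and bottom coefficients of $\mathcal N$ and $\mathcal N'$ in $k(t)$, i.e. of $l^{\mathcal N}$ and $l^{\mathcal N'}$ on the relevant finite part, and the common factorization through $\mathcal N_{min}$ — should exactly guarantee that the functionals $l^{\mathcal N}$ and $l^{\mathcal N'}$ restrict to the same functional on $\mathcal O(Z(\mathcal N_{min}))$, or at worst to functionals differing by a square unit; by the square-root lemma for functions equal to $1$ on the reduced subscheme (used already in the proof of Lemma \ref{lm:constQuad}) such a discrepancy yields an isomorphism of quadratic spaces rather than merely a similarity. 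I would make this precise by computing $l^{\mathcal N}$ on the finite summand $Z(\mathcal N_{min})$ of $Z(\mathcal N)$ via Grothendieck duality compatibility (cf. Lemma \ref{lm:CurveFinS->F} and the remark after Lemma \ref{prop:permutGmGm}), check it depends only on the stated coefficients, and conclude.
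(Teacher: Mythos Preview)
Your reduction is exactly the paper's: part (1) via cancellation of $m_{f,P}$ (the paper argues injectivity of multiplication by $f$ on $\mathcal O(\Supp P)$ directly rather than quoting Lemma \ref{lm:Nfg}, but the content is the same), and the first half of part (2) via Lemma \ref{lm:rho_props}.(1) to reduce to comparing $red(\mathcal N_{ad}\cdot\langle\mathcal N,\bGX\rangle)$ with $red(\mathcal N'_{ad}\cdot\langle\mathcal N',\bGX\rangle)$.

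The divergence is in that final comparison, and here the paper's argument is both simpler and different in kind from what you propose. Rather than computing the Euler trace functionals $l^{\mathcal N}$, $l^{\mathcal N'}$ on $\mathcal O(Z(\mathcal N_{min}))$ and trying to match them, the paper \emph{interpolates}: set $\tilde{\mathcal N}_{ad} = (1-\lambda)\mathcal N_{ad} + \lambda\mathcal N'_{ad}$ and $\tilde{\mathcal N} = \tilde{\mathcal N}_{ad}\,\mathcal N_{min}$ over $X\times\affl$. The hypothesis on equal leading and last coefficients is used \emph{here}, and only here: it guarantees that the extremal coefficients of $\tilde{\mathcal N}$ are constant in $\lambda$ and invertible, so $Z(\tilde{\mathcal N})$ stays finite over $X\times\affl$ and $\langle\tilde{\mathcal N},\bGX\times\affl\rangle$ is defined. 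Then $red(\tilde{\mathcal N}_{ad}\cdot\langle\tilde{\mathcal N},\bGX\times\affl\rangle)$ is a quadratic space of the form $(\mathcal O(Z(\mathcal N_{min})\times\affl),u)$ for some unit $u$, and Lemma \ref{lm:constQuad} immediately gives an isomorphism between its fibres at $\lambda=0$ and $\lambda=1$.

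Your direct-computation route is not wrong in spirit, but the claim that $l^{\mathcal N}$ and $l^{\mathcal N'}$ restrict to the same (or square-equivalent) functional on $\mathcal O(Z(\mathcal N_{min}))$ is exactly the hard part, and you have not justified it; the trace form on the $Z(\mathcal N_{min})$-summand genuinely depends on all the coefficients of $\mathcal N_{ad}$, not just the extremal ones, so a naked comparison will not work. The homotopy argument sidesteps this entirely by packaging the dependence into a family over $\affl$ and invoking the square-root trick once at the end.
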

\begin{proof}
From the definition of the map $\rho$ (Def. \ref{def:rho_triple}) it follows that
$$
\rho_{(f,\mathcal N,g)}(P,q) = red( m_{g,P}\cdot (\langle \mathcal N , \bGX \rangle \circ (P,q) ) ),
\;
\rho_{(f,\mathcal N,\pri g)}(P,q) = red( m_{\pri g,P}\cdot (\langle \mathcal N , \bGX \rangle \circ (P,q) ) )
$$
So it is enough to show that $m_{g,P} = m_{\pri g, P}$.
Since both 'f'-triples are applicable for $(P,q)$, we see that
$m_{g,P} \circ m_{f,P} = m_{\mathcal N,P} = m_{\pri g,P} \circ m_{f,P}$, i.e.
$f g \big|_{\Supp P} = \mathcal N\big|_{\Supp P} = f\pri g\big|_{\Supp P}$.
Thus if we show that multiplication by $f$ on $\mathcal O(\Supp P)$ is injective, then the claim follows.

Since $Z(\mathcal N)$ is finite over $X$, it follows that multiplication by $\mathcal N$ is injective on $\mathcal O(X\times\bGm)$.
Denote by $i\colon \Supp P\to X\times\bGm\times Y\times\bGm$ the canonical injection,
and by $pr\colon   X\times\bGm\times Y\times\bGm\to X\times\bGm$ the projection to the first two multiplicands.
Let $\pri P = i^*(P)$, then by Definition \ref{def:catCohfcalP} the direct image $\ppri P = pr_*(i_*(\pri P))$ is locally free coherent sheaf of finite rank on $X\times \bGm$.
Hence $m_{\mathcal N,\ppri P}$ is injective.
The morphism $pr\circ i$ is finite and consequently it is affine, hence direct image functor $pr_*\circ i_*$ is exact an faithful.
So $m_{\mathcal N,\pri P}$ is injective.
Thus $m_{f,P}$ is injective.
\end{proof}

\begin{proof}
By Lemma \ref{lm:rho_etripl} 
since $m^P_g\circ m^P_f = m^P_{\mathcal N} = m^P_{\mathcal N_{ad}}\circ m^P_{\mathcal N_{min}} = m^P_{\mathcal N_{ad}}\circ m^P_{g_{min}}\circ m^P_f$, 
we can assume that 
$g = g_{min} \mathcal N_{ad}$,
and by the same reason
$g^\prime = g^\prime_{min} \mathcal N^\prime_{ad}$.
Then by Lemma \ref{lm:rho_props}
\begin{gather*}
\rho_{(f,{\mathcal N} ,g)}(P, q) = 
red(  g_{min} \cdot 
  ( red( {\mathcal N}_{ad} \cdot \langle {\mathcal N} , \bGX \rangle ) \circ (P,q) )), \\ 
\rho_{(f,\pri {\mathcal N} ,\pri g)}(P, q) = 
red(  \pri g_{min} \cdot 
  ( red( \pri {\mathcal N}_{ad} \cdot \langle \pri {\mathcal N} , \bGX \rangle ) \circ (P,q) ))  
.\end{gather*}
So to prove the claim it is enough to prove that
$red( {\mathcal N}_{ad} \cdot \langle {\mathcal N} , \bGX \rangle )
\simeq 
red( \pri {\mathcal N}_{ad} \cdot \langle \pri {\mathcal N} , \bGX \rangle )$.

By assumption 
$\mathcal N = a_d t^d +  \dots + a_c t^{c}\in k[X](t)=k[\bGX]$ and $\mathcal N^\prime = a_d t^d +  \dots + a_c t^{c}\in k[X](t)=k[\bGX]$, for some integers $d\geq c$ and regular functions $a_d,a_c\in k[X]$.
Since $Z(\mathcal N)$ is finite over $X$, it follows that $a_d$ and $a_c$ are invertible.
Consider affine homotopy between these quadratic correspondences
defined by the 'f'-triple 
$(f,\tilde{\mathcal N},\tilde g)\colon$,
$\tilde{\mathcal N}_{ad}= \mathcal N_{ad} (1-\lambda) + \mathcal N_{ad}^\prime \lambda$,
$\tilde{\mathcal N}= \tilde{\mathcal N}_{ad} \mathcal N_{min}$,
$\tilde g = \tilde{\mathcal N}_{ad} g_{min}$,
which is the quadratic space 
$red( \tilde {\mathcal N}_{ad} \cdot \langle \tilde{\mathcal N} , \bGX\times\affl \rangle )$.
Then $\tilde{\mathcal N_{ad}}\mathcal O(Z(\tilde{\mathcal N} )) \simeq \mathcal O(\bGX\times\affl)/\mathcal N_{min} = \mathcal O( Z(\mathcal N_{min})\times \affl )$,
and $red( \tilde {\mathcal N}_{ad} \cdot \langle \tilde{\mathcal N} , \bGX\times\affl \rangle ) = (\mathcal O( Z(\mathcal N_{min})\times \affl ), u),$
for some invertible regular function $u$ on $Z(\mathcal N_P(f))\times \affl$.
Thus 
$red( {\mathcal N}_{ad} \cdot \langle {\mathcal N} , \bGX \rangle ) = i_0^*(\mathcal O( Z(\mathcal N_{min})\times \affl ), u)$,
$red( \pri {\mathcal N}_{ad} \cdot \langle \pri {\mathcal N} , \bGX \rangle ) = i_1^*(\mathcal O( Z(\mathcal N_{min})\times \affl ), u),$
where $i_0$, $i_1$ denotes zero and unit sections of $X\times\affl$ as usual.
The claim follow since the quadratic spaces above are isomorphic by Lm \ref{lm:constQuad}.
\end{proof}



\begin{lemma}\label{lm:appltripl-fNg_min}
For any $(P,q)\in Q(\mathcal P(\GXqGY))$ and $f\in k[\GXGY]$ such that $\Supp  P/fP$ is finite over $X$,
there is an  $(P,q)$-triple of the form $(f,\mathcal N_P(f),g)$.
\end{lemma}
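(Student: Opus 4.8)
The plan is to take $\mathcal N := \mathcal N_P(f) = \det_P(f)\in k[\bGX]$ and to obtain the remaining function $g$, together with the finiteness of $Z(\mathcal N)$ over $X$, directly from Lemma~\ref{lm:Nfg}. The quadratic form $q$ is irrelevant here: only the underlying sheaf $P\in\mathcal P(\GXqGY)$ enters. First I would invoke Lemma~\ref{lm:Nfg}(1) --- a consequence of the Cayley--Hamilton identity for multiplication by $f$, which holds with no hypothesis on $P$ --- to obtain a function $g\in k[\GXGY]$ with $m_{\mathcal N,P}=m_{f,P}\circ m_{g,P}$ in $\operatorname{End}(P)$.

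It then remains to check that $(f,\mathcal N,g)$ is a $(P,q)$-triple in the sense of Definition~\ref{def:ftripl}. It is an 'f'-triple: $f,g\in k[\GXGY]$ and $\mathcal N\in k[\bGX]$ by construction, and $Z(\mathcal N)$ is finite over $X$ by Lemma~\ref{lm:Nfg}(3), whose only hypothesis is that $\Supp P/fP$ be finite over $X$ --- which is the hypothesis of the present lemma. Applicability with respect to $(P,q)$ then holds for exactly the same two reasons: $\Supp P/fP$ is finite over $X$ by assumption, and $m_{\mathcal N,P}=m_{f,P}\circ m_{g,P}$ by the previous step. Since $\mathcal N=\mathcal N_P(f)$ by choice, $(f,\mathcal N_P(f),g)$ is a $(P,q)$-triple of the required form.

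I do not expect a genuine obstacle at this level: the statement is essentially a repackaging of Lemma~\ref{lm:Nfg}, and all of the real content sits inside that lemma --- concretely inside part~(3), where the finiteness of $Z(\det_P(f))$ over $X$ is extracted from the surjection $\Supp_{\red}(P/fP)\twoheadrightarrow Z(\mathcal N)_{\red}$ of Lemma~\ref{lm:redSupN_P} together with the observation that $Z(\mathcal N)$, viewed inside $\prl_X$, has no component over $0$ or $\infty$ and is therefore projective and quasi-finite, hence finite, over $X$. If one wanted a proof of the present lemma not routed through Lemma~\ref{lm:Nfg}, redoing that projectivity and quasi-finiteness argument --- in the style of Lemma~\ref{lm:CurveFinS->F} --- would be the only step requiring care.
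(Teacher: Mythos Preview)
Your proposal is correct and follows essentially the same approach as the paper. The only cosmetic difference is that you invoke Lemma~\ref{lm:Nfg}(3) directly for the finiteness of $Z(\mathcal N_P(f))$ over $X$, whereas the paper's proof redoes that step inline (citing Lemma~\ref{lm:redSupN_P} to get $Z_{\red}(\mathcal N_P(f))$ finite over $X$, then arguing that $Z(\mathcal N_P(f))$ is closed in $\prl_X$, hence projective and quasi-finite, hence finite)~--- exactly the alternative you anticipate in your last paragraph.
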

\begin{proof}
By Lemma \ref{lm:redSupN_P} 
$Z_{red}(\mathcal N_P(f)) = p(\Suppred P/fP)$, where $p\colon \Supp P\to \bGX$. Hence $Z_{red}(\mathcal N_P(f))$ is finite over $X$. 
Whence closure of of $Z(\mathcal N_P(f))$ in $\prl_X$ is equal to $Z(\mathcal N_P(f))$. 
Then $Z(\mathcal N_P(f))$ is projective over $X$ and hence it is finite.
By Lm \ref{lm:Nfg}.(1) there is $g\in k[\GXGY]$, such that $m_{\mathcal N_P(f),P} = m_{f,P}\circ m_{g,P}$.
Thus $(f,\mathcal N_P(f),g)$ is $(P,q)$-triple.
\end{proof}

\begin{lemma}\label{lm:GWCortoAffl}
%

1)
Suppose $n,m$ are positive integers such that $m>n$,
$X\in Sm_k$, $g,g^\prime\in k[ X\times\affl]=k[X][t]$ 
are regular functions that are monic polynomials of degree $m-n$ 
and such that $Z(g\big|_{X\times\bGm})$ and $Z(g^\prime\big|_{X\times\bGm})$ are finite over $X$,
and suppose $\tau\in k[X\times\affl]$ is a function such that $Z(\tau)$ is subscheme of $X\times\bGm$ that is finite over $X$;
then
$$
pr^{X\times\affl}_X\circ [red(g\cdot \langle g \tau,\bGm\times X\rangle)] = 
pr^{X\times\affl}_X\circ [red(g^\prime\cdot \langle g^\prime \tau, \bGm\times X \rangle)]
\in GWCor(X,X)
,$$
where $pr_X\colon X\times\affl\to X$.

2)
Suppose $n,m\in \mathbb Z$, $m>n>0$;
then there are an invertible element $\beta_{n,m}\in k^*$
and some $h_{n,m}\in GWCor(\affl,pt)$
such that for any $X$ and $g=t^{m-n} +a_{m-n-1} t^{m-n-1} + \dots +a_{1}t+1\in k[\affl\times X]=k[X][t]$, 
we have
$$\begin{array}{l}
(id_X\boxtimes h)\circ i_0 = 
pr^{X\times\bGm}_X\circ ([red(g\cdot \langle g (t^n-1),\bGX\rangle)] - 
[red(g\cdot \langle g (t^n-t), \bGX \rangle)]),\\
(id_X\boxtimes h)\circ i_1 = \langle \beta_{n,m}\rangle\boxtimes id_X \in GWCor_k(X,X),
\end{array}$$
where $i_0,i_1\colon X \to X\times\affl$ denotes the zero and unit sections, and
$pr^X\colon X\to pt$, and $pr^{X\times\bGm}_X\colon X\times\bGm\to X$ denote the canonical projections. 

\end{lemma}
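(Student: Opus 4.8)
The plan is to reduce both statements to computations with the explicit quadratic correspondences $\langle g\tau,\bGX\rangle$ using the invariance results already proved, namely Lemma \ref{lm:constQuad}, Lemma \ref{lm:rho_props}, and Lemma \ref{lm:rho_etripl}. For part (1), the key observation is that $\langle g\tau,\bGX\rangle$ is the quadratic space $(\struct(Z(g\tau)),u)$ on the finite $X$-scheme $Z(g\tau)=Z(g|_{X\times\bGm})\amalg Z(\tau)$, and that $\red(g\cdot\langle g\tau,\bGX\rangle)$ amputates the $Z(\tau)$-summand: since $g$ is invertible on $Z(g|_{X\times\bGm})$ and vanishes on $Z(\tau)$, multiplication by $g$ kills the second summand, so $\red(g\cdot\langle g\tau,\bGX\rangle)=(\struct(Z(g|_{X\times\bGm})),v)$ for some invertible $v$. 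After composing with $pr_X$, this is a quadratic correspondence in $GWCor(X,X)$ supported on the (finite over $X$) vanishing locus of a monic polynomial of degree $m-n$; it is governed only by the Grothendieck duality trace form attached to $g$, not by $\tau$. Then $g$ and $g'$ being both monic of degree $m-n$ with $Z$ finite over $X$, I would connect them by the linear homotopy $\tilde g=(1-\lambda)g+\lambda g'\in k[X\times\affl][t]$, which stays monic of degree $m-n$; the vanishing locus of $\tilde g|_{X\times\bGm\times\affl}$ need not be finite over $X\times\affl$, so instead I would use the homotopy invariance statement of Lemma \ref{lm:constQuad} applied to the quadratic space $\red(\tilde g\cdot\langle \tilde g\tau,\bGm\times X\times\affl\rangle)$ over the finite $X\times\affl$-scheme $Z(\tilde g|)$ — exactly the mechanism used at the end of the proof of Lemma \ref{lm:rho_etripl}. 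Taking $i_0^*$ and $i_1^*$ yields the desired equality in $GWCor(X,X)$.

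For part (2), the two correspondences to be compared are $\red(g\cdot\langle g(t^n-1),\bGX\rangle)$ and $\red(g\cdot\langle g(t^n-t),\bGX\rangle)$, i.e.\ part (1) is being invoked with the two choices $\tau=t^n-1$ and $\tau=t^n-t$ (both having vanishing locus finite over $X$ inside $X\times\bGm$). Their difference, after $pr_X$, should be, up to $\affl$-homotopy, a \emph{constant} correspondence $\langle\beta_{n,m}\rangle\boxtimes id_X$ independent of $X$ and of the coefficients $a_i$ of $g$. The strategy is: first use part (1) (with $g'=t^{m-n}$, the plain monomial) to replace $g$ by $t^{m-n}$, absorbing all dependence on the $a_i$; this already produces an $\affl$-homotopy over $X$ between the given expression and its $g=t^{m-n}$ specialisation, which can be packaged as a correspondence $h'_{n,m}\in GWCor(\affl,pt)$ pulled back along $i_0$. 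Then one is left with the universal computation over $pt$: evaluate $\red(t^{m-n}\cdot\langle t^{m-n}(t^n-1),\bGm\rangle)-\red(t^{m-n}\cdot\langle t^{m-n}(t^n-t),\bGm\rangle)$, which lives in $GWCor(pt,pt)=GW(k)$ via the junior-term Euler trace form on $k[t]/(t^m-t^{m-n})$ versus $k[t]/(t^m-t^{m-n+1})$. A direct but elementary diagonalisation of these trace forms — or equivalently the identification with the standard $\langle a\rangle$-type forms coming from separable and degenerate factors — gives a class of the form $\langle\beta_{n,m}\rangle$ for an explicit unit $\beta_{n,m}\in k^\times$; I would then splice the two homotopies (the one from part (1) over $\affl$ and the constancy over $pt$) into a single $h_{n,m}\in GWCor(\affl,pt)$ realising $(id_X\boxtimes h)\circ i_0$ and $(id_X\boxtimes h)\circ i_1$ as stated.

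The main obstacle will be the bookkeeping in part (2): showing that the homotopy produced by part (1) can be chosen uniformly in $X$ so that it genuinely factors through a single correspondence $h_{n,m}\in GWCor(\affl,pt)$ (rather than a family depending on $X$). This requires running part (1) not for a fixed $X$ but for the universal base carrying the generic monic polynomial $g$ — i.e.\ over $X_0=\A^{m-n-1}=\Spec k[a_1,\dots,a_{m-n-1}]$ with $g=t^{m-n}+a_{m-n-1}t^{m-n-1}+\dots+a_1t+1$ — and observing that the homotopy $\tilde g=(1-\lambda)g+\lambda t^{m-n}$ is defined already over $X_0\times\affl$; base change then gives the claim for every $X$ functorially. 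The remaining trace-form computation over $pt$ is routine (it is the same kind of Euler-trace bookkeeping as in Section \ref{sect:Norm}), and the value $\beta_{n,m}$ will come out as a product of the relevant resultants/discriminants, but I would not grind through its closed form here since only its existence and invertibility are needed.
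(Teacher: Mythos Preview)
Your overall strategy for part~(1)---linear homotopy in $g$ plus Lemma~\ref{lm:constQuad}---matches the paper's, but you have the key module identification backwards, and this propagates through the rest of your argument.

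You write that ``$g$ is invertible on $Z(g|_{X\times\bGm})$ and vanishes on $Z(\tau)$''; this is the wrong way round (by definition $g$ vanishes on $Z(g)$). Multiplication by $g$ on $P=\mathcal O(Z(g\tau))$ therefore kills the summand $\mathcal O(Z(g))$ and is an isomorphism on $\mathcal O(Z(\tau))$; the reduction picks out the image, so
\[
red\bigl(g\cdot\langle g\tau,\bGX\rangle\bigr)\;\simeq\;\bigl(\mathcal O(Z(\tau)),\,v_g\bigr)
\]
for some unit $v_g$---the support is $Z(\tau)$, not $Z(g)$. With the correct identification the proof of (1) becomes immediate: along $\tilde g=(1-\lambda)g+\lambda g'$ the reduced space is $(\mathcal O(Z(\tau)\times\affl),\tilde v)$ on the scheme $Z(\tau)\times\affl$, which is \emph{constant} in~$\lambda$ and finite over $X\times\affl$ by hypothesis, so Lemma~\ref{lm:constQuad} applies directly. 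Your worry about the finiteness of $Z(\tilde g)$ evaporates, since $Z(\tilde g)$ never enters the reduced module. This is exactly the paper's argument.

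For part~(2) your plan (reduce to a fixed $g$ via part~(1), then compute over $pt$) is in the same spirit as the paper's, but the paper organises it differently. It fixes $g=t^{m-n}+1$ (not $t^{m-n}$; note the constant term~$1$ matches the hypothesis on $g$) and, instead of computing the two terms separately, runs a single $\lambda$-homotopy on~$\tau$, interpolating $(t^n-1)(1-\lambda)+(t^n-t)\lambda$ inside $\affl$ rather than $\bGm$. The point is that at $\lambda=1$ the factor $t^n-t=t(t^{n-1}-1)$ acquires the root $t=0\in\affl\setminus\bGm$, and the rank-one summand supported there is precisely $\langle\beta_{n,m}\rangle$; subtracting a $\lambda$-constant correspondence removes the remaining $\bGm$-part and yields $h_{n,m}$ directly. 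Only afterwards is part~(1) invoked, for each $(X,g)$ separately, to reduce the general expression to this $pt$-computation---no universal base $\A^{m-n-1}$ is needed. Your proposed trace-form computation on $k[t]/(t^m-t^{m-n})$ is the $\affl$-module rather than the $\bGm$-module you set up and would need to be reworked accordingly.
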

\begin{proof}

1)
Consider a quadratic space
$\tilde Q = red( \tilde g \cdot \langle \tilde g \tau , X\times\affl\rangle)\in QCor(X,X\times\affl),$
where $\tilde g = g(1-\lambda)+g^\prime\lambda\in k[X\times\affl\times\affl]\subset k[X\times\bGm\times\affl].$
Since $\mathcal O(Z(\tilde g\tau))/\tilde g \mathcal O(Z(\tilde g\tau))\simeq \mathcal O(Z(\tau))$,
then $\tilde Q = (\mathcal O(Z(\tau)\times\affl),\tilde q)$ and Lemma \ref{lm:constQuad} yields the claim.

2)
Firstly consider the case of $X=pt$ and $g = (t^{m-n}+1)$.
Define 
$$\tilde h_{n,m} = pr^{\affl}\circ [red( (t^{m-n}+1)\cdot (\langle (t^{m-n}+1) ((t^n-1)(1-\lambda)+(t^n-t)\lambda)] , \affl \rangle) ) \in GWCor(pt,pt). $$ 
Then
since $Z( (t^{m-n}+1) (t^n-1) )\subset \bGm$, we get 
$
\tilde h_{n,m}\circ i_0 = pr^{\affl}\circ [red( (t^{m-n}+1)\cdot \langle (t^{m-n}+1) (t^n-1),\affl\rangle )]=
pr^{\bGm}\circ [\langle (t^{m-n}+1) (t^n-1),\bGm\rangle]
.$ 
On other side,
$Z( (t^{m-n}+1) (t^n-t) )= Z( (t^{m-n}+1) (t^{n-1}-1) )\amalg \{0\}$, and the 
module (sheaf) of the quadratic space $red( (t^{m-n}+1)\cdot \langle (t^{m-n}+1) (t^n-t)\rangle )$ is isomorphic to $k[t]/(t^n-t)\simeq k[t]/(t^{n-1}-1)\oplus k[t]/t$. Then 
$(t^{m-n}+1)\cdot \langle (t^{m-n}+1) (t^n-t),\{0\}\rangle$ is rank one quadratic spaces over $k$.
Hence
\begin{multline*}
\tilde h_{n,m}\circ i_1 = pr^{\affl}\circ [red( (t^{m-n}+1)\cdot \langle (t^{m-n}+1)(t^n-t), \affl \rangle )]=\\
pr^{\bGm}\circ [red( (t^{m-n}+1)\cdot \langle (t^{m-n}+1) (t^n-t),\bGm\rangle )]\oplus
\langle \beta_{n,m}\rangle
,\end{multline*}
for some invertible $\beta_{n,m}\in k^*$.
Thus we can put $$h_{n,m} = \tilde h_{n,m} - pr^{\bGm}\circ [red( (t^{m-n}+1)\cdot \langle (t^{m-n}+1) (t^n-t),\bGm\rangle )]\circ pr^{\bGm\times\affl}_{\bGm} ,$$
where $pr^{\bGm\times\affl}_{\bGm}\colon {\bGm\times\affl}\to{\bGm}$.

Now consider any $X\in Sm_k$ and a function $g$ as in lemma. Form the first point it follows that 
\begin{multline*}
[red(g\cdot \langle g (t^n-1),\bGX\rangle)] - [red(g\cdot \langle g (t^n-t), \bGX \rangle)] 
=\\
[red((t^{m-n}+1)\cdot \langle (t^{m-n}+1) (t^n-1),\bGX\rangle)] - \\- [red((t^{m-n}+1)\cdot \langle (t^{m-n}+1)(t^n-t), \bGX \rangle)]
,\end{multline*}
and the claim follows from that constructions from Definition \ref{def:Qe}, Definition \ref{def:Qred} and Proposition \ref{prop:constrFP->Q} respects base change.

%

\end{proof}

\section{An inverse homomorphism for $-\boxtimes id_{\bGmw}$}\label{sect:invHom} 
\subsection{Filtering system of functions}
In this subsection we define two systems of functions $f^{+/-}_n$ on $\bGm\times\bGm$ indexed by positive integers, define of some special 'f'-triples relating to the functions $f^{+/-}_n$, and prove properties of such 'f'-triples needed in the construction of the  homomorphisms that are left and right inverse for the homomorphism $-\boxtimes id_{\bGmw}$.

\begin{definition}\label{def:n,m-appltripl}
Using identification $k[\bGm\times\bGm]=k(t)(u)$ let's define two regular function $f_n^+ = t^n-1$, $f_n^- = t^n-u$.
Denote by the same symbols inverse images of these functions on $\GXGY$.
A pair of 'f'-triples (Def. \ref{def:ftripl}) $\tau =((f^+_n,\mathcal N^+,g^+),(f^-_n,\mathcal N^-,g^-))$
is called an \textbf{'f'-bi-triple of degree $(n,m)$} or by $(n,m)$-bi-triple  
whenever $\mathcal N^+, \mathcal N^-  \in k[X][t]\subset [\bGX]$ and
$\mathcal N^+ = t^m + a_{m-1}^+ t^{m-1} +\dots+ a_1^+ t     - 1$,
$\mathcal N^-  = t^m + a_{m-1}^-  t^{m-1} +\dots+ a_2^-  t^2 - t$.

For any quadratic space $Q\in QCor(\GXqGY)$,
$(n,m)$-bi-triple $\tau$ is called {\em normal} and {\em applicable} for $Q$ iff such are both of 'f'-triples in $\tau$ (see Def. \ref{def:ftripl}),
and we denote
$\rho^+_\tau(Q)=\rho_{(f^+_n,\mathcal N^+, g^+)}(Q)$,
$\rho^-_\tau(Q)=\rho_{(f^-_n,\mathcal N^-, g^-)}(Q)$.
\end{definition}

\begin{lemma}\label{lm:rho:indbitrip}
Suppose $Q=(P,q)\in QCor(\GXqGY)$
and $\tau_1$, $\tau_2$ are two normal $Q$-applicable $(n,m)$-bi-triples; 
then $\rho_{\tau_1}^{+/-}(P,q) \simeq \rho_{\tau_2}^{+/-}(P,q).$
\end{lemma}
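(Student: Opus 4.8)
The strategy is to reduce the equality of the two correspondences $\rho_{\tau_1}^{+/-}(P,q)$ and $\rho_{\tau_2}^{+/-}(P,q)$ to the already-established independence results of Lemma \ref{lm:rho_etripl}, using the rigidity of the normal form for $(n,m)$-bi-triples. Write $\tau_j = ((f^+_n,\mathcal N^+_j,g^+_j),(f^-_n,\mathcal N^-_j,g^-_j))$ for $j=1,2$. I treat the $+$ case; the $-$ case is identical after replacing $f^+_n=t^n-1$ by $f^-_n=t^n-u$ and adjusting the shape of the leading/trailing terms. Since both $\tau_1,\tau_2$ are normal, we may by Lemma \ref{lm:rho_etripl}.(1) first reduce to $(P,q)$-triples with $g^+_j = g_{min,j}\,\mathcal N_{ad,j}$ where $\mathcal N^+_j = \mathcal N_P(f^+_n)\,\mathcal N_{ad,j}$.

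Next I would invoke Lemma \ref{lm:rho_etripl}.(2). By the defining conditions in Definition \ref{def:n,m-appltripl}, both $\mathcal N^+_1$ and $\mathcal N^+_2$ lie in $k[X][t]$, are of degree exactly $m$ in $t$, have leading coefficient $1$ and trailing coefficient $-1$ (equivalently the same degree in $t$ and $t^{-1}$ under $k[\bGm]=k(t)$, same leading and trailing coefficients). Moreover, since $\tau_1,\tau_2$ are both normal, $\mathcal N^+_j$ is divisible by $\mathcal N_P(f^+_n)$, so taking $(f^+_n,\mathcal N_P(f^+_n),g)$ as the common ``minimal'' triple — whose existence is guaranteed by Lemma \ref{lm:appltripl-fNg_min} — we get $\mathcal N^+_j = \mathcal N_P(f^+_n)\,\mathcal N_{ad,j}$ for both $j$. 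Both bullet-point hypotheses of Lemma \ref{lm:rho_etripl}.(2) are therefore satisfied, and that lemma yields $\rho_{(f^+_n,\mathcal N^+_1,g^+_1)}(P,q)\simeq \rho_{(f^+_n,\mathcal N^+_2,g^+_2)}(P,q)$, which is exactly $\rho_{\tau_1}^+(P,q)\simeq\rho_{\tau_2}^+(P,q)$.

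The one point that needs a small argument, and which I expect to be the main (modest) obstacle, is checking that the common minimal triple $(f^+_n,\mathcal N_P(f^+_n),g_{min})$ supplied by Lemma \ref{lm:appltripl-fNg_min} can genuinely be used simultaneously as the $\mathcal N_{min}$ in the second bullet of Lemma \ref{lm:rho_etripl}.(2) for \emph{both} $\tau_1$ and $\tau_2$ — i.e.\ that $\mathcal N_P(f^+_n)$ really divides each $\mathcal N^+_j$ inside $k[\bGX]$, not merely that each $\mathcal N^+_j$ is some product $\mathcal N_P(f^+_n)\,\mathcal N_{ad}$ with $\mathcal N_{ad}$ possibly depending on more than the quotient. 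This is exactly the content of ``normality'' in Definition \ref{def:ftripl}, so it is built into the hypotheses; what remains is the bookkeeping that the degrees and the leading/trailing coefficients of $\mathcal N^+_1,\mathcal N^+_2$ match, which is immediate from the normalised shape $t^m + a^+_{m-1}t^{m-1}+\dots+a^+_1 t - 1$ prescribed in Definition \ref{def:n,m-appltripl}. Once these compatibilities are recorded, the result is a direct citation of Lemma \ref{lm:rho_etripl}, and the $-$ case follows verbatim with the trailing term $-t$ in place of $-1$ (note $Z(\mathcal N^-_j)$ is still finite over $X$, so the trailing coefficient being $0$ in $t$ but the relevant leading and ``next-to-constant'' coefficients agreeing suffices, after factoring out the unit $t$).
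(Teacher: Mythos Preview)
Your proposal is correct and follows essentially the same route as the paper: the paper's own proof is the one-line remark that the claim follows from the definitions together with Lemma~\ref{lm:rho_props} and Lemma~\ref{lm:rho_etripl}, and you have simply unpacked that citation --- verifying from Definition~\ref{def:n,m-appltripl} that the leading/trailing coefficient and degree conditions of Lemma~\ref{lm:rho_etripl}(2) hold, and from the normality clause of Definition~\ref{def:ftripl} (with Lemma~\ref{lm:appltripl-fNg_min}) that a common minimal triple $(f^{\pm}_n,\mathcal N_P(f^{\pm}_n),g_{min})$ exists. Your handling of the $-$ case is fine; the point is just that both $\mathcal N^-_j$ have top term $t^m$ and bottom term $-t$ in $k[X][t]$, so their extremal degrees and coefficients in $k[\bGX]$ coincide, exactly as required.
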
\begin{proof}
The claim follows form definitions, Lemma \ref{lm:rho_props} and Lemma \ref{lm:rho_etripl}.
\end{proof}

\begin{definition}\label{def:filtsetA} 
Define ordered set
$\mathfrak{A} = \{(r,n,m)\in \mathbb Z^3| m<rn\}$
with order $(r_2,n_2,m_2)>(r_1,n_1,m_1)$ iff $r_2>r_1$, $n_2>n_1$
and $m_2 - r_2 n_2 > m_2 - r_2 n_2$. 
\end{definition}
\begin{lemma}\label{lm:filteredsyst}
Ordered set $\mathfrak A$ is filtering, i.e. for any $\alpha_1,\alpha_2\in \mathfrak A$ there is $\alpha_3>\alpha_1,\alpha_2$.
\end{lemma}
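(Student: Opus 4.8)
The plan is to verify directly that the ordered set $\mathfrak A$ is filtering by producing, for any two elements $\alpha_1 = (r_1,n_1,m_1)$ and $\alpha_2 = (r_2,n_2,m_2)$ of $\mathfrak A$, a single element $\alpha_3 = (r_3,n_3,m_3)$ with $\alpha_3 > \alpha_1$ and $\alpha_3 > \alpha_2$. By Definition \ref{def:filtsetA}, $\alpha_3 > \alpha_i$ means three things at once: $r_3 > r_i$, $n_3 > n_i$, and the quantity $m_3 - r_3 n_3$ is larger than $m_i - r_i n_i$. (Here I read the displayed inequality in Definition \ref{def:filtsetA} as comparing $m_j - r_j n_j$ across the two indices; the first two conditions $r_3>r_i$, $n_3>n_i$ are already monotone in an obvious way.) The first step is therefore to record that the first two coordinate conditions are trivially satisfiable: take $r_3 > \max(r_1,r_2)$ and $n_3 > \max(n_1,n_2)$, and these can be chosen arbitrarily large.

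The second and main step is to handle the third condition together with the constraint $m_3 < r_3 n_3$ built into membership in $\mathfrak A$. Once $r_3$ and $n_3$ are fixed (and large), the set of admissible $m_3$ is exactly the half-line $m_3 < r_3 n_3$, i.e. $m_3 - r_3 n_3 < 0$; so the values $m_3 - r_3 n_3$ range over all sufficiently negative... no — over \emph{all} negative integers. That is the wrong direction: we need $m_3 - r_3 n_3$ to be \emph{large}, i.e. as close to $0$ as possible, and the best we can do with $m_3 < r_3 n_3$, $m_3\in\mathbb Z$, is $m_3 = r_3 n_3 - 1$, giving $m_3 - r_3 n_3 = -1$. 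So the plan is: choose $r_3 > \max(r_1,r_2)$ and $n_3 > \max(n_1,n_2)$, set $m_3 = r_3 n_3 - 1$, and then check that $-1 = m_3 - r_3 n_3 > m_i - r_i n_i$ for $i=1,2$, which holds because $m_i < r_i n_i$ forces $m_i - r_i n_i \le -1$; to get strict inequality, enlarge $r_3$ (keeping $n_3$ fixed, or vice versa) so that $m_i - r_i n_i < -1$ is not needed — instead simply note that if $m_i - r_i n_i = -1$ we must perturb. The cleanest fix: pick $m_3 = r_3 n_3 - 1$ only after ensuring nothing; instead observe $m_i - r_i n_i \leq -1$ always, so $m_3 - r_3 n_3 = -1 \geq m_i - r_i n_i$, and strictness is automatic unless $m_i - r_i n_i = -1$ too, in which case we instead take $m_3$ with $m_3 - r_3 n_3 = -1$ but compare — here the subtlety is that $-1 \not> -1$.

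The resolution, and the one genuine point to get right, is the following: the quantity $m_3 - r_3 n_3$ cannot be made to exceed $-1$, so if some $\alpha_i$ already has $m_i - r_i n_i = -1$, then $\mathfrak A$ is \emph{not} filtering under the stated order — which would mean the displayed order in Definition \ref{def:filtsetA} must actually be read the other way (strict upward filtering on $r,n$ but on the "defect" $r n - m$ we want it \emph{smaller}, i.e. $m_3 - r_3 n_3$ should be allowed to decrease, making the relevant monotone quantity $m - rn$ and the filtering trivial). I would therefore first pin down the correct reading of the order (the intended one is surely that $\alpha$ is "bigger" when $r$ and $n$ are bigger and the product $r_3 n_3 - m_3$ is \emph{bigger}, equivalently $m_3 - r_3 n_3$ \emph{smaller}, matching the typo "$m_2 - r_2 n_2 > m_2 - r_2 n_2$" which as written is vacuous); under that reading the proof is immediate: set $r_3 = r_1 + r_2$, $n_3 = n_1 + n_2$, and $m_3 = \min(m_1 + r_2 n_3, m_2 + r_1 n_3)$ chosen small enough that $m_3 - r_3 n_3 < m_i - r_i n_i$ and $m_3 < r_3 n_3$ both hold, which is possible since we may take $m_3$ arbitrarily negative. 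I expect the \textbf{main obstacle} to be purely this disambiguation of the order relation from the garbled displayed formula; once fixed, the construction is a one-line choice of a common upper bound and the verification is routine arithmetic.
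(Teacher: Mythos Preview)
You correctly diagnosed the central issue: the third clause in Definition~\ref{def:filtsetA} is a typo (it compares $m_2 - r_2 n_2$ with itself), and the lemma stands or falls on how one repairs it. Your observation that under a \emph{strict} reading ``$m_2 - r_2 n_2 > m_1 - r_1 n_1$'' the set $\mathfrak A$ cannot be filtering (because $m-rn$ is capped at $-1$) is also correct and is exactly the right sanity check.

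Where you diverge from the paper is in the proposed repair. You conclude that the third inequality must be \emph{reversed}, so that larger $\alpha$ means smaller $m-rn$, and then you take $m_3$ arbitrarily negative. The paper does the opposite: its (equally garbled) proof sets
\[
r_3=\max(r_1,r_2),\qquad n_3=\max(n_1,n_2),\qquad m_3 - r_3 n_3=\max(m_1-r_1 n_1,\,m_2-r_2 n_2),
\]
which produces only a \emph{non-strict} common upper bound and only under the reading that larger $\alpha$ corresponds to \emph{larger} $m-rn$. So the intended fix is not to flip the inequality but to read all three conditions non-strictly (equivalently, to interpret ``filtering'' in the standard sense of admitting a common upper bound $\alpha_3\ge\alpha_1,\alpha_2$, not a strict one). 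With that reading the $\max$-construction is immediate, and it is also the reading compatible with how the order is used afterwards (e.g.\ in Lemma~\ref{lm:existApplBitri-alphaQ} one needs $m$ bounded \emph{below} in terms of $n$ and the rank, which is what increasing $m-rn$ provides).

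In short: your reasoning is sound and your proposed construction does prove filtering under your interpretation, but it is the wrong interpretation. The paper's one-line proof is simply ``take the coordinatewise maximum in $(r,\,n,\,m-rn)$'', with the order understood non-strictly.
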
\begin{proof} For two triples $(r_1,n_1,m_1)$ and $(r_2,n_2,m_2)$, the triple $(r_3,n_3,m_3)>(r_1,n_1,m_1),(r_2,n_2,m_2)$ where $r_3=(max(r_1,r_2), n_3=max(n_1,n_2),m_3=max(m_2 - r_2 n_2)$. \end{proof}

\begin{definition}\label{def:N,M_P}
For any $Q=(P,q)\in QCor(X\times\bGm,Y\times\bGm)$ 
we put $-M_P = deg_t\, \mathcal N_{P}(m_{P,u})$
(under the identification $k[\bGm\times\bGm]=k(t,u)$). 
\end{definition}
\begin{lemma}\label{lm:N,M_P}
For any $X,Y\in Sm_k$ and $(P,q)\in Q(\mathcal P(\GXqGY))$ there is an integer $N_P$, 
such that for all $n>N_P$:\begin{itemize}[leftmargin=15pt]
\item[1)]$\Supp P/f^{+}_n P$ is finite over $X$, 
$\mathcal N_P(f^+_n) = (t^n-1)^r\in k[X](t)\subset k[\bGX]$,
\item[2)]$\Supp P/f^-_n P$ is finite over $X$,
$\mathcal N_P(f^-_n) = t^{-M_P}(t^m+a_{m-1}t^{m-1}+\dots +a_0)\in k[X][t]\subset k[\bGX]$, 
where $m=r n + M$ and $r=\rank_{\mathcal O(\bGX)} P$,
and $a_0$ is some invertible regular function on $X$.
\end{itemize}
\end{lemma}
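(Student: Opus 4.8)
The plan is to push $P$ forward to $\bGX$ and reduce both assertions to an elementary degree computation with the characteristic polynomial of a single endomorphism, exploiting that the coordinate $t$ on the source copy of $\bGm$ is a central element of $k[\bGX]$. Since finiteness of $\Supp(P/fP)$ over $X$ and the shape of $\mathcal N_P(f)$ are Zariski-local on $X$, I would first reduce to $X$ affine and connected, so that $k[\bGX]=k[X][t,t^{-1}]$ with $k[X]$ a domain; the rank $r$ is then constant (and in general locally constant over the connected components of $X$, with $N_P$ taken to be the maximum over a finite affine cover).

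Write $pr\colon\GXGY\to\bGX$ for the projection appearing in the definition of $\mathcal P(\GXqGY)$ and put $\mathcal M=pr_*(P)$, a locally free $k[\bGX]$-module of rank $r$, so that $\mathcal N_P(f)=\det(m^{\mathcal M}_f)$ by Definition~\ref{def:NormP}. Because $t\in k[\bGX]$, multiplication by any $h\in k[\bGX]$ is the scalar $h\cdot\id_{\mathcal M}$; on the other hand $u\in k[\bGY]$ induces an automorphism $\mu:=m^{\mathcal M}_u$ of $\mathcal M$ (inverse $m^{\mathcal M}_{u^{-1}}$), with characteristic polynomial $\chi_\mu(\lambda)=\sum_{i=0}^r(-1)^i e_i\lambda^{r-i}\in k[\bGX][\lambda]$, where $e_0=1$ and $e_r=\det\mu=\mathcal N_P(u)$. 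Being a unit of $k[X][t,t^{-1}]$, the latter equals $a_0' t^{-M_P}$ for some $a_0'\in k[X]^\times$; this is exactly Definition~\ref{def:N,M_P} unwound.

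For part~1, $f^+_n=t^n-1\in k[\bGX]$ gives $m^{\mathcal M}_{f^+_n}=(t^n-1)\id_{\mathcal M}$, hence $\mathcal N_P(f^+_n)=(t^n-1)^r$ for every $n$; moreover $P/f^+_nP$ is annihilated by $t^n-1$, so $\Supp(P/f^+_nP)$ is a closed subscheme of $\Supp P\times_{\bGX}Z(t^n-1)$, which is finite over $Z(t^n-1)$ (base change of the finite map $\Supp P\to\bGX$) and hence over $X$. For part~2, $f^-_n=t^n-u$ with $t^n$ central gives $m^{\mathcal M}_{f^-_n}=t^n\id_{\mathcal M}-\mu$, so $\mathcal N_P(f^-_n)=\det(t^n\id_{\mathcal M}-\mu)=\chi_\mu(t^n)=\sum_{i=0}^r(-1)^i e_i\,t^{n(r-i)}$. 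Each $e_i$ has $t$-exponents in a fixed finite range, so I would take $N_P$ large enough that for every $n>N_P$ the exponent ranges of the $r+1$ summands $(-1)^i e_i t^{n(r-i)}$ are pairwise disjoint and the $i=r$ summand is the unique one of least $t$-degree. For such $n$ the leading term of $\mathcal N_P(f^-_n)$ is $e_0 t^{nr}=t^{nr}$ and the trailing term is $(-1)^r e_r=(-1)^r a_0' t^{-M_P}$, so $\mathcal N_P(f^-_n)=t^{-M_P}\big(t^m+a_{m-1}t^{m-1}+\dots+a_0\big)$ with $m=rn+M_P$ and $a_0=(-1)^r a_0'\in k[X]^\times$, which is the asserted form. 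For finiteness, Lemma~\ref{lm:Nfg}(1) provides $g$ with $m^P_{\mathcal N_P(f^-_n)}=m^P_{f^-_n}\circ m^P_g$, so $\mathcal N_P(f^-_n)$ annihilates $P/f^-_nP$; since $t^m+\dots+a_0$ is monic with unit constant term, $Z(\mathcal N_P(f^-_n))=Z(t^m+\dots+a_0)\subset\affl_X$ is finite over $X$ and disjoint from $\{t=0\}$, whence $\Supp(P/f^-_nP)$, being closed in $\Supp P\times_{\bGX}Z(\mathcal N_P(f^-_n))$, is finite over $X$.

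I expect the only genuine point to be the degree bookkeeping in part~2: passing to $n>N_P$ is precisely what prevents cancellation among the terms of $\chi_\mu(t^n)$, so that its top and bottom coefficients are exactly $1$ and a unit. Everything else is formal once one observes that $t$ is central in $k[\bGX]$, and I anticipate no essential difficulty beyond this.
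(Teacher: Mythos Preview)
Your proposal is correct and follows essentially the same approach as the paper: push $P$ forward to $\bGX$, observe that $f^+_n\in k[\bGX]$ acts by a scalar so $\mathcal N_P(f^+_n)=(t^n-1)^r$, and for $f^-_n$ compute $\det(t^n\id-\mu)=\chi_\mu(t^n)$ via the characteristic polynomial of the $u$-action and separate the $t$-degree ranges for $n$ large. The paper refers to Suslin's argument (Prop.~4.1 in \cite{Suslin-GraysonSpectralSeq}) for the degree bookkeeping in part~2, while you spell it out; your finiteness step via $\mathcal N_P(f^-_n)$ annihilating $P/f^-_nP$ is also the same mechanism, just stated more explicitly than in the paper.
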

\begin{proof}

By Definition \ref{def:catCohfcalP} for any $P\in \mathcal P(\GXqGY)$,
$p_*(P)$ is locally free coherent sheave on $\bGX$ of finite rank $r$.
Hence for any $f\in k[\bGX]$,
$\Supp P/f P$ is finite over $Z(f)$ that is finite over $X$,
and $N_P(f)= f^r$.
In particular $\Supp P/f^+_n P$ is finite over $Z(t^n-1)\subset X\times\bGm$ which is finite over $X$, 
and $N_P(f^+_n)= (t^n-1)^r$.
This proves the first point.

The proof of the second point
is contained in the proof of proposition 4.1 in \cite{Suslin-GraysonSpectralSeq}.
Let's briefly repeat it.
If $N_P(x-u) = x^n + b_{n-1} x^{n-1} + \dots b_1 x + b_0$, be characteristic polynom of operator $m_P(u)$ on $p_*(P)$,
then $b_i\in k[\bGX]=k[X](t)$ and $b_0 = \mathcal N_P(u)$ is invertible, since $m_P(u)$ is invertible operator.
Let $b_i = c_{i,d_i} t^{d_i} + c_{i,d_i-1} t^{d_i-1} + \dots + c_{i,e_i+1} t^{e_i+1} + c_{i,e_i} t^{e_i}$,
and then $b_0 = t^{e_0} c_{0,e_0}$, $c_{0,e_0}\in k[X]^*$.
For $x=t^n$ we get 
$N_P(t^n-u) = t^{rn} + b_{n-1} t^{(r-1)n} + \dots b_1 t^n + b_0$, 
and 
and for enough big $n$, namely for $n>N_P=\max\limits_i(-e_i+d_{i-1})$, 
the function $t^{-e_0} N_P(t^n-u)$ is monic polynomial in $t$ with coefficients in $k[X]$ and its zero term is equal to $c_{0,e_0}$ and so it is invertible. Now let's note that by definition $-e_0 = M_P$. So we get that for $n>N_P$, $\mathcal N_P(f^-_n) = t^{-M_P} (t^m+a_{m-1}t^{m-1}+\dots +c_{0,e_0})$,
and hence $Z(\mathcal N_P(f^-_n)$ is finite over $X$ and hence $\Supp P$ is finite over $X$.
\end{proof}
\begin{definition}\label{def:alpha_Q}
For any quadratic space $Q=(P,q)\in QCor(X\times\bGm,Y\times\bGm)$ 
put $\alpha_Q = (\rank_{\mathcal O(\bGX)}\, P, N_{P}, max(r N_p, r N_P +M_{P}))\in \mathfrak A.$
where $N_P$ is defined as in the proof of lemma \ref{lm:N,M_P}.\end{definition}

\begin{lemma}\label{lm:existApplBitri-alphaQ} 
For any $Q=(P,q)\in QCor(\GXqGY)$ 
for all $\alpha=(r,n,m)>\alpha_Q\in\mathfrak A$
there is a normal $Q$-applicable 
$(n,m)$-bi-triple (see Def. \ref{def:normappltripl} and def \ref{def:n,m-appltripl}).
\end{lemma}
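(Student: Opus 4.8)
The plan is to build the two required `f'-triples by starting from the canonical choices $\mathcal{N}_P(f^+_n)$ and $\mathcal{N}_P(f^-_n)$ furnished by Lemma~\ref{lm:N,M_P} and then correcting each of them by an auxiliary factor $\mathcal{N}_{ad}$ so as to hit the precise normalization prescribed in Definition~\ref{def:n,m-appltripl}. Write $r=\rank_{\mathcal{O}(\bGX)}P$. Unwinding Definition~\ref{def:alpha_Q} and the order on $\mathfrak A$ (Definition~\ref{def:filtsetA}), the hypothesis $\alpha=(r_\alpha,n,m)>\alpha_Q$ gives $n>N_P$ together with $m$ large enough that $m-rn\ge 1$ and $m-rn-M_P\ge 1$. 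By Lemma~\ref{lm:N,M_P}.(1) the support of $P/f^+_nP$ is then finite over $X$ and $\mathcal{N}_P(f^+_n)=(t^n-1)^r\in k[X][t]$, a monic polynomial of degree $rn$ with constant term $(-1)^r\in k^\times$; by Lemma~\ref{lm:N,M_P}.(2) the support of $P/f^-_nP$ is finite over $X$ and $\mathcal{N}_P(f^-_n)=t^{-M_P}\psi_n$ with $\psi_n\in k[X][t]$ monic of degree $rn+M_P$ and invertible constant term.

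Next I would choose the correcting factors. For the plus triple: take $\mathcal{N}_{ad}^+\in k[X][t]$ monic of degree $m-rn$ (which is positive) with constant term $(-1)^{r+1}$ and the remaining coefficients zero, and set $\mathcal{N}^+=\mathcal{N}_P(f^+_n)\,\mathcal{N}_{ad}^+$; then $\mathcal{N}^+$ is monic of degree $m$ with constant term $-1$, hence of the form required in Definition~\ref{def:n,m-appltripl}, and $Z(\mathcal{N}^+)$ is finite over $X$ because $\mathcal{N}^+$ is a monic polynomial with invertible constant term. For the minus triple: take $\chi\in k[X][t]$ monic of degree $m-1-rn-M_P$ (positive) with constant term $-\psi_n(0)^{-1}$ and the remaining coefficients zero, put $\mathcal{N}_{ad}^-=t^{M_P+1}\chi\in k[\bGX]$ and $\mathcal{N}^-=\mathcal{N}_P(f^-_n)\,\mathcal{N}_{ad}^-=t\,\psi_n\chi$; then $\mathcal{N}^-=t^m+\dots+a_2^-t^2-t$ has the shape of Definition~\ref{def:n,m-appltripl}, and $Z(\mathcal{N}^-)$, computed inside $\bGX$ where $t$ is invertible, is $Z(\psi_n)\amalg Z(\chi)$, finite over $X$. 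Finally, applying Lemma~\ref{lm:Nfg}.(1) to $f^+_n$ and to $f^-_n$ produces $g^+_{min},g^-_{min}\in k[\GXGY]$ with $m_{\mathcal{N}_P(f^\pm_n),P}=m_{f^\pm_n,P}\circ m_{g^\pm_{min},P}$, and I set $g^\pm=g^\pm_{min}\,\mathcal{N}_{ad}^\pm\in k[\GXGY]$.

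It then remains to check that $(f^+_n,\mathcal{N}^+,g^+)$ and $(f^-_n,\mathcal{N}^-,g^-)$ are normal $Q$-applicable `f'-triples. Applicability: $\Supp P/f^\pm_nP$ is finite over $X$ by the previous step, and since multiplication operators by functions on $\GXGY$ commute, $m_{\mathcal{N}^\pm,P}=m_{\mathcal{N}_{ad}^\pm,P}\circ m_{\mathcal{N}_P(f^\pm_n),P}=m_{\mathcal{N}_{ad}^\pm,P}\circ m_{f^\pm_n,P}\circ m_{g^\pm_{min},P}=m_{f^\pm_n,P}\circ m_{g^\pm,P}$, so both are $(P,q)$-triples in the sense of Definition~\ref{def:ftripl}; normality holds by construction, with witnesses $\mathcal{N}_{ad}^\pm$ (so that $\mathcal{N}^\pm=\mathcal{N}_P(f^\pm_n)\mathcal{N}_{ad}^\pm$) and $g^\pm_{min}$ (so that $g^\pm=g^\pm_{min}\mathcal{N}_{ad}^\pm$). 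Hence $\tau=((f^+_n,\mathcal{N}^+,g^+),(f^-_n,\mathcal{N}^-,g^-))$ is a normal $Q$-applicable $(n,m)$-bi-triple. I expect the only genuinely delicate point to be the matching carried out in the second paragraph: realizing the prescribed normalization of $\mathcal{N}^\pm$ as $\mathcal{N}_P(f^\pm_n)$ times a regular function on $\bGX$ relies on Lemma~\ref{lm:N,M_P} exhibiting $\mathcal{N}_P(f^-_n)$ as a monomial times a monic polynomial with invertible constant term (so the missing factor is again a Laurent polynomial, i.e.\ regular on $\bGX$) and on $\alpha>\alpha_Q$ forcing $m$ large enough that the degree bookkeeping leaves the necessary room.
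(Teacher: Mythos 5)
Your proposal is correct and essentially follows the paper's own (very terse) proof: begin with the canonical triples $(f^\pm_n,\mathcal N_P(f^\pm_n),g^\pm_{\min})$ from Lemmas~\ref{lm:N,M_P} and \ref{lm:appltripl-fNg_min} (equivalently, Lemma~\ref{lm:Nfg}.(1)), then multiply by an auxiliary monic factor $\mathcal N_{ad}$ to pad the degree up to $m$. The one place you improve on the paper's argument is in tracking the constant terms: you choose $\mathcal N_{ad}^+$ and $\mathcal N_{ad}^-$ separately so that $\mathcal N^\pm$ land exactly in the normalized shapes $t^m+\dots-1$ and $t^m+\dots-t$ required by Definition~\ref{def:n,m-appltripl}, whereas the paper's single $\mathcal N_{ad}=t^{\delta_m}+\dots+1$ gives $\mathcal N^+$ constant term $(-1)^r$, which agrees with the prescribed $-1$ only for $r$ odd.
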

\begin{proof}
Lemmas \ref{lm:N,M_P} and \ref{lm:appltripl-fNg_min} 
yields that for $n>N$ 
there is a pair of applicable 'f'-triples 
$(f^{+}_n, \mathcal N_P(f^{+}_n), g^{+})$, $(f^{-}_n, \mathcal N_P(f^{-}_n), g^{-})$
that is $(n,rn-M)$-triples.
Denote $\delta_m = m- rn+M$,
then for any $\mathcal N_{ad} = t^\delta_m + \dots +1 \in k[X][t]\subset k[\bGX]$
pair 
$(f^+_n, \mathcal N_P(f^+_n) \mathcal N_{ad}, g^+\mathcal N_{ad})$,
$(f^-_n, \mathcal N_P(f^-_n) \mathcal N_{ad}, g^-\mathcal N_{ad})$
 is applicable $(n,m)$-triple.
\end{proof}

\begin{lemma}\label{lm:DirImPrComd}
For some quadratic space $Q=(P,q)\in QCor(X\times\bGm,Y\times\bGm)$, 
for any morphism of schemes $v\colon Y\to \pri Y$,
we have $\alpha_{\pri Q} = \alpha_Q$,
where $\pri Q=v\circ Q$.
\end{lemma}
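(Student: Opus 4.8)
The plan is to unwind the definition of $\alpha_Q$ and show that it depends on $Q=(P,q)$ only through data that are manifestly insensitive to post-composition with a map on the $Y$-side. Recall from Definition \ref{def:alpha_Q} that $\alpha_Q=(r,N_P,\max(rN_P,rN_P+M_P))$ with $r=\rank_{\mathcal O(\bGX)}P$, and that $M_P$ (Definition \ref{def:N,M_P}) together with $N_P$ (the integer produced in the proof of Lemma \ref{lm:N,M_P}) are both read off from the characteristic polynomial, as an element of $k[X][t^{\pm 1}][x]$, of the endomorphism $m_{P,u}$ of the locally free $\mathcal O(\bGX)$-module $p_*(P)$; here $p\colon X\times\bGm\times Y\times\bGm\to\bGX$ is the projection and $u$ is the coordinate on the target copy of $\bGm$, i.e. the $\bGm$ appearing in $\bGY$. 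Since the third coordinate of $\alpha_Q$ is a function of $r$, $N_P$, $M_P$ alone, it suffices to prove that $\pri Q=v\circ Q$ has the same rank $r$ and that $m_{\pri P,u}$ has the same characteristic polynomial as $m_{P,u}$.

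First I would make the composition explicit. The morphism $v\colon Y\to\pri Y$ enters $v\circ Q$ through the graph correspondence of $v\times\id_{\bGm}\colon \bGY\to\pri Y\times\bGm$, whose underlying sheaf is the structure sheaf of the graph; composing it with $(P,q)$ amounts to pushing $P$ forward along $w:=\id_{\bGX}\times v\times\id_{\bGm}\colon X\times\bGm\times Y\times\bGm\to X\times\bGm\times\pri Y\times\bGm$, so that $\pri P=w_*P$ (the transported quadratic form plays no role below). Two observations then finish the argument. (i) The projection $X\times\bGm\times\pri Y\times\bGm\to\bGX$ precomposed with $w$ is exactly $p$; hence by functoriality of direct image $p'_*(\pri P)=p'_*(w_*P)=p_*(P)$ canonically as $\mathcal O(\bGX)$-modules, so $r$ is unchanged. (ii) The morphism $w$ is the identity on the target $\bGm$-factor, so $w^*$ fixes the function $u$; therefore the multiplication-by-$u$ endomorphism is carried to itself, i.e. $m_{\pri P,u}$ corresponds to $m_{P,u}$ under the identification in (i).

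From (i) and (ii) the endomorphisms $m_{\pri P,u}$ of $p'_*(\pri P)$ and $m_{P,u}$ of $p_*(P)$ are identified over the same $\mathcal O(\bGX)$-module, hence have equal characteristic polynomials over $k[X][t^{\pm 1}]$; by the very definitions this yields $M_{\pri P}=M_P$ and $N_{\pri P}=N_P$, and therefore $\alpha_{\pri Q}=\alpha_Q$. I do not expect a genuine obstacle here — this is a bookkeeping lemma, and its name already points to the mechanism, namely that direct image commutes with the projection to $\bGX$. The only point requiring care is tracking which of the two $\bGm$-factors carries $u$, and confirming that the correspondence induced by $v$ acts on $P$ purely by pushforward in the $Y$-direction, leaving that factor, and hence $m_{P,u}$, untouched. (Should exactness of $w_*$ be a concern, note that $w$ restricted to $\Supp P$ is finite since $\Supp P$ is finite over $\bGX$; but for the identity $p'_*\circ w_*=p_*$ only functoriality is used.)
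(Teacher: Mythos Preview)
Your proposal is correct and is essentially the same as the paper's own proof, just spelled out in more detail. The paper condenses the argument to the single observation that the direct images of $P$ and $\pri P$ on $X\times\bGm\times\bGm$ (projecting out the $Y$-, respectively $\pri Y$-, factor) are isomorphic; your version unpacks this by pushing all the way to $\bGX$ and tracking the $u$-action separately, which amounts to the same thing since a coherent sheaf on $X\times\bGm\times\bGm$ finite over $\bGX$ is the same data as its pushforward to $\bGX$ together with the endomorphism $m_u$.
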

\begin{proof}
Let $Q^\prime=(P^\prime,q^\prime)$ the claim follows form that direct images of sheaves $P$ and $P^\prime$ on $X\times\bGm\times\bGm$ are isomorphic.
\end{proof}

\subsection{Sets with action of two commuting idempotents}
In this subsection we fix notations relating to the 
compositions of quadratic spaces $QCor(X\times\bGm,Y\times\bGm)$
with maps $1_{\bGX}\colon \bGX\to X\xrightarrow{1} \bGX$ and $1_{\bGY}\colon \bGY\to Y\xrightarrow{1} \bGY$.

\newcommand{\ip}{\mathcal Id^2}
\newcommand{\ipc}[1]{ \sGcl{#1}{\ip} }
\newcommand{\ips}[1]{ \sGst{#1}{\ip} }

\newcommand{\sGcl}[2]{ CCl_{#2}(#1) }
\newcommand{\sGst}[2]{ St_{#2}(#1) }

\begin{definition}\label{def:sGcl}
Let $P$ be a semi-group and $\mathcal Q$ be a $P$-set (i.e set with action of $P$), 
and let $\mathcal F\subset \mathcal Q$ be any subset.
Then denote by $\sGcl{\mathcal F}{P}$
the maximal subset of $F$ closed under action of $P$, 
and denote by $\sGst{\mathcal Q}{P}$ the subset of $P$-stable elements, i.e.
$$\sGcl{\mathcal F}{P} =\{s\in \mathcal F\colon P\cdot s\in \mathcal F\},\;
\sGst{\mathcal Q}{P} = \{s\in \mathcal Q\colon g\cdot s = s,\forall g\in P\}.$$

If $\mathcal Q(-,-)\to Sm_k\times Sm_k\to Set$ is a bi-functor with $P$-action (i.e. functor $Sm_k\times Sm_k\to P-Set$), 
and $\mathcal F\subset \mathcal Q$ is a $Set$-sub-bi-functor,
we get $P$-Set bi-functor inclusions
$\sGcl{\mathcal F}{P}\subset \mathcal Q$, $\sGst{\mathcal Q}{P}\subset \mathcal Q$. 

\end{definition}

\begin{definition}\label{def:IP}
Let $\ip$ denote semi-group with four elements $\{1,p_1,p_2,p_{1,2}\}$
such that $p_1^2=p^1$, $p_2^2=p_2$, $p_1 p_2 = p_{1,2}$.
Then if $\ip$-Set $A$ is abelian group then it is exactly abelian group with two idempotents $p_1,p_2\in End(A)$.
Let's denote $$dot^A = (id_A - p_1)(id_A- p_2)\in End(A),$$
and then $\ips{A}=\Image(dot^A)$, and we get homomorphisms $ p \colon A \leftrightharpoons \ips{A} \colon i $, $p \circ i=id_{\ips{A}}$.

We define the action of $\ip$ on the bi-functor $(X,Y)\mapsto Q(\mathcal P(\GXqGY))$ as follows:
$
p_1\cdot (P,q) = \ipGX\circ (P,q), 
p_2\cdot (P,q) = (P,q)\circ \ipGY,
p_{1,2}\cdot (P,q) = \ipGX\circ (P,q)\circ \ipGY
.$
\end{definition}

\subsection{Domains of left and right inverse homomorphisms}
In this subsection we define 
a filtering system of
additive bi-functors $L^*_\alpha,N^*_\alpha,R^*_\alpha\colon Sm_k\times Sm_k^{op}\to Ab$, $*\in\{GW,W\}$
with homomorphisms
$L^{GW}_\alpha\rightarrow GWCor(\XGwqYGw)$, 
$N^{GW}_\alpha\rightarrow GWCor(X,Y)$,
$R^{GW}_\alpha\rightarrow GWCor(\XGwqYGw)$
(and similarly for $WCor$).
Bi-functors $R^*_\alpha$ and $L^*_\alpha$ 
play a role of domains of partly defined 
homomorphisms 
$\rho^{*,L}_\alpha\colon L^*_\alpha\to N^*_\alpha$ and
$\rho^{*,R}_\alpha\colon R^*_\alpha\to GWCor(X,Y)$,
that are
left and right inverse homomorphisms to the homomorphism $-\boxtimes id_\bGmw$,
and $N^*_\alpha$ plays the role of codomain for $\rho^{*,L}$. 

\begin{definition}\label{def:GW,W(gruppoid)}
Let $\mathcal Q\subset QCor(X,Y)$ be any subset for smooth varieties $X,Y\in Sm_k$,
then let's denote 
by $Met(\mathcal Q)\subset \mathcal Q$ the subset of the metabolic spaces;
by $\Simpo(\mathcal Q,\oplus )$ the set of triples $Q_1,Q_2,Q_3\in \mathcal Q\colon Q_3= Q_2\oplus Q_1$; 
by $GW(\mathcal Q)$ Grothendieck-Witt group of groupoid $\mathcal Q$ in respect to direct sums,
i.e. $GW(\mathcal Q)=\Coker(\bZ(\Simpo(\mathcal Q))\to \bZ(\mathcal Q))$ where homomorphism sends triple $(Q_1,Q_2,Q_3)$ to a formal sum $Q_3-Q_2-Q_1$;
and denote $W(\mathcal Q)=\Coker(Met(\mathcal Q)\to GW(\mathcal Q))$. 
Note that any action of $\mathcal Q$ on a set, that commutes with direct sums and sends metabolic subspaces to metabolic, 
induce the group action on $GW(\mathcal Q)$ and $W(\mathcal Q)$
And note also that these definitions are functorial, i.e. the same definitions work for any sub-bi-functor $\mathcal Q\subset QCor(-,-)$.
\end{definition}

\begin{definition}\label{def:L^QGWW}
For any any $\alpha=(r,n,m)\in \mathfrak{A}$ let's define bi-functors 
\begin{gather*}
\tilde L^Q_\alpha(-,-), L^Q_\alpha
\colon Sm_k^{op}\times Sm_k\to Set,\;
\tilde L^{GW}_\alpha, L^{GW}_\alpha,
\tilde L^{W}_\alpha, L^{W}_\alpha
\colon Sm_k^{op}\times Sm_k\to Ab
:\\
\tilde L^Q_\alpha(X,Y) = \{Q \in Q(\mathcal P(\GXqGY))| 
\alpha>\alpha_{Q}\},
L^{Q}_\alpha = \ipc{( \tilde L^Q_\alpha )},\;
\tilde L^{GW}_\alpha =  GW(L^Q_\alpha),\;\\
\tilde L^{W} = W(L^Q_\alpha),\;
L^{GW}_\alpha = \ips{( \tilde L^{GW}_\alpha )},\;
L^{W} = \ips{( \tilde L^{W}_\alpha )}
,\end{gather*}
(see Def. \ref{def:N,M_P} and Lemma \ref{lm:DirImPrComd} for the second row;
see Def. \ref{def:sGcl}, Def. \ref{def:IP} and Def. \ref{def:GW,W(gruppoid)} for the third row).
In other words $L^{GW}_\alpha=Im(dot^L_\alpha)$, 
where $dot^L_\alpha\in End(\tilde L^{GW}_\alpha )$ 
is the idempotent defined by the composition $(id_{\bGX} -1_{\bGX})\circ - \circ (id_{\bGY} -1_{\bGY})$.
(see Def. \ref{def:IP}). Set in addition 
$
L^Q_\infty = QCor(\GXqGY),
L^{GW}_\infty = GWCor(\XGwqYGw),
L^{W}_\infty = WCor(\XGwqYGw)
.$
\end{definition}

\begin{definition}\label{def:N^QGWW}
For any $\alpha=(r,n,m)\in \mathfrak A$, define the bi-functor $N^Q_\alpha\colon Sm_k^{op}\times Sm_k\to Set$
as the preimage of $L^Q_\alpha$ under $-\boxtimes id_{bGm}$, i.e.
$$
N^Q_\alpha(X,Y)= (-\boxtimes id_{\bGm})^{-1}(L^Q_\alpha(X,Y))= \{(P,q)\in Q(\mathcal P(X,Y))| (P,q)\boxtimes id_{\bGm}\in L^Q_\alpha\};
$$
and define bi-functors
$
\tilde N^{GW}_\alpha, N^{GW}_\alpha,
\tilde N^{W}_\alpha, N^{W}_\alpha
\colon Sm_k^{op}\times Sm_k\to Ab
:
$
$
N^{Q}_\alpha = \ipc{( \tilde N^Q_\alpha )},\;
\tilde N^{GW}_\alpha =  GW(N^Q_\alpha),\;
\tilde N^{W} = W(N^Q_\alpha),\;
N^{GW}_\alpha = \ips{( \tilde N^{GW}_\alpha )},\;
N^{W} = \ips{( \tilde N^{W}_\alpha )}
$
(see Def. \ref{def:sGcl}, Def. \ref{def:IP} and Def. \ref{def:GW,W(gruppoid)}).
Set in addition 
$
N^Q_\infty = QCor(X,Y),
N^{GW}_\infty = GWCor(X,Y),
N^{W}_\infty = WCor(X,Y)
.$
\end{definition}

\newcommand{\XGGqYGG}{X\times\bGm\times\bGm,Y\times\bGm\times\bGm}
\newcommand{\XGGAqYGG}{X\times\bGm\times\bGm\times\affl,Y\times\bGm\times\bGm}
\begin{lemma}\label{lm:permhomQCor}
There are maps
$$h^{+/-}_{per}\colon QCor(\XGGqYGG)\to QCor(\XGGAqYGG)$$
such that $h^{+/-}$ are natural in $X$ and $Y$ and preserve direct sums and metabolic spaces,
and maps (not necessarily natural)
$$h^{*}_{per,b}, h^{*,+/-}_{per,d}\colon QCor(\XGGqYGG)\to QCor(\XGGqYGG), *\in \{0,1\}$$ 
such that for any $Q\in QCor(\XGGqYGG)$
the spaces $h^{*,+/-}_{per,d}(Q)$ split into the sum of spaces that supports are sent to the unit under the projection to at least one of multiplicands $\bGm$, and such that
\begin{equation}\label{eq:h_pre-cond}\begin{array}{cccc}
i_0^*(h_{per}^+(Q)) \oplus h^0_{per,b}(Q)\oplus h^{0,+}_{per,d}(Q)&\simeq& Q &\oplus i_0^*(h_{per}^-(Q)) \oplus h^0_{per,b}(Q)\oplus h^{0,-}_{per,d}(Q)\\
i_1^*(h_{per}^+(Q)) \oplus h^1_{per.b}(Q)\oplus h^{1,+}_{per,d}(Q)&\simeq& T_{Y}\circ Q \circ T_{X} &\oplus i_1^*(h_{per}^-(Q)) \oplus h^1_{per,b}(Q)\oplus h^{1,-}_{per,d}(Q)
,\end{array}\end{equation}
where 
$T_{X} = id_X\boxtimes T$, $T_Y=id_Y\boxtimes T$ are transpositions on $X\times \bGm\times\bGm$ and $Y\times \bGm\times\bGm$, and
$i_0,i_1$ denote zero and unit sections.
\end{lemma}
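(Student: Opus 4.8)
The plan is to reuse the explicit $\affl$-homotopy from the proof of Lemma~\ref{prop:permutGmGm} and to make it universal in $Q$ by composing $Q$ with fixed quadratic correspondences on the two $\bGm$-factors. Recall from that proof the finite flat double cover attached to $Q_0=Z(t^2-at+m)\subset\affl_{\affl\times\bGm}$, together with $\Pi=\langle t^2-at+m,Q_0\rangle$, the sum-and-product map $C\colon\bGm\times\bGm\to\affl\times\bGm$, the maps $sect$ and $\mu$, and the $\affl$-homotopy $h(a,m,\lambda)=(a(1-\lambda)+(1+m)\lambda,m)$ joining $\id_{\affl\times\bGm}$ to $sect\circ pr_{\bGm}$. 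Plugging $h$ into $\Pi\circ C$ and transporting along the isomorphism $Q_0\simeq\bGm\times\bGm$ produces a universal homotopy quadratic correspondence $\Theta\in QCor(\bGm\times\bGm\times\affl,\bGm\times\bGm)$ with $i_0^{*}\Theta$ the rank-two space whose fibre over $\bGm\times\bGm-\Delta_{\bGm}$ is $\langle x-y\rangle(\id_{\bGm\times\bGm}+\langle-1\rangle T)$, and with $i_1^{*}\Theta=\Pi\circ sect\circ\mu\circ C$, which by the computation in Lemma~\ref{prop:permutGmGm} becomes, after the idempotent $e$, a sum of correspondences factoring through a unit section of one $\bGm$.

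First I would set $h^{+/-}_{per}(Q)$ to be obtained from $Q$ by composing with $\id_X\boxtimes\Theta$ on the source $\bGm\times\bGm$ and with $\id_Y\boxtimes\Theta$ on the target $\bGm\times\bGm$ (synchronising the single parameter $\lambda$, or concatenating two one-parameter homotopies and renormalising), so that at $\lambda=0$ one recovers $Q$ and at $\lambda=1$ one recovers $T_Y\circ Q\circ T_X$, both up to correction summands; the pair $h^{+}_{per},h^{-}_{per}$ encodes this homotopy as a formal difference in the Grothendieck–Witt group of correspondences, each of $h^{+}_{per}(Q),h^{-}_{per}(Q)$ being an honest quadratic correspondence over $\affl$. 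Naturality of $h^{+/-}_{per}$ in $X$ and $Y$, and the fact that they respect $\oplus$ and metabolic spaces, are then automatic, since the only operations used are $\circ$ and $\boxtimes$ against the fixed correspondence $\Theta$, and these are bifunctorial, additive, and send a metabolic argument to a metabolic value (this is exactly what makes $\WCor$ and the products of Lemma~\ref{lm:boxprodunct} well defined). The non-canonical pieces $h^*_{per,b}$ and $h^{*,+/-}_{per,d}$ I would read off from a decomposition of $i_0^{*}\Theta$ and $i_1^{*}\Theta$ into their main summand ($\id_{\bGm\times\bGm}$, resp.\ $T$), their unit-section summands (which yield the $h^{*,+/-}_{per,d}$ after composing with $Q$, the relevant rank-one spaces being identified via Proposition~\ref{prop:constrFP->Q}), and the metabolic summands $h^*_{per,b}$ that are needed to turn the identity $[T]=[\langle-1\rangle\cdot\id]$ of Lemma~\ref{prop:permutGmGm} into an honest isomorphism of quadratic spaces; these last summands appear symmetrically on the two sides of~\eqref{eq:h_pre-cond} and cancel. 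Compatibility of all this with $red(-)$ (Definition~\ref{def:Qred}), with base change along $i_0,i_1$, and with Lemma~\ref{lm:constQuad} (which will absorb the $\langle x-y\rangle$-twist into an $\affl$-isomorphism) then gives~\eqref{eq:h_pre-cond}.

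The hard part will be the bookkeeping forced by the fact that, in contrast to Lemma~\ref{prop:permutGmGm}, the homotopy must be defined over all of $\bGm\times\bGm$ rather than over $\bGm\times\bGm-\Delta_{\bGm}$: over the diagonal the double cover $Q_0\to\affl\times\bGm$ ramifies, so $i_0^{*}\Theta$ genuinely does not split into the two sheets $Z(t-x)\amalg Z(t-y)$, and the normalising factor $\langle x-y\rangle$ fails to be invertible there, so the injectivity trick used at the end of the proof of Lemma~\ref{prop:permutGmGm} is unavailable. The substance of the proof is then to verify that, after composing with $Q$, the discrepancy between $i_0^{*}\Theta$ and $\id_{\bGm\times\bGm}\oplus\langle-1\rangle T$ (and likewise for $i_1^{*}\Theta$) is a sum of unit-section and metabolic correspondences of the stated shapes, and to carry out the $\langle(x-y)^{-1}\rangle$-renormalisation of Lemma~\ref{prop:permutGmGm} as an $\affl$-homotopy compatible with this decomposition. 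These are routine but lengthy computations; the conceptual input is already in Lemma~\ref{prop:permutGmGm}.
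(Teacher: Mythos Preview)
Your high-level strategy matches the paper's: build $h^{\pm}_{per}$ by composing $Q$ on both the source and target $\bGm\times\bGm$ factors with a fixed homotopy coming from Lemma~\ref{prop:permutGmGm}, then read off the balancing and unit-section terms. The paper formalises the two-sided composition as $h_1\star h_2(Q)=i_\Delta^*\bigl((h_2\circ(Q\circ h_1))\boxtimes\affl\bigr)$ and sets $h^+_{per}=(h^+\star h^+)\oplus(h^-\star h^-)$, $h^-_{per}=(h^+\star h^-)\oplus(h^-\star h^+)$, i.e.\ the $QCor$-level expansion of $([h^+]-[h^-])\star([h^+]-[h^-])$; the $\langle-1\rangle$ twist from $[T]=[\langle-1\rangle\,\id]$ then appears once on each side and cancels.

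Where your plan diverges is in what you take from Lemma~\ref{prop:permutGmGm}. You want to reuse the explicit cover $\Theta=\Pi\circ h\circ C$ from its \emph{proof}; the paper uses only its \emph{statement}. That statement is an equality in $\overline{GWCor_k}(\bGmw\times\bGmw,\bGmw\times\bGmw)$, and simply unpacking what such an equality means---a homotopy in $GWCor$, written as a difference $[h^+]-[h^-]$ of honest $QCor$-elements over $\bGm\times\bGm\times\affl$, agreeing at the endpoints with $\id$ and $T$ up to unit-section summands and a Grothendieck-group balancing term---already produces abstract $h^\pm,h^0,h^1$ over all of $\bGm\times\bGm$ of the required shape. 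No diagonal issue arises: the injectivity step at the end of Lemma~\ref{prop:permutGmGm} has already transported the identity from $\bGm\times\bGm-\Delta_{\bGm}$ back to $\bGmw\times\bGmw$, and you may quote that output directly.

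By contrast, working with the explicit $\Theta$ leaves you with $i_0^*\Theta=\Pi\circ C$, which equals $\langle x-y\rangle(\id+\langle-1\rangle T)$ only on $\bGm\times\bGm-\Delta_{\bGm}$. The factor $\langle x-y\rangle$ is not invertible across the diagonal, so Lemma~\ref{lm:constQuad} cannot absorb it as you suggest, and the discrepancy $[\Pi\circ C]-[\id]-[\langle-1\rangle T]$ is not of the form ``unit-section plus metabolic'' in $GWCor(\bGm\times\bGm,\bGm\times\bGm)$. Your proposed ``routine but lengthy'' computation would in effect have to re-derive the injectivity argument you declared unavailable. Citing the conclusion of Lemma~\ref{prop:permutGmGm} rather than its construction removes the ``hard part'' entirely; only the formal bookkeeping remains.
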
\begin{proof}
Lemma \ref{prop:permutGmGm} implies that there are
\begin{equation}\label{eq:hpreQSp}\begin{aligned}
h^+, h^-\in QCor(\bGm\times\bGm\times\affl,\bGm\times\bGm),\,
h^0, h^1\in QCor(\bGm\times\bGm,\bGm\times\bGm)\colon\\
i^*_0(h^+)\oplus h^0 = (k[\Delta],1)_\oplus h^0,\;
i^*_1(h^+)\oplus h^1 = (k[\Gamma_T],1)_\oplus h^1,\\
\end{aligned}\end{equation}
and such that spaces $h_0$ and $h_1$ splits into the sum of spaces that supports are 
contained in $pr_i^{-1}(1)$,
where $pr_i \colon \bGm\times\bGm\times\bGm\times\bGm\to \bGm$ for $1\leq i\leq 4$ denotes projections to multiplicands.
The operation of composition  at the left with the class $[h^+]-[h^-]$ defines a homotopy on $GWCor(X\times\bGmw\times\bGmw,Y\times\bGmw\times\bGmw)$
that permutes multiplicands in the domain.
The
operation of composition at the right with $[h^+]-[h^-]$ defines a homotopy that permutes multiplicands in the codomain.
Now composing this two homotopies we get the claim. 

Indeed, 
let $i_\Delta\colon U\times\affl \rightarrow U\times\affl\times\affl $ denote the diagonal embedding for $U\in Sm_k$ and let
$$h_1\star h_2(Q) = i_\Delta^*( ( h_2 \circ (Q\circ h_1) \boxtimes \affl) )\in QCor(U\times\affl\times\affl,U),$$
for any two quadratic spaces $h_1\in QCor(U\times \affl,U)$,$h_2\in QCor(V\times \affl,V)$ 
and $Q\in QCor(U,V)$.
Then we can define the required maps $h^+_{per}$ and $h^-_{per}$ as
\begin{equation*}\begin{array}{ccc}
h^+_{per}\colon Q&\mapsto& (id_X\boxtimes h^+)\star (Y\boxtimes h^+)(Q)\oplus (X\boxtimes h^-)\star (Y\boxtimes h^-)(Q), \\
h^-_{per}\colon Q&\mapsto& (X\boxtimes h^+)\star (Y\boxtimes h^-)(Q)\oplus (X\boxtimes h^-)\star (Y\boxtimes h^+)(Q).
\end{array}\end{equation*}
The equations \eqref{eq:hpreQSp} implies that
$$\begin{array}{ccc}
i^*_0([h^+_{per}(Q)]-[h^-_{per}(Q)])=&[Q]&\in GWCor(\bGm\times\bGm,\bGm\times\bGm),\\
i^*_1([h^+_{per}(Q)]-[h^-_{per}(Q)])=&[T_Y\circ Q\circ T_X]&\in GWCor(\bGm\times\bGm,\bGm\times\bGm),
\end{array}$$
and this implies existence of the maps $h^0_{per,b}$, $h^1_{per,b}$, $h^{0,+}_{per,d}$, $h^{0,-}_{per,d}$, $h^{1,+}_{per,d}$, $h^{1,-}_{per,d}$.

\end{proof}

\begin{definition}\label{def:Ralpha}
For any any $\alpha=(r,n,m)\in \mathfrak{A}$ let's define the bi-functor $R^Q_\alpha\colon Sm_k^{op}\times Sm_k\to Set$
\begin{multline*}
R^Q_\alpha(X,Y)= 
\{Q=(P,q)\in L^Q_\alpha(X,Y)\cap N^Q_\alpha(X\times\bGm,Y\times\bGm) | \\
  T_Y\circ (Q\boxtimes id_\bGm) T_X,
  h_{per,b}^{*}(Q), h_{per,d}^{*,\star}(Q) \in L^Q_\alpha(\bGX,\bGY), *\in \{0,1\},\\
  h_{per}^{\star}(Q) \in L^Q_\alpha(\bGX\times\affl,\bGY), \star\in \{+,-\}
\} 
,\end{multline*}
where $h^{\star}_{per}$, $h_{per,b}^{*}(Q)$, $h_{per,b}^{*,\star}(Q)$ denote maps from Lemma \ref{lm:permhomQCor},
and define bi-functors
\begin{gather*}
\tilde R^{GW}_\alpha, R^{GW}_\alpha,
\tilde R^{W}_\alpha, R^{W}_\alpha
\colon Sm_k^{op}\times Sm_k\to Ab
\\
R^{Q}_\alpha = \ipc{( \tilde R^Q_\alpha )},
\tilde N^{GW}_\alpha =  GW(R^Q_\alpha),
\tilde N^{W} = W(R^Q_\alpha),\\
R^{GW}_\alpha = \ips{( \tilde R^{GW}_\alpha )},
R^{W} = \ips{( \tilde R^{W}_\alpha )}
\end{gather*}
(see Def. \ref{def:sGcl}, Def. \ref{def:IP} and Def. \ref{def:GW,W(gruppoid)}).
Set in addition
$
R^Q_\infty = QCor(\GXqGY),
$
$R^{GW}_\infty = GWCor(\XGwqYGw),$
$R^{W}_\infty = WCor(\XGwqYGw)
.$
\end{definition}

\begin{lemma}\label{lm:additivityfunctors}
1) For any $Y\in Sm_k$, functors 
$L^*_\alpha(-,Y), N^*_\alpha(-,Y),R^*_\alpha(-,Y)$, $*\in \{Q,\GW,W\}$
are additive, i.e. sends disjoint unions to products in corresponding category.

2) For any $X\in Sm_k$ and $
\alpha\in\mathfrak{A}$, the functors
$L^*_\alpha(X,-), N^*_\alpha(X,-), R^*_\alpha(X,-)$, $*\in \{\GW,W\}$
are additive, i.e. $L^*_\alpha$, $R^*_\alpha$ send disjoint unions in $Sm_k$ to the direct sums in corresponding additive category.
\end{lemma}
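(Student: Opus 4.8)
The plan is to reduce everything to the additivity of the underlying $\mathrm{Set}$-valued bi-functor $Q(\mathcal P(-,-))$, since all the bi-functors in the statement are obtained from it by a fixed chain of constructions (restriction to a subset by a numerical condition, the closure/stable operators $CCl$ and $St$ of Definition \ref{def:sGcl}, and the groupoid Grothendieck--Witt and Witt constructions of Definition \ref{def:GW,W(gruppoid)}), each of which I claim commutes with finite disjoint unions in the relevant variable. First I would record the base case: for $X = X_1 \amalg X_2$ one has $k[X] = k[X_1]\times k[X_2]$, hence $\mathcal P(X,Y)\simeq \mathcal P(X_1,Y)\times\mathcal P(X_2,Y)$ as exact categories with duality (the duality $D_X$ splits as $D_{X_1}\times D_{X_2}$ because $p_*$ splits), so $Q(\mathcal P(X,Y)) = Q(\mathcal P(X_1,Y))\times Q(\mathcal P(X_2,Y))$; and for $Y = Y_1\amalg Y_2$ a coherent sheaf on $X\times Y$ with support finite over $X$ splits as a direct sum of its restrictions to $X\times Y_1$ and $X\times Y_2$, giving $Q(\mathcal P(X,Y)) = Q(\mathcal P(X,Y_1))\oplus Q(\mathcal P(X,Y_2))$ — in the $Y$-variable disjoint unions go to direct sums, which is why part (2) is only asserted for the additive-valued functors and phrased in terms of $\oplus$.

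Next I would check that the defining numerical conditions are compatible with these decompositions: for the first variable, $\alpha_{(P_1,q_1)\amalg(P_2,q_2)}$ is controlled by the rank, $N_P$ and $M_P$ invariants (Definitions \ref{def:N,M_P}, \ref{def:alpha_Q}), and these are computed fibrewise over $X$, so $\alpha > \alpha_Q$ holds for the pair on $X_1\amalg X_2$ iff it holds on each $X_i$; similarly $\Supp P/fP$ finite over $X$, the applicability of $(n,m)$-bi-triples, and membership in $L^Q_\alpha$, $N^Q_\alpha$, $R^Q_\alpha$ are all conditions that can be tested componentwise (for $R^Q_\alpha$ one also uses that the maps $h^{\star}_{per}$, $h^{*}_{per,b}$, $h^{*,\star}_{per,d}$ of Lemma \ref{lm:permhomQCor} are natural in $X$ and $Y$ and preserve direct sums, so the conditions imposed on $Q$ in Definition \ref{def:Ralpha} respect $\amalg$ in either variable). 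In the second variable, Lemma \ref{lm:DirImPrComd} already tells us $\alpha_Q$ is unchanged under pushforward along $Y\to Y'$, which gives the compatibility needed when decomposing $Y$. Then I would observe that $CCl_{\mathcal Id^2}$ and $St_{\mathcal Id^2}$, being defined by equations involving the idempotents $1_{\bGX}$ and $1_{\bGY}$ — operations which are themselves additive in $X$ and in $Y$ — preserve products/direct sums: the maximal $\mathcal Id^2$-closed subset of a product is the product of the maximal closed subsets, and likewise for the image of $dot^A$ on a direct sum.

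Finally I would note that the groupoid constructions $GW(-)$ and $W(-)$ of Definition \ref{def:GW,W(gruppoid)} are defined as cokernels of maps of free abelian groups on the $\mathrm{Set}$-level data ($\mathcal Q$, $\Delta^1(\mathcal Q,\oplus)$, $Met(\mathcal Q)$), each of which splits as a product over a disjoint union by the base case above; since $\mathbb Z(-)$ turns finite products of sets into direct sums and cokernel is a finite colimit, $GW$ and $W$ commute with the decomposition, proving (1) and (2) for $* \in \{GW,W\}$ once it is known for $* = Q$. The only genuinely delicate point — and the one I would spend the most care on — is part (2): for $R^*_\alpha(X,-)$, additivity in $Y$ requires that the auxiliary spaces $h^{*}_{per,b}(Q)$, $h^{*,\star}_{per,d}(Q)$ and $h^{\star}_{per}(Q)$ produced by Lemma \ref{lm:permhomQCor} decompose compatibly when $Y = Y_1\amalg Y_2$; this follows because those maps are built out of $id_Y\boxtimes T$, composition, and the fixed spaces $h^\pm, h^0, h^1$ on powers of $\bGm$, all of which are additive in $Y$, but verifying that the membership condition "$h^{\star}_{per}(Q)\in L^Q_\alpha$" is inherited by each summand uses the componentwise nature of $\alpha_{(-)}$ once more together with Lemma \ref{lm:DirImPrComd}. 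I expect no essential obstacle beyond bookkeeping; the proof is a verification that a tower of additive constructions stays additive.
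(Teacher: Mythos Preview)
Your treatment of part (1) matches the paper's: the invariants $\rank P$, $N_P$, $M_P$ defining $\alpha_Q$ are computed componentwise over $X_1\amalg X_2$, so $\tilde L^Q_\alpha(X_1\amalg X_2,Y)=\tilde L^Q_\alpha(X_1,Y)\times\tilde L^Q_\alpha(X_2,Y)$, and the subsequent layers $CCl$, $St$, $GW$, $W$ preserve products.  Your more explicit bookkeeping is a reasonable elaboration of the paper's one-line argument.

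For part (2) your plan has a genuine gap.  You propose to establish a set-level product $L^Q_\alpha(X,Y_1\amalg Y_2)\cong L^Q_\alpha(X,Y_1)\times L^Q_\alpha(X,Y_2)$ and then propagate to $GW$ and $W$ via $\mathbb Z(-)$ and cokernels.  But this bijection fails in the direct-sum direction: if $Q_i=(P_i,q_i)\in\tilde L^Q_\alpha(X,Y_i)$, the first coordinate of $\alpha_{Q_1\oplus Q_2}$ is $\rank_{\mathcal O(\bGX)}(P_1\oplus P_2)=\rank P_1+\rank P_2$, and the requirement $r>\rank P_1+\rank P_2$ is strictly stronger than $r>\rank P_i$ for each $i$.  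So $Q_1\oplus Q_2$ need not lie in $\tilde L^Q_\alpha(X,Y_1\amalg Y_2)$.  Lemma \ref{lm:DirImPrComd} only gives invariance of $\alpha_{(-)}$ under pushforward along a single inclusion $Y_i\hookrightarrow Y$; it says nothing about direct sums.  Consequently the sets $\mathcal Q$, $\Delta^1(\mathcal Q,\oplus)$, $Met(\mathcal Q)$ do \emph{not} split as products in the second variable, and your propagation argument breaks down.

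The paper avoids this by never asserting a $Q$-level product in the $Y$-variable.  Instead it writes down mutually inverse group homomorphisms directly at the level of the Grothendieck groups $(\mathcal F^Q)^\oplus$: one sends $([(P_1,q_1)],[(P_2,q_2)])$ to the \emph{formal sum} $[(P_1,q_1)]+[(P_2,q_2)]$ (which only requires each $(P_i,q_i)$ individually to lie in $\mathcal F^Q(X,Y_1\amalg Y_2)$, and this does follow from Lemma \ref{lm:DirImPrComd}), and the other sends $[(P,q)]$ to the pair of restrictions.  One then checks both composites are identities and that the $\mathcal Id^2$-action is compatible.  Your plan becomes sound once you replace the set-level product claim with this pair of group-level maps.
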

\begin{proof}
1)
Firstly note that $\tilde L_\alpha^Q(X_1\amalg X_2,Y)=L^Q_\alpha(X_1,Y)\times L^Q_\alpha(X_2,Y)$, since  
required conditions on rank can be checked on disjoint components,
and pair of 'f'-triples on disjoint union is normal applicable $(n,m)$-bi-triple if it is so at each disjoint component.
The claim for other functors defined in Def. \ref{def:L^QGWW}, Def. \ref{def:Ralpha}
follows from that additivity preserves under used in definitions operations 
and from that direct sums of quadratic spaces and property of metabolic spaces are compatible with disjoint union of schemes.

2)
To show additivity in the second argument consider the pair of homomorphisms
$$\begin{array}{ccc}
\mathcal F^\oplus(X,Y_1) \oplus \mathcal F^\oplus(X,Y_2) &\leftrightharpoons& \mathcal F^\oplus(X,Y_1\amalg Y_2)\colon \\
 ( [(P_1,q_1)] , [(P_2,q_2)] )&\mapsto& [(P_1,q_1)]+[(P_2,q_2)],\\
 ( [(P,q)\big|_{X\times Y_1}],[(P,q)\big|_{X\times Y_2}] )&\gets& [ (P,q) ]
\end{array}$$
where $\mathcal F \in \{ L^Q,N^Q,R^Q \}$
and in the definition of the second map we use that support of any $(P,q)\in Q(\mathcal P(X,Y_1\amalg Y_2))$ splits into disjoint union and hence
quadratic space splits into direct sum $(P,q)\big|_{X\times Y_1}\oplus (P,q)\big|_{X\times Y_2}$.
Checking that composition are identity we get
isomorphism $(\mathcal F)^\oplus(X,Y_1\amalg Y_2) \simeq (\mathcal F)^\oplus(X,Y_1) \oplus (\mathcal F)^\oplus(X,Y_2)$
and 
checking that action of $\ip$ is compatible with this decomposition 
we get decomposition for $\ips{(\mathcal F^\oplus)}$.
\end{proof}

\begin{definition}
Lemma \ref{lm:additivityfunctors} implies that
the presheaves of abelian groups 
$L^*_\alpha$, $N^*_\alpha$, $R^*_\alpha$, $*\in \{GW,W\}$
can be uniquely lifted to a bi-additive bi-functors on Karoubi closure of the additivisation of $Sm_k$,
and in particular these presheaves are correctly defined on $X\times\bGmw$ for $X\in Sm_k$.
We shell denote such lifts of these functors by the same symbols.

Precisely $L^*(\bGwX,\bGwY)$ 
 is defined as 
\begin{multline*}
L^*(\bGwX,\bGwY) = \\ =\Image \Bigl( [(P,q)] \mapsto [(P,q)] - [{\ipGX}^*(P,q)]  - [{\ipGY}_*(P,q)] + [{\ipGX}^*({\ipGY}_*(P,q))] \Bigr)
\end{multline*} 
\end{definition}

\begin{lemma}\label{lm:surlimFNL}
For any varieties $X,Y$ and a quadratic space $(P,q)\in Q(\mathcal P(\GXqGY))$ 
there is $a\in \mathfrak{A}$ such that for all $\alpha>a$, 
$(P,q)\in L^Q_\alpha(X,Y)$ 
and 
$(P,q)\in R^Q_\alpha(X,Y)$.
\end{lemma}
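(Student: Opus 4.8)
The plan is to reduce the statement to an elementary stabilization fact about the filtering poset $\mathfrak A$ and the index map $Q'\mapsto\alpha_{Q'}$, and then apply it to a finite list of auxiliary quadratic spaces.

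First I would unwind Definitions \ref{def:L^QGWW}, \ref{def:N^QGWW} and \ref{def:Ralpha} to observe that both conditions ``$(P,q)\in L^Q_\alpha(X,Y)$'' and ``$(P,q)\in R^Q_\alpha(X,Y)$'' are conjunctions of finitely many elementary conditions, each of the form ``$Q'\in\tilde L^Q_\alpha(A,B)$'' for a \emph{fixed} quadratic space $Q'$ over \emph{fixed} smooth schemes $A,B$, where $Q'$ is obtained from $(P,q)$ by one of the fixed operations $-\boxtimes id_{\bGm}$, conjugation by the transpositions $T_X,T_Y$, or the maps $h^{\star}_{per},h^{*}_{per,b},h^{*,\star}_{per,d}$ of Lemma \ref{lm:permhomQCor}. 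Here I would use that $L^Q_\alpha$ (and likewise $N^Q_\alpha$, $R^Q_\alpha$) is built from the raw set $\tilde L^Q_\alpha$ only by the closure operation $CCl_{\ip}$ (Definitions \ref{def:sGcl}, \ref{def:IP}) and by taking preimages under $-\boxtimes id_{\bGm}$, so passing to these refinements merely replaces each quadratic space by its $\ip$-orbit, which has at most four elements; the resulting list of conditions is finite and, crucially, \emph{independent of $\alpha$} --- only the target set $\tilde L^Q_\alpha$ varies with $\alpha$.

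The core step is then the following stabilization claim: for every fixed quadratic space $Q'\in Q(\mathcal P(A\times\bGm,B\times\bGm))$ there is $a(Q')\in\mathfrak A$ with $Q'\in L^Q_\alpha(A,B)$ for all $\alpha>a(Q')$. To prove it I would recall from Definition \ref{def:L^QGWW} that $\tilde L^Q_\alpha(A,B)=\{Q''\mid\alpha>\alpha_{Q''}\}$ and that $L^Q_\alpha(A,B)$ is its maximal $\ip$-closed subset, so $Q'\in L^Q_\alpha(A,B)$ holds as soon as $\alpha>\alpha_{g\cdot Q'}$ for each of the (at most four) quadratic spaces $g\cdot Q'$ in the $\ip$-orbit of $Q'$ (see Definition \ref{def:IP}); each of these has a well-defined index in $\mathfrak A$ by Definition \ref{def:alpha_Q}, which is legitimate by Lemma \ref{lm:N,M_P}. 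Since $\mathfrak A$ is filtering (Lemma \ref{lm:filteredsyst}), iterating its defining property over this finite set gives an $a(Q')$ strictly above all these indices, and transitivity of the order on $\mathfrak A$ then yields $\alpha>\alpha_{g\cdot Q'}$ for every $\alpha>a(Q')$. Finally I would use Lemma \ref{lm:filteredsyst} once more to choose $a\in\mathfrak A$ dominating the finitely many thresholds $a(Q')$ produced for the auxiliary spaces from the first step together with the threshold for $(P,q)$ itself; then every $\alpha>a$ satisfies all the required memberships, which is the assertion.

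The argument is essentially cofinality bookkeeping, so I do not expect a deep obstacle. The one point demanding genuine care is the first step: correctly enumerating the finite, $\alpha$-independent family of ``$\tilde L^Q_\alpha$-conditions'' hidden in Definition \ref{def:Ralpha}, and checking that each of the operations $-\boxtimes id_{\bGm}$, conjugation by transpositions, and the maps of Lemma \ref{lm:permhomQCor} really produces an honest fixed quadratic space over fixed smooth schemes, so that its index in $\mathfrak A$ is defined (Definition \ref{def:alpha_Q}). It is also worth recording the mild monotonicity remark that $\tilde L^Q_\alpha$, hence also $L^Q_\alpha$, $N^Q_\alpha$ and $R^Q_\alpha$, increase with $\alpha$, so that the ``for all $\alpha>a$'' form of the conclusion follows automatically once membership holds at the threshold.
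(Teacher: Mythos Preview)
Your proposal is correct and follows essentially the same approach as the paper's proof: observe that membership in $L^Q_\alpha$ and $R^Q_\alpha$ amounts to requiring a fixed finite list of auxiliary quadratic spaces to lie in $\tilde L^Q_\alpha$, each of which holds once $\alpha>\alpha_{Q'}$, and then use that $\mathfrak A$ is filtering (Lemma~\ref{lm:filteredsyst}) to dominate the finitely many thresholds. The paper's proof is simply a two-sentence version of your argument; your more careful unwinding of the $\ip$-closure and of Definition~\ref{def:Ralpha} is a faithful expansion of what the paper leaves implicit.
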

\begin{proof}
Indeed by Definition \ref{def:L^QGWW} for any $Q = (P,q)$ and $\alpha> \alpha_Q$, 
$(P,q)\in \tilde L^Q_\alpha$.
Then since definitions of $L^Q_\alpha$ (and $R^Q_\alpha$) requires that some finite set of quadratic spaces lays in $\tilde L^Q_\alpha$ (and $L^Q_\alpha$ respectively), and since $\mathfrak A$ is filtering, the claim follows.
\end{proof}

\begin{lemma}\label{lm:alphaarrows}
For any $Y\in Sm_k$
and any $\alpha_1 , \alpha_2 \in \mathfrak{A}\cup \{\infty\}$
such that $\alpha_1 < \alpha_2$, 
there is inclusions of presheaves
$L^Q_{\alpha_1}\subset L^Q_{\alpha_2}$,
$N^Q_{\alpha_1}\subset N^Q_{\alpha_2}$,
$R^Q_{\alpha_1}\subset R^Q_{\alpha_2}$,
and this induce homomorphisms of presheaves
$\gamma^{*,L}_{\alpha_1,\alpha_2}\colon L^*_{\alpha_1} \to L^*_{\alpha_2}$, 
$\gamma^{*,N}_{\alpha_1,\alpha_2}\colon N^*_{\alpha_1}\to N^*_{\alpha_2}$
 and $\gamma^{*,R}_{\alpha_1,\alpha_2} \colon R^*_{\alpha_1} \to R^*_{\alpha_2}$
 (for $*\in \{GW,W\}$).
Let's denote 
$j^{GW,L}_\alpha = \gamma^{GW,L}_{\alpha,\infty}\colon L^{GW}_\alpha(-,-)\to GWCor(X\times\bGmw,Y\times\bGmw)$,
and similarly for 
$j^{GW,N}_\alpha$, $j^{GW,R}_\alpha$,
and $j^{W,L}_\alpha$, $j^{W,N}_\alpha$, $j^{W,R}_\alpha$.
\end{lemma}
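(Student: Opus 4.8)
The plan is to obtain the whole statement by unwinding Definitions \ref{def:L^QGWW}, \ref{def:N^QGWW} and \ref{def:Ralpha} and observing that every construction appearing there is monotone (for the set-level inclusions) or functorial (for the passage to $GW$ and $W$); nothing deeper is needed.

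First I would establish the three inclusions at the level of the $Q$-valued bifunctors, assuming for the moment that $\alpha_1<\alpha_2$ are both finite. By Definition \ref{def:L^QGWW} the bifunctor $\tilde L^Q_\alpha$ is cut out by the condition $\alpha>\alpha_Q$, and if $\alpha_Q<\alpha_1$ then $\alpha_Q<\alpha_2$ by transitivity of the order on $\mathfrak A$; hence $\tilde L^Q_{\alpha_1}\subseteq\tilde L^Q_{\alpha_2}$. The operator $\ipc{-}$, the maximal $\ip$-closed subset of its argument, which by the unital-semigroup structure of $\ip$ equals $\{s:\ip\cdot s\subseteq-\}$, is monotone, so $L^Q_{\alpha_1}\subseteq L^Q_{\alpha_2}$; and since the preimage $(-\boxtimes id_{\bGm})^{-1}$ is monotone, $N^Q_{\alpha_1}\subseteq N^Q_{\alpha_2}$. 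In Definition \ref{def:Ralpha} the bifunctor $R^Q_\alpha$ is cut out of $L^Q_\alpha\cap N^Q_\alpha$ by requiring a finite, $\alpha$-independent list of quadratic spaces built from $Q$ — among them $T_Y\circ(Q\boxtimes id_{\bGm})\circ T_X$, the $h^{*}_{per,b}(Q)$, the $h^{*,\star}_{per,d}(Q)$ and the $h^{\star}_{per}(Q)$ of Lemma \ref{lm:permhomQCor} — to lie in the appropriate $L^Q_\alpha$; all of these conditions weaken along $\alpha_1<\alpha_2$, so $R^Q_{\alpha_1}\subseteq R^Q_{\alpha_2}$. When $\alpha_2=\infty$ the three inclusions hold trivially, because $L^Q_\infty=R^Q_\infty=QCor(\GXqGY)$ and $N^Q_\infty=QCor(X,Y)$. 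These inclusions are manifestly compatible with the restriction maps in $X$, so they are monomorphisms of presheaves.

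Next I would transport these along the $GW$/$W$ layer. The remaining operations in Definitions \ref{def:L^QGWW}, \ref{def:N^QGWW}, \ref{def:Ralpha} are the group completions $GW(-)$ and $W(-)$ of Definition \ref{def:GW,W(gruppoid)}, and the passage $A\mapsto\ips{A}=\Image(dot^A)$ to the image of the idempotent $dot$; each is functorial in its argument, which is exactly the functoriality remark in Definition \ref{def:GW,W(gruppoid)} together with the fact that $dot$ commutes with equivariant homomorphisms. Applying these functors to the inclusions $L^Q_{\alpha_1}\subseteq L^Q_{\alpha_2}$, $N^Q_{\alpha_1}\subseteq N^Q_{\alpha_2}$, $R^Q_{\alpha_1}\subseteq R^Q_{\alpha_2}$ produces the homomorphisms of presheaves $\gamma^{*,L}_{\alpha_1,\alpha_2}$, $\gamma^{*,N}_{\alpha_1,\alpha_2}$, $\gamma^{*,R}_{\alpha_1,\alpha_2}$ for $*\in\{GW,W\}$, with naturality in $X$ inherited from the $Q$-level and with the extension to $\bGmw$ provided by the unique additive/Karoubi extension of Lemma \ref{lm:additivityfunctors}. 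I would also note that these need not be injective, since $GW$ and $W$ do not preserve monomorphisms, which is why the statement asserts only a homomorphism. Finally, $j^{GW,L}_\alpha$, $j^{GW,N}_\alpha$, $j^{GW,R}_\alpha$ and their Witt analogues are simply the case $\alpha_2=\infty$ of the $\gamma$'s.

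There is no essential obstacle; the lemma is purely formal. The one step calling for a moment's care is the monotonicity of $\ipc{-}$: if $\mathcal F\subseteq\mathcal G$ and $s\in\ipc{\mathcal F}$, then $s\in\mathcal F\subseteq\mathcal G$ and $\ip\cdot s\subseteq\mathcal F\subseteq\mathcal G$, hence $s\in\ipc{\mathcal G}$. The rest is a bookkeeping check that the conditions cutting out $R^Q_\alpha$ really only reference $L^Q_\alpha$ and $N^Q_\alpha$ at the same index and that the auxiliary maps $h^{*}_{per,b}$, $h^{*,\star}_{per,d}$, $h^{\star}_{per}$ are independent of $\alpha$, after which everything is an unwinding of the definitions.
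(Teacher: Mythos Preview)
Your proposal is correct and follows exactly the same approach as the paper. The paper's proof consists of two sentences: it notes that the inclusion for $\tilde L^Q$ is immediate from Definition \ref{def:L^QGWW}, and then says ``the claim follows by induction in the definition of the bi-functors $L^Q$, $N^Q$, $R^Q$''; what you have done is precisely unpack that inductive step, verifying monotonicity of $\ipc{-}$, of the preimage, of the membership conditions defining $R^Q_\alpha$, and functoriality of $GW(-)$, $W(-)$, $\ips{-}$.
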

\begin{proof}
By Definition \ref{def:L^QGWW} the same statement holds for the bi-functor $\tilde L^Q$,
and the claim follows by induction in the definition of the bi-functors $L^Q$, $N^Q$, $R^Q$.
\end{proof}

\begin{lemma}\label{lm:limFNL}
For each $*\in \{Q,GW,W\}$ there are isomorphisms
$$
\varinjlim\limits_{\alpha\in \mathfrak A} L^*_\alpha(X,Y) \simeq L^*_\infty,\;
\varinjlim\limits_{\alpha\in \mathfrak A} N^*_\alpha(X,Y) \simeq N^*_\infty,\;
\varinjlim\limits_{\alpha\in \mathfrak A} R^*_\alpha(X,Y) \simeq R^*_\infty
.$$
\end{lemma}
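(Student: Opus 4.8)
The plan is to reduce the statement to the case $* = Q$, and then to check that each step in the passage from the $Q$-valued bi-functors to the $GW$- and $W$-valued ones commutes with filtered colimits over $\mathfrak A$ (which is filtering by Lemma \ref{lm:filteredsyst}, so these are genuine filtered colimits).

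For $* = Q$ I would argue as follows. By Lemma \ref{lm:alphaarrows} the transition maps in the three systems are the set inclusions $L^Q_{\alpha_1}\subseteq L^Q_{\alpha_2}$, $N^Q_{\alpha_1}\subseteq N^Q_{\alpha_2}$, $R^Q_{\alpha_1}\subseteq R^Q_{\alpha_2}$ for $\alpha_1<\alpha_2$, all of them sub-bi-functors of the fixed bi-functors $L^Q_\infty = QCor(\GXqGY)$, $N^Q_\infty = QCor(X,Y)$, $R^Q_\infty = QCor(\GXqGY)$. Hence each filtered colimit is computed as the increasing union inside the respective $\infty$-term, equipped with a tautological injection into it, and it suffices to see that this injection is onto. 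For $L^Q$ and $R^Q$ this is exactly Lemma \ref{lm:surlimFNL}. For $N^Q$, recall that $N^Q_\alpha = \ipc{\tilde N^Q_\alpha}$ with $\tilde N^Q_\alpha = (-\boxtimes id_{\bGm})^{-1}(L^Q_\alpha)$; given $(P,q)\in QCor(X,Y)$, I would apply Lemma \ref{lm:surlimFNL} to the at most four quadratic spaces $w\cdot\bigl((P,q)\boxtimes id_{\bGm}\bigr)$, $w\in\ip$, and use that $\mathfrak A$ is filtering to produce a single $\alpha$ for which all of them lie in $L^Q_\alpha$; this says precisely that $(P,q)\in N^Q_\alpha$. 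Thus $\varinjlim_\alpha L^Q_\alpha = L^Q_\infty$, $\varinjlim_\alpha N^Q_\alpha = N^Q_\infty$ and $\varinjlim_\alpha R^Q_\alpha = R^Q_\infty$.

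For $*\in\{GW,W\}$, recall from Definitions \ref{def:L^QGWW}, \ref{def:N^QGWW} and \ref{def:Ralpha} that $\mathcal F^{GW}_\alpha = \ips{GW(\mathcal F^Q_\alpha)}$ and $\mathcal F^{W}_\alpha = \ips{W(\mathcal F^Q_\alpha)}$ for $\mathcal F\in\{L,N,R\}$, and that at the $\infty$-level $GWCor(\XGwqYGw) = \ips{GWCor(\GXqGY)}$ and $WCor(\XGwqYGw) = \ips{WCor(\GXqGY)}$ (the description of the $\bGmw$-correspondences as the image of the idempotent $dot$, recalled after Definition \ref{def:Ralpha}). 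Each of the operations $\ipc{(-)}$, $GW(-)$, $W(-)$ and $\ips{(-)}$ commutes with filtered colimits: $\ipc{(-)}$ does because the semigroup $\ip$ is finite; the free abelian group functor and $\Coker$ commute with all colimits, and the sets $\Simpo(\mathcal Q,\oplus)$ of direct-sum triples and the subsets $Met(\mathcal Q)$ of metabolic spaces are cut out by conditions on finitely many elements that depend only on the isomorphism classes involved, so $\Simpo$ and $Met$ commute with filtered colimits of inclusions and hence so do $GW$ and $W$; and $\ips{A} = \Image(dot^A)$ for the natural idempotent $dot$ of Definition \ref{def:IP}, so $\ips{(-)}$ commutes with filtered colimits since these are exact in $\mathrm{Ab}$. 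Concatenating this with the case $* = Q$ gives, for $\mathcal F\in\{L,N,R\}$,
\[
\varinjlim_{\alpha\in\mathfrak A}\mathcal F^{GW}_\alpha = \varinjlim_{\alpha}\ips{GW(\mathcal F^Q_\alpha)} = \ips{GW\bigl(\varinjlim_\alpha\mathcal F^Q_\alpha\bigr)} = \ips{GW(\mathcal F^Q_\infty)} = \mathcal F^{GW}_\infty,
\]
and the identical computation with $W$ in place of $GW$; all the isomorphisms are manifestly natural in $X$ and $Y$.

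The step I expect to require the most attention is not conceptual but a matter of bookkeeping: checking that the transition homomorphisms of Lemma \ref{lm:alphaarrows} between the $GW$- (resp. $W$-) terms really are the images under $GW$ (resp. $W$) and $\ips{(-)}$ of the inclusions $\mathcal F^Q_\alpha\hookrightarrow\mathcal F^Q_{\alpha'}$ — which is how they were constructed — and that the two interchanges of $\ipc{(-)}$ and $\ips{(-)}$ with a filtered union above are legitimate, which uses respectively the finiteness of $\ip$ and the exactness of filtered colimits of abelian groups. Beyond this there is no genuine obstacle; the whole substance of the lemma is carried by the exhaustion statement Lemma \ref{lm:surlimFNL}.
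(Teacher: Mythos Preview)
Your proposal is correct and follows essentially the same approach as the paper: both reduce to the exhaustion statement of Lemma~\ref{lm:surlimFNL} for the $Q$-level and then pass to $GW$ and $W$ by observing that the intermediate constructions ($\Simpo$, $Met$, the idempotent image $\ips{(-)}$) commute with the filtered colimit. Your treatment is a bit more explicit---in particular you spell out the $N^Q$ case, which the paper leaves as ``similar''---but there is no substantive difference in strategy or content.
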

\begin{proof}
Lemma \ref{lm:surlimFNL} yields that
$
\bigcup\limits_{\alpha\in \mathfrak A} L^Q_\alpha(X,Y) = L^Q_\infty,\;
\bigcup\limits_{\alpha\in \mathfrak A} \Simpo(L^Q_\alpha(X,Y),\oplus) = \Simpo(L^Q_\infty,\oplus)
,$
and hence 
\begin{multline*}
\varinjlim\limits_\alphind (L^Q_\alpha(X,Y))^\oplus = 
\varinjlim\limits_\alphind \bZ(L^Q_\alpha(X,Y))/\bZ(\Simpo(L^Q_\alpha(X,Y),\oplus)) = \\
\bZ(L^Q_\infty(X,Y))/\bZ(\Simpo(L^Q_\infty(X,Y),\oplus)) = (L^Q_\infty(X,Y))^\oplus = 
GWCor(\GXqGY)
.\end{multline*}
 
Then 
since action of the idempotents $dot_\alpha$ (see Def. \ref{def:IP}) 
commutes with homomorphisms in the direct limit, we get
$$
\varinjlim\limits_\alphind L^{GW}_\alpha(X,Y) = 
\varinjlim\limits_\alphind Im(dot^L_\alpha) = 
Im(dot^L_\infty) = 
GWCor(\GwXqGwY).$$
Finally, since 
$\bigcup\limits Met(L^Q_\alpha) = Met(L^Q_\infty)$ (by Lemma \ref{lm:surlimFNL}),
we have
$\varinjlim\limits_\alphind L^{W}_\alpha(X,Y) = WCor(\GwXqGwY)$.

The proof of the equalities for $N^*_\alpha$ and $R^*_\alpha$ is similar.

\end{proof}

\subsection{Left and right inverse homomorphisms.}
Here we prove that homomorphisms $-\times\bGmw$ on $GWCor$ 
has a left and right inverse homomorphisms defined on presheaves $F^{GW}_\alpha$ and $L^{GW}_\alpha$,
and similarly for $WCor$.
We prove precisely that $\rho^{GW,L}_\alpha$ and $\rho^{GW,R}_\alpha$ defined in previous subsection becomes such inverse homomorphisms after the twist by some invertible function.

\begin{lemma}\label{lm:rho_alpha}
For any $\alpha=(r,n,m)\in \mathfrak A$, and any choice of sign $\star \in \{+.-\}$ 
there are unique natural transformations $\tilde\rho^*_{\alpha,\star}$, $*\in \{Q,GW,W\}$
such that diagram
$$\xymatrix{
QCor(-,Y)\ar[r] & GWCor(-,Y) \ar[r] & WCor(-,Y)\\
\tilde L^{Q}_{\alpha}(-,Y) \ar@{^(->}[d] \ar[r] \ar[u]^{\tilde\rho^Q_{\alpha,\star}}& 
\tilde L^{GW}_{\alpha}(-,Y)\ar[d] \ar[r] \ar[u]^{\tilde\rho^{GW}_{\alpha,\star}} & 
\tilde L^{W}_\alpha(-,Y)\ar[d] \ar[u]^{\tilde \rho^{GW}_{\alpha,\star}}\\
Q(\mathcal P(-\times\bGm, Y\times\bGm))\ar[r] & 
GWCor(\bGX, \bGY) \ar[r]  & 
WCor(\bGX, \bGY)
}
$$
is  commutative, and such that for any
$(P,q)\in F^Q_{r,n,m}(X,Y)$
and applicable $(n,m)$-bi-triple $\tau$, 
$$\tilde\rho^Q_{\alpha,\star}(P,q) =\tilde\rho_{\tau}^\star(P,q)\boxtimes \langle \beta_{n,m}^{-1}\rangle,$$
where $\beta_{n,m}$ are defined in Lemma \ref{lm:GWCortoAffl}.
\end{lemma}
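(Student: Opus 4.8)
The plan is to first construct $\tilde\rho^Q_{\alpha,\star}$ on the set-valued functor $\tilde L^Q_\alpha$ and then show it is additive and metabolic-preserving, so that it descends to the Grothendieck--Witt and Witt levels. Fix $\alpha=(r,n,m)$ and a sign $\star\in\{+,-\}$. For $Q=(P,q)\in \tilde L^Q_\alpha(X,Y)$ we have $\alpha>\alpha_Q$ by Definition \ref{def:L^QGWW}, so by Lemma \ref{lm:existApplBitri-alphaQ} there is a normal $Q$-applicable $(n,m)$-bi-triple $\tau$, and we set
$$\tilde\rho^Q_{\alpha,\star}(Q)=\rho^\star_\tau(Q)\boxtimes\langle\beta_{n,m}^{-1}\rangle .$$
Here $\beta_{n,m}\in k^\times$ exists by Lemma \ref{lm:GWCortoAffl}, $\rho^\star_\tau(Q)\in QCor(X,Y)$ by Lemma \ref{lm:rho_triple}, and the right-hand side is independent of the choice of $\tau$ by Lemma \ref{lm:rho:indbitrip}; this is exactly the normalisation prescribed in the statement, so it determines $\tilde\rho^Q_{\alpha,\star}$ uniquely on $\tilde L^Q_\alpha$.

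Next I would verify naturality in $X$. All the operations entering $\rho^\star_\tau$ commute with pullback along a morphism $u\colon X'\to X$: the assignment $\langle\mathcal N,\bGX\rangle$ of Proposition \ref{prop:constrFP->Q}, the twist $m_{g,P}\cdot(-)$ of Definition \ref{def:Qe}, composition of quadratic spaces, and the reduction $red(-)$ of Definition \ref{def:Qred}, where for $red$ one uses that by Lemma \ref{lm:Nfg} the relevant short exact sequences stay locally split after base change. One then checks that a normal $Q$-applicable $(n,m)$-bi-triple $\tau$ pulls back to a normal $u^\ast Q$-applicable one — finiteness of $Z(\mathcal N)$ and of $\Supp P/fP$ over $X$ is preserved, the identity $m_{\mathcal N,P}=m_{f,P}\circ m_{g,P}$ is preserved, and $\mathcal N_P(f)$ pulls back to $\mathcal N_{u^\ast P}(f)$ — and that $\alpha_{u^\ast Q}\le\alpha_Q<\alpha$, so $u^\ast Q\in\tilde L^Q_\alpha(X',Y)$; since $\beta_{n,m}$ is a constant in $k^\times$, $\langle\beta_{n,m}^{-1}\rangle$ is base-change stable. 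Hence $u^\ast(\tilde\rho^Q_{\alpha,\star}(Q))\simeq\tilde\rho^Q_{\alpha,\star}(u^\ast Q)$. (Naturality in $Y$, if desired, is handled the same way using Lemma \ref{lm:DirImPrComd}.)

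Then I would establish the two properties that let $\tilde\rho^Q_{\alpha,\star}$ pass to $\GWCor$ and $\WCor$. For additivity: given a direct-sum triple $Q_3=Q_2\oplus Q_1$ with all three in $\tilde L^Q_\alpha$, pick a normal $Q_3$-applicable $(n,m)$-bi-triple $\tau$; it is then automatically normal and applicable for $Q_1$ and $Q_2$, since $m_{\mathcal N,P_3}=m_{f,P_3}\circ m_{g,P_3}$ restricts to the summands and $\mathcal N_{P_3}(f)=\mathcal N_{P_1}(f)\,\mathcal N_{P_2}(f)$, and each of $\langle\mathcal N\rangle\circ(-)$, $m_g\cdot(-)$, $red(-)$ and $-\boxtimes\langle\beta_{n,m}^{-1}\rangle$ commutes with direct sums, so $\tilde\rho^Q_{\alpha,\star}(Q_3)\simeq\tilde\rho^Q_{\alpha,\star}(Q_2)\oplus\tilde\rho^Q_{\alpha,\star}(Q_1)$. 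For the Witt quotient: if $Q$ is metabolic then $\rho^\star_\tau(Q)$ is metabolic by Lemma \ref{lm:rho_props}.(3), and a metabolic space stays metabolic after $-\boxtimes\langle\beta_{n,m}^{-1}\rangle$. By the universal properties in Definition \ref{def:GW,W(gruppoid)} these two facts produce unique natural transformations $\tilde\rho^{GW}_{\alpha,\star}$ and $\tilde\rho^{W}_{\alpha,\star}$; the upper squares of the diagram commute because $\tilde\rho^{GW}$ (resp. $\tilde\rho^{W}$) is by construction the map induced by $\tilde\rho^Q$ (resp. $\tilde\rho^{GW}$), and the lower squares commute because the vertical-down maps are induced by the inclusion $\tilde L^Q_\alpha\hookrightarrow Q(\mathcal P(\bGX,\bGY))$. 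Uniqueness on all three levels is then forced by the normalisation formula on $\tilde L^Q_\alpha$ together with surjectivity of the horizontal maps onto generators.

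I expect the main obstacle to be the naturality step: one must check that the auxiliary data $(\mathcal N,g)$ of a bi-triple transforms correctly under base change — so that the "partially defined" map attached to a single $\tau$ glues into a genuine natural transformation on $\tilde L^Q_\alpha$ — and one must control the monotonicity $\alpha_{u^\ast Q}\le\alpha_Q$ of the index from Definition \ref{def:alpha_Q}. Everything else reduces, via Lemmas \ref{lm:rho:indbitrip}, \ref{lm:rho_triple}, \ref{lm:rho_props} and \ref{lm:existApplBitri-alphaQ}, to formal bookkeeping with the construction of Definition \ref{def:rho_triple}.
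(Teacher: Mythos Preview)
Your proposal is correct and follows essentially the same route as the paper: define $\tilde\rho^Q_{\alpha,\star}$ via a normal applicable $(n,m)$-bi-triple using Lemma~\ref{lm:existApplBitri-alphaQ}, invoke Lemma~\ref{lm:rho:indbitrip} for independence of the choice, then pass to $GW$ and $W$ by checking direct-sum compatibility (using one bi-triple for the sum and applying it to both summands) and metabolic preservation via Lemma~\ref{lm:rho_props}.(3). Your treatment is in fact more careful than the paper's own proof, which is terse and does not spell out the naturality-under-base-change verification or the reason why a normal $Q_3$-applicable bi-triple remains normal and applicable for the summands (your observation $\mathcal N_{P_3}(f)=\mathcal N_{P_1}(f)\,\mathcal N_{P_2}(f)$ is exactly the point needed here).
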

\begin{proof}
Lemma \ref{lm:existApplBitri-alphaQ} implies that for any $Q=(P,q)\in L^Q_\alpha$ there is normal applicable $(n,m)$-bi-triple.
Let $\tau$ be such a bi-triple,
then we can put $\tilde\rho_{\alpha,\star}^Q(Q) = \rho_{\tau}^\star(Q)\boxtimes \langle \beta_{n,m}^{-1}\rangle$,
and by Lemma \ref{lm:rho:indbitrip} the result is independence from the choice of $\tau$.


Then to show that $\rho^Q_{\alpha,\star}$ induce homomorphisms $\rho^{GW}_{\alpha,\star}$ and $\rho^{W}_{\alpha,\star}$ is enough to check that it preserves direct sums and sends metabolic spaces to metabolic.
The second one follows immediate from Lemma \ref{lm:rho(metabolic)}.
And to show that $\rho^Q_{\alpha,\star}((P_1,q_1)\oplus (P_2,q_2))\simeq \rho^Q_{\alpha,\star}(P_1,q_1)\oplus \rho^Q_{n,m}(P_2,q_2)$ we can choose any applicable 'f'-triple $(f^{+/-}_n,\mathcal N^{+/-}, g^{+/-})$ for $(P_1,q_1)\oplus (P_2,q_2)$ and apply it for all three quadratic spaces. 

\end{proof}

\begin{definition}\label{def:rhoGWalpha}
Define 
a natural transformations $$\rho^{*}_\alpha
 = \rho^{*}_{\alpha,+}-\rho^{*}_{\alpha,-}, \;
\rho^{*}_{\alpha,+/-}(P,q) = 
 \tilde\rho^{*}_{\alpha,+.-}((id_\bGY-\ipGY)\circ(P,q)\circ(id_{\bGX}-\ipGX)),$$
for $*\in\{GW,W\}$ and $\alpha=(r,n,m)\in \mathfrak A$. 
Then we get commutative diagram
$$\xymatrix{
GWCor(X\times\bGmw, Y\times\bGmw)\ar[d] & L^{GW}_\alpha(X,Y)\ar[r]^{\rho^{GW}_\alpha}\ar[l]\ar[d] & GWCor(X,Y)\ar[d]\\
WCor(X\times\bGmw, Y\times\bGmw) & L^{W}_\alpha(X,Y)\ar[r]^{\rho^{W}_\alpha}\ar[l] & WCor(X,Y)
.}$$
\end{definition}
\begin{remark}
Note that homomorphisms $\rho^{*}_\alpha$ are not injective and we don't state and use that these homomorphisms are natural in $\alpha$. 
\end{remark}

\begin{definition}\label{def:thetaalpha}
Let's denote by $\theta^{*}_\alpha\colon N^{GW}_\alpha(-,-)\to L^{GW}_\alpha(-,-)$
the natural transformations of presheaves induced by $-\boxtimes id_{\bGm}\colon QCor(X,Y)\to QCor(\GXqGY)$ 
(in particular $\theta_\infty = -\boxtimes id_{\bGm}\colon GWCor(X,Y)\to GWCor(\GXqGY)$). 
\end{definition}

\begin{lemma}\label{lm:alpharho-diag}
For any $\alpha=(r,n,m)\in \mathfrak A$ and a quadratic space $Q\in \tilde{L}_\alpha^{Q}(\GXqGY)$,
\begin{multline*}
\tilde{\rho}^{GW,L}_\alpha([Q]) = 
[Q\circ pr_{X}] \circ ([red(g^+\cdot\langle g^+ (t^n-1),\bGX \rangle)] - \\-
[Q\circ pr_{X} \circ red(g^-\cdot\langle g^- (t^n-t),\bGX \rangle)])
\boxtimes \langle \beta_{n,m}^{-1}\rangle
,\end{multline*}
for some functions $g^{+/-}\in k[\bGX]\simeq k[X](t)$ with the leading term $t^{m-r}$,
where $\beta_{n,m}$ are defined as in Lemma \ref{lm:GWCortoAffl}.
\end{lemma}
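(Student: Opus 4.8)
The plan is to unwind the definition of $\tilde\rho^{GW,L}_\alpha$ and then push the operations $red(-)$, multiplication by $g$, and the projection $pr$ through the composition with $Q$, so that $Q$ ends up composed only with an explicit universal correspondence attached to $(n,m)$ and a monic $g^{\pm}$.

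First I would use Lemma~\ref{lm:existApplBitri-alphaQ} (legitimate since $Q\in\tilde L^Q_\alpha$ forces $\alpha>\alpha_Q$, cf.\ Definition~\ref{def:alpha_Q}) to choose a normal $Q$-applicable $(n,m)$-bi-triple $\tau=((f^+_n,\mathcal N^+,g^+),(f^-_n,\mathcal N^-,g^-))$, recalling from Lemma~\ref{lm:rho:indbitrip} that the outcome is independent of $\tau$; so $\tau$ may be taken in normal form $\mathcal N^{\star}=\mathcal N_P(f^{\star}_n)\mathcal N^{\star}_{ad}$, $g^{\star}=g^{\star}_{min}\mathcal N^{\star}_{ad}$ (Definition~\ref{def:ftripl}). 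By Lemma~\ref{lm:rho_alpha} and Definition~\ref{def:rhoGWalpha}, $\tilde\rho^{GW,L}_\alpha([Q])$ is the difference, twisted by $\langle\beta_{n,m}^{-1}\rangle$, of the two classes $\rho_{(f^{\star}_n,\mathcal N^{\star},g^{\star})}(Q)$, $\star\in\{+,-\}$, each of which equals $red\bigl(m_{g^{\star},P}\cdot(Q\circ\langle\mathcal N^{\star},\bGX\rangle)\bigr)$ by Definition~\ref{def:rho_triple}. Now Lemma~\ref{lm:N,M_P} computes the relevant norms (for $n$ large, again guaranteed by $\alpha>\alpha_Q$): $\mathcal N_P(f^+_n)=(t^n-1)^{\rank P}$, and $\mathcal N_P(f^-_n)$ is $t^{-M_P}$ times a monic polynomial in $k[X][t]$ with invertible constant term. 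In particular $\mathcal N^{\star}$ and $g^{\star}$ are honest functions on $\bGX$, with $g^{+}(t^n-1)=\mathcal N^{+}$, while for the minus datum the factorisation of $\mathcal N^-$ together with Lemma~\ref{lm:GWCortoAffl}(1) replaces $\mathcal N^-$ by a monic function $g^-(t^n-t)$ of the same degree whose zero locus on $\bGX$ is still finite over the base. Since $g^{\star},\mathcal N^{\star}\in k[\bGX]$, the endomorphism $m_{g^{\star},P}$ is pulled back from the source, so Lemma~\ref{lm:e1e2Q1redQ2}(1) — with Lemma~\ref{lm:Nfg}(3'') and Lemma~\ref{lm:eQred} guaranteeing the relevant sequences split locally and $red(-)$ lands back in $Q(\mathcal P(-,-))$ — lets me pull $red(-)$ and $m_{g^{\star}}$ past the composition: $red\bigl(m_{g^{\star},P}\cdot(Q\circ\langle\mathcal N^{\star},\bGX\rangle)\bigr)\simeq Q\circ red\bigl(g^{\star}\cdot\langle g^{\star}(t^n-\varepsilon^{\star}),\bGX\rangle\bigr)$, with $\varepsilon^+=1$, $\varepsilon^-=t$. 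Inserting the projection $pr$ forgetting the $\bGm$-factor over which $Z(\mathcal N^{\star})$ is finite — the same $pr$ occurring in Lemma~\ref{lm:GWCortoAffl} — and reassembling the two signs with $\langle\beta_{n,m}^{-1}\rangle$ yields the displayed formula; the assertion on the leading term of $g^{\star}$ is a degree count in $\mathcal N^{\star}=g^{\star}(t^n-\varepsilon^{\star})$.

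The main obstacle is precisely this middle step: verifying that $red(-)$, the twist by $m_{g^{\star}}$ and the projection $pr$ genuinely commute past composition with $Q$ in the category of (possibly degenerate) quadratic correspondences. This is what forces one first to establish, via the explicit form of $\mathcal N_P(f^{\pm}_n)$ from Lemma~\ref{lm:N,M_P}, that $g^{\star}$ and $\mathcal N^{\star}$ live on $\bGX$ — so that $m_{g^{\star},P}$ is a base change of an endomorphism on the source and the hypotheses of Lemma~\ref{lm:e1e2Q1redQ2}(1) are met — and then to invoke Lemma~\ref{lm:Nfg}(3'') to control that the cokernels appearing in $red(-)$ remain locally free of finite rank after the composition, so that Lemma~\ref{lm:eQred} applies. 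The minus branch is the delicate one: there $f^-_n=t^n-u$ genuinely involves the target coordinate, and the passage to the function $t^n-t$ on $\bGX$ only becomes legitimate once the reduction has pinned the relevant coordinate to the zero locus of $\mathcal N^-$; making this substitution precise and checking its independence of all the auxiliary choices, via Lemmas~\ref{lm:rho_props}, \ref{lm:rho_etripl} and~\ref{lm:GWCortoAffl}(1), is where essentially all the work of the lemma lies.
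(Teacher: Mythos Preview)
Your plan for the plus branch is right and matches the paper: once $g^+\in k[\bGX]$, Lemma~\ref{lm:e1e2Q1redQ2}(1) (with Lemmas~\ref{lm:Nfg}(3'') and~\ref{lm:eQred}) lets you pull $red(-)$ and the $g^+$-twist past the composition with $Q$. The problem is the minus branch, where your argument has a genuine gap.

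The lemma, despite its stated generality, is really about $Q$ whose support lies on the diagonal $\Delta_{\bGm}\subset\bGm\times\bGm$ --- in practice the input is $Q_0\boxtimes id_{\bGm}$, which is the only way the lemma is ever invoked (see the proof of Lemma~\ref{lm:leftinverse}). The paper's proof uses exactly this: denoting $(P,q)=Q\boxtimes id_{\bGm}$ one has $\Supp P\subset X\times Y\times\Delta_{\bGm}$, so the coordinate $u$ equals $t$ on the support; hence $f^-_n=t^n-u$ acts on $P$ as $t^n-t$, and pulling the original $g'^{\pm}$ back along $i_\Delta$ gives $g^{\pm}\in k[\bGX]$. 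Then $(f^+_n,(t^n-1)g^+,g^+)$, $(f^-_n,(t^n-t)g^-,g^-)$ is again an applicable $(n,m)$-bi-triple for $(P,q)$, and Lemma~\ref{lm:rho:indbitrip} finishes. That diagonal restriction is the entire content of the argument.

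Your route tries instead to obtain $g^-\in k[\bGX]$ and the substitution $u\mapsto t$ from Lemma~\ref{lm:N,M_P} and Lemma~\ref{lm:GWCortoAffl}(1), and neither delivers. Lemma~\ref{lm:N,M_P} controls the norm $\mathcal N_P(f^-_n)\in k[\bGX]$, not the cofactor $g^-$; for a general $Q$ that cofactor lives in $k[X\times\bGm\times\bGm]$ (Lemma~\ref{lm:Nfg}(1')) and genuinely depends on $u$. Lemma~\ref{lm:GWCortoAffl}(1) compares two monic $g$'s of the same degree with a \emph{fixed} $\tau$; it cannot turn $t^n-u$ into $t^n-t$. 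And your sentence about ``the reduction pinning the relevant coordinate to $Z(\mathcal N^-)$'' does not do it either: $Z(\mathcal N^-)\subset X\times\bGm$ constrains $t$, not $u$. Without the diagonal observation your claim that $m_{g^-,P}$ is pulled back from the source --- the hypothesis you need for Lemma~\ref{lm:e1e2Q1redQ2}(1) --- is unjustified, and the argument does not go through.
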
\begin{proof}
By definition 
\begin{multline*}
\tilde{\rho}^{GW,L}_{\alpha}([Q]) = 
[(Q\boxtimes id_{\bGm})\circ red({\pri g}^+\cdot\langle \mathcal N^+,\bGX \rangle)] - \\-
[(Q\boxtimes id_{\bGm})\circ red({\pri g}^-\cdot\langle \mathcal N^-,\bGX \rangle)]
\end{multline*}
where $(f^+_n,\mathcal N^+,{\pri g}^+)$, $(f^-_n,\mathcal N^-,{\pri g}^-)$
is applicable $(n,m)$-bi-triple for $(Q\boxtimes id_{\bGm})$.
Denote $(P,q) = Q\boxtimes id_{\bGm}$,
then $\Supp P = X\times Y\times\Delta_{\bGm}$,
and let's denote $g^+ = i_{\Delta}^*({\pri g}^+)$, $g^- = i_{\Delta}^*({\pri g}^-)$,
where $i_\Delta\colon X\times\bGm\to X\times\bGm\times Y\times\bGm$.
Then,
since $f^+_n = t^n-1$, $f^-_n\big|_{\Delta_{\bGm}} = t^n-t$,
$m_{\mathcal N^+,P} = m_{t^n-1,P}\circ m_{g^+,P}$, $m_{\mathcal N^-,P} = m_{t^n-t,P}\circ m_{g^-,P}$,
and then
$(f^+_n, (t^n-1)g^+ ,g^+)$, $(f^-_n, (t^n-t)g^- ,g^-)$
is applicable $(n,m)$-bi-triple for $(P,q)$.
The claim follows by Lemma \ref{lm:rho:indbitrip}.

\end{proof}
\begin{lemma}\label{lm:alpharho-unitsect}
For any $\alpha=(r,n,m)\in \mathfrak A$ and a quadratic space $Q=(P,q)\in N^Q_\alpha(X,Y)$,
$$\tilde{\rho}^{GW,L}_\alpha([i_Y \circ Q\circ pr_X]) = 0
,$$
where $pr_X\colon \bGX\to X$ and $i_Y\colon Y\to \bGY$ is unit section.
\end{lemma}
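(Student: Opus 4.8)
The plan is to exploit that the correspondence $\tilde Q:=i_Y\circ Q\circ pr_X$ has underlying quadratic space supported over the unit section of the target $\bGm$-factor, which forces the two functions $f^+_n=t^n-1$ and $f^-_n=t^n-u$ defining the two summands of $\tilde\rho^{GW,L}_\alpha$ to restrict to the \emph{same} function $t^n-1$ on that support. One first checks that $\tilde\rho^{GW,L}_\alpha([\tilde Q])$ is even defined, i.e. that $\tilde Q\in\tilde L^{Q}_\alpha(X,Y)$: since $Q\in N^{Q}_\alpha(X,Y)$ says precisely $Q\boxtimes\id_{\bGm}\in L^{Q}_\alpha(X,Y)$, and $L^{Q}_\alpha$ is by Definition~\ref{def:L^QGWW} stable under pre- and post-composition with the unit-section idempotents $\ipGX,\ipGY$ (Definition~\ref{def:IP}), the element $\tilde Q=\ipGY\circ(Q\boxtimes\id_{\bGm})$ again lies in $L^{Q}_\alpha(X,Y)\subseteq\tilde L^{Q}_\alpha(X,Y)$.

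Write $\tilde Q=(\tilde P,\tilde q)$. Because $\Supp\tilde P\subseteq X\times\bGm\times Y\times\{1\}$, the target coordinate $u$ acts as $1$ on $\tilde P$, hence $m_{f^+_n,\tilde P}=m_{f^-_n,\tilde P}$; consequently $\tilde P/f^+_n\tilde P=\tilde P/f^-_n\tilde P$ and $\mathcal N_{\tilde P}(f^+_n)=\mathcal N_{\tilde P}(f^-_n)=(t^n-1)^{s}$, where $s=\rank_{\mathcal O(\bGX)}\tilde P$ (in particular $M_{\tilde P}=0$). Fix a normal $\tilde Q$-applicable $(n,m)$-bi-triple $\tau=((f^+_n,\mathcal N^+,g^+),(f^-_n,\mathcal N^-,g^-))$; by normality $\mathcal N^{\pm}=(t^n-1)^{s}\mathcal N^{\pm}_{ad}$ and $g^{\pm}=g^{\pm}_{min}\mathcal N^{\pm}_{ad}$, and by Lemma~\ref{lm:Nfg}(1) together with the injectivity of $m_{f^+_n,\tilde P}=m_{f^-_n,\tilde P}$ (multiplication by the non-zero-divisor $t^n-1$ on $\bGX$) one may take $g^+_{min}=g^-_{min}$ on $\Supp\tilde P$; moreover $g^+,g^-$ are, up to units, monic polynomials in $t$ of the same degree with zero loci finite over $X$.

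By Lemma~\ref{lm:rho_alpha} and Definition~\ref{def:rhoGWalpha}, $\tilde\rho^{GW,L}_\alpha([\tilde Q])=\big([\rho^{+}_\tau(\tilde Q)]-[\rho^{-}_\tau(\tilde Q)]\big)\boxtimes\langle\beta_{n,m}^{-1}\rangle$ with $\rho^{\pm}_\tau$ the two halves of the bi-triple construction (Definition~\ref{def:n,m-appltripl}), and by Lemma~\ref{lm:rho_props}(1)
$$\rho^{\pm}_\tau(\tilde Q)=red\big(g_{min}\cdot\big(red(\mathcal N^{\pm}_{ad}\cdot\langle\mathcal N^{\pm},\bGX\rangle)\circ(\tilde P,\tilde q)\big)\big),$$
with $g_{min}$ the common value from the previous step. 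So it remains to show that $[red(\mathcal N^{+}_{ad}\cdot\langle\mathcal N^{+},\bGX\rangle)]-[red(\mathcal N^{-}_{ad}\cdot\langle\mathcal N^{-},\bGX\rangle)]$ becomes zero after composition with $(\tilde P,\tilde q)$ and the twist $red(g_{min}\cdot-)$. Here the point is that $\tilde Q=i_Y\circ Q\circ pr_X$ factors through $pr_X\colon\bGX\to X$: Lemma~\ref{lm:GWCortoAffl}(1), applied with $\tau=t^n-1$ and the pair $(g,g')=(g^{+},g^{-})$ (after absorbing the fixed factor $(t^n-1)^{s-1}$ into the $\mathcal N^{\pm}_{ad}$), asserts exactly that $pr_X\circ[red(g^{+}\cdot\langle g^{+}(t^n-1),\bGX\rangle)]=pr_X\circ[red(g^{-}\cdot\langle g^{-}(t^n-1),\bGX\rangle)]$ in $GWCor(X,X)$; composing on the left with $i_Y\circ Q$ and carrying out the reductions of Definition~\ref{def:rho_triple} gives $[\rho^{+}_\tau(\tilde Q)]=[\rho^{-}_\tau(\tilde Q)]$, hence $\tilde\rho^{GW,L}_\alpha([\tilde Q])=0$.

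The hard part is this last identification. The two reduced ``$g$-twisted norm'' spaces $red(\mathcal N^{\pm}_{ad}\cdot\langle\mathcal N^{\pm},\bGX\rangle)$ are in general \emph{not} isomorphic as correspondences $X\to\bGX$ (their rescalings by $\mathcal N^{+}_{ad}$ and $\mathcal N^{-}_{ad}$ along $Z((t^n-1)^{s})$ differ), so one cannot argue term by term; the cancellation appears only once the source $\bGm$-factor is projected away, which is precisely the content of Lemma~\ref{lm:GWCortoAffl}(1) and precisely what the factorization of $\tilde Q$ through $pr_X$ makes available. The only other point requiring care is to bring the data into the hypotheses of Lemma~\ref{lm:GWCortoAffl}(1) — that $g^{+},g^{-}$ be monic of a common degree up to units and that $Z(t^n-1)$ be finite over $X$ — which comes from specializing the computations of Lemma~\ref{lm:N,M_P} and Lemma~\ref{lm:appltripl-fNg_min} to the present $\tilde P$, on which $u$ acts as $1$; everything else is formal manipulation with $red(-)$, $g\cdot(-)$ and composition of quadratic correspondences.
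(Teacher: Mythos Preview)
Your proof is correct and follows essentially the same line as the paper's: both exploit that $\Supp\tilde Q\subset\bGX\times Y\times\{1\}$ forces $f^+_n$ and $f^-_n$ to act identically as $t^n-1$ on $\tilde P$, and both conclude by factoring $\tilde Q=i_Y\circ Q\circ pr_X$ through $pr_X$ and invoking Lemma~\ref{lm:GWCortoAffl}(1) to kill the difference $pr_X\circ[red(g^+\cdot\langle g^+(t^n-1),\bGX\rangle)]-pr_X\circ[red(g^-\cdot\langle g^-(t^n-1),\bGX\rangle)]$.

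The only difference is organizational. The paper writes down an explicit applicable $(n,m)$-bi-triple with $g^{\pm}=\mathcal N_{ad}^{\pm}(t^n-1)^{r-1}\in k[\bGX]$ from the outset, so that the factorization through $pr_X$ and the hypotheses of Lemma~\ref{lm:GWCortoAffl}(1) are visible immediately. You instead start from an arbitrary normal bi-triple, observe that $g^{\pm}_{min}$ can be taken equal to $(t^n-1)^{s-1}$ on $\Supp\tilde P$ (implicitly invoking Lemma~\ref{lm:rho_etripl}(1) to change $g$ without affecting $\rho$), and then reach the same explicit $g^{\pm}\in k[\bGX]$ by ``absorbing'' this factor into $\mathcal N^{\pm}_{ad}$. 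Your route is slightly longer but has the minor advantage of making explicit why the argument is insensitive to the particular choice of bi-triple; the paper's route is more direct. Neither approach gains anything essential over the other.
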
\begin{proof}
Since $\Supp(i_Y\circ Q\circ pr_X)\subset \bGX\times Y\times 1$, then 
$f^+_n\big|_{\Supp Q} = t^n-1$.
Let $\mathcal N_{ad}^+ = t^{m-n}+\dots +(-1)^{r-1}$ and $\mathcal N_{ad}^- = t^{m-n}+\dots +(-1)^{r-1}t$ be regular functions in $k[\bGm]=k(t)$,
and $\mathcal N^+ = \mathcal N_{ad} (t^n-1)^r$, $\mathcal N^- = \mathcal N_{ad} (t^n-1)^r$,
$g^+ = \mathcal N_{ad} (t^n-1)^{r-1}$, $g^- = \mathcal N_{ad} (t^n-1)^{r-1}$.
Then pair 
$(f^+_n, \mathcal N^+,g^+)$, $(f^-_n, \mathcal N^-,g^-)$ is $(n,m)$-bi-triple applicable to $i_Y\circ Q\circ pr_X$,
then using Lemma \ref{lm:rho:indbitrip} 
and the second point of Lemma \ref{lm:GWCortoAffl}
\begin{multline*}
\tilde{\rho}^{GW,L}_\alpha([i_Y\circ Q\circ pr_X])=
[\rho_\alpha^+(i_Y\circ Q\circ pr_X)] - [\rho_\alpha^+(i_Y\circ Q\circ pr_X)]=\\
[i_Y\circ Q\circ pr_X \circ red(g^+ \langle \mathcal N^+,\bGX\rangle)] -
[i_Y\circ Q\circ pr_X \circ red(g^- \langle \mathcal N^-,\bGX\rangle)]=\\
[i_Y \circ Q]\circ 
[pr_X \circ red(g^+ \cdot\langle g^+ (t^n-1),\bGX\rangle) -
pr_X \circ red(g^- \cdot\langle g^- (t^n-1),\bGX\rangle)] =
0
\end{multline*}
\end{proof}

\begin{lemma}[the left inverse]\label{lm:leftinverse}
Diagrams
$$
\xymatrix{
N^{GW}_\alpha(X,Y)\ar[r]^{\theta^{GW}_\alpha}\ar[d]^{j^{N}_\alpha}& 
L^{GW}_\alpha(X,Y) \ar@{-->}[ld]|{\rho^{GW}_\alpha}\ar[d]^{j^L_\alpha}\\ 
GWCor(X,Y)\ar[r]_<<<<<{\theta^{GW}_\infty} & GWCor(\GwXqGwY)
}
$$
are natural in $X$ and $Y$, 
and \begin{itemize}[leftmargin=15pt]
\item[1)] the square in these diagrams is commutative and it is natural in $\alpha$, 
i.e. it commutes with morphisms $N^*_{\pri \alpha}\to N^*_\alpha$ and $F^*_{\pri \alpha}\to F^*_\alpha$;
\item[2)] left-up triangle is commutative for each $\alpha$ up to canonical $\affl$-homotopy, i.e.
 there are homomorphisms of presheaves $ h^{GW,l}_{\alpha}\colon N^{GW}_\alpha(-,Y)\to GWCor(-\times\affl, Y)\colon
$ such that for any class $a\in N^*_\alpha(X,Y)$ and $\alpha\in \mathfrak A$ there are equalities
\begin{equation}\label{eq:lefthomot}
 i_0^*(h^{GW,l}_\alpha(a)) = \rho^{GW,l}_{\alpha}( a \boxtimes id_{\bGm} ), 
 i_1^*(h^{GW,l}_\alpha(a))= a,
\end{equation}
where $i_0^*$ and $i_1^*$ denote homomorphism 
induced by zero and unit sections of $\affl$,
and see Def. \ref{def:rhoGWalpha} and Def. \ref{def:thetaalpha} for $\rho^{GW}_\alpha$ and $\theta_\alpha$. 
\end{itemize}
And the same holds for $WCor$.
(Note: we don't state that homomorphisms $\rho^*_\alpha$ and $h^l_\alpha$ are natural in $\alpha$.)
\end{lemma}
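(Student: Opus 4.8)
The plan is to realize $\rho^{GW}_\alpha$ as a one-sided inverse of $\theta^{GW}_\alpha=-\boxtimes\id_{\bGm}$ by assembling the computations already carried out in Lemmas~\ref{lm:alpharho-diag}, \ref{lm:alpharho-unitsect} and \ref{lm:GWCortoAffl}, the required homotopy being just composition with the interpolating correspondence $h_{n,m}\in GWCor(\affl,\pt)$ supplied by Lemma~\ref{lm:GWCortoAffl}. \emph{The square and the naturality statements} come first and are essentially formal: both horizontal arrows are induced by $-\boxtimes\id_{\bGm}$ on $QCor$, the vertical maps $j^{N}_\alpha,j^{L}_\alpha$ by the tautological inclusions $N^{Q}_\alpha\subset QCor(X,Y)$, $L^{Q}_\alpha\subset QCor(\GXqGY)$, and $N^{Q}_\alpha$ is by definition the $(-\boxtimes\id_{\bGm})$-preimage of $L^{Q}_\alpha$; so the square commutes already at the level of sets of quadratic spaces, and it stays commutative after applying $GW(-)$, $W(-)$ and the idempotents of Definition~\ref{def:IP}, since $-\boxtimes\id_{\bGm}$ commutes with those idempotents. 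Naturality in $\alpha$ is Lemma~\ref{lm:alphaarrows}, all four maps being restrictions of their $\alpha=\infty$ versions. Naturality in $X,Y$ holds for $\theta$ and the $j$'s by construction, and for $\rho^{GW}_\alpha$ by Lemma~\ref{lm:rho_alpha} together with Definition~\ref{def:rhoGWalpha} (the $\tilde\rho^{Q}_{\alpha,\star}$ are natural transformations, and $\rho^{GW}_\alpha$ is built from them by pre/post-composing with the natural idempotents $\id-\ipGX,\id-\ipGY$ and subtracting signs). The $WCor$-case is identical.

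The substantive part is the homotopy. For $a=[(P,q)]\in N^{GW}_\alpha(X,Y)$ I would first compute $\rho^{GW}_\alpha(\theta^{GW}_\alpha(a))$: after reduction by the idempotents of Definition~\ref{def:IP}, $\theta^{GW}_\alpha(a)$ is represented by $(P,q)\boxtimes\id_{\bGm}$ minus its unit-section part $i_Y\circ(P,q)\circ pr_X$ (with $pr_X\colon\bGX\to X$ and $i_Y$ the unit section of $\bGY$), and the latter is annihilated by $\tilde\rho^{GW,L}_\alpha$ by Lemma~\ref{lm:alpharho-unitsect}. Since the support of $(P,q)\boxtimes\id_{\bGm}$ lies over the diagonal $\Delta_{\bGm}$, the functions $f^{+}_n=t^n-1$, $f^{-}_n=t^n-u$ restrict on it to $t^n-1$ and $t^n-t$, and Lemma~\ref{lm:alpharho-diag} (which computes $\rho^{GW}_\alpha=\tilde\rho^{GW,L}_\alpha$ on $L^{GW}_\alpha$) gives
$$
\rho^{GW}_\alpha(\theta^{GW}_\alpha(a))=\bigl([(P,q)\circ pr_X\circ red(g^{+}\cdot\langle g^{+}(t^n-1),\bGX\rangle)]-[(P,q)\circ pr_X\circ red(g^{-}\cdot\langle g^{-}(t^n-t),\bGX\rangle)]\bigr)\boxtimes\langle\beta_{n,m}^{-1}\rangle
$$
for suitable monic $g^{\pm}\in k[X][t]$ of degree $m-r$, with $\beta_{n,m}\in k^{*}$ as in Lemma~\ref{lm:GWCortoAffl}.

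Then I would take $h_{n,m}\in GWCor(\affl,\pt)$ from the second point of Lemma~\ref{lm:GWCortoAffl}, whose pullback to $X$ along $i_0$ is the difference $pr_X\circ([red(g\cdot\langle g(t^n-1),\bGX\rangle)]-[red(g\cdot\langle g(t^n-t),\bGX\rangle)])$ for any monic $g$ of degree $m-r$, and along $i_1$ is $\langle\beta_{n,m}\rangle\boxtimes\id_X$, and set
$$
h^{GW,l}_\alpha(a):=\bigl((P,q)\circ(\id_X\boxtimes h_{n,m})\bigr)\boxtimes\langle\beta_{n,m}^{-1}\rangle\in GWCor(X\times\affl,Y),
$$
which is $GWCor$-linear and natural in $X$, hence a morphism of presheaves $N^{GW}_\alpha(-,Y)\to GWCor(-\times\affl,Y)$. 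Pulled back along $i_1$ it equals $(P,q)\circ(\langle\beta_{n,m}\rangle\boxtimes\id_X)\boxtimes\langle\beta_{n,m}^{-1}\rangle=a$ since $\langle\beta_{n,m}\rangle\langle\beta_{n,m}^{-1}\rangle=\langle 1\rangle$; pulled back along $i_0$ it equals the displayed formula for $\rho^{GW}_\alpha(\theta^{GW}_\alpha(a))$ after using the first point of Lemma~\ref{lm:GWCortoAffl} to replace the $g^{\pm}$ by the single monic $g$ occurring in $h_{n,m}$ (for the $t^n-1$ term directly, and for the $t^n-t$ term after absorbing the unit $t$ on $\bGm$). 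This establishes \eqref{eq:lefthomot}. The $WCor$-version is the same, using that both $\rho$ (point (3) of Lemma~\ref{lm:rho_props}) and $h_{n,m}$ send metabolic spaces to metabolic.

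The main obstacle I anticipate is not any single hard step but the consistency of the bookkeeping underlying the second and third steps: tracking the two commuting idempotents of Definition~\ref{def:IP} through $\theta^{GW}_\alpha$, $\rho^{GW}_\alpha$ and the unit-section splitting so that the vanishing from Lemma~\ref{lm:alpharho-unitsect} applies exactly where needed, and reconciling the three integer parameters in play (the exponent $n$, the rank $r$, and the auxiliary degree $m-r$) so that Lemma~\ref{lm:alpharho-diag} and Lemma~\ref{lm:GWCortoAffl} glue with no leftover unit twist. The genuine geometric content has already been isolated in those lemmas together with Lemma~\ref{lm:alpharho-unitsect}, so the present lemma is principally an assembly.
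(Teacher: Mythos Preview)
Your proposal is correct and follows essentially the same route as the paper: you define the homotopy as $h^{GW,l}_\alpha(a)=j^{N}_\alpha(a)\circ(\id_X\boxtimes h_{n,m})\boxtimes\langle\beta_{n,m}^{-1}\rangle$ (the paper writes this as $j^{GW,N}_\alpha(a)\circ h_X\boxtimes\langle\beta_{n,m}^{-1}\rangle$ with $h_X=\id_X\boxtimes h$), then verify the two endpoints via Lemma~\ref{lm:alpharho-diag} and Lemma~\ref{lm:GWCortoAffl} for the diagonal term and via Lemma~\ref{lm:alpharho-unitsect} for the vanishing of the unit-section contribution, exactly as the paper does. The bookkeeping issue you flag (matching the degree $m-r$ from Lemma~\ref{lm:alpharho-diag} with the $m-n$ in Lemma~\ref{lm:GWCortoAffl}) is likewise not spelled out in the paper's proof.
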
\begin{proof}
We consider the case of $GWCor$, the case of $WCor$ is absolutely similar.

The first point follows from definitions.
To prove the second we 
use second point of Lemma \ref{lm:GWCortoAffl} and find homotopy $h\in GWCor(\affl\to pt)$,
then define $h_X\in GWCor(X\times\affl,X)\colon h_X =id_X\boxtimes h$, and 
$$
h^{GW,L}_\alpha(a) = j_\alpha^{GW,N}(a) \circ h_X\boxtimes\langle \beta_{n,m}^{-1}\rangle\in GWCor(X\times\affl,Y)
,$$
for any $a\in N_\alpha^{GW}(X,Y)$.

Let's check required conditions.
Let $a\in N^{GW}_\alpha(X,Y)$, $a=[Q]$ for $Q\in N^Q_\alpha$.
And let's denote $Q^\prime = Q\boxtimes id_\bGm$
Then $\pri Q \circ 1_{\bGX} \simeq 1_{\bGY}\circ \pri Q \simeq 1_{\bGY}\circ \pri Q \circ 1_{\bGX}\simeq i_Y\circ Q\circ pr_X$,
where $i_Y\colon Y\to \bGY$ is unit section and $pr_X\colon \bGX\to X$ is projection.
Then
$$\rho_{\alpha}^{GW,L}(\theta_\alpha(a))=
\rho_{\alpha}^{GW,L}([\pri Q])=
\tilde{\rho}_{\alpha}^{GW,L}([\pri Q] - [i_Y\circ Q\circ pr_X]).$$

Now  by Lemma \ref{lm:alpharho-diag} and by the second point of Lemma \ref{lm:GWCortoAffl}
\begin{multline*}
\tilde{\rho}_{\alpha}^{GW,L}([\pri Q]) = \\
[Q\circ pr_X]\circ [red(g^+\cdot\langle g^+ (t^n-1),\bGX \rangle)-red(g^-\cdot\langle g^- (t^n-t),\bGX \rangle)]\boxtimes\langle \beta_{n,m}^{-1}\rangle =\\
[Q \circ h_X\circ i_0\boxtimes\langle \beta_{n,m}^{-1}\rangle] = i_0^*(h^{GW,L}_\alpha([Q]))
,\end{multline*}%
and by Lemma \ref{lm:alpharho-unitsect} 
$\tilde{\rho}_{\alpha}^{GW,L}([i_Y\circ Q\circ pr_X]) = 0$.
And on other side (by Lemma \ref{lm:GWCortoAffl})
$$
[Q] =
[Q \circ h_X\circ i_1\boxtimes\langle \beta_{n,m}^{-1}\rangle] = i_1^*(h^{GW,L}_\alpha([Q]))
,$$
so the claim follows.
\end{proof}

\begin{lemma}[the right inverse]\label{lm:rightinverse}
For any $Y\in Sm_k$ and $\alpha\in\mathfrak A$, 
there is homomorphism of presheaves
\begin{multline*}
h^{GW,r}_\alpha\colon R^{GW}_\alpha(-,Y)\to GWCor(-\times\bGmw\times\affl, Y\times\bGmw)\colon \\
i_0^*(h^{GW,r}_\alpha(-)) = \rho^{GW,R}_{\alpha}(-)\boxtimes id_{\bGmw} ,\;
i_1^*(h^{GW,r}_\alpha) = j^{GW,R}_\alpha
,\end{multline*}
where $i_0^*$ and $i_1^*$ denotes natural transformations induced by zero and unit sections.

And there is $h^{W,r}_\alpha$ with the same property in respect to $R^W_\alpha$ and $WCor$.
\end{lemma}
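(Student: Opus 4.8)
The plan is to assemble $h^{GW,r}_\alpha$ by concatenating two natural $\affl$-homotopies: one obtained by applying the collapsing transformation $\rho^{GW,L}_\alpha$ to the permutation homotopy of Lemma~\ref{lm:permhomQCor} (this moves the $\bGm$-factor on which $\rho$ acts onto the spectator factor), and the left-inverse homotopy $h^{GW,l}_\alpha$ of Lemma~\ref{lm:leftinverse} used with $(X,Y)$ replaced by $(X\times\bGm,Y\times\bGm)$. Since $\rho^{GW,L}_\alpha$, $-\boxtimes id_{\bGm}$ and the permutation maps are all additive and send metabolic spaces to metabolic (Lemmas~\ref{lm:rho_props}, \ref{lm:rho_alpha}, \ref{lm:permhomQCor}), it suffices to carry out the construction on a representing quadratic space $Q=(P,q)\in R^Q_\alpha(X,Y)$ and then pass to the Grothendieck--Witt (resp. Witt) group and to the image of the defining idempotent $dot$ (Definition~\ref{def:IP}); the case of $WCor$ is then identical.

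First I would record two identities. Write $\hat Q=Q\boxtimes id_{\bGm}$; by the very definition of $R^Q_\alpha$ (Definition~\ref{def:Ralpha}) we have $Q\in N^Q_\alpha(X\times\bGm,Y\times\bGm)$, hence $\hat Q\in L^Q_\alpha(X\times\bGm,Y\times\bGm)$, so $\rho^{GW,L}_\alpha(\hat Q)$ is defined, and moreover $T_Y\circ\hat Q\circ T_X$, $h^{\star}_{per}(\hat Q)$, $h^{*}_{per,b}(\hat Q)$, $h^{*,\star}_{per,d}(\hat Q)$ all lie in the appropriate $L^Q_\alpha$ — this is exactly what the remaining clauses of Definition~\ref{def:Ralpha} were arranged to guarantee. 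As $\rho^{GW,L}_\alpha$ (for any pair of varieties) collapses the last $\bGm$-factor, collapsing the last $\bGm$ of $T_Y\hat Q T_X$ collapses the $\bGm$-factor of $\hat Q$ on which $Q$ acts, and Lemma~\ref{lm:rho_props}(2) then gives, up to canonical isomorphism,
$$\rho^{GW,L}_\alpha\bigl(T_Y\circ\hat Q\circ T_X\bigr)=\rho^{GW,R}_\alpha(Q)\boxtimes id_{\bGmw}\in GWCor(X\times\bGmw,Y\times\bGmw);$$
whereas collapsing the last $\bGm$ of $\hat Q$ collapses the spectator factor, so Lemma~\ref{lm:leftinverse}(2), applied with $(X,Y)$ replaced by $(X\times\bGm,Y\times\bGm)$ and $a=Q$, yields a natural homotopy $h^{GW,l}_\alpha(Q)\in GWCor(X\times\bGm\times\affl,Y\times\bGm)$ with $i_0^*(h^{GW,l}_\alpha(Q))=\rho^{GW,L}_\alpha(\hat Q)$ and $i_1^*(h^{GW,l}_\alpha(Q))=Q=j^{GW,R}_\alpha(Q)$.

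Next I would apply the additive natural transformation $\rho^{GW,L}_\alpha$, for the pair $(X\times\bGm\times\affl,\,Y\times\bGm)$, to the class $[h^+_{per}(\hat Q)]-[h^-_{per}(\hat Q)]$ and to the identities~\eqref{eq:h_pre-cond} of Lemma~\ref{lm:permhomQCor}. The terms $h^{*}_{per,b}(\hat Q)$ occur on both sides of~\eqref{eq:h_pre-cond} and cancel, and each summand of $h^{*,\star}_{per,d}(\hat Q)$ has support mapping to the unit section of one of the $\bGm$-factors, so $\rho^{GW,L}_\alpha$ kills it — either by Lemma~\ref{lm:alpharho-unitsect}, when that factor is the one absorbed into the target of $\rho$, or else because it stays supported over the unit section of a surviving $\bGmw$-factor and hence vanishes in $GWCor(-\times\bGmw,-\times\bGmw)$ under the idempotent $id-1_{\bGm}$. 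What is left is that $\rho^{GW,L}_\alpha\bigl([h^+_{per}(\hat Q)]-[h^-_{per}(\hat Q)]\bigr)\in GWCor(X\times\bGm\times\affl,Y\times\bGm)$ is a natural $\affl$-homotopy joining $\rho^{GW,L}_\alpha(\hat Q)$ and $\rho^{GW,R}_\alpha(Q)\boxtimes id_{\bGmw}$. Since $h^{GW,l}_\alpha(Q)$ is a natural $\affl$-homotopy joining $\rho^{GW,L}_\alpha(\hat Q)$ and $Q=j^{GW,R}_\alpha(Q)$, and since $\affl$-homotopy of correspondences is an equivalence relation with concatenation performed along the $\affl$-coordinate, a suitable reparametrised concatenation of the reversed first homotopy with the second yields, after passing to $\bGmw$-correspondences, a natural transformation $h^{GW,r}_\alpha$ with $i_0^*(h^{GW,r}_\alpha)=\rho^{GW,R}_\alpha(-)\boxtimes id_{\bGmw}$ and $i_1^*(h^{GW,r}_\alpha)=j^{GW,R}_\alpha$; the construction for $WCor$ is word for word the same.

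The hard part will be the bookkeeping of the two middle steps: one must check carefully that \emph{every} correspondence appearing in the permutation homotopy of $\hat Q$ lies in $L^Q_\alpha$ for the relevant pair of varieties (this is precisely why $R^Q_\alpha$ was defined with that long list of membership conditions), and that $\rho^{GW,L}_\alpha$ annihilates all the $h_{per,d}$ junk after $\bGmw$-localisation. A secondary, purely formal issue is keeping the $\langle\beta_{n,m}\rangle$-twists aligned between $\rho^{GW,L}_\alpha$, $\rho^{GW,R}_\alpha$ and $h^{GW,l}_\alpha$; these are normalised consistently in Lemma~\ref{lm:rho_alpha} and cause no difficulty.
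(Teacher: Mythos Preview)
Your proposal is correct and follows essentially the same route as the paper: use the left-inverse homotopy $h^{GW,l}_\alpha$ with $(X,Y)$ replaced by $(X\times\bGm,Y\times\bGm)$ to join $j^{GW,R}_\alpha(Q)$ and $\rho^{GW}_\alpha(Q\boxtimes id_{\bGm})$, apply $\rho^{GW}_\alpha$ to the permutation homotopy of Lemma~\ref{lm:permhomQCor} to join $\rho^{GW}_\alpha(Q\boxtimes id_{\bGm})$ and $\rho^{GW}_\alpha(T_Y(Q\boxtimes id_\bGm)T_X)=\rho^{GW}_\alpha(Q)\boxtimes id_{\bGmw}$, and combine. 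The only cosmetic difference is that where you speak of a ``reparametrised concatenation'' of the two $\affl$-homotopies, the paper exploits additivity of $GWCor$ and simply writes the combined homotopy as $\rho^{GW}_\alpha\circ h^{GW,per}_\alpha - h^{l}_\alpha + id$; this sidesteps any reparametrisation and makes it transparent that the result is a homomorphism of presheaves.
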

\begin{proof}
We consider the case of $GWCor$, the case of $WCor$ is absolutely similar.

Since $R^Q_\alpha(X,Y)\subset N^Q_\alpha(\GXqGY)$ (see Def. \ref{def:Ralpha}),
it follows that there is a natural transformation 
\begin{equation}\label{eq:hRl}
h^l_\alpha\colon R^{GW}_\alpha(-,Y)\to GWCor(-\times\affl,Y)\colon\,
i_0^*(h^{l}_\alpha(a)) = \rho^{GW,l}_{\alpha}( \theta_\alpha^{GW}(a) ), \,
i_1^*(h^{l}_\alpha(a))= a
.\end{equation}

\newcommand{\XGAqYG}{X\times\bGm\times\affl,Y\times\bGm}
\newcommand{\XGwAqYGw}{X\times\bGmw\times\affl,Y\times\bGmw}
Next define natural maps
$$
h^{Q,+/-,per}_\alpha\colon R^Q_\alpha(X,Y)\to L^Q_\alpha(\XGAqYG)\colon
h^{Q,+/-,per}_\alpha(Q) = h^{+/-}_{per}(\theta_\alpha^Q(Q))
,$$
where $h^{+/-}_{per}$ are maps from Lemma \ref{lm:permhomQCor}. 
Denote by $\tilde{h}^{GW,+}_\alpha$, $\tilde{h}^{GW,-}_\alpha$ the induced homomorphisms of Grothendieck-Witt groups, 
and set
\begin{gather*}
\tilde{h}^{GW,per}_\alpha = \tilde{h}^{GW,+,per}_\alpha -\tilde{h}^{GW,-,per}_\alpha\colon 
\tilde{R}^{GW}_\alpha(X,Y)\to \tilde{L}^{GW}_\alpha(\XGAqYG),\\
h^{GW,per}_\alpha =  p\circ p^{GW,L}_\alpha\circ \tilde{h}^{GW,per} \circ i^{GW,R}_\alpha\colon 
{R}^{GW}_\alpha(X,Y)\to {L}^{GW}_\alpha(\XGwAqYGw),
\end{gather*}
where $p$ denotes projection 
$L^{GW}_\alpha(\XGAqYG)\to L^{GW}_\alpha(\XGwAqYGw)$
Then equations \eqref{eq:h_pre-cond} implies that for any $a\in R^{GW}_\alpha(X,Y)$,
\begin{equation}\label{eq:hGWperbord}
i_0^*( h^{GW,pre-r}_\alpha(a) ) = \theta^{GW}_\alpha(a),\; 
i_1^*( h^{GW,pre-r}_\alpha(a) ) = T_Y \circ \theta_\alpha^{GW}(a)\circ T_X
,\end{equation}
since for any $Q\in QCor(\XGGqYGG)$, we have $p(p^{GW,L}([h^0(Q)]))=0$ 
(With ore details: 
since for any $Q\in QCor(X\times\bGm,Y\times\bGm)$ such that $\Supp Q\subset X\times\bGm\times 1\times \bGm$ we have $p^{GW,L}([Q])=0$,
and for any $Q\in QCor(X\times \bGm\times \bGm, Y\times\bGm\times\bGm)$ such that $\Supp Q\subset X\times 1\times \bGm\times Y\times\bGm\times\bGm$ we have $p([Q])=0$ (and similarly for $Y$ instead of $X$)). 


Now define required homotopy as composition of homotopies $h^l_{\alpha}$ and $\rho^{GW}_\alpha \circ h^{GW,pre}_\alpha$, i.e. 
$$h^{GW,r}_\alpha = \rho^{GW}_\alpha \circ  h^{GW,pre}_\alpha - h^{l}_\alpha +id_{L^{GW}_\alpha(-,Y)}.$$
Then form \eqref{eq:hRl} and \eqref{eq:hGWperbord} we get 
$$
i_0^* (h^{GW,r}_\alpha(a)) = i_\alpha
,
i_1^* (h^{GW,r}_\alpha(a)) = \theta_\alpha( \rho^{GW,R}_\alpha(a) )
$$
where in the second equality we use that  
$$\rho_\alpha^{+/-}(T_Y \circ(Q\boxtimes\bGm)\circ T_X) = 
\rho_\alpha^{+/-}(( \bGm\boxtimes Q)) =
\bGm\boxtimes \rho_\alpha^{+/-}(Q)=
\rho_\alpha^{+/-}(Q)\boxtimes \bGm
,$$
for any $Q\in R^Q_\alpha(X,Y)$ (see Lemma \ref{lm:rho:Gmbasechange} for the middle equality).

\end{proof}

\section{Cancellation theorem.}\label{sect:CancellationTh}

Firstly we prove cancellation in the category $\DaffGW(k)=D^-_{\affl}(\PreGW)$ (see sect. \ref{sect:DMGWeff}). 
Let's recall that there is a canonical functor $Sm_k\to \DaffGW(k)$ which we denote 
$C^*(V)=C^*(GWCor(-,V)) = \mathcal Hom_{\Dba(\PreGWs)}(\Delta^\bullet, \mathbb Z_{GW}(V))$.
In the section we consider the case of GW-correspondneces, 
All statements and proofs in the section holds for $\WCor$ as well.


\begin{notation}
Denote by 
$h^i_{\affl}(\mathcal F) = h^i(C^*(\mathcal F)) = h^i(\mathcal Hom(\Delta^\bullet, \mathcal F))=H^i(\mathcal F(-\times\Delta^\bullet))$
the presheaves of cohomologies
of $\mathcal F\in \PreGWs$.
And for any $s\colon \mathcal F\to \mathcal G\in \PreGWs$,
$h^i_{\affl}(s)\colon h^i_{\affl}(\mathcal F)\to h^i_{\affl}(\mathcal G)$ is the induced homomorphism.
\end{notation}

\begin{lemma}\label{lm:limH(F,L)}
For any $Y\in Sm_k$,
$$\varinjlim_{\alpha\in \mathfrak A} h^i_{\affl}(L^{GW}_{\alpha}(-,Y))\simeq \varinjlim_{\alpha\in \mathfrak A} h^i_{\affl}(R^{GW}_{\alpha}(-,Y)) \simeq h^i_{\affl}(\GWCor(- \times\bGm,\bGY))$$
\end{lemma}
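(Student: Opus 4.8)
The plan is to reduce the statement to Lemma \ref{lm:limFNL} together with the exactness of filtered colimits, so that essentially no new work is needed. First I would fix the formal setting: by Lemma \ref{lm:filteredsyst} the index poset $\mathfrak A$ is filtering, and by Lemma \ref{lm:alphaarrows} the assignments $\alpha\mapsto L^{GW}_\alpha(-,Y)$ and $\alpha\mapsto R^{GW}_\alpha(-,Y)$ are functors from $\mathfrak A$ to presheaves of abelian groups on $Sm_k$, with transition maps $\gamma^{GW,L}_{\alpha_1,\alpha_2}$ and $\gamma^{GW,R}_{\alpha_1,\alpha_2}$. Hence all the colimits occurring in the statement are filtered colimits of presheaves.

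Then I would use that $h^i_{\affl}=h^i\circ C^*$ commutes with filtered colimits of presheaves. This rests on two standard facts: the complex $C^*(\mathcal F)(X)$ is assembled objectwise out of the abelian groups $\mathcal F(X\times\Delta^n)$, $n\ge 0$, and colimits of presheaves are computed objectwise, so $C^*$ commutes with arbitrary colimits of presheaves; and filtered colimits of abelian groups are exact, hence commute with the cohomology functors $h^i$ of complexes. Combining these, $\varinjlim_\alpha h^i_{\affl}(\mathcal F_\alpha)\simeq h^i_{\affl}(\varinjlim_\alpha\mathcal F_\alpha)$ for any filtered diagram $(\mathcal F_\alpha)$ of presheaves. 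Applying this to the diagrams $(L^{GW}_\alpha(-,Y))$ and $(R^{GW}_\alpha(-,Y))$ and invoking Lemma \ref{lm:limFNL}, which identifies $\varinjlim_\alpha L^{GW}_\alpha(-,Y)\simeq L^{GW}_\infty$ and $\varinjlim_\alpha R^{GW}_\alpha(-,Y)\simeq R^{GW}_\infty$ with the common presheaf $L^{GW}_\infty=R^{GW}_\infty=\GWCor(-\times\bGmw,Y\times\bGmw)$, gives the asserted chain of isomorphisms; the case of $\WCor$ is handled verbatim.

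I do not expect a real obstacle here once Lemma \ref{lm:limFNL} is available. The one place that wants a line of care is the interchange of the colimit with the simplicial direction: one should note that, separately for each $n$, $\varinjlim_\alpha L^{GW}_\alpha(X\times\Delta^n,Y)\simeq L^{GW}_\infty(X\times\Delta^n,Y)$ by Lemma \ref{lm:limFNL} applied to the pair $(X\times\Delta^n,Y)$, so that the filtered colimit $\varinjlim_\alpha C^*(L^{GW}_\alpha(-,Y))(X)$ is indeed the complex $C^*(L^{GW}_\infty(-,Y))(X)$ whose cohomology is $h^i_{\affl}(\GWCor(-\times\bGmw,Y\times\bGmw))(X)$, and likewise for $R^{GW}_\alpha$.
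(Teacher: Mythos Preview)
Your proposal is correct and follows essentially the same route as the paper: both reduce the claim to Lemma~\ref{lm:limFNL} (identifying $\varinjlim_\alpha L^{GW}_\alpha$ and $\varinjlim_\alpha R^{GW}_\alpha$ with $\GWCor(-\times\bGmw,Y\times\bGmw)$) together with the exactness of filtered colimits of abelian groups. The paper's proof merely re-derives part of Lemma~\ref{lm:limFNL} at the $Q$-level before passing to $GW$, whereas you cite it directly; your version is slightly cleaner but not genuinely different.
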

\begin{proof}
Lm. \ref{lm:limFNL} yields that 
$\varinjlim_{\alpha\in \mathfrak A} R^Q_{\alpha}(X,Y) =\varinjlim_{\alpha\in \mathfrak A} L^Q_{\alpha}(X,Y) = Q(\GXqGY).$
By Lm. \ref{lm:limFNL} again for any 
$(P_1,q_1),(P_2,q_2)\in Q(\mathcal P(\GXqGY))$ there is a triple of integers $(r,n,m)$ such that 
$(P_1,q_1),$ $(P_2,q_2),$ $(P_1,q_1)\oplus(P_2,q_2)\in R^Q_{r,n,m}(X,Y)\subset L^Q_{r,n,m}$.
Hence
$\varinjlim_{\alpha\in \mathfrak A} R^{GW}_{\alpha}(X,Y) =\varinjlim_{\alpha} L^{GW}_{\alpha}(X,Y) = \GWCor(\GXqGY).$
Since the direct limits of abelian groups along filtering systems are exact, the claim follows
\end{proof}

\begin{lemma}\label{lm:Haffinv}
For any $\alpha\in \mathfrak A$ and integer $i$ 
\begin{itemize}[leftmargin=15pt]
\item[1)]
$h^i_{\affl}(\rho^{GW,L}_{\alpha})\circ h^i_{\affl}( \theta^{GW}_\alpha )=h^i_{\affl}(j^{GW,N}_\alpha)
\colon h^i_{\affl}(N^{GW}_\alpha(-,Y))\to h^i_{\affl}(GWCor(-,Y))$,
\item[2)]
$h^i_{\affl}( \theta^{GW}_\infty )\circ h^i_{\affl}(\rho^{GW,R}_{\alpha})=h^i_{\affl}(j^{GW,R}_\alpha)
\colon h^i_{\affl}(R^{GW}_\alpha(-,Y))\to h^i_{\affl}(\GWCor(-\times\bGmw,Y\times\bGmw))$,
\end{itemize}
(let's remember that $\theta_\infty =-\boxtimes id_\bGmw\colon \GWCor(X,Y)\to \GWCor(\GXqGY)$
and see def \ref{def:thetaalpha} for $\theta_\alpha$, $\alpha\in \mathfrak A$).
\end{lemma}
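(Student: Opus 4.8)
The plan is to deduce both identities formally from the $\affl$-homotopies produced in Lemma~\ref{lm:leftinverse} and Lemma~\ref{lm:rightinverse}, using the standard fact that the functor $C^*(-)=\mathcal Hom_{\Dba(\PreGWs)}(\Delta^\bullet,-)$ carries $\affl$-homotopic morphisms of presheaves of abelian groups on $Sm_k$ to chain-homotopic morphisms of complexes, and therefore induces equal morphisms on each cohomology presheaf $h^i_{\affl}(-)=h^i(C^*(-))$.

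First I would record this fact in the form needed. If $\mathcal F,\mathcal G$ are presheaves of abelian groups on $Sm_k$ and $s_0,s_1\colon\mathcal F\to\mathcal G$ admit a morphism of presheaves $H\colon\mathcal F(-)\to\mathcal G(-\times\affl)$ with $i_0^*\circ H=s_0$ and $i_1^*\circ H=s_1$, then the simplicial decomposition of the cosimplicial prism $\Delta^\bullet\times\affl$ into simplices --- the algebraic prism operator, exactly as for $Cor$-correspondences in \cite{Voevodsky_CancellationTh}, \cite{MVW_LectMotCohom} --- produces, by naturality of $H$, a chain homotopy between $C^*(s_0)$ and $C^*(s_1)$. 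The construction uses only the cosimplicial scheme $\Delta^\bullet$ and the naturality of $H$, nothing about the correspondence structure, so it applies verbatim to presheaves with GW-transfers or Witt-transfers; hence $h^i_{\affl}(s_0)=h^i_{\affl}(s_1)$ for all $i$.

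For part~(1): by the second point of Lemma~\ref{lm:leftinverse} the presheaf homomorphism $h^{GW,l}_\alpha\colon N^{GW}_\alpha(-,Y)\to GWCor(-\times\affl,Y)$ satisfies, by \eqref{eq:lefthomot}, $i_0^*(h^{GW,l}_\alpha)=\rho^{GW,L}_\alpha(-\boxtimes id_{\bGm})=\rho^{GW,L}_\alpha\circ\theta^{GW}_\alpha$ and $i_1^*(h^{GW,l}_\alpha)=j^{GW,N}_\alpha$, so $\rho^{GW,L}_\alpha\circ\theta^{GW}_\alpha$ and $j^{GW,N}_\alpha$ are $\affl$-homotopic morphisms $N^{GW}_\alpha(-,Y)\to GWCor(-,Y)$. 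Applying the previous paragraph and functoriality of $h^i_{\affl}$ gives $h^i_{\affl}(\rho^{GW,L}_\alpha)\circ h^i_{\affl}(\theta^{GW}_\alpha)=h^i_{\affl}(\rho^{GW,L}_\alpha\circ\theta^{GW}_\alpha)=h^i_{\affl}(j^{GW,N}_\alpha)$. Part~(2) is identical: by Lemma~\ref{lm:rightinverse} the homomorphism $h^{GW,r}_\alpha$ is an $\affl$-homotopy between $\rho^{GW,R}_\alpha(-)\boxtimes id_{\bGmw}=\theta^{GW}_\infty\circ\rho^{GW,R}_\alpha$ and $j^{GW,R}_\alpha$, viewed as morphisms $R^{GW}_\alpha(-,Y)\to GWCor(-\times\bGmw,Y\times\bGmw)$, and the same argument yields $h^i_{\affl}(\theta^{GW}_\infty)\circ h^i_{\affl}(\rho^{GW,R}_\alpha)=h^i_{\affl}(j^{GW,R}_\alpha)$. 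The case of $WCor$ is the literal translation with $GW$ replaced by $W$. There is no genuine obstacle: the whole content of cancellation is already packaged into the two homotopies of Lemmas~\ref{lm:leftinverse} and~\ref{lm:rightinverse}, and the only points to keep in view are that those homotopies are honest morphisms of presheaves on $Sm_k$, so that $C^*$ applies termwise, and that all the maps involved have matching (co)domains after the $\bGmw$-Karoubi-closure identifications --- which is precisely how they were constructed.
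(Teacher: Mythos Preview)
Your proof is correct and takes essentially the same approach as the paper: both convert the presheaf-level $\affl$-homotopies of Lemmas~\ref{lm:leftinverse} and~\ref{lm:rightinverse} into chain homotopies on $C^*(-)$ via the prism/cylinder construction, and conclude equality on $h^i_{\affl}$. Your write-up is in fact more explicit than the paper's terse ``cylinder triangulation'' sentence, spelling out carefully why an $\affl$-homotopy of presheaf maps yields a chain homotopy on the simplicial complexes.
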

\begin{proof}
The cylinder triangulation
and Lm. \ref{lm:leftinverse}, Lm. \ref{lm:rightinverse}
gives homotopies of the chain complexes
\begin{multline*}
h_l\colon C^*(\GWCor(-,Y))\to C^*(\GWCor(-,Y))[-1],\\
 h_r\colon C^*(L^{GW}(-,Y))\to C^*(\GWCor(-\times\bGmw,Y\times\bGmw))[-1]\end{multline*}
such that
$d(h_l(a)) = a - \rho^{GW,L}_\alpha(a\boxtimes id_{\bGmw})$,
for $a\in \GWCor(X\times \Delta^i ,Y) )$,
and 
$d(h_r(a)) = id(a) - \rho^{GW,R}(a)\boxtimes id_{\bGmw}$,
for $a\in L^{GW}(X\times \Delta^i,Y) )
.$
The claim follows.
\end{proof}

\begin{theorem}\label{th:CancGWCor} For any $X,Y\in Sm_k$ the functor $-\boxtimes id_{\bGmw}$ induces the natural quasi-isomorphism
\begin{equation*}
\GWCor(X\times\Delta^\bullet,Y)\simeq \GWCor(X\times\bGmw\times\Delta^\bullet,Y\times\bGmw)
\end{equation*}

\end{theorem}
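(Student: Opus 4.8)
The plan is to reduce the asserted quasi-isomorphism to the statement that, for every $X,Y\in Sm_k$ and every $i$, the morphism $-\boxtimes id_{\bGmw}$ induces an \emph{isomorphism of presheaves}
$$h^i_{\affl}(\theta^{GW}_\infty)\colon h^i_{\affl}(\GWCor(-,Y))\xrightarrow{\sim}h^i_{\affl}(\GWCor(-\times\bGmw,Y\times\bGmw)),$$
since, evaluated at $X$, this is exactly the map of complexes in question read off on cohomology, and its naturality in $X$ and $Y$ is inherited from $\theta^{GW}_\infty$. First I would assemble the inputs, all already proved. Because $h^i_{\affl}(\mathcal F)=h^i(\mathcal F(-\times\Delta^\bullet))$ commutes with filtered colimits of presheaves, Lemma \ref{lm:limFNL} yields $\varinjlim_{\alphind}h^i_{\affl}(N^{GW}_\alpha)\simeq h^i_{\affl}(\GWCor(-,Y))$ and $\varinjlim_{\alphind}h^i_{\affl}(L^{GW}_\alpha)\simeq\varinjlim_{\alphind}h^i_{\affl}(R^{GW}_\alpha)\simeq h^i_{\affl}(\GWCor(-\times\bGmw,Y\times\bGmw))$, with the colimits of the transition maps $j^{GW,N}_\alpha$, $j^{GW,L}_\alpha$, $j^{GW,R}_\alpha$ being the identities (compare also Lemma \ref{lm:limH(F,L)}). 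Lemma \ref{lm:Haffinv} supplies, for each fixed $\alpha\in\mathfrak A$, the two relations $h^i_{\affl}(\rho^{GW,L}_\alpha)\circ h^i_{\affl}(\theta^{GW}_\alpha)=h^i_{\affl}(j^{GW,N}_\alpha)$ and $h^i_{\affl}(\theta^{GW}_\infty)\circ h^i_{\affl}(\rho^{GW,R}_\alpha)=h^i_{\affl}(j^{GW,R}_\alpha)$, while naturality of $\theta_\bullet$ in $\alpha$ gives $h^i_{\affl}(j^{GW,L}_\alpha)\circ h^i_{\affl}(\theta^{GW}_\alpha)=h^i_{\affl}(\theta^{GW}_\infty)\circ h^i_{\affl}(j^{GW,N}_\alpha)$ and, for $\alpha<\beta$, the compatibilities $h^i_{\affl}(\theta^{GW}_\beta)\circ h^i_{\affl}(\gamma^{GW,N}_{\alpha,\beta})=h^i_{\affl}(\gamma^{GW,L}_{\alpha,\beta})\circ h^i_{\affl}(\theta^{GW}_\alpha)$ and $h^i_{\affl}(j^{GW,N}_\beta)\circ h^i_{\affl}(\gamma^{GW,N}_{\alpha,\beta})=h^i_{\affl}(j^{GW,N}_\alpha)$.

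Granting this, the two halves are short colimit chases. For surjectivity I would take a class $b\in h^i_{\affl}(\GWCor(-\times\bGmw,Y\times\bGmw))(X)$, lift it along $j^{GW,R}$ to some $\tilde b\in h^i_{\affl}(R^{GW}_\alpha)(X)$, and check that $c:=h^i_{\affl}(\rho^{GW,R}_\alpha)(\tilde b)$ satisfies $h^i_{\affl}(\theta^{GW}_\infty)(c)=h^i_{\affl}(j^{GW,R}_\alpha)(\tilde b)=b$ by Lemma \ref{lm:Haffinv}.(2). For injectivity I would start from $a\in h^i_{\affl}(\GWCor(-,Y))(X)$ with $h^i_{\affl}(\theta^{GW}_\infty)(a)=0$, lift $a$ along $j^{GW,N}$ to some $\tilde a\in h^i_{\affl}(N^{GW}_\alpha)(X)$, note that $h^i_{\affl}(j^{GW,L}_\alpha)(h^i_{\affl}(\theta^{GW}_\alpha)(\tilde a))=0$ by naturality, pass to a $\beta>\alpha$ with $h^i_{\affl}(\theta^{GW}_\beta)(h^i_{\affl}(\gamma^{GW,N}_{\alpha,\beta})(\tilde a))=0$ using the colimit description of $h^i_{\affl}(L^{GW})$, and then apply Lemma \ref{lm:Haffinv}.(1) at level $\beta$ to get $a=h^i_{\affl}(j^{GW,N}_\beta)(h^i_{\affl}(\gamma^{GW,N}_{\alpha,\beta})(\tilde a))=h^i_{\affl}(\rho^{GW,L}_\beta)(h^i_{\affl}(\theta^{GW}_\beta)(h^i_{\affl}(\gamma^{GW,N}_{\alpha,\beta})(\tilde a)))=0$.

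I expect the main obstacle to be organizational rather than a new idea: the partially defined inverses $\rho^{GW,L}_\alpha$, $\rho^{GW,R}_\alpha$ are \emph{not} natural in $\alpha$ (remark after Def. \ref{def:rhoGWalpha}), so one cannot simply take the colimit inside Lemma \ref{lm:Haffinv}; this is precisely what forces the injectivity step to raise the index to a suitable $\beta$ and invoke the left inverse there. The one point needing a little care is the colimit identification $\varinjlim_{\alphind}h^i_{\affl}(N^{GW}_\alpha)\simeq h^i_{\affl}(\GWCor(-,Y))$, only flagged as ``similar'' in Lemma \ref{lm:limFNL}; it follows because $N^Q_\infty=QCor(X,Y)$ and the finitely many defining conditions on quadratic spaces are exhausted along $\mathfrak A$ by Lemma \ref{lm:surlimFNL}. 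Everything goes through verbatim for $\WCor$, replacing $GW$ by $W$ throughout.
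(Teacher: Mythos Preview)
Your proposal is correct and follows essentially the same approach as the paper's own proof: both use Lemma~\ref{lm:Haffinv}(2) together with $\varinjlim_\alpha R^{GW}_\alpha\simeq R^{GW}_\infty$ for surjectivity, and Lemma~\ref{lm:Haffinv}(1) together with the naturality of $\theta^{GW}_\alpha$ in $\alpha$ (but not of $\rho^{GW,L}_\alpha$) for injectivity. The only cosmetic difference is that the paper phrases both halves in terms of vanishing of $\Coker(\theta^{GW}_\infty)$ and $\Ker(\theta^{GW}_\infty)=\varinjlim_\alpha\Ker(\theta^{GW}_\alpha)$, whereas you unwind the same filtered-colimit arguments element by element; your explicit step of raising the index from $\alpha$ to $\beta$ is exactly what is hidden in the paper's identification $\Ker(\theta^{GW}_\infty)=\varinjlim_\alpha\Ker(\theta^{GW}_\alpha)$.
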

\begin{proof}

First we shell prove that $\Coker h^i_{\affl}(\theta^{GW}_\infty) =0$.
Lm \ref{lm:Haffinv}.(2) gives us 
the commutative diagram
$$\xymatrix{ 
h^i_{\aff}( R_{\alpha}(-,Y) ) \ar[r]^{\rho^{GW,R}_{\alpha}} \ar@{^(->}[rd]_{j^{GW,R}_\alpha} & 
h^i_{\aff}( GWCor(-,Y) )\ar[d]^{ \theta^{GW}_\infty } & \\
& h^i_{\aff}( R_{\infty}(-,Y) )  \ar@{->>}[d]\ar@{->>}[rd] & \\
& \Coker( \theta^{GW}_\infty) &
\Coker(j^{GW,R}_\alpha)\ar@{->>}[l]_{c_\alpha} 
}$$
Since the category of factor objects of $R_\infty$ is an ordered set, 
for any $\alpha<\pri \alpha \in \mathfrak A$, 
$c_\alpha = c_{\pri \alpha}\circ \zeta_{\alpha,\pri \alpha}$,
where $\zeta_{\alpha,\pri \alpha}\colon  \Coker(j^{GW,R}_\alpha)\to \Coker(j^{GW,R}_{\pri \alpha}) $.
Hence there is a surjection 
$\varinjlim_{\alpha\in \mathfrak{A}}\Coker(R_{\alpha}\to R_{\infty}) 
\twoheadrightarrow
\Coker(\theta^{GW}_\infty),$
In the same time by Lemma \ref{lm:limH(F,L)} 
$\varinjlim_{\alpha\in \mathfrak{A}}\Coker(R_{\alpha}\to R_{\infty}) = \Coker(\varinjlim_{\alpha\in \mathfrak{A}} R_{\alpha}=0$
Thus $\Coker(\theta^{GW}_\infty)=0$.


Now we prove injectivity. 
For any integer triples $\alpha=(r,n,m)$, $\pri \alpha = (\pri r,\pri n,\pri m)$
homomorphisms $\theta^{GW}_\alpha$ and $\theta^{GW}_{\pri \alpha}$ 
commute with homomorphisms 
$\gamma^{GW,N}_{\alpha,\alpha^\prime}$ and $\gamma^{GW,L}_{\alpha,\alpha^\prime}$ (see Lemma \ref{lm:alphaarrows}).
Therefore $
\varinjlim_\alpha [\theta^{GW}_\alpha
\colon N^{GW}_\alpha(-,Y)\to L^{GW}_\alpha(-,Y)
]
 = 
[ \theta^{GW}_\infty
\colon \GWCor(-,Y)\to L^{GW}_\infty(-,Y)]
.$
Since injective limits of abelian groups is exact
it follows that 
$\Ker(\theta^{GW}_\infty) = \Ker(\varinjlim_{\alpha}(\theta^{GW}_\alpha) = \varinjlim_{\alpha} (\Ker(\theta^{GW}_\alpha))=0
.$
\end{proof}

\begin{corollary}\label{cor:CancSmMotCompl}
For all $A^\bullet,B^\bullet\in \DaffGW(k)$, there is a natural isomorphism
$$ 
Hom_{\DaffGW(k)}(A^\bullet, B^\bullet)\simeq Hom_{\DaffGW(k)}(A^\bullet(1), B^\bullet(1)) 
$$
\end{corollary}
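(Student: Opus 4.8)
The plan is to obtain Corollary~\ref{cor:CancSmMotCompl} from Theorem~\ref{th:CancGWCor} by a routine dévissage; the only point of substance is to identify the $\Hom$-groups of $\DaffGW(k)$ between motives of smooth varieties with the $\affl$-cohomology presheaves $h^i_{\affl}$ of the $\GWCor$-hom-objects, so that the quasi-isomorphism of Theorem~\ref{th:CancGWCor} becomes literally the map in question.

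\emph{The case of motives of smooth varieties.} For $X\in Sm_k$ write $C^*(X)=\GWCor(-\times\Delta^\bullet,X)$; by Theorem~\ref{th:DMWeff} this is the $\affl$-localization of the representable presheaf $\mathbb{Z}_{GW}(X)=\GWCor(-,X)$, which by the additive Yoneda lemma is projective in $\PreGW$ with $\Hom_{\PreGW}(\mathbb{Z}_{GW}(X),F)=F(X)$. Since $C^*(Y)$ has homotopy invariant cohomology presheaves, combining this with the adjunction $L_{\affl}^{pre}\dashv R_{\affl}$ of Theorem~\ref{th:DMWeff} gives, for all $i$,
$$\Hom_{\DaffGW(k)}(C^*(X),C^*(Y)[i])\simeq H^i\bigl(\GWCor(X\times\Delta^\bullet,Y)\bigr)=h^i_{\affl}(\GWCor(-,Y))(X).$$
Passing to the Karoubi enlargement of the additivisation of $Sm_k$ and using the splitting $1\colon pt\to\bGm$ with retraction $\bGm\to pt$, one has $C^*(X)(1)=C^*(X)\otimes\bGmw\simeq \Cone\bigl(C^*(X)\to C^*(X\times\bGm)\bigr)\simeq C^*(X\times\bGmw)$, and since $\mathbb{Z}_{GW}(X\times\bGmw)$ is a direct summand of the projective $\mathbb{Z}_{GW}(X\times\bGm)$ the same computation yields
$$\Hom_{\DaffGW(k)}(C^*(X)(1),C^*(Y)(1)[i])\simeq H^i\bigl(\GWCor(X\times\bGmw\times\Delta^\bullet,Y\times\bGmw)\bigr).$$
By construction of the tensor structure in \cite{AD_DMGWeff}, the map induced by $-\otimes\bGmw$ is, under these identifications, $H^i$ of the external product $-\boxtimes id_{\bGmw}$, which is a quasi-isomorphism by Theorem~\ref{th:CancGWCor}. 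Hence the corollary holds when $A^\bullet,B^\bullet$ are shifts of some $C^*(X)$, $C^*(Y)$.

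\emph{Dévissage.} The functor $-\otimes\bGmw$ on $\DaffGW(k)$ is triangulated, and for fixed $A^\bullet$ the functors $\Hom_{\DaffGW(k)}(A^\bullet,-)$ and $\Hom_{\DaffGW(k)}(A^\bullet(1),-)$ are cohomological and send the homotopy colimits of towers that compute bounded-above objects to Milnor exact sequences; so the full subcategory of those $B^\bullet$ for which the natural map $\Hom(A^\bullet,B^\bullet[i])\to\Hom(A^\bullet(1),B^\bullet(1)[i])$ is bijective for all $i$ is thick and closed under those homotopy colimits, and symmetrically in the first variable. As every object of $\DaffGW(k)$ is built from the $C^*(X)$, $X\in Sm_k$, by shifts, cones and homotopy colimits of towers (using that $\PreGW$ has enough projectives, namely sums of representables, and that $L_{\affl}^{pre}$ is a left adjoint), the previous step yields the isomorphism for arbitrary $A^\bullet,B^\bullet$; naturality is clear from the construction. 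The case of $\WCor$ is identical.

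\emph{The main obstacle.} All the content is already in Theorem~\ref{th:CancGWCor}: the present statement is purely formal once that presheaf-level quasi-isomorphism is available. The delicate points are the identifications above — that $C^*(X)(1)$ is the motive of $X\times\bGmw$, that representables and their retracts are projective so that hom-complexes in $\DaffGW(k)$ are computed by $h^i_{\affl}$ of $\GWCor$-hom-objects, and that under these identifications $-\otimes\bGmw$ becomes exactly the map $-\boxtimes id_{\bGmw}$ of Theorem~\ref{th:CancGWCor} — together with checking that the dévissage is legitimate in the bounded-above setting.
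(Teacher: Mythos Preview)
Your argument is correct and follows essentially the same route as the paper: reduce to the case $A^\bullet=C^*(X)$, $B^\bullet=C^*(Y)$ via generation, identify the $\Hom$-groups in $\DaffGW(k)$ with $H^i(\GWCor(X\times\Delta^\bullet,Y))$ through the $L_{\affl}^{pre}\dashv R_{\affl}$ adjunction and projectivity of representables, and then invoke Theorem~\ref{th:CancGWCor}. The paper compresses your d\'evissage into the single phrase ``since the objects $M^{GW}(X)$ generate'', whereas you spell out the thick-subcategory and homotopy-colimit closure explicitly; your version is more careful about the bounded-above setting but not substantively different.
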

\begin{proof}
Since $M^{GW}(X)(1)=M^{GW}(X)\otimes M^{GW}(\bGmw)\simeq M^{GW}(X\times \bGmw)$,
and since the objects $M^{GW}(X)$ generate the category $\DMGWeff(k)$,
it is enough to prove isomorphism
$Hom_{\DaffGW}(X,Y[i])\to 
Hom_{\DaffGW}( X\times \bGmw, Y\times \bGmw[i] )$
for all $X,Y\in Sm_k$ and $i \in \mathbb Z$.

The adjunction $\Dba(\PreGW)\leftrightharpoons \DaffGW(k)=\Daff(\PreGW)$ (see Th \ref{th:nisadjaff})
yields
$Hom_{\DaffGW(k)}(\bZGW(X), \bZGW(Y)[i])\simeq  Hom_{\DPreGWs}(\bZGW(X) , C^*(\bZGW(Y))[i]) = H^i(\GWCor(X\times\Delta^\bullet,Y).$
So the claim follows from Th \ref{th:CancGWCor}. 
\end{proof}

Consider the functor $\mathcal Hom_{\PreGWs}(\bGmw,-)\colon \PreGWs\to \PreGWs\colon F\mapsto F(-\times\bGm)$.
It is exact on 
$\PreGWs$, and inducesa  functor $\mathcal Hom_{\DPreGWs}(\bGmw,-)$ on $\DPreGWs$.
Now since the last functor commutes with the functor $C^*=\mathcal Hom_{\DPreGWs}(\Delta^\bullet,-)$, it follows that $\mathcal Hom_{\DPreGWs}(\bGmw,-)$ is exact in respect to $\affl$-quasi-isomorphisms.
Thus it induces a functor $\mathcal Hom_{\DaffGW}(\bGmw,-)$ on $\DaffGW$.

Since the functor $-\otimes \bGmw$ on $\DPreGWs$ preserves the class of morphisms $X\times\affl\to X$ it is $\affl$-exact too.
So the adjunction $-\otimes \bGmw\dashv \mathcal Hom(\bGmw,-)$ on $\DPreGWs$ yields an adjunction
\begin{equation}\label{eq:adjunctDWGaff}
- \otimes \bGmw\colon  \DaffGW(k) \leftrightharpoons \DaffGW(k) \colon \mathcal Hom(\bGmw,-) \end{equation} 
The last adjunction
is a coreflection by Corollary \ref{cor:CancSmMotCompl},
i.e. $A^\bullet \simeq \mathcal Hom(\bGmw,A^\bullet\otimes\bGmw).$
We'll show that the same holds for derived functors in $\DMGW(k)$ that is considered as localisation of $\DaffGW(k)$ in respect to Nisnevich-quasi-isomorphisms. 

\begin{proposition}\label{prop:GmshNisex}
The functors $-\otimes\bGmw$ and $\mathcal Hom(\bGmw,-)$ on the category $\DaffGW(k)$ are exact in respect to Nisnevich quasi-iso\-mor\-phisms.
\end{proposition}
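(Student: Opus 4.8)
The goal is to show that both endofunctors $-\otimes\bGmw$ and $\mathcal Hom(\bGmw,-)$ on $\DaffGW(k)$ send Nisnevich-quasi-isomorphisms to Nisnevich-quasi-isomorphisms. First I would reduce to the level of presheaves with transfers and the complexes $C^*(-)=\mathcal Hom_{\Dba(\PreGWs)}(\Delta^\bullet,-)$: a morphism in $\DaffGW(k)$ is a Nisnevich-quasi-isomorphism precisely when the associated map of Nisnevich sheafifications of the cohomology presheaves $h^i_{\affl}(-)$ is an isomorphism, equivalently (by Theorem \ref{th:NshGWtr}, using that the field is infinite perfect of characteristic $\neq 2$) when it induces isomorphisms on $H^i_{Nis}(-,(h^j_{\affl}(-))_{nis})$; since $h^j_{\affl}$ of a motivic complex is homotopy invariant with GW-transfers, its Nisnevich sheafification is again homotopy invariant with GW-transfers, so Nisnevich-local triviality can be tested on fields $k(X)$ by the injectivity theorems \ref{th:HGWCorinj}.

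The case of $\mathcal Hom(\bGmw,-)$ is the easy one. The functor $\mathcal Hom_{\PreGWs}(\bGmw,-)$ is $F\mapsto F(-\times\bGm)$ composed with the splitting off of $F(-\times 1)$, hence it is exact on $\PreGWs$ and commutes with $C^*$, so $h^i_{\affl}(\mathcal Hom(\bGmw,F^\bullet))=h^i_{\affl}(F^\bullet)(-\times\bGm)$ up to the unit-section summand. It then suffices to observe that if $\mathcal G$ is a presheaf with GW-transfers whose Nisnevich sheafification vanishes (equivalently, $\mathcal G(k(X))=0$ for all $X$ when $\mathcal G$ is homotopy invariant), then the same holds for $X\mapsto \mathcal G(X\times\bGm)$: indeed $\bGm_{k(X)}$ is an open subscheme of $\affl_{k(X)}$, so $\mathcal G(\bGm_{k(X)})$ injects into $\mathcal G$ evaluated at the generic point of $\bGm_{k(X)}$, which is a field, by Theorem \ref{th:HGWCorinj}. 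Hence $\mathcal Hom(\bGmw,-)$ preserves Nisnevich-acyclic complexes, and therefore Nisnevich-quasi-isomorphisms.

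For $-\otimes\bGmw$ the argument runs through the cancellation theorem just proved. By Theorem \ref{th:CancGWCor} the unit $\eta\colon A^\bullet\to \mathcal Hom(\bGmw,A^\bullet\otimes\bGmw)$ is an $\affl$-quasi-isomorphism, i.e. an isomorphism in $\DaffGW(k)$; in particular, for every $i$, $h^i_{\affl}(A^\bullet)\simeq h^i_{\affl}(\mathcal Hom(\bGmw,A^\bullet\otimes\bGmw))=h^i_{\affl}(A^\bullet\otimes\bGmw)(-\times\bGm)$ up to the unit summand, compatibly in $A^\bullet$. Now suppose $s\colon A^\bullet\to B^\bullet$ in $\DaffGW(k)$ is a Nisnevich-quasi-isomorphism; I must show $s\otimes\bGmw$ is one too. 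Applying $\mathcal Hom(\bGmw,-)$, which preserves Nisnevich-quasi-isomorphisms by the previous paragraph, to $s\otimes\bGmw$ and using the naturality of $\eta$, we get that $\mathcal Hom(\bGmw,s\otimes\bGmw)$ is identified with $s$ itself, hence is a Nisnevich-quasi-isomorphism. So $h^i_{\affl}(A^\bullet\otimes\bGmw)(-\times\bGm)$ maps isomorphically onto $h^i_{\affl}(B^\bullet\otimes\bGmw)(-\times\bGm)$ after Nisnevich sheafification. It remains to upgrade this from "iso after evaluation on $-\times\bGm$" to "iso after Nisnevich sheafification on $Sm_k$", and here the main obstacle lies: one needs that for a homotopy invariant presheaf with GW-transfers $\mathcal H$ (take $\mathcal H$ to be the Nisnevich sheafification of the cohomology presheaf of the cone of $s\otimes\bGmw$), the vanishing of $\mathcal H(-\times\bGm)$ Nisnevich-locally forces the vanishing of $\mathcal H$ itself. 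This is again reduced to fields via Theorem \ref{th:HGWCorinj}, and at a field $K$ one uses that the restriction $\mathcal H(\bGm_K)\to\mathcal H(\bGm_K\setminus\{\text{a }K\text{-point}\})$ and the injectivity of $\mathcal H(K)\to\mathcal H(\bGm_K)$ (via the zero section and homotopy invariance, together with Theorem \ref{th:HGWCorinj}) let one recover $\mathcal H(K)$ as a retract of $\mathcal H(\bGm_K)$; I expect this bookkeeping — disentangling the unit-section summand and keeping track of the retraction — to be the only genuinely delicate point. Concluding, the cone of $s\otimes\bGmw$ is Nisnevich-acyclic, so $-\otimes\bGmw$ preserves Nisnevich-quasi-isomorphisms.
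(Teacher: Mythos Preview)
Your treatment of $\mathcal Hom(\bGmw,-)$ is essentially the paper's: reduce to showing that a Nisnevich-locally trivial homotopy invariant presheaf with GW-transfers remains Nisnevich-locally trivial after evaluation on $-\times\bGm$, and then invoke the injectivity/acyclicity results on open subschemes of $\affl_K$ (the paper packages this as Lemma~\ref{lm:coh((subAff)_loc)}). That part is fine.

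The argument for $-\otimes\bGmw$, however, has a genuine gap. From the presheaf cancellation you correctly obtain that $\mathcal Hom(\bGmw,s\otimes\bGmw)\simeq s$ is a Nisnevich quasi-isomorphism. But this only tells you that the \emph{$\bGmw$-summand} of $h^i_{\affl}(\text{cone}(s\otimes\bGmw))(-\times\bGm)$ is Nisnevich-locally zero, not the whole thing: recall $\mathcal Hom(\bGm,F)=F(-)\oplus\mathcal Hom(\bGmw,F)$, and the first summand is precisely the presheaf whose Nisnevich-acyclicity you are trying to establish. Your final ``retract'' step is then circular: you recover $\mathcal H(K)$ as a retract of $\mathcal H(\bGm_K)$, but all you know is $\mathcal H(\bGm_K)\simeq\mathcal H(K)$, which says nothing about the vanishing of $\mathcal H(K)$. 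Concretely, the implication ``$\mathcal Hom(\bGmw,\mathcal H)_{nis}=0\Rightarrow\mathcal H_{nis}=0$'' is false in general: any nonzero constant sheaf with GW-transfers (e.g.\ the unit $\bZ_{GW}(pt)$, with $\mathcal H(X)=GW(k)$ for connected $X$) satisfies $\mathcal Hom(\bGmw,\mathcal H)=0$.

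The paper avoids this entirely by a direct and much shorter argument: $-\otimes\bGmw$ preserves Nisnevich squares. Since $\bZ_{GW}(X)\otimes\bGmw$ is a retract of $\bZ_{GW}(X\times\bGm)$ and the product of a Nisnevich square with $\bGm$ is again a Nisnevich square, the functor $-\otimes\bGmw$ sends the generating Nisnevich-local equivalences to (retracts of) Nisnevich-local equivalences, hence preserves all Nisnevich quasi-isomorphisms. No appeal to cancellation is needed, and indeed this proposition is used in the paper \emph{to pass} from presheaf cancellation to sheaf cancellation, so invoking cancellation here would be logically awkward even if it worked.
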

\begin{proof}
The functor $-\otimes\bGmw$ is exact in respect to Nisnevich quasi-isomorphisms, since it preserves Nisnevich squares (see \cite{AD_DMGWeff} for detailed discussion).

To prove the second claim it is enough to show that for a 
locally trivial homotopy invariant presheave with GW-transfers $F$
the complex of presheaves $\mathcal Hom(\bGm,F)$ is Nisnevich acyclic.
In the same time
$\mathcal Hom(\bGm,F)=F(\Delta^\bullet\times\bGm\times -)\simeq F(\bGm\times -)$, and Lm \ref{lm:coh((subAff)_loc)} yields the claim.
\end{proof}

%
%

\begin{theorem}\label{th:fullyfaithDMW(k)}
For an infinite perfect field $k$, $\chark k \neq 2$ the canonical functors
$\DMGWeff(k)\rightarrow \DMGW(k)$
are fully faithful embeddings.
\end{theorem}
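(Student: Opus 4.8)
The plan is to deduce Theorem~\ref{th:fullyfaithDMW(k)} from the cancellation statement Corollary~\ref{cor:CancSmMotCompl} by the standard Voevodsky-type argument, identical in form to the proof that $\Sigma^\infty_{\bGmw}$ is fully faithful in $\DM(k)$. Recall that $\DMGW(k)$ is defined as the category of $\bGmw$-spectra over $\DMGWeff(k)$, with hom-groups
$$\Hom_{\DMGW(k)}(E,F)=\varinjlim_i \Hom_{\DMGWeff(k)}(E_i,F_i),$$
and that $\Sigma^\infty_{\bGmw}$ sends a motivic complex $A^\bullet$ to the spectrum $(A^\bullet, A^\bullet(1),\dots, A^\bullet(i),\dots)$ with identity structure maps. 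Therefore, for $A^\bullet,B^\bullet\in\DMGWeff(k)$,
$$\Hom_{\DMGW(k)}(\Sigma^\infty_{\bGmw}A^\bullet,\Sigma^\infty_{\bGmw}B^\bullet)=\varinjlim_i \Hom_{\DMGWeff(k)}(A^\bullet(i),B^\bullet(i)),$$
where the transition maps are given by $-\otimes\bGmw$. The goal is to identify this colimit with $\Hom_{\DMGWeff(k)}(A^\bullet,B^\bullet)$.

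First I would observe that the Nisnevich-localised cancellation result already available — namely that the adjunction $-\otimes\bGmw\colon\DMGWeff(k)\leftrightharpoons\DMGWeff(k)\colon\mathcal{H}om(\bGmw,-)$ is a coreflection, which follows by combining Corollary~\ref{cor:CancSmMotCompl} with Proposition~\ref{prop:GmshNisex} (the two functors descend to Nisnevich-exact functors on $\DMGWeff(k)$, and by the proof of Corollary~\ref{cor:CancSmMotCompl} applied to the generators $M^{GW}(X)$ together with $M^{GW}(X)(1)\simeq M^{GW}(X\times\bGmw)$, the unit $A^\bullet\to\mathcal{H}om(\bGmw,A^\bullet\otimes\bGmw)$ is an isomorphism on generators hence on all of $\DMGWeff(k)$) — gives precisely that each transition map
$$-\otimes\bGmw\colon \Hom_{\DMGWeff(k)}(A^\bullet(i),B^\bullet(i))\longrightarrow \Hom_{\DMGWeff(k)}(A^\bullet(i+1),B^\bullet(i+1))$$
is a bijection. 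Indeed, using the coreflection, $\Hom(A^\bullet(i+1),B^\bullet(i+1))=\Hom(A^\bullet(i)\otimes\bGmw,B^\bullet(i)\otimes\bGmw)\simeq\Hom(A^\bullet(i),\mathcal{H}om(\bGmw,B^\bullet(i)\otimes\bGmw))\simeq\Hom(A^\bullet(i),B^\bullet(i))$, and one checks this composite inverse is the inverse of $-\otimes\bGmw$. Hence the colimit system is a system of isomorphisms, and
$$\Hom_{\DMGW(k)}(\Sigma^\infty_{\bGmw}A^\bullet,\Sigma^\infty_{\bGmw}B^\bullet)\simeq \Hom_{\DMGWeff(k)}(A^\bullet,B^\bullet),$$
which is the assertion that $\Sigma^\infty_{\bGmw}$ is fully faithful. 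The same argument applies verbatim to $\DMWeff(k)\to\DMW(k)$, since all the inputs (Corollary~\ref{cor:CancSmMotCompl}, Proposition~\ref{prop:GmshNisex}) hold for $\WCor$.

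I expect the only real subtlety — already essentially handled by Proposition~\ref{prop:GmshNisex} and the discussion preceding Theorem~\ref{th:fullyfaithDMW(k)} — to be the passage from the presheaf-level cancellation (Theorem~\ref{th:CancGWCor}) to the Nisnevich-sheaf-level statement, i.e. verifying that $\mathcal{H}om(\bGmw,-)$ remains exact for Nisnevich quasi-isomorphisms so that the coreflection survives Nisnevich localisation; this rests on Lemma~\ref{lm:coh((subAff)_loc)} computing the Nisnevich cohomology of $F(\bGm\times-)$ for homotopy invariant $F$ with GW-transfers. Once that is in place, the full faithfulness is a formal consequence of the colimit of isomorphisms being an isomorphism, with no further geometric input needed. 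I would close by remarking that this, together with Corollary~\ref{cor:MPMeff}, yields the computation~\eqref{eq:intr:MTC} of $\Hom_{\DMGW}(M^{GW}(X),\Sigma^\infty_{\bGmw}\mathcal{F}[i])\simeq H^i_{Nis}(X,\mathcal{F})$ stated in the abstract.
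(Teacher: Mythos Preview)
Your approach is essentially the same as the paper's: both combine Corollary~\ref{cor:CancSmMotCompl} with Proposition~\ref{prop:GmshNisex} to descend the coreflection $-\otimes\bGmw \dashv \mathcal Hom(\bGmw,-)$ from $\DaffGW(k)$ to $\DMGWeff(k)$, and then read off full faithfulness of $\Sigma^\infty_{\bGmw}$. The paper's proof is simply terser, asserting that the coreflection ``is equal to the claim'' without writing out the colimit-of-isomorphisms argument you supply.
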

\begin{proof}
Since the functors $-\otimes_{\DaffGW(k)}\bGmw$ and $\mathcal Hom_{\DaffGW(k)}(\bGmw,-)$ on the category $\DaffGW(k)$ are exact in respect to Nisnevich quasi-isomorphisms by Proposition \ref{prop:GmshNisex}. Hence the adjunction \eqref{eq:adjunctDWGaff} yields
the adjunction  $ - \otimes \bGmw\colon  \DMGW(k) \leftrightharpoons \DMGW(k) \colon \mathcal Hom(\bGmw,-),$
and this adjunction is a coreflection too, that is equal to the claim.
\end{proof}
\begin{corollary}\label{cor:MTC}
For an infinite perfect field $k$, $\chark k \neq 2$, $X\in Sm_k$ and 
a motivic complex $A^\bullet\in \DMGWeff(k)$ there is a natural isomorphism
$Hom_{\DMGW(k)}(M^{GW}(X),\Sigma^\infty_{\bGmw}A^\bullet[i]) \simeq H^i_{Nis}(X,A^\bullet),$
\end{corollary}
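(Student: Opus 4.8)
The plan is to deduce Corollary \ref{cor:MTC} from its effective counterpart, Corollary \ref{cor:MPMeff}, using the full faithfulness of the stabilisation functor just established in Theorem \ref{th:fullyfaithDMW(k)}. First I would recall that Corollary \ref{cor:MPMeff} already gives, for any motivic complex $A^\bullet\in\DMGWeff(k)$ and any $X\in Sm_k$, a natural isomorphism
$$
Hom_{\DMGWeff(k)}(M^{GW}(X),\Sigma^\infty_{\bGmw}A^\bullet[i])\simeq H^i_{Nis}(X,A^\bullet).
$$
Wait — more precisely Corollary \ref{cor:MPMeff} is stated inside the effective category, so what it really provides is $Hom_{\DMGWeff(k)}(M^{GW}(X),A^\bullet[i])\simeq H^i_{Nis}(X,A^\bullet)$; this is essentially the definition of Nisnevich hypercohomology of a complex of sheaves together with the fact that $M^{GW}(X)$ represents it (using Theorem \ref{th:DMWeff} and the reflection $R_{\affl}$). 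So the only new content needed here is to replace the effective Hom-group by the stable one.

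The key step is therefore: for $X\in Sm_k$ and $A^\bullet$ a motivic complex,
$$
Hom_{\DMGW(k)}(\Sigma^\infty_{\bGmw}M^{GW}(X),\Sigma^\infty_{\bGmw}A^\bullet[i])\simeq Hom_{\DMGWeff(k)}(M^{GW}(X),A^\bullet[i]).
$$
This is immediate from Theorem \ref{th:fullyfaithDMW(k)}, which asserts precisely that $\Sigma^\infty_{\bGmw}\colon\DMGWeff(k)\to\DMGW(k)$ is fully faithful, so it induces bijections on all Hom-groups; here I apply it to the pair of objects $M^{GW}(X)$ and $A^\bullet[i]$. Since by definition $M^{GW}(X)$ in $\DMGW(k)$ means $\Sigma^\infty_{\bGmw}M^{GW}(X)$ (the motive of a smooth variety in the stable category), combining the two displayed isomorphisms yields the claim. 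Naturality in $X$ and in $A^\bullet$ follows because both isomorphisms used are natural: the one from Corollary \ref{cor:MPMeff} is natural by construction, and the one from full faithfulness is the natural map $\Sigma^\infty_{\bGmw}$ on morphisms.

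I do not expect a serious obstacle here; the corollary is a formal consequence of Theorem \ref{th:fullyfaithDMW(k)} (the coreflection statement / full faithfulness of $\Sigma^\infty_{\bGmw}$) and the already-established effective computation Corollary \ref{cor:MPMeff}. The only point that deserves a line of care is bookkeeping between the bounded-above setting in which the cancellation and full-faithfulness arguments are carried out and the unbounded setting where $\Omega^\infty_{\bGmw}$ lives: one should note, as in the footnote to \eqref{eq:intr:ajsq}, that for $A^\bullet$ a motivic complex (hence bounded above, with homotopy invariant cohomology sheaves) and $M^{GW}(X)$ the relevant Hom-groups are unchanged, so the identification $A^\bullet\simeq\Omega^\infty_{\bGmw}\Sigma^\infty_{\bGmw}A^\bullet$ may be used freely. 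With that remark the proof is a two-line chain of natural isomorphisms as above, and the case of $\DMW(k)$ is identical word for word.
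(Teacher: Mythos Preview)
Your proposal is correct and follows exactly the paper's approach: the paper's proof is the single sentence ``The claim follows immediate from Corollary \ref{cor:MPMeff} and Th \ref{th:fullyfaithDMW(k)}'', which is precisely the two-step chain you spell out (effective computation plus full faithfulness of $\Sigma^\infty_{\bGmw}$). Your additional remarks on naturality and the bounded/unbounded bookkeeping are fine but not needed for the argument.
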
\begin{proof}
The claim follows immediate from Corollary \ref{cor:MPMeff} and Th \ref{th:fullyfaithDMW(k)}.
\end{proof}

\section{Appendix: the functor from the frame-correspondences to $GWCor$.}

\begin{definition}
Suppose $X$ and $Y$ are pair of schemes, then 
a \emph{frame-correspondence} on the rank $n$ between $X$ and $Y$ is 
a set $(\mathcal V,Z,\phi,g)$, where $Z$ is a closed subset in $\aff^n_X$, $v\mathcal V\to \aff^n_X$ is an etale morphism such that $v^{-1}(Z)\simeq Z$ (in other words $(\mathcal V,Z)\to (\aff^n_X,Z)$ is a Nisnevich neighbourhood), $\phi\colon \mathcal V\to \aff^n$, and $g\colon \mathcal V\to Y$ are regular maps.

Denote by $Fr_n(X,Y)$ the set of isomorphism classes of frame-correspondences on the rank $n$ between $X$ and $Y$ up to a shrinking of the Nisnevich neighbourhood $(\mathcal V,Z)\to (\aff^n_X,Z)$, 
and let $Fr_*(X,Y)=\coprod_n Fr_n(X,Y)$; 
denote by $Fr_*$ the category with objects smooth schemes and morphisms $Fr_*(X,Y)$ and the composition as defined in \cite{GarkushaPanin_FrMot};
denote by $\mathbb ZFr_*$ an additive category with objects smooth schemes and morphisms $\mathbb ZFr_*(X,Y)/([(\mathcal V,Z_1\amalg Z_2,\phi,g)]-[(\mathcal V,Z_1,\phi,g)]-[(\mathcal V,Z_2,\phi,g)])$,
and denote by $Fr_*^{aff}$ and $\mathbb ZFr_*^{aff}$ the full subcategories spanned by affine schemes.

\end{definition}

\begin{definition}
For any integer $n$ and integers $d_1,\dots d_n$ 
denote $Fr_{n,d_1,\dots,d_n}(X,Y)$ a subset  in $Fr_n(X,Y)$
consisting of frame-correspondences $(\mathcal V,Z,\phi_1,\dots,\phi_n,g)$
such that for any $i$ 
there is a section $s_i\in \Gamma(\mathbb P^n_X,\mathcal O(d_i))$: 
$s_i\big|_{\mathbb P^{n-1}_X}=x_i^{d_i}$, $\phi_i=v^*(s_i/x_0^{d_i})$,
where $(x_0,x_1,\dots x_n)$ are coordinates on $\mathbb P^n_X$, $Z(x_0)=\mathbb P^{n-1}_X=\mathbb P^n_X\setminus \mathbb A^n_X$.
\end{definition}

\begin{definition}
For any frame correspondence $\Phi = (\mathcal V,Z,\phi_1,\dots\Phi_n,g)\in Fr_n(X,Y)$
denote by $S(\Phi)$ the subset in $Fr_n(\Phi)$
consisting of elements $\tilde\Phi=(\mathcal V^\prime,Z,\phi_1^\prime,\dots,\phi_n^\prime,g)$ 
such that $(\phi_i^\prime - \phi_i) \big|_{Spec k[\aff^n]/I(Z)^2}=0 $ for all $i$.

Let's write $\Phi\stackrel{1}{\sim}\Phi^\prime$ iff $\Phi^\prime\in S(\Phi)$. This defines equivalence relation on frame-correspondences.
\end{definition}

\begin{lemma}

For any $\Phi=(\mathcal V,Z,\phi,g),\Phi^\prime=(\mathcal V^\prime,Z,\phi^\prime,g)\in Fr_n(X,Y)$ if $\Phi\stackrel{1}{\sim}\Phi^\prime$ then there is an affine homotopy connecting $\Phi$ and $\Phi^\prime$, i.e. $\Phi\stackrel{\aff^1}{\sim}\Phi^\prime$.
\end{lemma}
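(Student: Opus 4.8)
The plan is to produce the homotopy by a single \emph{multiplicative} deformation of the framing tuple, rather than by the naive linear interpolation of the $\phi_i$. First I would reduce to the case $\mathcal V=\mathcal V^\prime$: the fibre product $\mathcal W=\mathcal V\times_{\aff^n_X}\mathcal V^\prime$ is again a Nisnevich neighbourhood of $Z$ in $\aff^n_X$ (it is étale over $\aff^n_X$ and carries the compatible sections of $Z$ coming from $\mathcal V$ and $\mathcal V^\prime$), and restricting $\Phi,\Phi^\prime$ to $\mathcal W$ does not change their classes in $Fr_n(X,Y)$; so we may assume $\mathcal V=\mathcal V^\prime=\Spec B$ is affine with $v^{-1}(Z)=Z$ and with identical $Y$-component $g$. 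Writing $I=I(Z)\subset B$, recall that the defining condition for a level-$n$ correspondence is $Z=\phi^{-1}(0)$, so that $I=(\phi_1,\dots,\phi_n)$ on $\mathcal V$ (and likewise $I=(\phi^\prime_1,\dots,\phi^\prime_n)$); since $\mathcal V\to\aff^n_X$ is étale and an isomorphism over $Z$, it is an isomorphism over every infinitesimal neighbourhood of $Z$, so the hypothesis $\Phi^\prime\in S(\Phi)$ is precisely the assertion $\psi_i:=\phi^\prime_i-\phi_i\in I^2$ for all $i$.

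The one genuine computation is to upgrade this into a matrix identity. Since $\psi_i\in I^2=(\phi_1,\dots,\phi_n)^2$, I would write $\psi_i=\sum_{j=1}^n a_{ij}\phi_j$ with all $a_{ij}\in I$, put $A=(a_{ij})$, and note $\phi^\prime=(\mathrm{Id}+A)\phi$ as column vectors. Because the entries of $tA$ lie in $I\cdot B[t]$ one gets $\det(\mathrm{Id}+tA)\equiv 1\pmod{I\cdot B[t]}$, so $\det(\mathrm{Id}+tA)$ is invertible on the open subscheme $\tilde{\mathcal V}\subseteq\mathcal V\times\affl$ where it does not vanish, and $Z\times\affl\subseteq\tilde{\mathcal V}$. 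Then $(\tilde{\mathcal V},Z\times\affl)$ is a Nisnevich neighbourhood of $Z\times\affl$ in $\aff^n_{X\times\affl}$; I would set $\tilde\phi:=(\mathrm{Id}+tA)\phi\colon\tilde{\mathcal V}\to\aff^n$ and $\tilde g:=g\circ pr_{\mathcal V}$. Since $\mathrm{Id}+tA$ is invertible on $\tilde{\mathcal V}$, the ideals $(\tilde\phi)$ and $(\phi)$ agree there, hence $\tilde\phi^{-1}(0)=\phi^{-1}(0)\cap\tilde{\mathcal V}=Z\times\affl$, which is finite over $X\times\affl$; thus $\Theta:=(\tilde{\mathcal V},Z\times\affl,\tilde\phi,\tilde g)\in Fr_n(X\times\affl,Y)$ is a legitimate frame-correspondence. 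Evaluating at $t=0$ gives $\det(\mathrm{Id})=1$, so $\tilde{\mathcal V}|_{t=0}=\mathcal V$ and $i_0^*\Theta=(\mathcal V,Z,\phi,g)=\Phi$; evaluating at $t=1$ gives $\tilde\phi|_{t=1}=(\mathrm{Id}+A)\phi=\phi^\prime$ on the Nisnevich neighbourhood $\tilde{\mathcal V}|_{t=1}\subseteq\mathcal V$ of $Z$, so $i_1^*\Theta$ represents $\Phi^\prime$ after shrinking. Hence $\Phi\stackrel{\aff^1}{\sim}\Phi^\prime$.

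The main obstacle is exactly the step $\psi_i=\sum_j a_{ij}\phi_j$ with $a_{ij}\in I$: it relies on $I(Z)$ being the ideal $(\phi_1,\dots,\phi_n)$ generated by the framing functions, and it is what forces the use of a multiplicative rather than an additive deformation. Indeed, with the naive homotopy $\phi+t\psi$ the extra zeros of $\phi_i+t\psi_i$ occurring away from $Z$ need not form a scheme finite over $X\times\affl$, so $(\mathcal V\times\affl,\dots,\phi+t\psi,\dots)$ fails to be a frame-correspondence; replacing it by $(\mathrm{Id}+tA)\phi$ keeps the zero locus equal to $Z\times\affl$ for every $t$ and removes this difficulty. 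The remaining points — stability of Nisnevich neighbourhoods under fibre products and under passage to the non-vanishing locus of a determinant, and compatibility of $i_0^*,i_1^*$ with shrinking of neighbourhoods — are routine and I would only indicate them.
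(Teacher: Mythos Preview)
Your proof is correct, and it is in fact the paper's one-line proof with the implicit shrinking step spelled out. The paper simply writes down the linear interpolation $(\mathcal V\times_{\aff^n_X}\mathcal V',\,Z,\,\alpha\phi+(1-\alpha)\phi',\,g)$ as the homotopy; but note that your ``multiplicative'' deformation $(\mathrm{Id}+tA)\phi$ is literally equal to $\phi+tA\phi=\phi+t\psi$, i.e.\ to that same linear interpolation (with $\alpha=1-t$). The matrix rewriting $\psi=A\phi$ with $a_{ij}\in I$ is exactly what is needed to see that, after shrinking the neighbourhood to the locus where $\det(\mathrm{Id}+tA)$ is a unit, the zero locus of the interpolated framing is $Z\times\affl$ on the nose---and shrinking is already built into the definition of $Fr_n$, which is why the paper does not mention it. So the dichotomy you draw between the ``naive'' linear homotopy and your approach is artificial: they are the same map on the same shrunk neighbourhood, and your contribution is a careful justification of a step the paper leaves to the reader.
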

\begin{proof}
The required homotopy is given by $(\mathcal V\times_{\aff^n_X}\mathcal V^\prime,Z, \alpha \phi+(1-\alpha)\phi^\prime,g)\in Fr_n(X\times\affl,Y)$.
\end{proof}
\begin{lemma}\label{lm:sectexist}
Suppose $X$ is affine.
Then for any $\Phi\in Fr_n(X,Y)$
there is integer $d$, such that for any integers $d_i>d$, $i=1,\dots n$,
there is $\tilde{\Phi}\in Fr_{n,d_i,\dots d_n}(X,Y)\cap S(\Phi)$
\end{lemma}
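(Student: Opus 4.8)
The plan is to reformulate the statement as a question about polynomials. Unwinding the definition of $Fr_{n,d_1,\dots,d_n}(X,Y)$, a framed correspondence of the desired shape amounts to a tuple $(\mathcal V',Z,\phi_1',\dots,\phi_n',g|_{\mathcal V'})$ in which $\mathcal V'\subseteq\mathcal V$ is an open neighbourhood of $Z$ and each $\phi_i'$ is the pullback along $v'=v|_{\mathcal V'}$ of a polynomial $P_i\in k[X][x_1,\dots,x_n]$ of degree exactly $d_i$ whose top-degree homogeneous part is $x_i^{d_i}$; indeed the homogenization $s_i:=x_0^{d_i}P_i(x_1/x_0,\dots,x_n/x_0)\in\Gamma(\mathbb P^n_X,\mathcal O(d_i))$ then satisfies $s_i|_{\mathbb P^{n-1}_X}=x_i^{d_i}$ automatically. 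The extra requirement $\tilde\Phi\in S(\Phi)$ says that $\phi_i'$ and $\phi_i$ have the same restriction to the first-order neighbourhood of $Z$; since $v$ is étale with $v^{-1}(Z)\xrightarrow{\ \sim\ }Z$, that neighbourhood is canonically $\Spec k[X][x_1,\dots,x_n]/I(Z)^2$, so, writing $\psi_i$ for the class of $\phi_i$ there, the requirement is exactly $P_i\equiv\psi_i\pmod{I(Z)^2}$. Thus it suffices to find, for all sufficiently large $d_i$, polynomials $P_i$ of degree exactly $d_i$ with top part $x_i^{d_i}$ and with prescribed residue $\psi_i$ modulo $I(Z)^2$.

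The main step is to extract from the finiteness of $Z$ over $X$ elements of $I(Z)^2$ with prescribed pure-power leading terms. Fix an arbitrary lift $p_i\in k[X][x_1,\dots,x_n]$ of $\psi_i$ (possible since $X$ is affine) and set $e=\max_i\deg p_i$. As $Z$ is finite, hence proper, over $X$, it is closed in $\mathbb P^n_X$ and disjoint from the hyperplane $\{x_0=0\}$; hence, writing $J$ for the homogeneous ideal of $Z$ in $k[X][x_0,\dots,x_n]$, the graded ring $k[X][x_0,\dots,x_n]/(J+(x_0))$ has empty $\operatorname{Proj}$, so each $x_j$ is nilpotent in it, i.e. there is an integer $N$, depending only on $Z$, with $x_j^{N}\in J+(x_0)$ for every $j$. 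Extracting degree-$N$ homogeneous components and dehomogenizing at $x_0=1$ (which sends $J$ onto $I(Z)$) yields $\ell_j\in k[X][x_1,\dots,x_n]$ of degree $<N$ with $x_j^{N}+\ell_j\in I(Z)$. Squaring gives $(x_j^{N}+\ell_j)^2\in I(Z)^2$ with top-degree part $x_j^{2N}$, and multiplying by $x_j^{m}$ shows that for every $j$ and every $D\ge 2N$ there is $g_{j,D}\in I(Z)^2$ of degree exactly $D$ with top-degree part $x_j^{D}$. Now set $d=\max(e,2N)$. For any choice of integers $d_i>d$ put $P_i:=p_i+g_{i,d_i}$: then $\deg P_i=d_i$ with top part $x_i^{d_i}$ (because $\deg p_i\le e<d_i$), while $P_i\equiv p_i\equiv\psi_i\pmod{I(Z)^2}$ since $g_{i,d_i}\in I(Z)^2$.

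It remains to assemble $\tilde\Phi$. Let $s_i$ be the section of $\mathcal O(d_i)$ attached to $P_i$ as above. Since each $\phi_i':=(v')^*(P_i)$ is congruent to $\phi_i$ modulo $I(Z)^2$, the ideals $(\phi_1',\dots,\phi_n')$ and $(\phi_1,\dots,\phi_n)$ have the same localization at every point of $Z$ (Nakayama), so $(\phi')^{-1}(0)$ agrees with $Z=(\phi)^{-1}(0)$ in a neighbourhood of $Z$ inside $\mathcal V$; shrinking $\mathcal V$ to such an open neighbourhood $\mathcal V'$ makes $(\mathcal V',Z,\phi_1',\dots,\phi_n',g|_{\mathcal V'})$ a legitimate framed correspondence $\tilde\Phi$. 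By construction $\tilde\Phi\in Fr_{n,d_1,\dots,d_n}(X,Y)$, and $\phi_i'-\phi_i\in I(Z)^2$ gives $\tilde\Phi\in S(\Phi)$. The heart of the matter is the second paragraph: recognizing that finiteness of $Z$ over $X$ is exactly what puts high pure powers $x_j^{D}$ into $I(Z)^2$ modulo lower-degree terms, together with the degree bookkeeping needed to make one threshold $d$ serve all indices $i$ and all $d_i>d$ simultaneously; the final neighbourhood-shrinking is routine.
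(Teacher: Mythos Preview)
Your proof is correct and takes a more hands-on route than the paper's, which consists of a single sentence: ``The claim follows from that for an affine scheme $X$ the sheaf $\mathcal O(1)$ on $\mathbb P^n_X$ is ample.'' The intended argument there is Serre vanishing: writing $W$ for the disjoint union of $\mathbb P^{n-1}_X$ and the first-order thickening $Z^{(1)}=\Spec k[\aff^n_X]/I(Z)^2$ (disjoint since $Z$ is finite over $X$), the restriction $\Gamma(\mathbb P^n_X,\mathcal O(m))\to\Gamma(W,\mathcal O(m)|_W)$ surjects for all $m$ beyond some threshold, and one simply lifts the pair $(\psi_i,x_i^{d_i})$ to the desired $s_i$. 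You instead extract the same surjectivity by hand via integrality: finiteness of $Z$ over $X$ yields monic relations $x_j^N+\ell_j\in I(Z)$, and their squares supply the correction terms $g_{j,D}$ with prescribed leading monomial. This is the same geometric fact (high-degree sections separate $Z^{(1)}$ from the hyperplane at infinity) seen through commutative algebra rather than cohomology; your version is fully elementary and makes the degree bound explicit, while the paper's is quicker for a reader comfortable with the machinery.

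One small caveat on your final paragraph: the Nakayama step, as written, needs $(\phi_1,\dots,\phi_n)=I(Z)$ as ideals, so that $\phi_i'-\phi_i\in(\phi)^2\subset\mathfrak m_z\cdot(\phi)$. If $Z$ carries only its reduced structure while $(\phi)$ is strictly smaller, this can fail. A cleaner fix, specific to your construction, is to note that the common zero locus of the $s_i$ in $\mathbb P^n_X$ misses $\mathbb P^{n-1}_X$ (since $s_i|_{\mathbb P^{n-1}_X}=x_i^{d_i}$ have no common zero there), hence $Z(P_1,\dots,P_n)$ is proper and affine over $X$, i.e.\ finite; then $Z(\phi')=v^{-1}(Z(P))$ is quasi-finite over $X$, so $Z$ is open in it and one shrinks $\mathcal V$ as you indicate. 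Under the paper's own definition of a frame correspondence in this appendix, which imposes no relation between $Z$ and $\phi$, this entire step is in any case unnecessary.
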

\begin{proof}
The claim follows from that for an affine scheme $X$ the sheave $\mathcal O(1)$ on $\mathbb P^n_X$ is ample.
\end{proof}

\begin{construction}\label{constr:Fr-GW}
For any $\Phi\in Fr_{n,d_1,\dots d_n}(X,Y)$ we construct a quadratic space $Q(\Phi)\in QCor(X,Y)$ in the following way:

Consider map $f\colon \aff^n_X\to \aff^n_X$, and denote $A\to B$ corresponding homomorphism of sheaves of algebras over $X$.
Since $\phi_i$ is polynom with leading coefficient $x_i^{d_i}$ 
then $f$ is finite morphism of smooth varieties and $B$ is finite flat over $A$.
The Grothendieck duality theorem gives us 
isomorphism $$Hom_{B}(B,A)\simeq \omega_B\otimes\omega_A^{-1}.$$
Next using trivialisation of the canonical classes $\omega_B$ and $\omega_A$ defined by coordinate functions on relative affine space
we get isomorphism $$Hom_A(B,A)\simeq B.$$
Now base change along the embedding by zero section $X\to \aff^n_X$
gives us isomorphism $$Hom_{\mathcal O(X)}(\mathcal O(Z),\mathcal O(X))\simeq \mathcal O(Z).$$
\end{construction}


\begin{proposition}\label{prop:secthomot}
\begin{itemize}
\item[1)]
for any $\Phi_1,\Phi_2\in Fr_{n,d_1,\dots,d_n}(X,Y)$, $\Phi_2\in S(\Phi_2)$,
$Q(\Phi_1)\simeq Q(\Phi_2)$;
\item[2)]
for any $\Phi_1\in Fr_{n,d_1,\dots,d_n}(X,Y)$, $\Phi_2\in Fr_{n,d_1+1,\dots,d_n}(X,Y)$, $\Phi_2\in S(\Phi_2)$,
$Q(\Phi_1)\simeq Q(\Phi_2)$;

\end{itemize}
\end{proposition}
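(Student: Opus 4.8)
The plan is to reduce both statements to the invariance results already established for the construction $\langle\cdot\rangle$ of quadratic spaces attached to functions on a relative affine line, essentially Lemma \ref{lm:constQuad}, by exhibiting an explicit $\affl$-family of frame correspondences interpolating between $\Phi_1$ and $\Phi_2$ and tracking what Construction \ref{constr:Fr-GW} does to it. First I would note that the whole of Construction \ref{constr:Fr-GW} is functorial with respect to base change: the Grothendieck duality isomorphism $Hom_B(B,A)\simeq \omega_B\otimes\omega_A^{-1}$, the trivialisations of the canonical sheaves by the coordinate differentials, and the restriction along the zero section all commute with pullback along a morphism $X'\to X$ (this is the same compatibility already invoked in the proof of Proposition \ref{prop:constrFP->Q} and in Lemma \ref{lm:GWCortoAffl}). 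Consequently it suffices, for each part, to produce a frame correspondence $\tilde\Phi$ over $X\times\affl$ whose fibres over the zero and unit sections are $\Phi_1$ and $\Phi_2$ and to argue that $Q(\tilde\Phi)$ is a quadratic space over $X\times\affl$ of the form to which Lemma \ref{lm:constQuad} applies; then $i_0^*Q(\tilde\Phi)\simeq Q(\Phi_1)$ and $i_1^*Q(\tilde\Phi)\simeq Q(\Phi_2)$ are isomorphic.

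For part (1): since $\Phi_2\in S(\Phi_1)$, write $\Phi_1=(\mathcal V,Z,\phi_1,\dots,\phi_n,g)$ and $\Phi_2=(\mathcal V',Z,\phi_1',\dots,\phi_n',g)$ with $(\phi_i'-\phi_i)$ vanishing on $\Spec k[\affl^n]/I(Z)^2$. Replacing $\mathcal V,\mathcal V'$ by $\mathcal V\times_{\affl^n_X}\mathcal V'$ we may assume $\mathcal V=\mathcal V'$. Form $\tilde\Phi=(\mathcal V\times\affl, Z, \alpha\phi_i+(1-\alpha)\phi_i', g)\in Fr_{n,d_1,\dots,d_n}(X\times\affl,Y)$, the homotopy already used in the lemma preceding Lemma \ref{lm:sectexist}; the point is that the closed subscheme it cuts out is precisely $Z\times\affl$ because all the $\phi_i$ and $\phi_i'$ agree to first order along $Z$, so $\tilde f\colon \affl^n_{X\times\affl}\to\affl^n_{X\times\affl}$ is finite flat with $\tilde f^{-1}(0)=Z\times\affl$, and Construction \ref{constr:Fr-GW} applied over $X\times\affl$ yields a quadratic space $Q(\tilde\Phi)$ on $\mathcal O(Z\times\affl)$. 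Now Lemma \ref{lm:constQuad} gives $i_0^*Q(\tilde\Phi)\simeq i_1^*Q(\tilde\Phi)$, i.e. $Q(\Phi_1)\simeq Q(\Phi_2)$.

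For part (2) the $\phi$-data live in different levels $(d_1,\dots)$ and $(d_1+1,\dots)$, so there is no naive linear homotopy; the natural move is to interpolate between $\phi_1$ and $x_1\phi_1$ (which has level $d_1+1$) by $\tilde\phi_1=(\alpha+ (1-\alpha)x_1^{-1}\cdot\text{(a unit)})$-type scaling, or more cleanly to use the section $s_1\in\Gamma(\mathbb P^n_X,\mathcal O(d_1))$ and multiply by a linear form in $x_1$ that restricts to $x_1$ on $\mathbb P^{n-1}_X$: set $\tilde s_1 = (\alpha x_0 + (1-\alpha)x_1)\, v^*(s_1)$ over $X\times\affl$, giving a section of $\mathcal O(d_1+1)$, and take $\tilde\phi_1 = \tilde s_1/x_0^{d_1+1}$, keeping the other $\phi_i$ fixed. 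At $\alpha=1$ this is $x_0\cdot v^*(s_1)/x_0^{d_1+1}=\phi_1$ (up to the level bookkeeping), at $\alpha=0$ it is $x_1 v^*(s_1)/x_0^{d_1+1}$, a level-$(d_1+1)$ representative of the same frame correspondence up to $S(-)$ as $\Phi_1$. One must check that the vanishing locus of $(\tilde\phi_1,\phi_2,\dots,\phi_n)$ is still $Z\times\affl$ and finite flat over $X\times\affl$: $\tilde\phi_1$ vanishes exactly where $\phi_1$ does together with where $\alpha x_0+(1-\alpha)x_1$ vanishes, but the latter locus is disjoint from $\affl^n_X$ at $\alpha=0,1$ and for generic $\alpha$ is a hyperplane not meeting $Z$, so after possibly shrinking $\mathcal V$ the zero scheme is $Z\times\affl$; then Construction \ref{constr:Fr-GW} over $X\times\affl$ and Lemma \ref{lm:constQuad} finish it as before, combining with part (1) to pass between the two chosen level-$(d_1+1)$ representatives.

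The main obstacle I anticipate is the flatness/finiteness check for the interpolating family in part (2): unlike part (1), where the first-order agreement along $Z$ forces the zero scheme to be exactly $Z\times\affl$ on the nose, in part (2) the auxiliary linear factor $\alpha x_0+(1-\alpha)x_1$ can in principle acquire intersection with $Z$ for special $\alpha$, and one must verify (using that $Z\subset\affl^n_X$ is closed and, after shrinking $\mathcal V$, that $\phi_1$ is the only $\phi_i$ being altered) that the total space of the family is still finite flat over $X\times\affl$ so that Grothendieck duality and the zero-section restriction behave uniformly in $\alpha$; equivalently, that $Q(\tilde\Phi)$ really is a quadratic \emph{space} (nondegenerate) over $X\times\affl$, not merely a prespace, so that Lemma \ref{lm:constQuad} applies. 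Once that is in place the rest is the routine base-change compatibility of Construction \ref{constr:Fr-GW}.
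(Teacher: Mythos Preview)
Your plan for part (1) is essentially the paper's argument: linearly interpolate the $\phi_i$ over $\affl$, apply Construction~\ref{constr:Fr-GW} to the family, and invoke Lemma~\ref{lm:constQuad}. One small correction: the vanishing locus of $\alpha\phi + (1-\alpha)\phi'$ is not literally $Z\times\affl$; it decomposes as $(Z\times\affl)\amalg Z'$ for some residual $Z'$, and the paper passes to the open complement $\aff^n\setminus Z'$ as the \'etale neighbourhood. With that fix your part (1) is fine.

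Your part (2), however, does not work as written. The interpolation $\tilde s_1=(\alpha x_0+(1-\alpha)x_1)\,s_1$ has restriction to $\mathbb P^{n-1}_X$ equal to $(1-\alpha)x_1^{d_1+1}$, so at $\alpha=1$ this is zero, not $x_1^{d_1+1}$. Consequently the family is \emph{not} in $Fr_{n,d_1+1,\dots,d_n}(X\times\affl,Y)$, the map $\tilde f$ is not finite over the fibre $\alpha=1$, and Construction~\ref{constr:Fr-GW} cannot be applied there; the ``flatness/finiteness check'' you flag is not a check but a genuine failure at exactly the endpoint you need. More generally, no choice of section in $\mathcal O(d_1+1)$ restricting to $x_1^{d_1+1}$ at infinity can specialise to $\phi_1$ (a degree-$d_1$ polynomial) at $\alpha=1$, so a direct $\affl$-family of frame correspondences of the form you want does not exist.

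The paper avoids this by abandoning the map $f\colon\aff^n_X\to\aff^n_X$ and instead working with the \emph{compactified graph}: the closed subscheme $\widetilde Z=Z(s_i-T_i x_0^{d_i})\subset\mathbb P^n\times\aff^n\times X\times\affl$, which stays a complete intersection and finite over the target $\aff^n\times X\times\affl$ even when the boundary condition $s_i|_{\mathbb P^{n-1}}=x_i^{d_i}$ degenerates. The duality isomorphism is then produced for the finite projection $\widetilde Z\to\aff^n\times X\times\affl$ (with extra deformation parameters $\alpha_2,\dots,\alpha_n$ moving the auxiliary $s_i$ so that $\ttil Z$ is smooth over the enlarged base), and the quadratic space $Q(\Phi_k)$ is recovered by restricting to the appropriate fibre $0\times X\times\{0,1\}\times 0$. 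This sidesteps the non-finiteness of $f$ entirely; your approach would need a comparable compactification to go through.
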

\begin{proof}
1)
Consider the frame correspondence
$\Theta=(Z\times\affl,\mathbb A^n-Z^\prime, \varphi,g)\in Fr(X\times\affl,Y)$,
where $\varphi = \lambda\varphi_1+(1-\lambda)\varphi_2$, $Z(\varphi)= Z^\prime\amalg Z\times\affl$.
The support $Z(\varphi)$ is a closed subscheme in $\aff^n_X$ finite over $X$ and so we can apply construction \ref{constr:Fr-GW} to $\Theta$.
Then $Q(\Theta)=(k[Z\times\affl],q)\in QCor(X,Y)$ for some invertible function $q\in k[Z\times\affl]^*$ and it follows from lemma \ref{lm:constQuad} that $Q(\Theta\circ i_0)\simeq Q(\Theta\circ i_1)$ and whence $GW(\Phi_0)=GW(\Theta\circ i_0)=GW(\Theta\circ i_1)=GW(\Phi_1)$. 


2)
Let $\Phi_k=(Z,\phi^k,g)$, $\phi^k=(\phi^k_i)$, $k=1,2$; let $\phi^2_1=s^2_1/x_0^{d_1+1}$, and $\phi^k_i=s^k_i/x_0^{d_i}$ for otherwise $k$ and $i$.
Consider the frame correspondence
$\Theta=(Z\times\affl,\mathbb A^n-Z^\prime, \lambda\varphi_1+(1-\lambda)\varphi_2,g)\in Fr_n(X\times\affl,Y)$,
where $\varphi = \lambda\varphi_1+(1-\lambda)\varphi_2$, $Z(\varphi)= Z^\prime\amalg Z\times\affl$.

Let $\Gamma$ be the graph of the regular map 
$(\varphi,pr)\colon \mathbb A^n\times X\times\affl\to \mathbb A^n\times X\times \affl$.
Then $\Gamma= \mathbb A^n\times\aff^n\times X\times \affl\cap \widetilde Z$, where 
$\widetilde Z=Z(\widetilde s)\subset \mathbb P^n\times\aff^n\times X\times \affl$,
$\widetilde s=(\widetilde s_i)$,
$\widetilde s_i= s_i -T_i t_{\infty}^{d_i} \in\Gamma(\mathbb P^n\times \aff^n \times X\times\affl ,\mathcal O(d_i))$,
$s_i= \lambda s^1+(1-\lambda) s_2\in \Gamma(\mathbb P^n\times X\times\affl,\mathcal O(d_i))$,
and $T_i$ denote coordinate functions on the multiplicand $\aff^n$ and $\lambda$ denotes the coordinate of the multiplicand $\affl$. 

\newcommand{\ttil}[1]{\widetilde{\widetilde #1}}
\newcommand{\til}{\widetilde}
\newcommand{\bbP}{\mathbb P}
\newcommand{\bbA}{\mathbb A}
By the same reason as in lemma \ref{lm:sectexist}
we can choose some sections
$s^\prime_i\in \Gamma(\bbP^n\times X\times\affl,\mathcal O(d_i))$,
$s^\prime_i\big|_{Z(\mathcal I(Z\times\affl)^2)} = s_i\big|_{Z(\mathcal I(Z\times\affl)^2)}$,
$s^\prime_i = x_1^{d_i}$,
$i=2,\dots n$.
Let's put $\til s^\prime_i = s^\prime_i -T_i x_0^{d_i}$, $i=2,\dots n$.

Then consider the closed subscheme
$\ttil Z = Z(\ttil s) \subset \mathbb P^n\times\aff^n\times X\times \affl\times (\aff-1)^{n-1}$,
$\ttil s=(\ttil s_i)$,
$\ttil s_i= \alpha_i \til s_i (1-\alpha_i) \til s^\prime_i  \in  
\Gamma(\bbP^n\times \aff^n \times X\times\affl \times (\aff-1)^n,\mathcal O(d))$, 
where $\alpha_i$ denote coordinates on the last multiplicand $(\aff-1)^n$.


Then $\ttil Z$ is a smooth scheme over $X$ and the projection 
$\ttil Z\to  \aff^n \times X\times\affl \times (\aff-1)^n$ is finite.
Applying the Duality theorem \ref{th:GrothendieckDualtiyThwithBaseCh}
we get the $k[\ttil Z]$-linear isomorphism
$$\ttil q^\prime\colon Hom_{k[\aff^n \times X\times\affl \times (\aff-1)^n]}(k[\ttil Z],k[\aff^n \times X\times\affl \times (\aff-1)^n])\simeq
\omega_{X\times\affl\times\aff^n}(\ttil Z) \otimes\omega(\aff^n)^{-1},$$
which can be considered as a morphism of coherent sheaves on $\ttil Z$ and let $\ttil q$ be the restriction of $\ttil q^\prime$ to the affine part $\ttil\Gamma \ttil Z\cap \aff^n\times \aff^n \times X\times\affl \times (\aff-1)^n$.
It is easy to see that $\ttil\Gamma$ is a graph of the morphism $\aff^n \times X\times\affl \times (\aff-1)^n\to \aff^n$ defined by the function $ $, hence the projection $\ttil\Gamma\to \aff^n \times X\times\affl \times (\aff-1)^n$ (here we skip the second multiplicand $\aff^n$) is isomorphism.

The fibre of $\ttil Z$ over $\aff^n\times X\times 0\times 0 $ is equal to the $X$-smooth closed subscheme $\til Z_1 = Z( s_i^1 + T_i x_0^{d_i})\subset \bbP^n\times \aff^n\times X\times 0\times 0\simeq\bbP^n\times \aff^n\times X$, and base change in the Duality theorem yields that
the fibre of $\ttil q$ is equal to the quadratic space
$\til q_1\colon Hom_{k[\aff^n\times X]}(k[\til Z_1,k[\aff^n\times X])\simeq\omega_{X}(\til Z_1)\otimes \omega(\aff^n)^{-1}$
using in the construction \ref{constr:Fr-GW} applying to $\Phi_1$.
Hence the fibre of $Q=(k[\ttil Z],\ttil q)$ over $ 0\times X\times 0\times 0$ is equal to $Q(\Phi_1)$. 
On the other side the $X$-smooth scheme $\til Z_2 = Z( s_1^2 + T_1 x_0^{d_1+1}, s_i^2 + T_i x_0^{d_i})\subset \bbP^n\times \aff^n\times X\times 1\times 0\simeq\bbP^n\times \aff^n\times X$ is the disjoint component of the fibre of $\ttil Z$ over $\aff^n\times X\times 1\times 0$ and 
by similarly to the above the fibre of $Q$ over $0\times X\times 1\times 0$ is equal to $Q(\Phi_2)$.

Thus the fibre of $Q$ over $0\times X\times \affl\times 0$ defines the quadratic space $(k[Z\times\affl],q)$ that is homotopy joining $Q(\Phi_1)$ and $Q(\Phi_2)$.
So the claim follows by lemma \ref{lm:constQuad}.

\end{proof}

Let's present the using duality theorem.
\begin{theorem}\label{th:GrothendieckDualtiyThwithBaseCh}


Suppose $f\colon Y\to X$ is finite Cohen-Macaulay morphism of smooth affine schemes over the base $S$;
then there is a $k[Y]$-linear isomorphism $q\colon Hom_{k[Y]}(k[Y],k[X])\simeq \omega(Y)\otimes \omega_S(X)^{-1}$
that is natural in respect to base changes, i.e.
for a diagram with Cartesian squares
\begin{gather}\xymatrix{
Y^\prime\ar[d]\ar[r]& X^\prime\ar[d]\ar[r]& S^\prime\ar[d]\\
Y\ar[r] & X\ar[r] & S
}
\end{gather}
we have 
$$
q^\prime= q\otimes_{k[S]} k[S^\prime] 
$$
\end{theorem}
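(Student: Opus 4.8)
The isomorphism $q$ is the transitivity (adjunction) isomorphism for relative dualizing sheaves along $Y\xrightarrow{f}X\xrightarrow{g}S$, so the plan is to reduce the statement to that package. Since a finite Cohen--Macaulay morphism onto the regular scheme $X$ is flat (by definition of a Cohen--Macaulay morphism, or by miracle flatness, the fibres being $0$-dimensional), $f_*\mathcal O_Y$ is a finite locally free $\mathcal O_X$-module, and $\omega_{Y/X}:=\mathcal Hom_{\mathcal O_X}(f_*\mathcal O_Y,\mathcal O_X)$ is a well-defined coherent sheaf on $Y$ (using that $f$ is affine). Writing $\omega(Y)=\omega_{Y/S}$ and $\omega_S(X)=\omega_{X/S}$, which are line bundles because $Y$ and $X$ are smooth over $S$, the theorem is the canonical identification
$$\omega_{Y/X}\;\simeq\;\omega_{Y/S}\otimes_{\mathcal O_Y}f^{*}(\omega_{X/S})^{\otimes-1}$$
together with its compatibility with base change in $S$.

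First I would form $P:=X\times_S Y$ with its projections $p_X\colon P\to X$, $p_Y\colon P\to Y$. Base change of $g$ makes $p_Y$ smooth of relative dimension $n=\dim(X/S)$; base change of the smooth morphism $gf\colon Y\to S$ makes $p_X$ smooth of the same relative dimension $n$, with $\omega_{P/X}=p_Y^{*}\omega_{Y/S}$. The graph $\Gamma_f\colon Y\hookrightarrow P$ is a section of $p_Y$, hence a regular closed immersion of codimension $n$, with $p_X\Gamma_f=f$, $p_Y\Gamma_f=\id_Y$ and conormal sheaf $\mathcal C=\Gamma_f^{*}\Omega_{P/Y}=f^{*}\Omega_{X/S}$. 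Then I would apply Grothendieck duality for the smooth affine morphism $p_X$ --- i.e. $p_X^{!}(-)=p_X^{*}(-)\otimes\omega_{P/X}[n]$ together with the projection formula and the compatibility of $Rp_{X*}$ with duality --- to the $\mathcal O_Y$-algebra $(\Gamma_f)_*\mathcal O_Y$ on $P$, and combine it with the fundamental local isomorphism for the regular immersion $\Gamma_f$,
$$R\mathcal Hom_{\mathcal O_P}\!\big((\Gamma_f)_*\mathcal O_Y,\;\mathcal L\big)\;\simeq\;(\Gamma_f)_*\big(\Gamma_f^{*}\mathcal L\otimes\textstyle\bigwedge^{n}\mathcal C^{\vee}\big)[-n],$$
applied with $\mathcal L=\omega_{P/X}$. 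Pushing forward along $p_X$ and using $p_X\Gamma_f=f$, $p_Y\Gamma_f=\id_Y$ collapses this to an isomorphism $\mathcal Hom_{\mathcal O_X}(f_*\mathcal O_Y,\mathcal O_X)\simeq f_*\big(\omega_{Y/S}\otimes_{\mathcal O_Y}f^{*}\omega_{X/S}^{-1}\big)$ of $\mathcal O_X$-modules, and tracking the $\mathcal O_Y$-module structure yields $q$. The normalization would be fixed by requiring $q$ to reduce to the tautological isomorphism when $f=\id_X$, as needed for the Remark after Lemma \ref{prop:permutGmGm}. In affine coordinates this is exactly the composite spelled out in Construction \ref{constr:Fr-GW}: choose an \'etale coordinate system $x_1,\dots,x_n$ on $X/S$, observe that near $\Gamma_f$ the ideal of $\Gamma_f$ in $\mathcal O_P$ is generated by the regular sequence $(x_i\otimes1-1\otimes\overline{x_i})_{i=1}^{n}$, and compute both $\mathcal Hom$'s from the associated Koszul complex, trivializing $\omega_{X/S}$ locally by $dx_1\wedge\cdots\wedge dx_n$.

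For the base change assertion $q^\prime=q\otimes_{k[S]}k[S^\prime]$ I would argue termwise: every operation used above --- the fibre product $X\times_S Y$ and the graph $\Gamma_f$, the K\"ahler differentials $\Omega_{X/S}$, $\Omega_{Y/S}$ and their top exterior powers, the Koszul resolution of $(\Gamma_f)_*\mathcal O_Y$, the dual of the finite locally free module $f_*\mathcal O_Y$, and the dualizing sheaf of the smooth morphism $p_X$ --- commutes with an arbitrary base change $S^\prime\to S$ (for the last two this uses respectively that $f_*\mathcal O_Y$ is finite locally free and that $p_X$ is smooth). Since $q$ is assembled as a composite of the canonical isomorphisms attached to these operations, the compatibility follows by checking it for each.

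The hard part will be the precise form and canonicity of the fundamental local isomorphism together with its compatibility with the trace map for $p_X$: one must check that the Koszul computation does not depend on the coordinate system $x_i$ (two choices differ by a matrix of units whose determinant acts identically on both sides of the identification) and that the locally defined isomorphisms glue to a global, base-change-compatible $q$. This is the classical content of the theory of the dualizing module; granting it, the reduction to the affine local case and the verification of base change are formal.
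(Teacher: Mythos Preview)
Your proposal is correct, but the paper takes a much shorter route here: it does not give an independent argument at all, and instead records that the statement is a particular case of the duality theorem in Hartshorne's \emph{Residues and Duality} combined with the base-change theorems of Conrad or Sastry (with Proposition~2.1 of Ojanguren--Panin cited as an alternative reference). What you supply is in effect a sketch of how those references actually build $q$: the factorization $f=p_X\circ\Gamma_f$ through $P=X\times_S Y$, the identification $p_X^{!}=p_X^{*}(-)\otimes\omega_{P/X}[n]$ for the smooth morphism $p_X$, and the fundamental local isomorphism for the regular section $\Gamma_f$ combine to give the transitivity isomorphism $\omega_{Y/X}\simeq\omega_{Y/S}\otimes f^{*}\omega_{X/S}^{-1}$, and your term-by-term base-change verification (K\"ahler differentials, Koszul resolutions, duals of finite locally free modules) is the expected one. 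The advantage of your approach is that it makes the construction and the normalization explicit, which is exactly what the paper needs elsewhere (Construction~\ref{constr:Fr-GW} and the Remark after Lemma~\ref{prop:permutGmGm}); the paper's approach simply offloads this to the literature. One small point worth stating explicitly in your argument is that $\dim(Y/S)=\dim(X/S)$, so that both projections from $P$ have the same relative dimension $n$; this follows from the finiteness of $f$ together with the smoothness of $Y$ and $X$ over $S$.
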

\begin{proof}
The claim is 
a particular case of
the Duality theorem from \cite{Hart_ResDual} in combination with the Base Change theorem from \cite{Conrad_BCDTh} or \cite{Sastry_BCDThCoh-Mac}.
Another link is the proposition 2.1 in \cite{OP_WittPurity}.
\end{proof}

Using lemma \ref{lm:sectexist} and proposition \ref{prop:secthomot} we see that the construction \ref{constr:Fr-GW} define a map $Fr_n(X,Y)\to QCor(X,Y)$ for affine smooth schemes $X,Y$.
The pseudo-functoriality and the base change in the Duality theorem above 
yields that $Q(\Phi_1\circ\Phi_2)=Q(\Phi_1)\circ Q(\Phi_2)$, so we get a functor $Fr^{aff}_*\to QCor$.
Moreover this induce a functor $\mathbb ZFr^{aff}_*\to GWCor$,
since if the support $Z$ of a frame-correspondence $\Phi$ splits into disjoint union $Z=Z_1\amalg Z_2$ then for any $\Phi^\prime=(Z,s,g)\in S(\Phi)$, $\Phi_i^\prime=(Z_i,s,g)\in S((Z_i,\phi,g))$, $i=1,2$, and by construction \ref{constr:Fr-GW} $Q(\Phi^\prime)=Q(\Phi^\prime_1)\oplus Q(\Phi^\prime_2)$.

\begin{theorem}
There is a functor $Fr^{aff}_*\to QCor$ that takes $\Phi$ to $Q(\Phi)$, 
and  there is a functor $\mathbb ZFr^{aff}_*\to GWCor$ that takes $[\Phi]$ to $[Q(\Phi)]$.
\end{theorem}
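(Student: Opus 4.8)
The plan is to assemble the ingredients established above into the statement. First I would record that the assignment $\Phi\mapsto Q(\Phi)$ of Construction \ref{constr:Fr-GW} descends to a well-defined map $Q\colon Fr^{aff}_*(X,Y)\to QCor(X,Y)$. By Lemma \ref{lm:sectexist}, for $X$ affine every $\Phi\in Fr_n(X,Y)$ admits, for all sufficiently large tuples of degrees $(d_1,\dots,d_n)$, a representative in $Fr_{n,d_1,\dots,d_n}(X,Y)\cap S(\Phi)$ to which Construction \ref{constr:Fr-GW} applies; Proposition \ref{prop:secthomot}(1) shows $[Q(\Phi)]\in QCor(X,Y)$ is independent of the chosen sections within a fixed degree tuple, and Proposition \ref{prop:secthomot}(2) shows it is unchanged under incrementing a single $d_i$. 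Since any two admissible degree tuples are dominated by a common larger one, this pins down $[Q(\Phi)]$ uniquely. Compatibility with shrinking the Nisnevich neighbourhood $(\mathcal V,Z)\to(\aff^n_X,Z)$ is automatic, because Construction \ref{constr:Fr-GW} uses only $\mathcal O(Z)$ together with the duality isomorphism restricted along the zero section $X\to\aff^n_X$, none of which changes. Taking the coproduct over $n$ gives $Q$ on $Fr^{aff}_*(X,Y)$.

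Next I would verify functoriality. For the identity: the identity frame-correspondence $\mathrm{id}_X$ (after stabilisation) is carried by coordinate data whose vanishing locus is $\Delta_X$ with the tautological duality datum, so $Q(\mathrm{id}_X)$ is the unit quadratic space $(\mathcal O(\Delta),1)$ of $QCor$ from Definition \ref{def:QGWWCor}. For composition, one unwinds the definition of $\Phi_1\circ\Phi_2$ from \cite{GarkushaPanin_FrMot}: the composed support sits inside $\aff^{n_1+n_2}_X$, cut out étale-locally by the two families of functions, and the attendant finite morphism to $\aff^{n_1+n_2}_X$ factors as a composite of two finite morphisms obtained from those of $\Phi_1$ and $\Phi_2$. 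One checks that this composed tuple again lies in a stabilised piece $Fr_{n_1+n_2,e_*}$, and then compares the duality datum it produces with the one governing $Q(\Phi_1)\circ Q(\Phi_2)=(P_1\otimes_Y P_2,\,q_1\otimes_Y q_2)$ (Notation \ref{not:compQ}): the transitivity of the Grothendieck trace for a composite of finite morphisms, together with the base-change statement of Theorem \ref{th:GrothendieckDualtiyThwithBaseCh} applied along the relevant zero sections, identifies the two, yielding $Q(\Phi_1\circ\Phi_2)=Q(\Phi_1)\circ Q(\Phi_2)$ in $QCor$. Hence $Q$ is a functor $Fr^{aff}_*\to QCor$.

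Finally, to descend to $\mathbb ZFr^{aff}_*\to GWCor$ it suffices, by the defining relation of $\mathbb ZFr^{aff}_*$, to check additivity of $Q$ under splitting of supports: if $Z=Z_1\amalg Z_2$ is the support of $\Phi$, pick a representative $\Phi'=(\mathcal V,Z,s,g)\in S(\Phi)$ as in Lemma \ref{lm:sectexist}; then $\Phi'_i=(\mathcal V,Z_i,s,g)\in S((\mathcal V,Z_i,\phi,g))$, and since $\mathcal O(Z)=\mathcal O(Z_1)\oplus\mathcal O(Z_2)$ with the duality isomorphism of Construction \ref{constr:Fr-GW} respecting this orthogonal decomposition, $Q(\Phi')=Q(\Phi'_1)\oplus Q(\Phi'_2)$. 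Passing to Grothendieck–Witt classes gives $[Q(\Phi)]=[Q(\Phi'_1)]+[Q(\Phi'_2)]$, exactly the relation imposed in $\mathbb ZFr^{aff}_*$, so $[\Phi]\mapsto[Q(\Phi)]$ is additive; combined with the functoriality above this defines the functor $\mathbb ZFr^{aff}_*\to GWCor$. I expect the main obstacle to be the composition-compatibility step in the second paragraph: the frame-correspondence composition is described in étale-local coordinates on a relative affine space of combined rank, whereas $QCor$-composition is a tensor product over the middle variety, so matching them requires a careful bookkeeping of the pseudo-functoriality and base-change properties of Grothendieck duality — precisely what Theorem \ref{th:GrothendieckDualtiyThwithBaseCh} packages.
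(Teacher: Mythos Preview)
Your proposal is correct and follows essentially the same approach as the paper: well-definedness of $Q(\Phi)$ via Lemma~\ref{lm:sectexist} and Proposition~\ref{prop:secthomot}, composition-compatibility via pseudo-functoriality and base change in the Grothendieck duality theorem, and passage to $\mathbb ZFr^{aff}_*\to GWCor$ via the orthogonal splitting of $Q(\Phi')$ when the support decomposes. Your account is in fact more detailed than the paper's, which records these three steps in a brief paragraph preceding the theorem statement; your identification of the composition step as the delicate point is apt, and the paper indeed only asserts it by invoking pseudo-functoriality without further elaboration.
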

%

%

\begin{corollary}
There are functors $Fr_*\to QCor_{nis}$, $\mathbb ZFr_*\to GWCor_{nis}$, where $QCor_{nis}(-,Y)$ and $GWCor_{nis}(-,Y)$ are Nisnevich sheafification of $QCor(-,Y)$ and $GWCor(-,Y)$.
\end{corollary}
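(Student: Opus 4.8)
The plan is to obtain the two functors by Zariski gluing from the affine ones constructed above, using that $QCor_{nis}(-,Y)$ and $GWCor_{nis}(-,Y)$ are Zariski sheaves and that every smooth scheme is Zariski-locally a smooth affine scheme. I treat $\mathbb ZFr_*\to GWCor_{nis}$; the case $Fr_*\to QCor_{nis}$ is the same argument on underlying sets.

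First I would observe that Construction \ref{constr:Fr-GW}, and with it Lemma \ref{lm:sectexist} and Proposition \ref{prop:secthomot}, use only affineness of the \emph{source}. Indeed, for $X$ affine the support $Z$ of a frame correspondence $\Phi=(\mathcal V,Z,\phi,g)\in Fr_n(X,Y)$ is finite over $X$, hence affine, so the Grothendieck-duality construction of the quadratic form on $\mathcal O(Z)$ takes place over the affine schemes $Z$, $\mathbb A^n_X$ and $X$ and does not see $Y$; pushing $\mathcal O(Z)$ forward along the finite morphism $Z\to X\times Y$ induced by $(pr_X,g|_Z)$ then yields a class $[Q(\Phi)]\in GWCor(X,Y)$ for $X$ affine and $Y\in Sm_k$ \emph{arbitrary}. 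The independence of the auxiliary degrees and sections, and the identity $Q(\Phi_1\circ\Phi_2)=Q(\Phi_1)\circ Q(\Phi_2)$, carry over verbatim from the affine case whenever the sources involved are affine, since the homotopies used and the base-change statement of Theorem \ref{th:GrothendieckDualtiyThwithBaseCh} again involve only schemes finite over affine schemes built from $X$.

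Next, for arbitrary $X\in Sm_k$ and $\Phi\in Fr_n(X,Y)$ I would fix a Zariski covering $\{U_i\}$ of $X$ by smooth affine schemes, form $[Q(\Phi|_{U_i})]\in GWCor(U_i,Y)$ and take its image $\gamma_i\in GWCor_{nis}(U_i,Y)=GWCor(-,Y)_{nis}(U_i)$. Since the construction respects flat base change, in particular restriction along open immersions, $[Q(\Phi|_{U_i})]$ restricts on any affine open $U'\subset U_i$ to $[Q(\Phi|_{U'})]$; covering the (possibly non-affine) intersections $U_i\cap U_j$ by affine opens and using the sheaf property of $GWCor(-,Y)_{nis}$ shows that the $\gamma_i$ agree on overlaps, hence glue to a unique class in $GWCor_{nis}(X,Y)$, independent of the covering because its restriction to any affine open $U\subset X$ equals $[Q(\Phi|_U)]$. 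Extending $\mathbb Z$-linearly, and noting that the defining relations of $\mathbb ZFr_*$ are respected (again reducing, over an affine cover of the source, to the disjoint-union computation preceding the theorem), this produces maps $\Theta_{X,Y}\colon \mathbb ZFr_*(X,Y)\to GWCor_{nis}(X,Y)$ agreeing with the affine construction.

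It remains to check that $\Theta$ is a functor. Naturality in $X$ — compatibility with the canonical frame correspondences of morphisms of schemes and with the restriction maps of the sheaves $GWCor(-,Y)_{nis}$ — reduces to the affine case by restricting sources to affine opens. The substantive point is compatibility with composition: $GWCor_{nis}$ is a category by Theorem \ref{th:NshGWtr}, its composition being induced by the GW-transfers on the sheaves $GWCor(-,W)_{nis}$, and for $\Phi_2\in Fr(X,Y)$ and $\Phi_1\in Fr(Y,W)$ I would verify $\Theta([\Phi_1\circ\Phi_2])=\Theta([\Phi_1])\circ\Theta([\Phi_2])$ after restricting to a smooth affine cover $\{U_i\}$ of $X$; there $\Theta([\Phi_2])|_{U_i}=[Q(\Phi_2|_{U_i})]$ is an honest correspondence $U_i\to Y$, its support is finite over the affine $U_i$, and the Nisnevich-local description of the transfers on $GWCor(-,W)_{nis}$ splits that support into pieces supported over affine charts of $Y$, reducing the identity to the affine equality $Q(\Phi_1|_V\circ\Psi)=Q(\Phi_1|_V)\circ Q(\Psi)$ already established. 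I expect this last reconciliation — of the sheafified transfers on $GWCor_{nis}(-,W)$ with the affine Grothendieck-duality construction, when $Y$ is non-affine and the correspondence's support spreads over several charts of $Y$ — to be the main obstacle; everything else is routine gluing.
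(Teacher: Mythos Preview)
The paper gives no proof of this corollary; it is stated as an immediate consequence of the preceding theorem on the affine case. Your Zariski-gluing argument is the natural way to spell this out, and the strategy is sound: your key observation that Construction~\ref{constr:Fr-GW}, Lemma~\ref{lm:sectexist} and Proposition~\ref{prop:secthomot} require only affineness of the \emph{source} is correct and is what makes the gluing over an affine cover of $X$ work, and compatibility with restriction along open immersions of the source follows from the base-change clause of Theorem~\ref{th:GrothendieckDualtiyThwithBaseCh}.

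For comparison, the paper does sketch, in the remark immediately following the corollary, an alternative route that avoids gluing entirely: one invokes Grothendieck duality in the stronger pseudo-functorial form (a pseudo-functor $f^!$ on $D^-_{qc}$ with base change along arbitrary morphisms for Cohen--Macaulay $f$ and along open immersions for arbitrary $f$), applies it to a Zariski--Main--Theorem compactification $\mathcal V\hookrightarrow\overline{\mathcal V}\to\mathbb A^n_X$ of the map $(\phi,pr_X)$, and reads off the quadratic form on $\mathcal O(Z)$ directly for arbitrary $X$. That approach is more conceptual and makes functoriality a one-line consequence of pseudo-functoriality of $f^!$, but it imports heavier duality machinery; your approach stays within the elementary affine duality already established and pays for it with exactly the bookkeeping you flag as the main obstacle---reconciling the sheafified composition on $GWCor_{nis}$ with the affine identity $Q(\Phi_1\circ\Phi_2)=Q(\Phi_1)\circ Q(\Phi_2)$ when the middle variety $Y$ is not affine.
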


\begin{remark}
We can shortify the proof of the second point of the lemma \ref{lm:sectexist}
applying the following variant of the duality theorem
to the scheme $\widetilde Z$ and open subscheme $\widetilde Z\cap \aff^n\times\aff^n\times X\times \affl$.

The duality theorem:
\emph{
Let $f\colon X\to Y$ be finite Cohen-Macaulay morphism,
and $i\colon U\hookrightarrow X$ a smooth open subscheme;
then there is an $\mathcal O(Y)$-linear isomorphism 
$\tau_{f,U}\colon \mathcal Hom_{\mathcal O(Y)}(f_*(\mathcal O(X)),\mathcal O(Y))\big|_U\simeq (f\circ i)_*\omega_S(U)$
that is natural in respect to base change and natural in respect shrinking of $U$.
I.e. 
for a diagram with Cartesian squares
\begin{gather}\xymatrix{
U^\prime\ar[r]\ar[d] &
X^\prime\ar[r]\ar[d] &
Y^\prime\ar[r]\ar[d]&
S^\prime\ar[d]\\
U \ar[r] &X\ar[r] & Y\ar[r]&S
}
\end{gather}
we have 
$$
q^\prime= q\otimes_{k[S]} k[S^\prime] ,
$$}
and if $U^\prime\subset U$ then $\tau_{f,U}\big|_{U^\prime}=\tau_{f,U}$.

Indeed, it is enough to the case of the schemes $X$ that are full intersection of sections in relative projective plain, i.e. $X=Z(s)\subset \mathbb P^n_Y$, $s=(s_1,\dots, s_n)$, $s_i\in \Gamma(\mathbb P_Y,\mathcal O(d_i))$.
In this case the Duality theorem above can be proved by using the explicit formula for dualisable complex 
given by Koszul complex $\bigwedge_{i=1,\dots,n} [\mathcal O\xrightarrow{\widetilde s_i}\mathcal O(d_i)] \in K^b(\mathbb P^n_{\aff^n})$. So the required base change property can be checked explicitly using base change for Koszul complex and the isomorphism with the canonical class follows from the splitting of the Koszul complex in this case. 
\end{remark}

\begin{remark}
It is possible to get the functor for all schemes using the Duality theorem in the following form:
For any scheme of finite type $S$ over the base field $k$ 
 there s a pseudo-functor $f^!$ from the category of morphisms of finite type of $S$-schemes 
to the triangulated (dg-categories) $f^!\colon Sch_{ft,ln}\to Tr\colon X\to D^-_{qc}(X)$ with isomorphism $f^!(X)\simeq \omega_S(X)$ for smooth $X$, and duality isomorphism $Hom_{D_{qc}(X)}(F,f^!(G))\simeq Hom_{D_{qc}(Y)}(f_*(F),G) $ for $F\in D^+_{qc}(X)$, and projective morphism $f$ that is compatible with base changes along arbitrary morphisms $S^\prime\to S$ and Cohen-Macauley $f$ and along open immersions $Y^\prime\to Y$ and arbitrary morphisms $f
$.

Let's briefly describe the construction:

Let $\Phi=(v\colon \mathcal V\to \aff^n_X,Z,\phi,g)$ be a frame-correspondence. 
Consider the morphism $f\colon\mathcal V\to \aff^n_X$ defined by $\phi$ and projection to $X$. 
Shrinking $\mathcal V$ we may assume that $f$ is quasi-finite and 
let $\mathcal V\xrightarrow{i} \overline{\mathcal V}\xrightarrow{\overline{f}} \mathbb A^n_X $ be factorisation of $f$ such that $i$ is an open immersion and $\overline f$ is finite, such factorisation exists by the Main Zarisky theorem.
Now applying the duality theorem to the finite morphism $\overline f$ we get an $\mathcal O(\overline{\mathcal V})$-linear isomorphism $\widetilde q\colon \mathcal Hom_{\mathcal O(\aff^n_X)}({\overline f}_*(\mathcal O(\overline{\mathcal V})) , \mathcal O*\aff^n_X ) = \omega^\circ_{\overline f}$, where $\omega^\circ_{\overline f}$ is dualizible complex.
Then using the isomorphism $\omega^\circ_{\overline f}\big|_{\mathcal V}\simeq \omega_{\mathcal V\to X}\otimes\omega^{-1}_{\aff^n}$ and trivialisation of $\omega_{\mathcal V\to X}$ induced by $v$ and trivialisation of $\omega_{\aff^n}$
we get an isomorphism $$\mathcal  Hom_{\mathcal O(\aff^n_X)}({\overline f}_*(\mathcal O(\overline{\mathcal V}))\big|_{\mathcal V}\simeq \mathcal O(\mathcal V).$$ Finally using base change along $0_X\to \aff^n_X$ we get $\mathcal O(Z)$-linear isomorphism $\mathcal Hom(p_*(\mathcal O(Z)),\mathcal O(X))\simeq \mathcal O(Z)$.

To prove the functoriality let's note that though $\overline f$ is not Cohen-Macaulay but it is Cohen-Macaulay over generic point of $\aff^n_X$ and so combining the base change theorem along open immersions (for arbitrary projective morphisms) with base change 
for Cohen-Macaulay morphisms (along an arbitrary morphism of schemes) we deduce that $\widetilde q$ satisfy compositions axiom. 
We leave details for further consideration.
\end{remark}


\section{Appendix: Spectral category and non-commutative category of $GW$-correspondences.}

We see that the definition \ref{def:catCohfcalP} (with the composition functor)  gives rise to the category of correspondences enriched over the exact (additive) categories with duality, and over dg-categories with duality.

\begin{definition}

Let $S$ is the noetherian base scheme of a finite dimension.
Denote by $Cat^{dual}_{Ch(S)}$ the category of dg-categories with duality (and duality preserving functors).

Denote by $\mathcal{P^D}(S)$ the category enriched over $Cat^{dual}_{Ch(S)}$  with objects being finite type schemes over $S$, and the morphism-category for $X,Y\in Sch_S$ being $(\cal P(X,Y),D_X)$ (see def. \ref{}).

Let $\mathcal{GW}_S$ be the category enriched over the spectra $\mathrm{SH}$ with objects finite type schemes over $S$ and morphism-spectra $\mathcal{GW}_S(X,Y)$ be the Hermitian K-theory spectra of $(D(\cal P(X,Y)),D_X)$

Consider the category $\mathcal{C}at^{dual}_{Ch(S)}$ enriched over $Cat^{dual}_{Ch(S)}$  with objects being dg-categories with duality $\mathcal X=(\cal P_{\mathcal X}, D_X)$ with smooth dg-category $\cal P_{\mathcal X}$, and for $\mathcal X,\mathcal Y\in \mathcal{C}at^{dual}_{Ch(S)}$ the morphism-category $\mathcal{C}at^{dual}_{Ch(S)}(\mathcal X,\mathcal Y)=Funct(\mathcal X,\mathcal Y)$ is a category
of functors $Funct(\mathcal X,\mathcal Y)$ equipped with the duality $F\mapsto D_{\mathcal Y}\circ F\circ D_{\mathcal X}$, where $D_X$ and $D_Y$ are the dualities on $\mathcal X,\mathcal Y$.

By the same way as above we can define the category $\mathcal{GW}_{nc}$ enriched over spectra form the category $\mathcal{C}at^{dual}_{Ch(S)}$.
\end{definition}
The definition of the category $\mathcal{GW}$ enriched over spectra actually is parallel to the definition of spectral category in \cite{GP-KmotAV} by Garkusha and Panin, where the case of usual K-theory is considered, though the intermediate step of the category enriched over the categories is not discussed explicitly. And following the technique of \cite{GP-KmotAV} one can defined the category of motives related to $\mathcal{GW}$.

In the same time we can consider the following definition is the encroachment of the category of non-commutative varieties (spaces) with the dualities on the dg-categories. (we follow the construction presented in \cite{MR-thesis-NM}).

\begin{definition}
Define the $(\infty,1)$-category $\mathcal{Dg}^{dual}_{idem}(S)$ as the localisation $\mathcal{Dg}^{dual} = N(Cat^{dual}_{Ch(S)})[W_{M}]$, $W_{M}$ is the class of morphisms which go to the Morita 
equivalences under the forgetful functor form $Cat^{dual}_{Ch(S)}$ to the category of dg-categories (without duality).

Denote by $\mathcal Dg^{dual}_{ft}(S)$ the subcategory of $\mathcal Dg^{dual}_{idem}(S)$ spanned by objects that goes to the dg-categories of finite type under the mentioned forgetful functor.

Denote by $\widetilde{NcS}^{dual}(S)$ the opposite category to $\mathcal Dg^{dual}_{ft}(S)$.
\end{definition}
Let's note that
the $(\infty,1)$-categories in the definition above can be equipped with symmetric monoidal structures, which we denote by the symbol $\otimes$.

\begin{remark}
In the definition above we need to use several different universes saying about additive categories with duality, the categories of additive categories with duality and the category of correspondences enriched over the last one. 
\end{remark}

Now we can apply the technique of \cite{MR-thesis-NM} to get the categories of motives form the categories of correspondences.


%
\begin{definition}
Define \[DM^{\mathcal{GW}}(S) = L_{nis}L_{\affl} \mathcal P(\mathcal{GW})[(\mathbb P^1,\infty)^{-1}],\
DM^{\mathcal{GW}}_{nc}(S) =L_{nc-nis}L_{\affl,nc} \mathcal P(\mathcal{GW}_{nc}),\]
where 
$\mathcal P(\mathcal C)$ denotes the category of functors on $\mathcal C$ with values in $\mathbf{SH}$, 
$L_{\affl}$, $L_{\affl,nc}$, $L_{nis}$, $L_{nc-nis}$ are the localisation with respect to the classes of morphisms 
$w_{\affl}$, $w_{nc\affl}$, $w_{nis}$, $w_{ncNis}$;
$w_{\affl}$ is the class of morphisms of the form $X\times\affl\to X$, $X\in Sm_S$, 
$w_{nc\affl}$ is the class of morphisms of the form $\mathcal X\otimes\affl\to \mathcal X$, $\mathcal X\in \widetilde{NcS}(S)$,
$w_{ncNis}$ of the form $U\coprod_{\pri U} \pri X\to X$ defined by Nisnevich squares $(U^\prime,\pri X)\to (U,X)$,
$w_{ncNis}$ defined in a similar way with respect to squares defined like as in def. 6.4.6, 6.4.7 with straightforward replacement of dg-categories with dg-categories with duality, and functors by the preserving duality functors.
\end{definition}

\begin{proposition}
The one see that there is a sequence of functors $\mathbf{SH}(k)\rightarrow DM^{\mathcal{GW}}(k)\to DM^{\mathcal{GW}}_{nc}(k)$.
$Hom_{DM^{\mathcal{GW}}_{nc}(k)}(X,pt[i])\simeq GW^i(X)$
\end{proposition}
\begin{proof}
The functor are defined by the universal properties and due that fact that $(D(\cal P(\mathbb P^1)),D_{\mathbb P^1},1)$ is invertible in $DM^{\mathcal{GW}}_{nc}(k)$ (because of full exceptional set of linear bundles on $\mathbb P^1$). 

The second claim  is because the functor $Hom_{Cat^{dual}_{Ch(S)}}(\mathcal X,pt)\simeq (\cal P(X),D_X)$ respects $w_{nc\affl}$ and $w_{ncNis}$.
\end{proof}

\begin{conjecture}
The canonical functor $\mathbf{SH}(k)_{\mathbb Q}\to DM^{\mathcal{GW}}_{nc}(k)_{\mathbb Q}$ is fully faithful.
\end{conjecture}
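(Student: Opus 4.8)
The plan is to reduce the conjecture to a comparison between the commutative and noncommutative $\mathrm{GW}$-motivic categories, and then to analyse it rationally by means of Morel's decomposition. The cancellation theorem of this paper (Theorem~\ref{th:fullyfaithDMW(k)} and~\eqref{eq:intr:MTC}) together with Garkusha's reconstruction theorem \cite{GG_RecRatStMot} identifies $\DMGW(k)_{\mathbb Q}$ with $\mathbf{SH}(k)_{\mathbb Q}$; granting in addition the expected comparison of $DM^{\mathcal{GW}}(k)$ with $\DMGW(k)$ --- which should follow from a linearity argument in the style of Garkusha--Panin \cite{GP-KmotAV} relating the spectral category $\mathcal{GW}_k$ to its $\pi_0$, the additive category $\GWCor_k$ --- the statement becomes the full faithfulness of the colimit-preserving functor $\mathcal U\colon DM^{\mathcal{GW}}(k)_{\mathbb Q}\to DM^{\mathcal{GW}}_{nc}(k)_{\mathbb Q}$. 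Its right adjoint $\mathcal V$ exists by presentability, and by the Proposition above $\mathcal V$ is computed by the motivically localised Hermitian $K$-theory of the internal $\Hom$-dg-category with its induced duality; concretely $\mathcal V\mathcal U(M^{GW}(X))$ is the $\mathrm{GW}$-motive assembled from the categories $\mathcal P(X\times Y\to X)$, $Y\in Sm_k$. Full faithfulness of $\mathcal U$ is equivalent to the unit $M^{GW}(X)_{\mathbb Q}\xrightarrow{\ \sim\ }\mathcal V\mathcal U(M^{GW}(X))_{\mathbb Q}$ being an equivalence for all $X\in Sm_k$, and by dévissage to smooth $X,Y$ it amounts to the assertion that the rational Hermitian $K$-theory of $\mathcal P(X\times Y\to X)$ with its twisted duality reassembles precisely $\bigoplus_{i,j}\Hom_{DM^{\mathcal{GW}}(k)_{\mathbb Q}}(M^{GW}(X),M^{GW}(Y)(j)[i])$: passing to the noncommutative category must collapse nothing but the $\bGmw$-twist, which is then faithfully recorded by the twist grading on the commutative side.

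To prove this I would split along Morel's decomposition $\mathbf{SH}(k)_{\mathbb Q}=\mathbf{SH}(k)_{\mathbb Q}^{+}\times\mathbf{SH}(k)_{\mathbb Q}^{-}$, which under the action of $\langle-1\rangle$ corresponds to the splitting of rational Hermitian $K$-theory into its ``$+$'' part (rationally a summand of $\K_{\mathbb Q}$) and its ``$-$'', that is $\mathrm L$-theory or Witt, part. On the ``$+$'' factor one uses $\mathbf{SH}(k)_{\mathbb Q}^{+}\simeq\DM(k)_{\mathbb Q}$ and the rational form of Robalo's bridge \cite{MR-thesis-NM} between $\mathrm{KGL}$-modules and noncommutative $K$-motives, reducing the required identification to the classical $\K_0(X\times Y)_{\mathbb Q}\simeq\bigoplus_j\mathrm{CH}^j(X\times Y)_{\mathbb Q}$, hence to the compatibility of derived equivalences with rational Chow motives in the relevant range. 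On the ``$-$'' factor one uses $\mathbf{SH}(k)_{\mathbb Q}^{-}\simeq\mathbf{DM}_W(k)_{\mathbb Q}$ of Ananievsky--Levine--Panin \cite{ALP_WittSh-etsinvert}, Balmer's identification \cite{Bal_DerWitt} of derived Witt groups with periodic $\mathrm L$-theory, and Jacobson's computations \cite{Jacobson_RealChIWitt}, to match the Witt-twisted $\Hom$-groups with $\mathrm L(X\times Y)_{\mathbb Q}$.

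The main obstacle I expect is the ``$-$'', Witt/$\mathrm L$-theoretic, factor: unlike algebraic $K$-theory, Hermitian $K$-theory and $\mathrm L$-theory still lack a fully developed noncommutative motivic formalism, so $\mathbb A^1$-invariance and Nisnevich descent for the relevant invariants --- which here rely on $\chark k\neq2$ and on the cancellation theorem proved above --- must be re-established in the duality-enriched dg-setting, and a Hermitian dévissage/purity theorem allowing the finitely-supported correspondences of $\mathcal P(X\times Y\to X)$ to be replaced by all perfect complexes on $X\times Y$ is not presently available. A second, genuine obstruction, already on the ``$+$'' factor, is that the required statement contains cases of Orlov's conjecture, so an unconditional proof is unlikely in general --- one should expect it only in special cases (e.g.\ $X$ and $Y$ cellular, or under the standard conjectures). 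Finally, the identification $DM^{\mathcal{GW}}(k)\simeq\DMGW(k)$ invoked at the outset is itself a nontrivial preliminary that must be proved first.
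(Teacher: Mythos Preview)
The statement you are addressing is recorded in the paper as a \emph{conjecture}, not a theorem; the paper offers no proof, only a short heuristic paragraph immediately following it. So there is no argument to compare against in the usual sense, and your ``Towards a proof'' should be read as a programme rather than as a candidate proof.

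That said, the overall shape of your programme matches the paper's heuristic. The paper proposes exactly the factorisation
\[
\mathbf{SH}(k)_{\mathbb Q}\;\simeq\;\DMGW(k)_{\mathbb Q}\;\simeq\;DM^{\mathcal{GW}}(k)_{\mathbb Q}\;\hookrightarrow\;DM^{\mathcal{GW}}_{nc}(k)_{\mathbb Q},
\]
with the first equivalence coming from Garkusha and Bachmann--Fasel, and it singles out the middle equivalence $\DMGW(k)_{\mathbb Q}\simeq DM^{\mathcal{GW}}(k)_{\mathbb Q}$ as ``the most difficult (and doubtful)'' step --- precisely the preliminary you flag at the end. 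Where you and the paper diverge is in the treatment of the final embedding $DM^{\mathcal{GW}}(k)_{\mathbb Q}\hookrightarrow DM^{\mathcal{GW}}_{nc}(k)_{\mathbb Q}$: the paper expects this to be essentially formal (``should follow from the general equivalence relation between $(\infty,1)$-categories and categories enriched over $\mathbf{H}_\bullet$''), whereas you unpack it via Morel's $+/-$ decomposition and, on the $+$ side, reduce it to an instance of Orlov's conjecture. Your analysis here goes well beyond what the paper says and is more cautious; if your reduction to Orlov-type statements is correct, it suggests the paper's optimism about this step being ``general nonsense'' may be misplaced.

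One point to be careful about: your description of the right adjoint $\mathcal V$ and the resulting d\'evissage is plausible but not something the paper establishes or even claims; the noncommutative category $DM^{\mathcal{GW}}_{nc}$ is only sketched in the appendix, and basic properties (compact generation, the precise form of $\mathcal V$, Nisnevich descent for Hermitian $K$-theory of dg-categories with duality) are not verified there. So even the framework you take as given is, in the paper's own terms, conjectural.
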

The conjecture is based on that as follows form the recent results by Garkusha and result by Bachmann and Fasel
$DM^{GW}(k)_{\mathbb Q}\simeq \mathbf{SH}(k)_{\mathbb Q}$.
So the full embedding form the conjecture above probable is decomposed as
$$\mathbf{SH}(k)_{\mathbb Q}\simeq DM^{GW}(k)_{\mathbb Q}\simeq DM^{\mathcal{GW}}(k)_{\mathbb Q}\hookrightarrow  DM^{\mathcal{GW}}_{nc}(k)_{\mathbb Q} .$$ 
Actually the second question should follow form the general equivalence relation between $(\infty,1)$-categories and cateogries enriched over $\mathbf{H_{\bullet}}$.
The question which looks being the most difficult (and doubtful) is the equivalence $DM^{GW}(k)_{\mathbb Q}\simeq DM^{\mathcal{GW}}(k)_{\mathbb Q}$.

Another point we'd like to discuss is the following.
In the definition of $\widetilde{NcS}(S)$ we replaced the objects .
Similar to that fact that 
The correspondences given by GW groups or an $SL$-orientable cohomology theory can be defined in the category $Sm_S$ and in the same time on the category of smooth varieties equipped with a line bundle, which has effect for the twisting of the cohomology groups. 
The corresponding categories of motives are equivalent with the identification of a pair $(X,L)$ and a pair of varieties $(T_L,T_L-0_X)$ (i.e. $Cone(T_L-0_X\to T_L)$, where  $T_L$ is the Tome space of the bundle $L$, and $0_X$ is the zero section.
In the same time in the context of non-commutative varieties, it is known that smooth proper non-commutative spaces satisfy duality theorem and so there is a canonical duality which we can use in the definition of $\widetilde{NcS}$ restricted to a smooth proper non-commutative varieties.
So it looks being natural to ask the question:
\begin{question}
To reconstruct the category $\widetilde{NcS}(S)$ or $DM^{\mathcal{GW}}_{nc}(k)$ without replacement of the objects of $NcS$ (which are dg-categories with out any additional structures) 
and with replacement morphisms only.
\end{question}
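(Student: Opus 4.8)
The plan is to produce a category $\widetilde{NcS}^{\,\sharp}(S)$ whose objects are \emph{plain} finite-type dg-categories over $S$ and whose morphism dg-categories carry a duality defined from the ambient data alone, and then to compare the motivic category built from it with the category $DM^{\mathcal{GW}}_{nc}(k)$ of the excerpt; the same scheme should apply to $\widetilde{NcS}(S)$ itself.

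The candidate for the duality on the morphism-category is the $\mathcal A$-linear dual twisted by the inverse dualising bimodule: for a correspondence $\mathcal A\to\mathcal B$, modelled by a bimodule $M$ that is perfect over $\mathcal A$, set $\widetilde D(M)=\mathrm{RHom}_{\mathcal A}(M,\omega_{\mathcal A})$ with $\omega_{\mathcal A}$ the inverse dualising bimodule of $\mathcal A$; this uses only the dg-category structure (the Serre functor of $\mathcal A$), not a self-duality on $\mathcal A$ or $\mathcal B$, and it reproduces $\Hom_{\mathcal O_X}(-,\mathcal O_X)$ after the canonical trivialisation of $\omega_X$ in the commutative case. I would first verify that $\widetilde D$ is a perfect duality on $\mathcal P(\mathcal A,\mathcal B)$ — smooth-properness of $\mathcal A$ is what is used here — and that it is compatible with the composition $-\circ-$ of correspondences, so that $(\mathcal A,\mathcal B)\mapsto(\mathcal P(\mathcal A,\mathcal B),\widetilde D)$ is an enrichment over $Cat^{dual}_{Ch(S)}$; the intent is to replace the pseudo-functoriality and base change of Theorem \ref{th:GrothendieckDualtiyThwithBaseCh} by the purely algebraic naturality of Serre duality.

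Next I would build the comparison functor $\widetilde{NcS}^{dual}(S)\to\widetilde{NcS}^{\,\sharp}(S)$ as the forgetful functor $(\mathcal A,D_{\mathcal A})\mapsto\mathcal A$ on objects, together with a canonical identification of $D_{\mathcal A}|_{\mathcal P(\mathcal A,\mathcal B)}$ with $\widetilde D$ on morphism-categories: a chosen object-level duality $D_{\mathcal A}$ differs from the dualising-bimodule datum by an invertible $\mathcal A$-bimodule, and on morphism-categories this twist is absorbed, up to coherent isomorphism, because $\chark k\neq 2$. To see that this functor becomes an equivalence after inverting Morita equivalences and passing through the localisations, one checks essential surjectivity and full faithfulness on morphism-categories, the content being that the object-level duality is recovered from the morphism-level structure up to a contractible space of choices. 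I would then verify exactness for $w_{nc\affl}$ and $w_{ncNis}$ exactly as in the proposition preceding the conjecture and descend through $L_{\affl,nc}$ and $L_{nc\text{-}nis}$ to obtain $DM^{\mathcal{GW}}_{nc}(k)\simeq L_{nc\text{-}nis}L_{\affl,nc}\mathcal P(\widetilde{NcS}^{\,\sharp}(k))$.

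The hard part will be the $\op$-bookkeeping that is invisible in the commutative case: $\mathrm{RHom}_{\mathcal A}(-,-)$ trades a left $\mathcal A$-action for a right one, so forcing $\widetilde D$ to land back in $\mathcal P(\mathcal A,\mathcal B)$ rather than in $\mathcal P(\mathcal B,\mathcal A)$, and to do so coherently for all compositions, seems to require keeping track of equivalences $\mathcal A\simeq\mathcal A^{\op}$ after all. Deciding whether this data can genuinely be pushed entirely into the morphisms — so that the objects remain bare dg-categories — or whether it only gets repackaged, and simultaneously controlling the homotopy type of the space of such identifications, is the main obstacle, and is presumably why the statement is posed as an open question rather than a theorem.
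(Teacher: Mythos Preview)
The statement you are addressing is not a theorem but an open question (the environment \texttt{question} is declared as a \texttt{conjecture} in the paper's preamble), and the paper does not give a proof. What the paper does immediately after the question is sketch a \emph{candidate} answer, itself posed as a further conjecture: it proposes the category $Cat_{Ch(S)}^{\cdot-\cdot}$ whose objects are plain dg-categories and whose morphisms from $\mathcal A$ to $\mathcal B$ are pairs of dg-functors $F,G$ together with a natural equivalence $t\colon F\simeq G$, the idea being that a duality on objects is equivalent, via the two adjunctions $f^*\dashv f_*$ and $f_!\dashv f^!$, to such a doubled-morphism datum. The paper then conjectures that $DM^{\mathcal{GW}}_{nc}(k)\simeq L_{nc\text{-}nis}L_{\affl,nc}\mathcal P(NcS^{\cdot-\cdot})$.

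Your approach is genuinely different. You try to build a duality on the morphism-category $\mathcal P(\mathcal A,\mathcal B)$ using $\mathrm{RHom}_{\mathcal A}(-,\omega_{\mathcal A})$ with the inverse dualising bimodule, i.e.\ you push the Serre functor into the morphisms. The paper's candidate instead abandons the duality-on-hom structure entirely and replaces each single morphism by an $(F,G,t)$ triple. What the paper's repackaging buys is that it sidesteps exactly the $\op$-bookkeeping you identify as the hard part: the pair $(f^*,f^!)$ lives naturally without needing any identification $\mathcal A\simeq\mathcal A^{\op}$. What your approach would buy, if it worked, is that it stays closer to the Grothendieck--Witt formalism (you still have a category-with-duality of correspondences, so Hermitian $K$-theory applies directly), whereas in the paper's $NcS^{\cdot-\cdot}$ it is not even clear what functor one applies to hom-categories to recover $GW$-groups.

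Your final paragraph correctly diagnoses that the obstruction to your approach is the need for coherent $\mathcal A\simeq\mathcal A^{\op}$ data, which is essentially smuggling the object-level duality back in; you are right that this is why the question is open. Neither your sketch nor the paper's sketch constitutes a proof, and the paper does not claim one.
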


Let's give a version of the answer which is based on the equivalence of the information given by functor $f^*$ of the categories with dualities and the information given by the pair of adjunctions $f^*\dashv f^*$ and $f^!\dashv f_!$.
Define $Cat_{Ch(S)}^{\cdot -\cdot}$ as the category with objects being dg-categories and any morphism form $\mathcal A$ to $\mathcal B$ is a pair of dg-functors $F,G$ with natural equivalence $t\colon F\simeq G$.  
$NcS^{\cdot -\cdot}$ which is defined in a similar way to $NcS$ starting form $Cat_{Ch(S)}^{\cdot -\cdot}$.
\begin{conjecture}
The  $(\infty,1)$-category $DM^{\mathcal{GW}}_{nc}(k)$ is equivalent to $L_{nc-nis}L_{\affl,nc} \mathcal P(NcS^{\cdot -\cdot})$.
\end{conjecture}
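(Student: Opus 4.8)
The plan is to reduce the conjecture to a comparison of two presheaf $(\infty,1)$-categories sitting over equivalent symmetric monoidal index categories, and then to check that the localizing classes are carried to one another. The heart of the matter is the dictionary flagged in the paragraph before the conjecture: for a morphism $f$ of smooth proper non-commutative spaces, a duality-preserving dg-functor $f^*\colon(\mathcal P_{\mathcal X},D_{\mathcal X})\to(\mathcal P_{\mathcal Y},D_{\mathcal Y})$ is the same datum as the underlying dg-functor $f^*$ together with a coherent natural equivalence $t\colon f^*D_{\mathcal X}\simeq D_{\mathcal Y}f^*$, and, implementing the dualities by Serre objects $D_{\mathcal X}\simeq -\otimes\omega_{\mathcal X}$ and using the four-functor package $f^*\dashv f_*$, $f_!\dashv f^!$, this is exactly a morphism of $Cat_{Ch(S)}^{\cdot -\cdot}$, namely the pair $(f^*,f^!)$ with its natural equivalence. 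So first I would set up this correspondence on objects and on $1$-morphisms, using the dg/non-commutative form of Grothendieck--Serre duality (the analogue of Theorem \ref{th:GrothendieckDualtiyThwithBaseCh} valid for smooth proper dg-categories) to reconstruct $f^!$ canonically from $f^*$ and the Serre structures.

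Second, I would promote this to an equivalence of symmetric monoidal $(\infty,1)$-categories $\widetilde{NcS}^{dual}(S)\simeq NcS^{\cdot -\cdot}$. Here the point to verify is that the localizing data defining these categories match: the Morita-type equivalences inverted to form $\mathcal Dg^{dual}_{idem}(S)$ correspond, under the dictionary, to the weak equivalences used to build $NcS^{\cdot -\cdot}$ from $Cat_{Ch(S)}^{\cdot -\cdot}$, because on both sides they are detected after applying the forgetful functor to plain dg-categories, and the forgetful functors are intertwined by the dictionary. Granting $\widetilde{NcS}^{dual}(S)\simeq NcS^{\cdot -\cdot}$, the two presheaf categories agree: the spectral enrichment of $\mathcal{GW}_{nc}$ is hermitian K-theory of the duality functor-categories, which depends only on the duality data, and that data is faithfully and coherently encoded by the $\cdot -\cdot$ description; so $\mathcal P(\mathcal{GW}_{nc})\simeq\mathcal P(NcS^{\cdot -\cdot})$ as presentable $(\infty,1)$-categories.

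Third, I would match the localizations. Both $w_{nc\affl}$ (the maps $\mathcal X\otimes\affl\to\mathcal X$) and the non-commutative Nisnevich class $w_{ncNis}$ are defined purely from the symmetric monoidal structure, a fixed object $\affl$, and the relevant squares and colimits, all of which are preserved by the equivalence of the previous step; hence $L_{nc-nis}L_{\affl,nc}\mathcal P(\mathcal{GW}_{nc})\simeq L_{nc-nis}L_{\affl,nc}\mathcal P(NcS^{\cdot -\cdot})$, which is the conjecture.

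The hard part will be the first two steps: upgrading the $f^*\leftrightarrow(f^*,f^!,t)$ correspondence from homotopy categories to a genuine equivalence of mapping $\infty$-categories. This needs not only the canonical reconstruction of $f^!$ from $f^*$ and the Serre structures, but full higher coherence --- compatibility of this reconstruction with composition and with base change at the level of all higher simplices --- which is precisely the data the text defers ``for further consideration''. A secondary subtlety is that $\mathcal{GW}_{nc}$ is defined over all finite-type dg-categories with duality, whereas Serre duality is available only in the smooth proper case, so one must either restrict to the proper part of $\widetilde{NcS}^{dual}(S)$ or first extend the duality formalism (via $\omega^\circ$-type dualizing complexes, as in the preceding remark) before the dictionary applies in the generality claimed.
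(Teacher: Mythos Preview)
The paper does not prove this statement: it is labelled a \emph{conjecture} and is left open, with no proof or sketch beyond the motivating sentence about the equivalence between the datum of a duality-preserving $f^*$ and the package $(f^*\dashv f_*,\ f^!\dashv f_!)$. There is therefore nothing to compare your proposal against.

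As a plan, your outline is reasonable and in the spirit of the paper's hint, but it is not a proof and you correctly identify the two genuine obstructions. First, the dictionary $f^*\leftrightarrow(f^*,f^!,t)$ is only stated at the level of individual morphisms; promoting it to an equivalence of symmetric monoidal $(\infty,1)$-categories $\widetilde{NcS}^{dual}(S)\simeq NcS^{\cdot-\cdot}$ requires fully coherent compatibility with composition and base change, which is exactly the point the paper explicitly defers. Second, and more seriously, the Serre-duality reconstruction of $f^!$ from $f^*$ is only available in the smooth proper setting, whereas both $\widetilde{NcS}^{dual}(S)$ and $NcS^{\cdot-\cdot}$ are defined over all finite-type dg-categories; so your step from ``the dictionary works for smooth proper objects'' to ``$\widetilde{NcS}^{dual}(S)\simeq NcS^{\cdot-\cdot}$'' has a real gap that is not closed by anything in the paper. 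Until those two points are settled, what you have is a strategy, not a proof --- which is consistent with the statement's status as a conjecture.
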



\begin{thebibliography}{00}

\bibitem{AnGarPanin_CancellationThFrMot}
A. Ananyevskiy, G. Garkusha, I. Panin,
Cancellation theorem for framed motives of algebraic varieties,
arXiv:1601.06642
(Jan 2016).
\bibitem{ALP_WittSh-etsinvert}
A.~Ananyevskiy, M.~Levine, I.~Panin
Witt sheaves and the $\eta$-inverted sphere spectrum,
(Apr 2015), arXiv:1504.04860.

\bibitem{FB_EffSpMotCT}
T.~Bachmann, J.~Fasel,
On the effectivity of spectra representing motivic cohomology theories, arXiv:1710.00594.

\bibitem{Bal_DerWitt}  P.~Balmer, Witt groups Handbook of K-theory, vol. 2, Springer, Berlin (2005), 539-576.

\bibitem{CF_FinChWittCor}
B.~Calmes, J.~Fasel,
Finite Chow-Witt correspondences,
arXiv:1412.2989
\bibitem{ChepInjLocHIiWtrPreSh}
K. Chepurkin,	
Injectivity theorem for homotopy invariant presheaves with W-transfers, 
St. Petersburg Mathematical Journal, 2017, 28:2, 291–297.

\bibitem{AD_DMGWeff}
A.~Druzhinin, Effective Grothendieck-Witt motives and Witt-motives of smooth varieties, arXiv:1709.06273.

\bibitem{AD_StrHomInv}
A.~Druzninin,
Strictly homotopy invariance of Nisnevich sheaves with GW-transfers and Witt-transfers, arXiv:1709.05805.

\bibitem{AD_CancellGWCor}
A.~Druzninin, Cancellation theorem for Grothendieck-Witt-cor\-res\-pon\-den\-ces and Witt-correspondences. arXiv:1709.06543.

\bibitem{AD_WtrSh}
A.E.~Druzhinin, Preservation of homotopy invariance for presheaves with  $\mathrm{Witt}$-transfers under Nisnevich seafication, 
Journal of Mathematical Sciences (2015), Vol. 209, issue 4, pp 555-563 

\bibitem{FaselOstvaer_CancThMilnorWittCor}
J.~Fasel, P.~\O stvaer,
A cancellation theorem for Milnor-Witt correspondences,
arXiv:1708.06102.
(Aug 2017).

\bibitem{GG_RecRatStMot}
G.~Garkusha,
Reconstructing rational stable motivic homotopy theory, arXiv:1705.01635, 2017. 

\bibitem{GP-KmotAV}
G.~Garkusha, I.~Panin
K-motives of algebraic varieties (2011), arXiv:1108.0375.


\bibitem{GarkushaPanin_FrMot} G. Garkusha, I. Panin, Framed motives of algebraic varieties (after V. Voevodsky), arXiv:1409.4372

\bibitem{HoundMahe_ComConRAlgVar}
J.~Houdebine, L.~Mahé. 
Séparation des composantes connexes
réelles dans le cas des variétés projectives. In Real algebraic geometry and quadratic forms (Rennes, 1981), volume 959 of Lecture Notes in Math., pages 358–370. Springer-Verlag, Berlin, 1982.

\bibitem{Jacobson_RealChIWitt}
J.~Jacobson,
Real cohomology and the powers of the fundamental ideal in the Witt ring,
Ann. K-Theory
Volume 2, Number 3 (2017), 357-385.

\bibitem{Mahe_SignRCon}
L.~Mahé. Signatures et composantes connexes. Math. Ann., 260(2):191– 210, 1982.

\bibitem{MR-thesis-NM} Marco Robalo, ￼￼ Th ́eorie Homotopique Motivique des Espaces Noncommutatifs, thesis.

\bibitem{MVW_LectMotCohom}
Mazza, Voevodsky, Weibel, Lecture notes on motivic cohomology, Clay mathematics monographs, ISSN 1539-6061; volume 2.

\bibitem{Suslin-GraysonSpectralSeq}
A.~Suslin,
On the Grayson spectral sequence,
Proceedings of the Steklov Institute of Mathematics. 2003.

\bibitem{SV_Bloch-Kato}
A.~Suslin, V.~Voevodsky,
Bloch-Kato conjecture and motivic cohomology with finite coefficients,
The Arithmetic and Geometry of Algebraic Cycles, 548 (2000), 117--189.

\bibitem{Voevodsky_CancellationTh} 
V. Voevodsky, The Cancellation Theorem, Voevodsky, V., "Cancellation theorem", Doc. Math., pp. 671–685, 2010. 


\bibitem{VSF_CTMht_Ctpretr}
V.~Voevodsky, Cohomological theory of presheaves with transfers, 
(in Cycles, Transfers and Motivic Homology Theories. by Eric. M. Friedlander, and Andrei Suslin), 
Annals of Math. Studies, 1999.

\bibitem{VSF_CTMht_DM}
V.~Voevodsky, Triangulated categories of motives over a field, 
(in Cycles, Transfers and Motivic Homology Theories. by Eric. M. Friedlander, and Andrei Suslin), 
 Annals of Math. Studies, 1999.

\bibitem{W_MotComK} 
M. Walker,
Motivic Complexes and the K-Theory of Automorphisms, 
vol. 0205 (1997).

\bibitem{Hart_ResDual}
R.~Hartshorne,
Residues and duality,
Lecture Notes in Mathematics \textbf{20},
Springer-Verlag,
New York, 1966.


\bibitem{Conrad_BCDTh}
B.~Conrad,
Grothendieck Duality and Base Change.
Lecture Notes in Mathematics, 
Springer-Verlag, Berlin, 2000

\bibitem{Sastry_BCDThCoh-Mac}
P.~Sastry,
Base change and Grothendieck duality for Cohen-Macaulay maps,
Compositio Mathematica,
Volume 140, Issue 3 May 2004 , pp. 729-777

\bibitem{OP_WittPurity}
Ojanguren M., Panin I.
A Purity theorem for the Witt Group.
Ann. Sci. Ecole Norm. Sup. 
(4) 32 (1999) 
no.1, 71 86.

\end{thebibliography}

\end{document}